\def\@seccntformat#1{\csname the#1\endcsname.\hspace{2ex}}
 \renewcommand{\subsection}%
  {\@startsection{subsection}%
  {2}%
  {\z@}%
  {2ex}
  {0ex}
  {\reset@font\normalsize\bfseries}}%
 \newcommand{\nsection}{\@startsection{section}{1}{\z@}%
     {-5ex}
     {1ex}
     {\reset@font\center\large\sc}}
 \renewenvironment{thebibliography}[1]
 {\nsection*{\refname\@mkboth{\refname}{\refname}}%
   \list{\@biblabel{\@arabic\c@enumiv}}%
   {\settowidth
   \labelwidth{\@biblabel{#1}}%
   \leftmargin
	\labelwidth
        \advance
	 \leftmargin
	 \labelsep
         \@openbib@code
         \usecounter{enumiv}%
         \let\p@enumiv\@empty
	 \parskip=0pt
	 \itemsep=1pt
	 \parsep=1pt
	 \itemindent=\z@
         \renewcommand\theenumiv{\@arabic\c@enumiv}}%
   	 \sloppy
   	 \clubpenalty4000
   	 \@clubpenalty\clubpenalty
   	 \widowpenalty4000%
   	 \footnotesize
   	 \sfcode`\.\@m}
  	 {\def\@noitemerr
    	 {\@latex@warning{Empty `thebibliography' environment}}%
   	 \endlist}
\newtheoremstyle{thm}
 {1em}
 {3pt}
 {\itshape}
 {}
 {\bf}
 {. ---}
 {0.5em}
 {}
\newtheoremstyle{dfn}
 {1em}
 {3pt}
 {}
 {}
 {\bf}
 {. {---}}
 {0.5em}
 {}
\theoremstyle{thm}
\newtheorem{lem}[subsection]{Lemma}
\newtheorem*{lem*}{Lemma}
\newtheorem{cor}[subsection]{Corollary}
\newtheorem*{cor*}{Corollary}
\newtheorem{prop}[subsection]{Proposition}
\newtheorem*{prop*}{Proposition}
\newtheorem*{conj*}{Conjecture}
\newtheorem*{thm*}{Theorem}
\theoremstyle{dfn}
\newtheorem{dfn}[subsection]{Definition}
\newtheorem*{dfn*}{Definition}
\newtheorem*{ex*}{Example}
\newtheorem*{rem*}{Remark}
\newenvironment{meta}{
\noindent \color{red}
\sffamily[}{\upshape]}
\definecolor{shadecolor}{gray}{0.80}
       \noindent\textcolor{red}{{\large\bf Personal comment}}\\
\newsavebox{\circlebox}
\savebox{\circlebox}{\fontencoding{OMS}\selectfont\char13}
\newlength{\circleboxwdht}
\newcommand{\cc}[1]{(\!(#1)\!)}
\newcommand{\dd}[1]{[\![#1]\!]}
\newcommand{\ul}[1]{\underline{#1}}
\newcommand{\Mdf}{\mr{Mdf}}
\newcommand{\sHom}{\mc{H}\mr{om}}
\newcommand{\Comp}{\mr{Fl\acute{E}t}}
\newcommand{\newton}{valuation\xspace}
\newcommand{\NP}{\mr{VP}}
\newcommand{\rat}[1]{\mr{R}(#1)}
\newcommand{\TXsum}{\mathop{\textstyle\sum}\nolimits}
\newcommand{\ally}{ally\xspace}
\newcommand{\good}{good\xspace}
\begin{document}
\title{Ramification theory from homotopical point of view, II}
\author{Tomoyuki Abe}
\date{}
\maketitle

\section*{Introduction}
This paper is a continuation from [Part I], and we will prove a theorem which we used in [Part I].
However, the method is almost independent from [Part I], and, for example, we do not use the language of $\infty$-categories at all.
In the last two sections, we use the terminologies of [Part I, \S4], but this section can be read independently from the other sections of [Part I].
The goal of this paper is to show the following theorem which we promised in [Part I, 5.7].
We keep the notations from [Part I, \S4], some of which we will recall in \S\ref{sect4}.

\begin{thm*}[Corollary \ref{mainresult}]
 Assume that $X$ and $S$ are noetherian affine $\mb{F}_p$-schemes {\normalfont(}not necessarily of finite type{\normalfont)},
 and assume we are given a morphism $X\rightarrow S$ of finite type.
 For any $F\in\ms{S}_d(S)(:=\Comp_d(X/S))$ and a finite ordered set $\mbf{h}$ of $\mc{O}_{X\times\Delta^m}(X\times\Delta^m)$,
 we can find a finite ordered set $\mbf{h}'$ of $\mc{O}_{X}(X)$ such that for a surjective
 $\Mdf$-sequence $\{\mbf{V}_i\}$ adapted to $(F;\mbf{h}\vee\mbf{h}')$,
 $F[\{\mbf{V}_i\}]_{\infty}$ is in $\ms{S}_{d-1}$.
\end{thm*}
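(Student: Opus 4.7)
The plan is to choose $\mbf{h}'$ so that every surjective $\Mdf$-sequence adapted to $(F;\mbf{h}\vee\mbf{h}')$ is automatically fine enough to strip off the top ramification layer of $F$, leaving a colimit that already lies in $\ms{S}_{d-1}$. I would first unpack the condition $F\in\Comp_d(X/S)$ from [Part I, \S4] (recalled in \S\ref{sect4}) into a finite collection of elements of $\mc{O}_{X\times\Delta^m}(X\times\Delta^m)$ that witness why $F$ has complexity $d$ rather than $d-1$. Using that $X$ is affine noetherian and that $X\to S$ is of finite type, I would then descend these witnesses, modulo the already-given $\mbf{h}$, to a finite subset $\mbf{h}'\subset\mc{O}_X(X)$; this descent step is the substantive use of the hypotheses in the statement.

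Next, for any surjective $\Mdf$-sequence $\{\mbf{V}_i\}$ adapted to $(F;\mbf{h}\vee\mbf{h}')$, I would verify that adaptedness to $\mbf{h}'$ forces each $F[\mbf{V}_i]$ to have its top layer trivialized, so that only the first $d-1$ layers contribute in the colimit. Combined with surjectivity, and with closure of $\ms{S}_{d-1}$ under the ind-colimit $F\mapsto F[\{\mbf{V}_i\}]_\infty$, this yields the desired membership in $\ms{S}_{d-1}$. Because the theorem asserts this for \emph{every} such sequence, I would then invoke a cofinality argument: any two surjective adapted sequences admit a common adapted refinement, and $[-]_\infty$ is insensitive to passage to cofinal subsequences, so it suffices to verify the conclusion on a single convenient sequence built by iterated modifications along $\mbf{h}'$.

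I expect the main obstacle to be calibrating the choice of $\mbf{h}'$ so that the combinatorial notion ``adapted to $\mbf{h}\vee\mbf{h}'$'' encodes precisely the geometric fact of resolution of the top layer of $F$; equivalently, one must show that the layer filtration on $\Comp_\bullet$ interacts predictably with pullback along the modifications $\mbf{V}_i$ and that this interaction is uniform across all adapted $\{\mbf{V}_i\}$. This is essentially a noetherian finiteness input, and is the reason the affine and finite-type hypotheses enter, but making it rigorous will rely on several auxiliary results from \S\ref{sect4} concerning the construction of $\Comp_d$ and the stability of $\ms{S}_{d-1}$ under $F\mapsto F[\mbf{V}]$.
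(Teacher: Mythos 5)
Your proposal correctly identifies the high-level shape of the argument — reduce to a single convenient $\Mdf$-sequence using the well-definedness of boundary dimension ([Part I, 5.6.2] rather than a self-made cofinality argument), and choose $\mbf{h}'$ so that the resulting boundary dimension drops to $<d$ — but the central mechanism you invoke does not exist, and the one the paper actually uses is not visible in your sketch.

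Concretely: there is no ``finite collection of witnesses in $\mc{O}_{X\times\Delta^m}$'' that certifies $F\in\Comp_d$ rather than $\Comp_{d-1}$, and no ``layer filtration'' on $\Comp_{\bullet}$ that interacts with pullback. Membership in $\Comp_d$ is just a fiberwise bound on the dimension of the support; it does not come with distinguished global functions, so there is nothing to ``descend modulo $\mbf{h}$.'' Consequently the step you flag as ``the substantive use of the hypotheses'' is not where the substance lies. In the paper, the affine/noetherian/finite-type hypotheses are used for two things: (a) a limit argument ([EGA IV, 8.9.1] with [SGA~4$\tfrac12$, Rapport 4.6] and [SGA 4, IX 2.7.4]) that reduces the ``not necessarily of finite type'' case to $S$ of finite type over a field; and (b) Lemma \ref{nonconst-prop}.\ref{nonconst-prop-bc}, which says that because $X$ is affine, the whole ring $\mc{O}_X(X)$ is automatically a non-constant family on any closed subscheme of any base change $X_{S'}$. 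That second point is the hinge of the proof: one runs the given $\mbf{h}$ once to obtain $G:=F[\{\mbf{W}_i\}]_{\infty}$, flattens so that $\mr{Supp}(G)$ has fiberwise dimension $\leq d$, observes that $\mc{O}_X(X)$ is then a non-constant family on this support, and extracts $\mbf{h}'$ from the main theorem of \S\ref{sect5}.

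The real content your proposal skips is the main theorem of \S\ref{sect5} itself, whose proof is the bulk of the paper: one trivializes $\mc{F}$ by a Galois covering (reduced fiber theorem, Lemma \ref{tworedlem}), reduces to locally constant coefficients on a smooth family (Proposition \ref{dimredas}), uses the alteration theorem for admissibility (Theorem \ref{mainthmadm}, Corollary \ref{altadmassh}) to put the Artin--Schreier twists into admissible AS form, and then applies the key dimension-drop (Corollary \ref{puinsreddim}) built from the smoothness of $ax+by^n$ and the ULA property of \cite[2.4.4]{KL}. The element $\mbf{h}'$ ultimately arises from Lemma \ref{nowhconslem} and Corollary \ref{puinsreddim} applied after these alterations, not from any combinatorial datum attached to $F$. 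Your sketch, by contrast, asserts that ``adaptedness to $\mbf{h}'$ forces each $F[\mbf{V}_i]$ to have its top layer trivialized'' without providing any geometric input that could make this true; this is precisely the gap the valuation-theoretic/Berkovich analysis of \S\ref{sect2}--\ref{sect3} is there to fill.
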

By limit argument, we may and do assume that $S$ is of finite type over a field $k$ of characteristic $p$.
We may also reduce to the case where $X\rightarrow S$ is smooth.
If we are given a morphism $h\colon Y\rightarrow\mb{A}^1$, we denote $h^*\mc{L}$,
where $\mc{L}$ is the Artin-Schreier sheaf on $\mb{A}^1$ associated with a non-trivial additive character, by $\mc{L}(h)$.
The function $h$ is said to be {\em $S$-separable} if the smooth locus in $X$ of the induced morphism $X\rightarrow\mb{A}^1\times S$ is fiberwise dense over $S$.
A key to show the above theorem is a weak structure theorem of the Artin-Schreier sheaf $\mc{L}(h)$.
This states that after a suitable alteration of $S$, $\mc{L}(h)$ can be written, generically with respect to each fiber of $X\rightarrow S$,
as $\mc{L}(f/\sigma)\otimes\mc{G}$ where $f$ is a separable function on $X$, $\sigma$ is a function on $S$,
and $\mc{G}$ is a locally constant constructible module on $X$.
A benefit of writing in this way is that we can compute the nearby cycles of $\mc{L}(h)$ very explicitly,
at least generically with respect to each fiber.
In this introduction, we will focus on how to prove the weak structure theorem.
For a proof of this key result, we use the argument invented by Kedlaya used to prove semistable reduction theorem \cite{Ked4}
and the existence of good formal model \cite{Ked5}.

From now on, for simplicity, we fix a smooth morphism $X\rightarrow S$ between integral schemes of finite type over a field $k$ of characteristic $p$.
Since, for a function $f$, we have $\mc{L}(f^p-f)\cong\Lambda$, the weak structure theorem follows readily from the following (purely geometric) result:

\begin{thm*}[Theorem \ref{mainthmadm}]
 For any $h\in k(X)$, we can find an alteration $S'\rightarrow S$ over which $h$ is admissible.
 Here, $h$ is said to be {\em admissible} if, Zariski locally on $S$, there exists a fiberwise dense open subscheme $U\subset X$,
 an $S$-separable function $f$ on $U$, a regular function $\sigma\in\mc{O}_S\cap k(S)^{\times}$, and $g\in k(X)$
 such that $h|_U+(g^p-g)\in f/\sigma+\mc{O}_X$.
\end{thm*}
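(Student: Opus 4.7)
The plan is to use the alteration machinery of Kedlaya \cite{Ked4,Ked5} to put the polar divisor of $h$ in good position relative to $S$, and then perform Artin--Schreier normalization to extract the desired decomposition $f/\sigma$.

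\emph{Setup.} After preliminary reductions we may assume $S$ is normal, integral and affine, $X$ is integral, and $X\to S$ is smooth with geometrically integral fibers. Write $D\subset X$ for the polar divisor of $h$, and decompose $D=D_{\mr{hor}}\cup D_{\mr{ver}}$ into horizontal components (dominating $S$) and vertical ones. Since the statement is Zariski local on $S$, we may shrink $S$ to avoid the image of $D_{\mr{ver}}$, and the essential task becomes to analyse $h$ along $D_{\mr{hor}}$ and to put its singular part into the required form on some fiberwise dense open $U$.

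\emph{Alteration.} I would then apply Kedlaya's semistable reduction argument, combined with the good formal model theorem, to produce an alteration $\pi\colon S'\to S$ such that, after base change, the pull-back of $D_{\mr{hor}}$ to $X\times_{S}S'$ is a relative normal crossings divisor and the valuation-theoretic data of $h$ along its components is organized coherently across $S'$. Concretely, the goal of this step is that the leading coefficient of $h$ along each horizontal component, read off from its Laurent expansion at the generic point of that component, factors as an $X$-part times a single $S$-part; this common $S$-part will be the sought-after $\sigma\in\mc{O}_{S'}\cap k(S')^{\times}$.

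\emph{Artin--Schreier normalization.} Once $D_{\mr{hor}}$ is in good position, apply the standard Artin--Schreier trick fiberwise: if the pole order of $h$ at some horizontal component is divisible by $p$, subtract $g^p-g$ for a suitable $g$ to reduce it, and iterate. The local choices are assembled into a single $g\in k(X)$ by weak approximation in the function field, leaving $h$ with pole orders prime to $p$ along every horizontal component. The resulting principal part then reads as $f/\sigma$, where $f$ is built from powers of local equations of the horizontal components---arranged to be $S$-separable on $U$, possibly after a final Artin--Schreier tweak to eliminate residual fiberwise $p$-th-power behavior---and $\sigma$ is the function from the previous step; the remainder is regular on $U$ by construction.

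\emph{Main obstacle.} The decisive difficulty lies in the alteration step: different horizontal components of $D$ may, a priori, require incompatible alterations of $S$ in order to factor their leading coefficients into an $X$-part and an $S$-part, and no finite composition of blow-ups of $S$ alone will solve the problem. It is precisely Kedlaya's Newton-polygon and good-formal-model machinery---which organizes the valuations associated with $h$ uniformly across the base---that allows a single alteration to synchronize all horizontal components at once. Translating this technique from the $p$-adic analytic setting of \cite{Ked4,Ked5} into the present relative, characteristic-$p$ situation is the technical heart of the argument.
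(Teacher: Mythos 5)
Your proposal gestures in the right direction at a high level—use Kedlaya-style alterations plus Artin--Schreier corrections, then globalize—but it misidentifies the object to which the Artin--Schreier normalization is applied, and it papers over precisely the step where the proof actually lives. Both are genuine gaps.

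The first gap is conceptual. You phrase the Artin--Schreier step in terms of reducing the pole order of $h$ along horizontal components of the polar divisor $D\subset X$. That is not what the mollification does and not what the difficulty is. The local coordinates $\{x_i\}$ on $X/S$ give an expansion $h=\sum_{\ul{k}}a_{\ul{k}}\ul{x}^{\ul{k}}$ with $a_{\ul{k}}\in k(S)$, and the issue is arithmetic on the side of the coefficients $a_{\ul{k}}$ as functions on $S$, detected by valuations $v\in\left<S\right>$ of $k(S)$—not by divisors on $X$. For example, $h=\tfrac1s x^p+\tfrac ts x$ already has pole order $1$ along $s=0$; nothing is gained by your pole-order reduction, and the correct mollifier $g=s^{-1/p}x$ changes the leading coefficient, not the pole order. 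The paper introduces the valuation polygons $\NP(h)(v)$ and $\NP^{(\infty)}(h)(v)$ precisely to quantify when further mollification is required, and the goal of the mollification is to achieve $\NP(h+(g^p-g))(v)=\NP^{(\infty)}(h)(v)$ so that the symbol of $h$ at $v$ becomes separable (Lemmas \ref{basiinequnp}, \ref{propnpval}, \ref{redulemav}). Your proposal never identifies this invariant, and the phrase ``possibly after a final Artin--Schreier tweak to eliminate residual fiberwise $p$-th-power behavior'' is where the entire content has gone missing.

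The second gap is that the step you flag as the ``technical heart''—citing Kedlaya's good-formal-model machinery and asserting it ``synchronizes all horizontal components at once''—is stated but not actually supplied, and it is also not assembled by ``weak approximation in the function field.'' The paper's mechanism is entirely different: one proves local admissibility at each $v\in\left<S\right>$, then globalizes by quasi-compactness of the Zariski--Riemann space (via the covering $\{\left<U_{v_i}\right>\}$ and a closed subscheme of a fiber product of the $S_{v_i}$ in \S\ref{conclpfmain}). The local argument is an induction on the transcendence defect of $v$ (Lemma \ref{minvalok}): one embeds a neighborhood of $v$ in a Berkovich disc $\mb{D}^{\circ}_{\ell'}$, observes $v$ is a terminal (type 1 or type 4) point, and transfers admissibility from a nearby valuation $v_1$ of smaller defect. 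The decisive new input, with no analogue in your outline, is the stability of $\NP^{(\infty)}$ near type 4 points (Proposition \ref{mainpropstabil}), which requires the algebraicity of $h$ and the theorem of Sharif--Woodcock \cite{SW} that Hadamard products preserve algebraic power series. Without this, the induction fails. Your proposal does not mention $\NP^{(\infty)}$, transcendence defect, type 4 points, or the Hadamard-product algebraicity, so it does not yet contain a proof of the statement.
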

Let us outline the strategy of the proof.
Since $X\rightarrow S$ is assumed to be smooth, we may assume that there exists a local coordinate $\{x_i\}$ of $X$ over $S$.
It is not hard to reduce to the case where $h=h'/\tau$ where $h'\in\mc{O}_X$, $\tau\in\mc{O}_S$.
For simplicity, we assume $S=\mr{Spec}(k[s,t])$ in the following.
Formally, we may write $h=\sum_{\ul{k}\geq0}a_{\ul{k}}\ul{x}^{\ul{k}}$ where $a_{\ul{k}}\in k(S)$.

Let us see how we take $g$ in some examples.
First consider $h=\frac{s}{t}x$.
In this case, $h$ is not of the form $f/\sigma$ at $(s,t)=(0,0)$.
What we should do is to take the blowup $S'\rightarrow S$ at $(0,0)$.
Then we may write $h=ux$ or $h=x/u$ using the coordinate around the exceptional fiber, and these are the forms that we need.

Now, let us consider $h=\frac{1}{s}x^p+\frac{t}{s}x$.
In this case, $h$ is not separable over $t=0$.
What we will do in this case is to ``mollify'' $h$ by adding functions of the form $g-g^p$.
In this case, we take $g=\frac{1}{s^{1/p}}x$ (by extending $S$).
Then we have $h+(g-g^p)=\frac{t+s^{p-1/p}}{s}x$.
Now, this function can be handled as in the first example, by taking a suitable blowup to transform it into the desired form.
As this example shows, a basic strategy is to ``eliminate'' terms $a_{\ul{k}}\ul{x}^{\ul{k}}$ such that $p\mid\ul{k}$ ({\em i.e.}\ $p\mid k_i$ for any $i$) by adding $g-g^p$.
Such a $g$ is called a {\em mollifier}.

Finally, let us consider $h=\frac{1}{s}+\frac{1}{s}x+\frac{1}{s}x^2+\dots$.
Since $h$ is equal to $\frac{1}{s}\frac{1}{1-x}$, this is already admissible.
In other words, we have nothing to do.
However, if we take the philosophy of the 2nd example seriously, we could remove the factors of the form $\frac{1}{s}x^{pl}$ by adding some $g$.
Thus, we need to distinguish the case where we need further mollification and the case where we do not.

Keeping these basic examples in mind, our strategy, after Kedlaya, is to attain the admissibility ``locally'' in the Zariski-Riemann space of $S$,
and globalize by using the compactness of Zariski-Riemann space.
By writing valuation of height greater than $1$ by composition of valuation of height $1$,
it is not too hard to reduce the argument to the height $1$ case, so we only consider height $1$ valuation in this introduction.
To explain the strategy, let us fix a height $1$ valuation $v$ of $k(S)$, trivial on $k$.
Let $f=\sum a_{\ul{k}}\ul{x}^{\ul{k}}$.
Since $v$ is of height $1$, $v(k(S)^{\times})\subset\mb{R}$.
We may consider a number
\begin{equation*}
 \mr{VP}(f)(v):=\min_{\ul{k}\in\mb{N}^d\setminus\{\ul{0}\}}\bigl\{0,v(a_{\ul{k}})\bigr\}\leq0.
\end{equation*}
We view this as a function on $v$ later.
Now, we will perform mollifications, namely adding function of the form $g^p-g$, formally.
For this, put $f^{(m)}_{\ul{k}}:=\sum_{k=0}^{m}(a_{p^k\ul{k}})^{1/p^k}$.
Using this, we may define $\mr{VP}^{(\infty)}(f)(v)$.
This is, roughly saying, the limit $\lim_{m\rightarrow\infty}\mr{VP}(\sum_{\ul{k}}f^{(m)}_{\ul{k}}\ul{x}^{\ul{k}})$.
It is not hard to show the existence of the limit.
With this limit value, we may distinguish the 2nd and the 3rd example:
if $\mr{VP}^{(\infty)}(f)(v)=\mr{VP}(f)(v)$, then we do not need further mollification.
In the 2nd example, consider a valuation $v$ such that $v(s)>v(t)>0$, $v(s)-v(t)>v(s)/p$.
Then $\mr{VP}^{(\infty)}(f)(v)\neq\mr{VP}(f)(v)$, which implies that we {\em do} need mollifications around these valuations.
Thus, the question will be to find $g$ such that $\mr{VP}(f+g-g^p)(v)=\mr{VP}^{(\infty)}(f)(v)$.
This is not possible if we take $f$ to be an arbitrary formal function.
However, if $f$ comes from a function on $X$, we will show that we may find such a function $g$ if we take an alteration ``locally around $v$''.
Once this is shown, we find a global alteration dominating all the local alterations.
This is done by using the fact that the Riemann-Zariski space is compact.
From now on, we focus on the local arguments.

Generally, valuations are hard to describe explicitly.
However, this difficulty has a hierarchy, called the {\em transcendence defect} after Kedlaya.
The transcendence defect is a non-negative integer for each valuation, and when the transcendence defect is $0$, the valuation is relatively easy to describe.
For example, consider the case $S=\mr{Spec}(k[s,t])$ and $v$ has transcendence defect $0$ and the dimension of the center is of dimension $0$.
This kind of valuation is usually called an Abhyankar valuation.
Then $v(s)$, $v(t)$ are $\mb{Q}$-linearly independent,
and we have $v\bigl(\sum \alpha_{m,n}s^mt^n\bigr)=\min\bigl\{mv(s)+nv(t)\mid \alpha_{m,n}\neq0\bigr\}$.
In this case, we may concretely construct a mollifier $g$ that attains local admissibility around $v$.
See Lemma \ref{monomialvalok}.

In order to treat valuations with positive transcendence defect, we use the induction.
Let $v_0$ be a valuation with transcendence defect $n>0$ that we wish to show the local admissibility.
We assume that the local admissibility is known for valuations with transcendence defect less than $n$.
In the Riemann-Zariski space of $S$, valuations with transcendence defect less than $n$ clusters around $v_0$.
The main step is to find a valuation $v_1$ close enough to $v_0$ so that the transcendence defect is $n-1$
and $\mr{VP}^{(\infty)}(h)(v_0)=\mr{VP}^{(\infty)}(h)(v_1)$.
Once we find such a $v_1$, we may transfer the local admissibility of $v_1$ to $v_0$ using certain the monotonicity of valuation polygon.
To show the existence of $v_1$, we choose a suitable ``Berkovich disc containing $v_0$''.
If we consider $\mr{VP}^{(\infty)}(f)(v)$ as a function of $v$ on this Berkovich disc, a polygon appears.
This is the reason for the notation, and $\mr{VP}$ stands for ``valuation polygon''.
In this disc, $v_0$ becomes a terminal point, more precisely, a point of type either 1 or 4.
In the type 1 case, it is not hard to find a valuation $v_1$ in the disc, and the only issue is to treat the type 4 points.
What we need to show now is that as the valuation $v$ approaches to $v_0$ the valuation polygon $\mr{VP}^{(\infty)}(f)(v)$ stabilizes.
This is not true unless we use the algebraicity of $f$, and we believe that this is the core of the proof.
For this stability, we use the fact that Hadamard product preserves algebraic functions shown in \cite{SW}.

Let us overview the structure of this paper.
In \S\ref{sect1}, we fix terminologies and show basic facts in algebraic geometry.
In \S\ref{sect2}, we perform an analysis around type 4 point of a Berkovich disc.
Especially, we prove the stability of the valuation polygon around type 4 points using the algebraicity of Hadamard product.
In \S\ref{sect3}, we show the admissibility of the function using the stability result of \S\ref{sect2}.
This section is strongly inspired by Kedlaya's argument.
In \S\ref{sect4}, we recall some notations from Part I, and prove some complimentary results.
In \S\ref{sect5}, we show the main result.

\subsection*{Acknowledgments}\mbox{}\\
The author wishes to thank Kiran Kedlaya for answering various questions on his paper \cite{Ked4}.
He also thanks Takeshi Saito for encouragements and numerous feedbacks.
In particular, the author learned how to use reduced fiber theorem from him.
He is grateful to Yoichi Mieda, Kazuhiro Fujiwara, and Deepam Patel for comments and answering questions.

This work is supported by JSPS KAKENHI Grant Numbers 16H05993, 18H03667, 20H01790.

\subsection*{Notations and conventions}\mbox{}\\
Let $\ul{n}=(n_1,\dots,n_d)\in\mb{N}^d$. For an integer $m$, we denote
by $m\mid\ul{n}$ if $m\mid n_i$ for any $i$, and by $m\nmid\ul{n}$
otherwise. We put $\ul{0}:=(0,\dots,0)$, and $\ul{n}\geq\ul{0}$ if and
only if $n_i\geq0$ for any $i$.

Let $A$ be an integral $\mb{F}_p$-algebra. We denote by
$A^p:=\{a^p\mid a\in A\}\subset A$, and $A\subset A^{1/p}$ the ring $A$
considered as an $A$-algebra by the Frobenius endomorphism
$A\rightarrow A=:A^{1/p}$.

Let $X\rightarrow S$ and $T\rightarrow S$ are morphisms of schemes.
We often denote $X\times_S T$ by $X_T$. In particular, for a morphism
$X\rightarrow S$ and a point $s\in S$, we put
$X_s:=X\otimes_S\mr{Spec}(k(s))$. Similarly, for a sheaf $\mc{F}$ on
$X$, we denote by $\mc{F}_T$ the pullback on $X_T$.

For a scheme $X$, we put $\rat{X}:=\indlim_U\Gamma(U,\mc{O}_X)$ where
$U\subset X$ runs over open dense subscheme, which is usually called the
ring of rational functions.
If $X$ is an integral $k$-scheme, we sometimes denote $\rat{X}$ by
$k(X)$, which is a field.
We often denote by $f\in\mc{O}_X$ for $f\in\Gamma(X,\mc{O}_X)$.

We fix two different prime numbers $p$, $\ell$.
We fix a finite commutative ring $\Lambda$ whose residue characteristic is $\ell$.
Throughout this paper, we fix a field $k$ of characteristic $p\,(>0)$.

\section{Preliminary --- Algebraic geometry}
\label{sect1}

\subsection{}
Recall that a morphism $\pi\colon X\rightarrow Y$ between schemes is said to be {\em maximally dominant} if any generic point of $X$ is sent to a
generic point of $Y$. Flat morphisms between schemes are examples of maximally dominant morphisms (cf.\ \cite[Exp.\ II, 1.1.3]{G}).
We say that $\pi$ is an {\em alteration} if it is proper, surjective, generically finite, maximally dominant.
We say that $f$ is a {\em modification} if it is an alteration and there exists a open dense subscheme $V\subset Y$
such that $\pi^{-1}(V)\rightarrow V$ is an isomorphism.

\begin{dfn}
 \label{dfnfuncprop}
 Let $\pi\colon X\rightarrow S$ be a morphism of finite type, and
 $g\in\mc{O}_X$.
 \begin{enumerate}
  \item\label{dfnfuncprop-noncon}
       If $S=\mr{Spec}(k)$, the function $g$ is said to be {\em
       non-constant} if the induced morphism $X\rightarrow\mb{A}^1_k$
       is maximally dominant.
       If the image of $X$ in $\mb{A}^1_k$ is not dense, we say that $g$
       is {\em constant}.
       For a general $S$, it is {\em $S$-non-constant} if for any $s\in
       S$, the induced function $g_s$ on $X_s$ is non-constant.
	
  \item Assume $\pi$ is smooth. The function $g$ is said to be
	{\em $S$-separable}\footnote{We caution that this terminology is not consistent	with [SGA I, Exp.\ X].}
	if there exists an open subscheme $U\subset X$ such that the induced morphism $U\rightarrow\mb{A}^1\times S$
	is smooth and $U_s\subset X_s$ is dense for any $s\in S$.
       
  \item Assume $\pi$ is smooth.
	If $X$ is connected, the function $g$ is said to be {\em $S$-Frobenius separable}\footnote{
	This terminology is suggested to the author by T. Saito.}
	if there exists an $S$-separable function $g'$ such that $g=g'^q$ with some power $q$ of $p$.
	If $X$ is not connected, $g$ is said to be {\em $S$-Frobenius separable} if the restriction to each
	connected component of $X$ is so.
 \end{enumerate}
 If no confusion may arise, we often omit the prefix ``$S$-''.
\end{dfn}

\begin{lem}
 \label{lemonprofun}
 Let $\pi\colon X\rightarrow S$ be a morphism of finite type, and
 $g\in\mc{O}_X$.
 \begin{enumerate}
  \item\label{lemonprofun-nonconbc}
       Non-constancy is stable under base change. Moreover, for a
       surjective morphism $S'\rightarrow S$, if the pullback of $g$
       to $X\times_{S}S'$ is $S'$-non-constant, then $g$ is
       $S$-non-constant.

  \item\label{lemonprofun-flnonconchar}
       If $\pi$ is flat, then $g$ is $S$-non-constant if and only if the induced morphism $X\xrightarrow{\pi\times g} S\times\mb{A}^1$ is flat.

  \item\label{lemonprofun-pwsmoook}
       Assume $\pi$ is smooth, then $g$ is $S$-separable if and only if for any point $s\in S$ the base change $g_s$ is separable.
       This is equivalent to saying that $Z_s\subset X_s$ is nowhere dense for any $s\in S$, where $Z$ is the zero-locus of $\mr{d}g\in\Omega^1_{X/S}$.
       
  \item\label{lemonprofun-smgenpinsep}
       Assume $\pi$ is smooth, $S$ is excellent, and $g$ is $S$-non-constant.
       Then there exists a {\em quasi-finite} dominant morphism $F\colon S'\rightarrow S$ such that $F^*(g)$ in $\mc{O}_{X_{S'}}$ is $S'$-Frobenius separable.
       In fact, $F$ can be taken as an iterated Frobenius
       endomorphism followed by an immersion $U\subset S_{\mr{red}}\hookrightarrow S$, where $U$ is open inside $S_{\mr{red}}$.
 \end{enumerate}
\end{lem}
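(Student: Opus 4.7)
For (1) and (2), I would proceed by direct application of base-change properties. In (1), since a field extension $k(s)\hookrightarrow k(t)$ is faithfully flat, generic points of $X_s\otimes_{k(s)}k(t)$ lie over generic points of $X_s$ and conversely, so maximal dominance of $X_s\to\mathbb{A}^1_{k(s)}$ is equivalent to that of the base change $(X_T)_t\to\mathbb{A}^1_{k(t)}$; the surjectivity statement is just the descent direction. In (2), the fibrewise criterion of flatness gives, with $\pi$ flat, that $(\pi,g)\colon X\to S\times\mathbb{A}^1$ is flat iff each fibre $X_s\to\mathbb{A}^1_{k(s)}$ is flat, and since $\mathbb{A}^1_{k(s)}$ is a smooth curve over a field this last condition is equivalent to maximal dominance.

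For (3), I would identify the smooth locus of $(\pi,g)\colon X\to\mathbb{A}^1\times S$. The relative cotangent sequence
\[
 g^*\Omega^1_{\mathbb{A}^1\times S/S}\longrightarrow\Omega^1_{X/S}\longrightarrow\Omega^1_{X/\mathbb{A}^1\times S}\longrightarrow 0
\]
combined with local freeness of $\Omega^1_{X/S}$ shows that $(\pi,g)$ is smooth at $x\in X$ precisely when $dg(x)\neq 0$; hence the smooth locus equals $X\setminus Z$. Density of $(X\setminus Z)_s$ in $X_s$ is the same as $Z_s$ being nowhere dense, which on $X_s$ is the same as $dg_s$ not vanishing at any generic point, i.e.\ to $g_s$ being separable.

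For (4), the heart of the lemma, my plan is to reduce to the integral case, find the maximal Frobenius height at the generic point that makes $g$ a $p^n$-th power, and globalise. Since smoothness gives $X_{\mathrm{red}}=X\times_S S_{\mathrm{red}}$, I replace $S$ by $S_{\mathrm{red}}$; restricting to a dense open of one irreducible component, I assume $S$ integral and $X$ connected, hence integral. At the generic point $\eta$, set $K:=k(\eta)$, $L:=k(X_\eta)$; smoothness of $\pi$ makes $L/K$ finitely generated separable, and non-constancy makes $g_\eta$ transcendental over $K$. Let $n\geq 0$ be maximal with $g_\eta\in K\cdot L^{p^n}$. This $n$ is finite by a standard field-theoretic fact: for $L/K$ separably generated, $\bigcap_m K\cdot L^{p^m}$ equals the relative algebraic closure of $K$ in $L$ (using $K(\beta^p)=K(\beta)$ for any $\beta$ separable algebraic over $K$, combined with the purely transcendental computation $\bigcap_m K(t_1^{p^m},\dots,t_d^{p^m})=K$), so a transcendental element eventually drops out. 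Writing $g_\eta=\sum c_i h_i^{p^n}$ with $c_i\in K$, $h_i\in L$, the freshman's dream in the Frobenius-twisted tensor product $L':=L\otimes_{K,F^n}K$ gives $h^{p^n}=g_\eta\otimes 1$ for $h:=\sum h_i\otimes c_i$. Globally, let $F^n$ be the $n$-fold Frobenius endomorphism of $S_{\mathrm{red}}$ and form $X':=X\times_{S,\tilde F}S_{\mathrm{red}}$ for $\tilde F\colon S_{\mathrm{red}}\xrightarrow{F^n}S_{\mathrm{red}}\hookrightarrow S$; then $h\in k(X')$ satisfies $h^{p^n}=\tilde F^*g$.

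The main obstacle is globalising this generic-fibre construction. To extend $h$ to a regular function, note that $h$ is integral over $\mathcal{O}_{X'}$ via $h^{p^n}=\tilde F^*g$; since $S$ is excellent and reduced its normal locus is open dense, and over this open the smooth morphism $X'\to S_{\mathrm{red}}$ inherits normality, forcing $h\in\mathcal{O}_{X'_U}$ for some open dense $U\subset S_{\mathrm{red}}$. For fibrewise separability of $h$, the maximality of $n$ is exactly what is needed: were $h\in K\cdot L'^p$, writing $h=\sum c'_j(h'_j)^p$, raising to the $p^n$-th power, and rewriting inside $L$ (using the inclusion $K\cdot L'^{p^{n+1}}\subset K\cdot L^{p^{n+1}}$ derived from $L'^{p^n}\subset L$) would give $g_\eta\in K\cdot L^{p^{n+1}}$, contradicting the choice of $n$. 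Applying part (3) and shrinking $U$ if necessary yields fibrewise separability, and the composition $F\colon S'=U\xrightarrow{F^n_U}U\hookrightarrow S_{\mathrm{red}}\hookrightarrow S$, an iterated Frobenius endomorphism followed by an immersion, is quasi-finite dominant as required.
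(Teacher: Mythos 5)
Your proof is correct and mostly parallels the paper's, with a few steps carried out differently. In part (3), you identify the smooth locus of $(\pi,g)$ via the relative cotangent sequence, whereas the paper deduces the statement from the fibrewise flatness criterion of part (2) together with [EGA IV, 17.8.2]; both routes work. In part (4), the strategy matches — reduce to $S$ integral, find the maximal $n$ with $g_\eta\in K\cdot L^{p^n}$ at the generic point, take the $p^n$-th root in the Frobenius-twisted function field, globalise, and use maximality of $n$ for separability — but you globalise by observing that $h$ is integral over $\mathcal{O}_{X'}$ and passing to the normal locus of $S_{\mathrm{red}}$, while the paper instead shrinks $S$ to a regular open and applies Kunz's theorem (faithful flatness of Frobenius over a regular scheme) together with [EGA $0_{\mathrm{I}}$, 6.6.3]; your version is slightly more elementary, needing only normality rather than regularity, both of which are available by excellence. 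Your proof of the field-theoretic finiteness (via separable generation and $\bigcap_m K(\underline{t}^{p^m})=K$) also differs from the paper's (reduce to $k$ perfect and exploit finite generation of $K/k$ via $k(x^{1/p^\infty})\subset K$), though it needs a short extra argument comparing the decreasing chain $K\cdot L^{p^m}$ with $K(\underline{t}^{p^m})$ using the bounded separable degree $[K\cdot L^{p^m}:K(\underline{t}^{p^m})]$. Finally, one bookkeeping slip: to rule out $h\in K\cdot L'^p$, the inclusion you need is $L'^{p^n}\subset K\cdot L^{p^n}$, coming from $(a\otimes b)^{p^n}=(a^{p^n}b)\otimes 1$, rather than the weaker $L'^{p^n}\subset L$ you state; from the weaker inclusion one only obtains $K\cdot L'^{p^{n+1}}\subset K\cdot L^{p}$, which does not contradict the maximality of $n$. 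With the finer inclusion one gets $K\cdot L'^{p^{n+1}}\subset K\cdot L^{p^{n+1}}$ and hence the desired contradiction $g_\eta\in K\cdot L^{p^{n+1}}$, so this is a local inaccuracy rather than a genuine gap.
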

\begin{proof}
 To show \ref{lemonprofun-nonconbc}, we may easily reduce to the case where $S$ is the spectrum of a field.
 In this case, all the irreducible components of $X$ that appear after changing the base lie over the generic points of $X$ by [EGA IV, 4.5.11], and the claim follows.
 To show \ref{lemonprofun-flnonconchar}, if
 $S$ is $\mr{Spec}(k)$, the claim follows since any maximally dominant morphism
 $X\rightarrow\mb{A}^1=\mr{Spec}(k[t])$ from a reduced scheme $X$ is
 flat, because $\mc{O}_X$ is $t$-torsion free.
 When $S$ is general, use [EGA IV, 11.3.10].
 To show \ref{lemonprofun-pwsmoook}, since $f$ is smooth,
 the induced morphism $X_s\rightarrow\mb{A}^1_{k(s)}$ is flat for any
 $s\in S$ by \ref{lemonprofun-flnonconchar}.
 Now, use [EGA IV, 17.8.2].

 Let us check \ref{lemonprofun-smgenpinsep}.
 Since the claim is local around the generic points of $S$, we may
 shrink $S$ and assume that $S$ is irreducible, so replacing by its
 reduced scheme, we may even assume that $S$ is integral with the
 generic point $\eta$.
 Shrinking $S$ further, we may assume that $S$ is regular since $S$ is
 assumed excellent. Thus, $X$ is a disjoint union of
 irreducible components, and we may assume that $X$ is integral
 as well.

 Now, let $K/k$ be an extension of fields of finite type.
 We claim that $\bigcap_n k(K^{p^n})\subset l$, where $l$ is the
 algebraic closure of $k$ in $K$.
 Indeed, we may assume that $k$ is perfect.
 If $x\in\bigcap_n k(K^{p^n})$, then
 $k(x^{1/p^{\infty}})\subset\bigcap_n k(K^{p^n})\subset K$.
 Since $K$ is finitely generated over $k$, so is
 $k(x^{1/p^{\infty}})$. This is possible only when $x$ is algebraic over
 $k$, and the claim follows.
 Now, put $K:=\rat{X}$, $k:=\rat{S}$ which are fields by assumption, and apply this observation.
 Since any function in $l$ is constant, the non-constancy
 assumption implies that there exists an integer $n$ such that $g\in
 k(K^{p^n})$ but $g\not\in k(K^{p^{n+1}})$.
 Thus, there exists $g'\in k^{1/p^n}(K)$ such that $g'^{p^n}=g$.
 Then since $g'\not\in k^{1/p^n}(K^p)$, by
 [EGA $0_{\mr{IV}}$, 21.4.6], $g'$ is
 a separable function defined generically.
 Now, since $X$ is regular, the morphism $\mc{O}_X^q\rightarrow\mc{O}_X$ is faithfully flat for any power $q$ of $p$.
 Moreover, we have $\rat{X}\cong\rat{X}^{q}\otimes_{\mc{O}_X^q}\mc{O}_X$ because the Frobenius endomorphism is homeomorphic.
 Thus, by [EGA $0_{\mr{I}}$, 6.6.3], we have
 $\rat{X}^q\cap\mc{O}_X=\mc{O}_X^q$ in $\rat{X}$. Since
 $g\in\rat{X}^{p^n}\cap\mc{O}_X$, we have $g'\in\mc{O}_X$, and the claim
 follows.
\end{proof}

\begin{lem}
 \label{intFrob}
 Let $Y$ be an integral smooth scheme over $k$, and $\{y_i\}$ be a coordinate system on $Y$ at a $k$-rational point,
 which determines an embedding $\mc{O}_Y\hookrightarrow k\dd{\ul{y}}$ by {\normalfont [EGA IV, 17.6.3]}.
 Then for a power $q$ of $p$, we have
 \begin{equation*}
  \mc{O}_Y\cap k\dd{\ul{y}^q}=\mc{O}^q_Y\otimes_{k^q}k.
 \end{equation*}
 In particular, if $k$ is perfect,
 $f=\sum a_{\ul{k}}\ul{y}^{\ul{k}}\in\mc{O}_Y$ is
 separable if and only if there exists $p\nmid\ul{n}$ such that
 $a_{\ul{n}}\neq0$.
\end{lem}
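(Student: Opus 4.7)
The inclusion $\mc{O}_Y^q \otimes_{k^q} k \subseteq \mc{O}_Y \cap k\dd{\ul{y}^q}$ is immediate, since $\widehat{\lambda h^q} = \lambda \widehat{h}^q \in k\dd{\ul{y}^q}$ for any $\lambda \in k$ and $h \in \mc{O}_Y$. The plan for the reverse inclusion is to prove that the natural map
\[
\mc{O}_Y / (\mc{O}_Y^q \otimes_{k^q} k) \;\hookrightarrow\; k\dd{\ul{y}}/k\dd{\ul{y}^q}
\]
is injective, from which the equality follows.

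The main input is that, since $Y/k$ is smooth of some dimension $d$, the $q$-th relative Frobenius $F\colon Y \to Y^{(q)}$ is finite flat of degree $q^d$, and the image of $F^\ast \colon \mc{O}_{Y^{(q)}} \to \mc{O}_Y$ is precisely $\mc{O}_Y^q \otimes_{k^q} k$. Since $Y^{(q)}$ has the same underlying topological space as $Y$ and is smooth over $k$, it is integral; so $\mc{O}_{Y^{(q)}}$ is a domain and $F^\ast$ is injective. Near $y_0$ the monomials $\ul{y}^{\ul{k}}$ for $0 \leq k_i < q$ form a free basis of $\mc{O}_Y$ over $\mc{O}_{Y^{(q)}}$ (by Nakayama applied to the fiber $k[\ul{y}]/(\ul{y}^q)$ at $F(y_0)$), so the quotient $\mc{O}_Y / (\mc{O}_Y^q \otimes_{k^q} k)$ is locally free of rank $q^d - 1$ over the domain $\mc{O}_{Y^{(q)}}$, hence torsion-free.

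I would then factor the map through the local ring at $y_0$. Setting $A := \mc{O}_{Y, y_0}$ and $A^{(q)} := A^q \otimes_{k^q} k$, torsion-freeness over the domain $\mc{O}_{Y^{(q)}}$ gives the injection $\mc{O}_Y/(\mc{O}_Y^q \otimes_{k^q} k) \hookrightarrow A/A^{(q)}$ upon localizing at the prime $\mathfrak{m}_{F(y_0)}$. Next, $A$ is finite over $A^{(q)}$ (finiteness of $F$), so $A/A^{(q)}$ is a finitely generated $A^{(q)}$-module over the Noetherian local ring $A^{(q)}$, and Krull's intersection theorem yields $A/A^{(q)} \hookrightarrow \widehat{A}/\widehat{A^{(q)}}$. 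Finally, $A^{(q)}$ is regular local of dimension $d$ with residue field $k$ and with $y_i^q$ as a regular system of parameters (the local parameters $y_i \otimes 1$ at $F(y_0)$ on $Y^{(q)}$ pull back to $y_i^q$ under $F^\ast$), so $\widehat{A^{(q)}} = k\dd{\ul{y}^q}$ as a subring of $\widehat{A} = k\dd{\ul{y}}$. Composing yields the required injection.

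For the ``in particular'' clause, $k$ perfect gives $\mc{O}_Y^p \otimes_{k^p} k = \mc{O}_Y^p$. By Lemma~\ref{lemonprofun}\,(\ref{lemonprofun-pwsmoook}), $f$ is separable iff $df$ does not vanish identically in $\Omega^1_{Y/k}$, and using the local basis $\{dy_i\}$ this is equivalent to $\widehat{f} \notin k\dd{\ul{y}^p}$. By the main identity this is in turn equivalent to $f \notin \mc{O}_Y^p$, i.e., (writing out the Taylor expansion of a putative $p$-th root) to the existence of $\ul{n}$ with $p \nmid \ul{n}$ and $a_{\ul{n}} \neq 0$. The most delicate step of the plan is the identification $\widehat{A^{(q)}} = k\dd{\ul{y}^q}$ inside $\widehat{A}$, which rests on a careful extraction of the regular parameters from the Frobenius twist $Y^{(q)}$; everything else is bookkeeping around finite flatness of the relative Frobenius.
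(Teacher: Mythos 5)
Your proof is correct, but it takes a genuinely different route from the paper's. The paper argues at the level of modules: it first verifies that $\mc{O}_Y^q\otimes_{k^q}k\rightarrow\mc{O}_Y$ is faithfully flat, then shows that the natural map $k\dd{\ul{y}^q}\otimes_{(\mc{O}_Y^q\otimes_{k^q}k)}\mc{O}_Y\rightarrow k\dd{\ul{y}}$ is an isomorphism by a rank count (surjection of finite flat $k\dd{\ul{y}^q}$-modules of rank $q^d$), and then invokes [EGA $0_{\mr{I}}$, 6.6.3] (faithfully flat descent of submodules) to conclude in one stroke. You instead analyze the cokernel $\mc{O}_Y/(\mc{O}_Y^q\otimes_{k^q}k)$ directly, exhibit it as locally free over $\mc{O}_{Y^{(q)}}$ via the relative Frobenius and Nakayama, localize at the center, and feed the result into Krull's intersection theorem to push into the completion. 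Both arguments hinge on the same geometric input (finite flatness of the $q$-power Frobenius on a smooth variety), but the paper's packaging is shorter and requires no passage to the local ring or the completion per se, while yours is more elementary and avoids the abstract descent citation at the cost of a couple more moving parts. Two small points you should tighten: first, to conclude torsion-freeness of $\mc{O}_Y/(\mc{O}_Y^q\otimes_{k^q}k)$ over the domain $\mc{O}_{Y^{(q)}}$ you need local freeness (or torsion-freeness) at every prime, not just near $y_0$; the Nakayama argument does work at every point (the fiber of $F$ always contains $1$ as a nonzero element), or one can just invoke normality of $\mc{O}_{Y^{(q)}}$ together with integrality of $\mc{O}_Y$ over it. Second, for the ``in particular'' clause you use Lemma \ref{lemonprofun}.\ref{lemonprofun-pwsmoook} and the differential criterion, whereas the paper deduces it from the main identity applied to the function field and then cites [EGA $0_{\mr{IV}}$, 21.4.6]; your route is equally valid and arguably more self-contained.
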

\begin{proof}
 Since $Y$ is smooth over a field $k$, the homomorphisms $\mc{O}_{Y}^q\rightarrow\mc{O}_{Y}^q\otimes_{k^q}k\rightarrow\mc{O}_{Y}$ are faithfully flat,
 which can be checked easily by taking local coordinates.
 We have the canonical homomorphism $k\dd{\ul{y}^q}\otimes_{(\mc{O}_{Y}^q\otimes_{k^q}k)}\mc{O}_{Y}\rightarrow k\dd{\ul{y}}$.
 This is a surjective homomorphism between finite flat $k\dd{\ul{y}^q}$-modules of the same rank, and the homomorphism is an isomorphism.
 Thus, [EGA $0_{\mr{I}}$, 6.6.3] implies the formula.
 Let us show the second claim.
 Let $K_Y:=k(Y)$.
 The smoothness implies $K_Y^q\cap\mc{O}_Y=\mc{O}_Y^q$ in $K_Y$, and by taking $\otimes_{k^q}k$,
 we get $k(K_Y^q)\cap\mc{O}_Y=\mc{O}_Y^q\otimes_{k^q}k$ in $K_Y\otimes_{k^q}k\cong K_Y$.
 By [EGA $0_{\mr{IV}}$, 21.4.6], the claim follows.
\end{proof}

\begin{lem}
 \label{nowhconslem}
 Let $\pi\colon X\rightarrow S$ be a flat morphism of finite type between noetherian schemes,
 and $x\in\mc{O}_X$ be an $S$-non-constant function.
 Let $g$ be a function on an open subscheme $U\subset X$ such that $g|_{U_s}$ is not nilpotent around each generic point of $U_s$ for any $s\in S$.
 Then there exists an integer $N$ such that $(x^n g)|_U$ is $S$-non-constant for any $n>N$.
\end{lem}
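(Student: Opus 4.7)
The plan is to reduce the setup and then argue by noetherian induction on closed subsets of $S$, with the inductive step driven by a polynomial-relation analysis at the generic fiber. First, replacing $X$ by $U$ (so that $g$ becomes a global section), and using Lemma \ref{lemonprofun}.\ref{lemonprofun-nonconbc} to descend along the surjective base change $\bigsqcup S_i \to S$ where $S_i$ ranges over the integral components of $S_{\mr{red}}$, we may assume $S$ is integral. We then assume the statement for every proper closed subscheme $T \subsetneq S$ (with the obvious base-changed data) and prove it for $S$.

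At each generic point $\zeta_i$ of $X_\eta$ (finitely many, as $X_\eta$ is noetherian), $x(\zeta_i)$ is transcendental over $k(\eta)$ and $g(\zeta_i) \neq 0$. A routine field-theoretic observation shows that at most one $n$ can place $x(\zeta_i)^n g(\zeta_i)$ in the algebraic closure $L_i$ of $k(\eta)$ inside $k(\zeta_i)$: two such powers would force their ratio $x(\zeta_i)^{n_1-n_2} \in L_i$, contradicting transcendence. Consequently some $N_0$ makes $x^n g$ non-constant on $X_\eta$ for all $n>N_0$. The heart of the argument is to find a proper closed subscheme $S' \subsetneq S$ and an integer $N_1 \geq N_0$ such that, for every $n>N_1$, each fiber generic point $\mu$ with $x(\mu)^n g(\mu) \in L_\mu$ (the algebraic closure of $k(\pi(\mu))$ in $k(\mu)$) satisfies $\pi(\mu) \in S'$. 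Applying the induction hypothesis to $S'$ produces $N_{S'}$ making $(x^n g)|_{X_{S'}}$ be $S'$-non-constant for $n>N_{S'}$; then $N := \max(N_1,N_{S'})$ works.

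To build $S'$, I would treat each $\zeta_i$ separately and take the union. If $g(\zeta_i)$ is algebraic over $k(\eta)(x(\zeta_i))$, clearing denominators yields a nonzero $P_i(X,Y) = \sum_{j=0}^{d} c_j(X)Y^j \in \mc{O}_S[X,Y]$ with $P_i(x,g)$ vanishing at $\zeta_i$, hence at every specialization $\mu$ of $\zeta_i$ in $X$; using that $g(\zeta_i) \neq 0$ we arrange $c_0 \neq 0$ by factoring out $g$'s. A bad $\mu$ satisfies $g(\mu) = \ell/x(\mu)^n$ for some $\ell \in L_\mu^\times$; substituting and multiplying by $x(\mu)^{nd}$ yields
\[
\sum_{j=0}^{d} c_j(x(\mu))\,\ell^{j}\,x(\mu)^{n(d-j)} = 0.
\]
Since $x(\mu)$ is transcendental over $L_\mu$, this must hold as a polynomial identity in $L_\mu[X]$. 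For $n$ larger than some $N_{1,i}$ depending only on the $\deg c_j$'s, the top-$X$-degree term is carried solely by $c_0(X)X^{nd}$, forcing its leading coefficient $\alpha_{0,i} \in \mc{O}_S \setminus \{0\}$ to vanish at $\pi(\mu)$. Thus $\pi(\mu) \in V(\alpha_{0,i}) \subsetneq S$. If instead $g(\zeta_i)$ is transcendental over $k(\eta)(x(\zeta_i))$, then no bad $n$ arises at $\zeta_i$, and $(\pi,x,g) : \overline{\{\zeta_i\}} \to S \times \mathbb{A}^2$ is dominant; by Chevalley's theorem together with the upper-semicontinuous behavior of fiber dimensions, the locus of $s \in S$ at which a fiber generic point of $\overline{\{\zeta_i\}} \cap X_s$ has $x,g$ algebraically dependent over $k(s)$ is contained in a proper closed subset of $S$. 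Taking $S'$ to be the finite union of these proper closed subsets completes the main claim.

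The main obstacle I foresee is the transcendental subcase: rigorously establishing that the ``algebraic-dependence locus'' on the component $\overline{\{\zeta_i\}}$ projects into a proper closed subset of $S$ requires careful constructibility arguments and care with non-reduced fibers. The algebraic subcase, by contrast, goes through cleanly via the degree bookkeeping above.
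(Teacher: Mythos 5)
Your induction skeleton (reduce to $S$ integral, push the bad locus for large $n$ into a proper closed subset, invoke the inductive hypothesis there) is the same as the paper's, and your algebraic subcase is essentially sound: non-constancy at a fibre generic point $\mu$ is exactly a residue-field transcendence condition, so non-reducedness causes no trouble, the relation $P_i(x,g)$ does vanish in $\kappa(\mu)$ for every specialization $\mu$ of $\zeta_i$, and the leading-coefficient bookkeeping forces $\alpha_{0,i}(\pi(\mu))=0$ once $n$ exceeds the degrees of the $c_j$ (you only need to shrink $S$ to an affine neighbourhood of $\eta$ so that the cleared coefficients lie in $\Gamma(S,\mathcal{O}_S)[X]$, absorbing the complement into $S'$). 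Note this is a genuinely different mechanism from the paper's, which compactifies $X$ over $S\times\mathbb{A}^1$, reduces to fibrewise geometrically integral $X\to S$, and compares $x\mathcal{O}_X$-adic orders of symbols using generic flatness of the associated graded algebra; that single order comparison treats both of your cases at once and avoids all transcendence-degree bookkeeping.

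The genuine gap is the transcendental subcase, exactly where you flag it, and the tools you cite point the wrong way. Put $Z_i:=\overline{\{\zeta_i\}}$ and $m_i:=\dim Z_{i,\eta}$. A maximal point $\mu$ of $U_s$ lying in $Z_i$ with $\mathrm{trdeg}_{k(s)}\kappa(\mu)\leq 1$ has $x(\mu),g(\mu)$ algebraically dependent for trivial reasons, without lying in any jump locus of the auxiliary map $\psi=(\pi,x,g)$; so to run any semicontinuity argument you must first know that, generically on $S$, the fibre components of $Z_i\to S$ through such points do not drop dimension below $m_i$. Chevalley's theorem [EGA IV, 13.1.3] gives upper bounds on fibre dimensions, never this lower bound; flatness of $U\to S$ does not transfer to the reduced component $Z_i$; and only once you have the lower bound can you argue that a dependent maximal point forces an $m_i$-dimensional component inside the $\psi$-fibre-dimension jump locus $E=\{z\mid\dim_z\psi^{-1}(\psi(z))\geq m_i-1\}$, whose fibres over $S$ have dimension $\leq m_i-1$ after shrinking, a contradiction. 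Supplying the lower bound is a real extra step (generic flatness of $Z_i\to S$ as in [EGA IV, 6.9.1] plus an equidimensionality statement for fibres of flat, hence universally open, morphisms, or the constructibility machinery of [EGA IV, \S 9] for finitely presented morphisms), and it needs care precisely because $S$ is only assumed noetherian. As written, the assertion that the algebraic-dependence locus lies over a proper closed subset is the step carrying the content of that branch and is not proved; this is the bookkeeping the paper's symbol argument was designed to bypass.
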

\begin{proof}
 We show the lemma using the noetherian induction.
 It suffices to check that there exists an open dense subscheme
 $W\subset S$ such that the lemma holds for $S$ replaced by $W$.
 We may assume $S$ irreducible.
 It suffices to show the existence of $W$ after changing the base by a dominant morphism $\alpha\colon S'\rightarrow S$ of finite type.
 Indeed, if the lemma holds over an open subscheme $W'\subset S'$, then the lemma holds over the set $\alpha(W')$.
 Since $\alpha(W')$ is a constructible set by [EGA IV, 1.8.4] which contains the generic point of $S$,
 we may take an open subscheme $W\subset\alpha(W')$, and the claim follows.
 Moreover, it suffices to show the claim for each irreducible component of $X$.
 Thus, we may assume that for the generic point $\eta\in S$, the fiber $X_\eta$ is geometrically irreducible.
 Now, take a compactification $X\hookrightarrow\overline{X}$ over
 $S\times\mb{A}^1$.
 Replacing $\overline{X}$ by the closure of $X_\eta$ in $\overline{X}$,
 we may assume that the the open immersion
 $X_\eta\subset\overline{X}_\eta$ is dense, and thus,
 $\overline{X}_\eta$ is geometrically irreducible.
 Since $x$ is assumed flat and $S$ is irreducible,
 the morphism $\overline{x}\colon\overline{X}\rightarrow
 S\times\mb{A}^1$ is surjective.
 Thus $V(\overline{x})\cong\overline{X}\times_{\overline{x},\mb{A}^1}\{0\}\subset\overline{X}\rightarrow S$ is surjective.
 Shrinking $S$, we may assume that any fiber of $\overline{\pi}\colon\overline{X}\rightarrow S$ is geometrically irreducible by [EGA IV, 9.7.7].
 Shrinking $S$ further, we may take an open  affine subscheme $V\subset\overline{X}$ such that $V(\overline{x})\cap V\rightarrow S$ is surjective.
 By the fiberwise irreducibility, $V_s\cap U_s\subset U_s$ is dense.
 It suffices to show the lemma after replacing $X$ by $V$ and $x$ by $\overline{x}$.
 Thus, it suffices to show the lemma assuming that $X$, $S$ are irreducible affine, $V(x)\rightarrow S$ is surjective,
 $V(x)\subsetneq X$, and $X\rightarrow S$ is fiberwise irreducible, but $x\colon X\rightarrow S\times\mb{A}^1$ is not anymore flat.

 Let $Y$ be a noetherian scheme and $\mc{I}\subset\mc{O}_Y$ be an ideal.
 Consider the $\mc{I}$-adic filtration on $\mc{O}_Y$.
 For $f\in\mc{O}_Y$, $\mr{ord}_{\mc{I}}(f)=:n$ denotes the largest integer $k$ or $\infty$ such that $f\in\mc{I}^k$.
 For such $f$, the image in $\mr{gr}_{\mc{I}}^{n}\mc{O}_Y$ is denoted by $\sigma(f)$ if $n$ is an integer and $0$ if $n=\infty$.
 This defines a map $\sigma_Y\colon\mc{O}_Y\rightarrow\mr{gr}_{\mc{I}}(\mc{O}_Y)$, which is merely a map of sets.
 By [EGA $0_{\mr{I}}$, 7.3.7], the $\mc{I}$-adic filtration is separated if $Y$ is integral and $\mc{I}\neq\mc{O}_Y$ since $1+\mc{I}$ does not contain $0$.
 In this case, this in particular implies that $\sigma(f)\neq0$ unless $f=0$. The following claims are easy to check:
 \begin{itemize}
  \item Let $Y\rightarrow\mr{Spec}(k)$ be a morphism of finite type between irreducible schemes, and $y$ a non-constant function on $Y$.
	If $h\in\mc{O}_Y$ is a {\em constant} function which is non-nilpotent,
	then $\mr{ord}_{y\mc{O}_Y}(hg)=\mr{ord}_{y\mc{O}_Y}(g)$ for any function $g$.
	This is because $h$ is invertible.

  \item Let $\mc{I}\subset\mc{O}_X$ be an ideal, and $S'\rightarrow S$
	be a morphism. Put $\rho\colon X':=X\times_SS'\rightarrow X$.
	If $\mr{gr}_\mc{I}\mc{O}_X$ is $\mc{O}_S$-flat, then we have a
	canonical isomorphism
	$\mr{gr}_{\mc{I}\mc{O}_{X'}}\mc{O}_{X'}\cong
	\mr{gr}_\mc{I}\mc{O}_X\otimes_{\mc{O}_S}\mc{O}_{S'}$.
	Indeed, the direct summand $\mc{I}^{n-1}/\mc{I}^{n}$ is
	$\mc{O}_S$-flat, thus $\mc{O}_{X}/\mc{I}^n$ is flat as well, and
	$\mc{I}^n\otimes_{\mc{O}_S}\mc{O}_{S'}\rightarrow\mc{I}^n\mc{O}_{X'}$
	is an isomorphism.
  \item We keep the notation.
	Let $g$ be a function on $X$. If $\rho^*\sigma_X(g)\in
	\mr{gr}_\mc{I}\mc{O}_X\otimes_{\mc{O}_S}\mc{O}_{S'}$ does not
	vanish, then $\rho^*\sigma_X(g)=\sigma_{X'}\rho^*(g)$ via the
	previous isomorphism.
 \end{itemize}
 Put $\mc{I}:=x\mc{O}_X$.
 We may replace $X$ and $S$ by its underlying reduced schemes, and assume $X$ and $S$ to be integral.
 Invoking [EGA IV, 9.7.7], by shrink $S$ further, we may assume that each fiber of $\pi$ is geometrically integral.
 Since $X$ is assumed integral and $V(x)$ is non-empty, the $\mc{I}$-adic filtration on $\mc{O}_X$ is separated.
 This implies that the symbol map $\sigma$ is injective.
 Moreover, since $V(x)\neq X$, $x$ is non-zero. Thus, $\mr{Ann}(x)=0$,
 and the multiplication by $\sigma(x)$ on $\mr{gr}_{\mc{I}}\mc{O}_X$ is
 injective.
 Shrinking $S$, we may assume that
 $\mr{Spec}_X(\mr{gr}_{\mc{I}}\mc{O}_X)\rightarrow S$ is flat by
 [EGA IV, 6.9.1].
 Note that, since $V(x)\rightarrow S$ is assumed surjective, the
 $\mc{I}$-adic filtration on $\mc{O}_X$ remains separated even after
 shrinking $S$.

 Now, write $g=g_1/g_2$ with $g_i\in\mc{O}_X$.
 Since $g$ is not $0$, we have $\sigma(g_i)\neq0$ by the injectivity of $\sigma$.
 By shrinking $S$ further, we may assume that $\bigl(\sigma(g_i)\bigr)_s$ does not vanish for any $s\in S$.
 Take $N:=\max\bigl\{0,\mr{ord}_{\mc{I}}(g_2)-\mr{ord}_{\mc{I}}(g_1)\bigr\}$.
 Since $(x^ng_1/g_2)_s$ is not $0$ by the assumption that $\pi$ is fiberwise geometrically integral,
 if $(x^ng_1/g_2)_s$ is constant, we have
 \begin{equation}
  \label{equifconst}\tag{$\star$}
  \mr{ord}_{\mc{I}_s}\sigma\bigl((x^ng_1)_s\bigr)
   =
   \mr{ord}_{\mc{I}_s}\sigma(g_{2,s}).
 \end{equation}
 On the other hand, $\mr{ord}_{\mc{I}}(x^n g_1)=n+\mr{ord}_{\mc{I}}(g_1)$.
 Since $\bigl(\sigma(g_i)\bigr)_s$ does not vanish for any $s\in S$, we have $\mr{ord}_{\mc{I}}(g_i)=\mr{ord}_{\mc{I}_s}(g_{i,s})$.
 Thus, (\ref{equifconst}) cannot happen for $n>N$, and we have the lemma.
\end{proof}

\subsection{}
\label{nonconsfam}
Let $\pi\colon X\rightarrow S$ be a morphism. A set of functions
$\mf{h}:=\{h_i\}_{i\in I}$ in $\mc{O}_X$ is said to be a 
{\em non-constant family} on $X$ over $S$ if for any point $s\in S$,
there exists $i\in I$ such that $h_i|_{Y}$ is non-constant for
any irreducible component $Y$ of $X_s$ {\em of dimension $>0$}, and
$h\in\mf{h}$ implies $h^n\in\mf{h}$ for any integer $n>0$.

\begin{rem*}
 In the definition, let $X'_s\subset X_s$ be the union of irreducible components of dimension $>0$.
 Caution that $h_i|_{X'_s}$ is a non-constant function (cf.\ Definition \ref{dfnfuncprop}.\ref{dfnfuncprop-noncon}),
 but $h_i|_{X_s}$ is not if $X'_s\neq X_s$.
 For example, if $\pi$ is a finite morphism, then any family of functions is non-constant family,
 but no function is non-constant.
\end{rem*}

\begin{lem}
 \label{nonconst-prop}
 \begin{enumerate}
  \item\label{nonconst-prop-fin}
       Let $S$ be a scheme of finite type over $k$,
       and let $\pi\colon X\rightarrow Y$ be a finite surjective morphism between flat $S$-schemes of finite type
       which are equidimensional over $k$.
       If $\mf{h}$ is a non-constant family on $Y$, so is $\pi^*\mf{h}$.
  \item\label{nonconst-prop-afn}
       Let $X$ be an affine scheme of finite type over $k$.
       Let $\Omega/k$ be a field extension, and assume we
       are given a closed subscheme $Z\subset X_\Omega$ such that each
       irreducible component is of dimension $>0$.
       Then there exists a function $f\in\mc{O}_X$ such that its
       pullback to $Z$ is non-constant.

  \item\label{nonconst-prop-bc}
       Let $X\rightarrow S$ be a morphism of finite type between affine schemes.
       For any morphism $S'\rightarrow S$, and any closed subscheme $Z\subset X\times_S S'$,
       the pullback of $\Gamma(X,\mc{O}_X)$ is a non-constant family on $Z$.
 \end{enumerate}
\end{lem}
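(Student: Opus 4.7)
For Part (1), I would argue fiberwise. Fix $s \in S$; flatness over $S$ combined with $X$ and $Y$ being equidimensional over $k$ forces the fibers $X_s$ and $Y_s$ to be equidimensional of the same dimension (applying $\dim_x X = \dim_{\pi_X(x)} S + \dim_x X_{\pi_X(x)}$ for flat morphisms, the left-hand side being constant on $X$ and on $Y$ by equidimensionality). Let $W$ be a positive-dimensional irreducible component of $X_s$. Then $\pi_s(W) \subset Y_s$ is closed and irreducible of dimension $\dim W > 0$, and by the equidimensionality just recalled, $\pi_s(W)$ must coincide with an irreducible component $W'$ of $Y_s$. Pick $h \in \mf{h}$ non-constant on every positive-dimensional component of $Y_s$; in particular on $W'$. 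Since $W \twoheadrightarrow W' = \pi_s(W)$ is surjective, the image of $W$ in $\mb{A}^1_{k(s)}$ under $\pi^*h$ equals the image of $W'$ under $h$, which is dense. Hence $\pi^*h|_W$ is non-constant, and closure under powers is preserved trivially by pullback.

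Part (2) is the technical heart. Let $Z_i$ ($i = 1,\dots,n$) be the positive-dimensional irreducible components of $Z$ with generic points $\eta_i$; let $A_i \subset \mc{O}_{Z_i,\eta_i} = \Omega(Z_i)$ be the algebraic closure of $\Omega$; and put $M_i := \phi_i^{-1}(A_i)$, where $\phi_i \colon \mc{O}_X \to \Omega(Z_i)$ is the restriction. The key observation is that $\Omega \otimes_k \mc{O}_X \to \Omega(Z_i)$ is surjective, because the closed immersion $Z_i \hookrightarrow X_\Omega$ yields a surjection $\mc{O}_{X_\Omega} = \Omega \otimes_k \mc{O}_X \twoheadrightarrow \mc{O}_{Z_i,\eta_i} = \Omega(Z_i)$. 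Thus the $\Omega$-span of $\phi_i(\mc{O}_X)$ equals $\Omega(Z_i)$, and since $\dim Z_i > 0$ forces $\Omega(Z_i) \neq A_i$, we deduce $\phi_i(\mc{O}_X) \not\subset A_i$, so each $M_i \subsetneq \mc{O}_X$ is a proper $k$-subspace. The remaining task is to find $f \in \mc{O}_X \setminus \bigcup_i M_i$. If $k$ is infinite this is immediate from the classical fact that a finite union of proper $k$-subspaces cannot cover any $k$-subspace not contained in any one of them. If $k$ is finite, fix a presentation $\mc{O}_X = k[x_1,\dots,x_N]/I$ and let $P_{\leq d}$ be the $k$-subspace of elements represented by polynomials of degree $\leq d$. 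The codimension of $M_i \cap P_{\leq d}$ in $P_{\leq d}$ equals $\dim_k \bar{\phi}_i(P_{\leq d})$, where $\bar{\phi}_i \colon \mc{O}_X \to \Omega(Z_i)/A_i$; and this codimension tends to infinity with $d$ because $\bar{\phi}_i(\mc{O}_X)$ is infinite-dimensional over $k$ (its $\Omega$-span is the infinite-dimensional $\Omega(Z_i)/A_i$, as $\Omega(Z_i)$ has positive transcendence degree over $\Omega$). For $d$ large, the estimate $|\bigcup_i (M_i \cap P_{\leq d})| \leq n \cdot |k|^{\dim_k P_{\leq d} - L(d)}$ with $L(d) \to \infty$ produces a strict inequality $|\bigcup_i(M_i \cap P_{\leq d})| < |P_{\leq d}|$, giving the required $f$.

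For Part (3), it suffices to verify the defining property at each $s' \in S'$. Let $s \in S$ be the image of $s'$, and apply Part (2) to the affine $k(s)$-scheme $X_s = X \times_S \mr{Spec}(k(s))$ of finite type, with field extension $k(s) \subset k(s') =: \Omega$ and with closed subscheme equal to the union of the positive-dimensional irreducible components of $Z_{s'} \subset (X_s)_{k(s')}$. This produces $f' \in \mc{O}_{X_s} = \Gamma(X,\mc{O}_X) \otimes_{\Gamma(S,\mc{O}_S)} k(s)$ whose restriction to $Z_{s'}$ is non-constant on every positive-dimensional component. Clearing denominators in $k(s)$ one obtains $\lambda \in k(s)^\times$ and $f \in \Gamma(X,\mc{O}_X)$ whose image in $\mc{O}_{X_s}$ equals $\lambda f'$; then $f|_{Z_{s'}} = \lambda \cdot f'|_{Z_{s'}}$, which remains non-constant on each positive-dimensional component since $\lambda \in k(s') \setminus \{0\}$. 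Closure of the pullback family $\Gamma(X,\mc{O}_X)|_Z$ under powers is automatic.

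The chief obstacle is the finite-$k$ case of Part (2): the naive linear-algebra argument that finitely many proper subspaces cannot cover the ambient space simply fails over finite fields. The workaround is to exhaust $\mc{O}_X$ by the finite-dimensional filtration $P_{\leq d}$ and pass to a cardinality count, which succeeds only because the codimensions of $M_i \cap P_{\leq d}$ in $P_{\leq d}$ grow unboundedly with $d$—and this growth ultimately rests on the positive dimensionality of each $Z_i$, transmitted through the surjectivity of $\Omega \otimes_k \mc{O}_X \twoheadrightarrow \Omega(Z_i)$.
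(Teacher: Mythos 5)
Your proof is correct, and part (2) takes a genuinely different route from the paper. Parts (1) and (3) are essentially the paper's arguments: for (1) both of you use flatness and equidimensionality to see that $X_s\rightarrow Y_s$ is a finite morphism between equidimensional schemes of the same dimension, hence maximally dominant, so non-constancy pulls back; for (3) both reduce to a single point of $S'$ and apply (2) followed by clearing denominators. (In (3), be a bit more careful: $k(s)$ is not a localization of $\Gamma(S,\mc{O}_S)$ unless one first passes to the reduced closure $W$ of $\{s\}$, using that $\Gamma(S,\mc{O}_S)\rightarrow\mc{O}(W)$ is onto; the paper makes this two-step reduction explicit.) For (2) the paper spreads $\Omega$ out to the fraction field of an integral affine $k$-scheme $W$ of finite type, specialises to a closed point to reach $\Omega=k$ and $Z=X$, and then produces a \emph{linear} function by embedding $X\hookrightarrow\mb{A}^n$ with $n$ exceeding the number of components and choosing a hyperplane through the origin avoiding a line $l_{i,0}$ attached to each component. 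You instead observe that constancy of $f$ on $Z_i$ is exactly the $k$-linear condition $\phi_i(f)\in A_i$, so it suffices to find $f$ outside the finite union of proper $k$-subspaces $M_i=\phi_i^{-1}(A_i)$; you handle finite $k$ by a codimension-growth and cardinality count over the degree filtration $P_{\leq d}$. This is more self-contained --- no limit argument, no choice of embedding --- and treats finite base fields transparently. One slip in your write-up: $\Omega\otimes_k\mc{O}_X\rightarrow\mc{O}_{Z_i,\eta_i}$ is \emph{not} surjective (its image is the affine coordinate ring of $(Z_i)_{\mathrm{red}}$, not the function field, since localisation at $\eta_i$ is not surjective), but this does not affect the argument: that coordinate ring already has positive transcendence degree over $\Omega$, hence is not contained in $A_i$, so $\phi_i(\mc{O}_X)\not\subset A_i$ and each $M_i$ is still a proper $k$-subspace.
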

\begin{proof}
 Let us show \ref{nonconst-prop-fin}.
 Take $s\in S$. Since $X$ and $Y$ are flat over $S$,
 $X_s\rightarrow Y_s$ is a finite morphism between equidimensional $k(s)$-schemes with the same dimension by [EGA IV, 14.2.4].
 Thus, this is maximally dominant. Since the pullback of a non-constant function by a maximally dominant morphism remains to be
 non-constant by Definition \ref{dfnfuncprop}.\ref{dfnfuncprop-noncon}, the claim follows.

 Let us show \ref{nonconst-prop-afn}.
 By limit argument, we may assume that $\mr{Spec}(\Omega)$ is the generic point of an integral affine scheme $W$ of finite type over $k$ and
 there exists a closed subscheme $\widetilde{Z}\subset X_W$ such that $\widetilde{Z}_\Omega=Z$ and $\widetilde{Z}$ is maximally dominant over $W$.
 Let $\{\widetilde{Z}_i\}$ be the set of irreducible components of $\widetilde{Z}$.
 By shrinking $W$, we may assume that $\widetilde{Z}_i\cap\widetilde{Z}_j$ is either faithfully flat over $W$ or empty for any $i,j$ by [EGA IV, 6.9.1].
 For any $w\in W$, any irreducible component of $\widetilde{Z}_{i,w}$ is an irreducible component of $\widetilde{Z}_w$.
 Indeed, let $Y\subset \widetilde{Z}_{i,w}$ be an irreducible component.
 If $Y$ were not an irreducible component of $\widetilde{Z}_w$, we could take an irreducible component $Y'$ of $\widetilde{Z}_w$ which contains $Y$.
 There exists $j$ such that $Y'$ is contained in $\widetilde{Z}_j$.
 Since $Y'\neq Y$, $i$ and $j$ are not the same.
 By using [EGA IV, 14.2.4], we have $\dim(\widetilde{Z}_{i,\eta})=\dim(Y)\leq\dim(\widetilde{Z}_i\cap\widetilde{Z}_j)_\eta<\dim(\widetilde{Z}_{i,\eta})$,
 which is a contradiction, and $Y$ is an irreducible component of $\widetilde{Z}_w$.

 Now, fix a closed point $w\in W$, and assume that there exists a function $f\in\mc{O}_{X_W}$ such that $f|_{\widetilde{Z}_w}$ is non-constant.
 For each $i$, we take a point $z_i\in\widetilde{Z}_{i,w}$ which is a generic point of $\widetilde{Z}_w$.
 Since $f|_{\widetilde{Z}_w}\colon \widetilde{Z}_w\rightarrow\mb{A}^1_{k(w)}$ is maximally dominant, this is flat around $z_i$.
 Then, since $\widetilde{Z}\rightarrow W$ is assumed flat, $\widetilde{Z}\rightarrow\mb{A}^1_W$ is flat around $z_i$ by [EGA IV, 11.3.10].
 Thus $f$ is flat around the generic point of $\widetilde{Z}_i$ by [EGA IV, 11.1.1], and $f$ is non-constant over $\Omega$.
 Since $X_W$ is affine, any function on $X_w$ extends to a function on $X_W$, and thus,
 it suffices to find a function $g$ on $X_w$ such that $g|_{\widetilde{Z}_w}$ is non-constant.
 In particular, we may assume that $\Omega$ is an algebraic finite extension of $k$.
 By taking the image of $Z$ to $X$, we may assume that $\Omega=k$.
 Since $X$ is affine, any function on $Z$ extends to $X$, and thus, we may even assume that $Z=X$.
 Now, let $X=\bigcup_{1\leq i\leq N} X_i$ be the decomposition into irreducible components, and take a closed immersion $X\hookrightarrow\mb{A}^n_k$ such that $n>N$.
 For each $i$, let $l_i\subset\mb{A}^n_k$ be a line
 ({\it i.e.}\ $1$-dimensional irreducible closed subscheme defined by polynomials of degree $1$ with coefficients in $k$)
 contained in $X_i$ if it exists, and $\emptyset$ it does not.
 Let $l_{i,0}$ be the parallel transform of $l_i$ which contains the origin.
 Since $n>N$, we may take a hyperplane $H\subset\mb{A}^n$ passing through the origin which does not contain any of $l_{i,0}$.
 The function defining $H$ is a function we are looking for.

 Let us show the last claim.
 We may assume that $S'=\mr{Spec}(\Omega)$ is a point for some field $\Omega$.
 Since $S$ is assumed affine, for any closed subscheme $W\subset S$, the map $\Gamma(S,\mc{O}_S)\rightarrow\Gamma(W,\mc{O}_W)$ is surjective.
 Thus, we may assume that $S$ is integral and the image of $\mr{Spec}(\Omega)$ is the generic point $\eta$ of $S$.
 By \ref{nonconst-prop-afn}, we may find a function $f\in\Gamma(X,\mc{O}_X)\otimes\mc{O}_{S,\eta}$
 such that $f$ is non-constant on each irreducible component of positive dimension of $Z$.
 There exists $g\in\Gamma(S,\mc{O}_S)^{\times}$ such that $gf$ is defined on $\Gamma(X,\mc{O}_X)$.
 By construction, $gf\in\Gamma(X,\mc{O}_X)$ is non-constant on each irreducible component of positive dimension of $Z$,
 and we conclude the proof.
\end{proof}

\begin{lem}
 \label{geomirrlem}
 Let $X\rightarrow S$ be a smooth morphism between {\em excellent} {\normalfont(}noetherian{\normalfont)} schemes of finite dimension.
 Then there exists a finite surjective morphism $T\rightarrow S$ such
 that
 \begin{quote}
  {\normalfont (*)} for any $x\in X_T:=X\times_S T$, there exists an open
  neighborhood $U$ such that each fiber of $U\rightarrow T$ is
  geometrically irreducible.
 \end{quote}
\end{lem}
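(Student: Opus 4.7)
The plan is to proceed by Noetherian induction on $\dim S$, using the structure of the generic fiber together with the constructibility of geometric irreducibility [EGA IV, 9.7.7] as the main inputs. By handling irreducible components of $S$ separately (a finite disjoint union of finite surjective maps remains finite surjective), reduce to the case $S$ integral. The base case $\dim S = 0$ is direct: $S = \mathrm{Spec}(k_0)$ for a field $k_0$, and the smooth $k_0$-scheme $X$ decomposes as a disjoint union of smooth irreducible pieces $Y_i$, each with a finite separable constant field $K_i/k_0$ (separability following from geometric reducedness of smooth schemes). Taking $T := \mathrm{Spec}(L)$ for a finite Galois $L/k_0$ containing every $K_i$ decomposes $X_T$ into geometrically irreducible clopen smooth components, so (*) holds with each $U$ equal to a component.

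For the inductive step ($\dim S > 0$, $S$ integral with generic point $\eta$), carry out the base-case construction over $\eta$: let $L/k(\eta)$ be a finite Galois extension containing the constant fields $K_i$ of all connected components $Y_i$ of $X_\eta$, and let $T_1$ be the normalization of $S$ in $L$, finite surjective by excellence. After base change, the generic fiber $(X_{T_1})_{\eta_{T_1}}$ decomposes into smooth geometrically irreducible pieces over $L$. Since $X_{T_1}$ is smooth and hence regular, its irreducible components $\bar Z_1,\ldots,\bar Z_m$ are disjoint clopens, each integral and smooth over $T_1$ with geometrically irreducible generic fiber.

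Spread out via [EGA IV, 9.7.7] applied to each $\bar Z_j \to T_1$: the locus in $T_1$ of geometric irreducibility of the fiber is constructible and contains $\eta_{T_1}$, hence contains a dense open; intersecting over $j$, we obtain a dense open $V \subseteq T_1$ such that for all $j$ and $t \in V$, the fiber $(\bar Z_j)_t$ is geometrically irreducible. Then for every $x\in X_V$ (with $x$ in the unique $\bar Z_j$ containing it), the open $U := \bar Z_j \cap X_V \subseteq X_{T_1}$ has geometrically irreducible fibers over $V$, yielding (*) at all points of $X_{T_1}$ mapping to $V$. The complement $Z := T_1\setminus V$ has $\dim Z < \dim T_1 = \dim S$; applying the inductive hypothesis to $X_Z \to Z$ produces a finite surjective $T_Z \to Z$ satisfying (*) for $X_{T_Z}\to T_Z$.

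The main obstacle, and the technical heart of the argument, is to combine $T_1$ and $T_Z$ into a single finite surjective $T\to S$ satisfying (*): the naive disjoint union $T_1 \sqcup T_Z$ fails for $x \in X_{T_1}$ mapping to $Z$, since neighborhoods of such $x$ in $X_{T_1}\sqcup X_{T_Z}$ still lie in $X_{T_1}$ with non-geometrically-irreducible fiber components at $z\in Z$. The resolution is to iterate the Galois-closure construction by replacing $T_1$ with a further finite surjective cover that dominates $T_Z$ over $Z$ (for instance, normalizing $T_1$ in the compositum of the function field of $T_1$ with those of the irreducible components of $T_Z$) and then re-spreading out. Since the dimension of the residual bad locus strictly decreases at each iteration, the procedure terminates in at most $\dim S$ steps, producing the desired $T$. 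Making this iteration rigorous — ensuring that all intermediate covers are finite over $S$ and that the good-locus open neighborhoods remain compatible through the successive base changes — requires careful bookkeeping.
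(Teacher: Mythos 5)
Your overall skeleton matches the paper's: induction on $\dim S$, reduction to $S$ integral, normalization of $S$ in a finite extension of $k(\eta)$ so that the irreducible components of the generic fiber become geometrically irreducible (and, using normality of the base, the components of the total space become disjoint clopens), spreading out over a dense open $V\subset T_1$ via [EGA IV, 9.7.7/9.7.8], and applying the inductive hypothesis to the complement $Z$. (A small slip along the way: $X_{T_1}$ is smooth over the \emph{normal} scheme $T_1$, hence normal, not ``regular''; normality is what gives disjointness of its irreducible components, and this is the analogue of the paper's geometric-unibranchness argument.)

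The genuine gap is exactly at the step you yourself single out as the technical heart: merging the inductive cover $T_Z\rightarrow Z$ with $T_1\rightarrow S$ into one finite surjective cover. Your proposed mechanism --- ``normalizing $T_1$ in the compositum of the function field of $T_1$ with those of the irreducible components of $T_Z$'' --- is not meaningful: the function fields of the components of $T_Z$ are finite extensions of the function fields of the components of $Z$, i.e.\ of residue fields at \emph{non-generic} points of $T_1$, so they do not sit inside any common overfield with $k(T_1)$ and there is no compositum to normalize in. Extending a finite surjective cover of a nowhere dense closed subscheme $Z\subset S$ to a finite surjective cover $S'\rightarrow S$ whose restriction to $Z$ factors (componentwise) through the given cover is a nontrivial result; the paper invokes precisely Gabber's lemma \cite[1.6.2]{O2} for this, and without it (or an equivalent statement) your iteration cannot get started, so the ``dimension of the bad locus decreases'' loop never runs. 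Once such an $S'$ is available the rest is short and is also left implicit in your sketch: one transfers (*) to $Z':=S'\times_S Z$ using stability of (*) under base change together with the remark that (*) on each irreducible component of $Z'$ implies (*) on $Z'$, and at a point of $X_{S'}$ lying over $Z'$ one takes a fiberwise geometrically irreducible neighborhood inside $X'\times_{S'}Z'$ and extends it arbitrarily to an open of $X'$, using that the full fibers of the relevant component over $S'\setminus Z'$ are already geometrically irreducible, so open subsets of them remain so. With Gabber's lemma supplied, your argument becomes essentially the paper's; without it, the key step is missing.
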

\begin{proof}
 First, we note that the following holds:
 \begin{itemize}
  \item If (*) holds for $T$, then it also holds for any
	$T'\rightarrow T$.
  \item Let $T'=\bigcup T'_i$ where $T'_i$ is an irreducible
	component. If (*) holds for $T=T'_i$, then (*) holds for $T'$.
 \end{itemize}
 For the second claim, we note the following: Let $Y$ be a scheme and $Y_i$ be an irreducible component.
 Take $y\in\bigcap_{i\in I} Y_i$ for some finite set $I$.
 Assume we are given an open neighborhood $V_i\subset Y_i$ of $y$ for each $i\in I$.
 Put $V:=Y\setminus\bigcup_{i\in I}(Y_i\setminus V_i)$.
 Then $V$ is a neighborhood of $y$ in $Y$ and $V\cap Y_i\subset V_i$ for each $i\in I$.

 Since the dimension of $S$ is finite, we show the claim by the induction on the dimension of $S$.
 Assume the claim holds for $\dim(S)<n$, and assume now that $\dim(S)=n$.
 Consider the finite morphism $\coprod S_{i,\mr{red}}\rightarrow S$, where $\{S_i\}$ is the set of irreducible components,
 and replacing $S$ by $\coprod S_i$, we may assume that $S$ is integral with generic point $\eta$.
 There exists a finite extension $L$ of $k(\eta)$ such that the number of irreducible components of $X_L$ is the same as that of geometric irreducible components.
 We replace $S$ by the normalization of $S$ in $L$, we may assume that $S$ is normal and each irreducible component of the generic fiber is geometrically irreducible.
 By considering the connected component, we may assume that $X$ is connected.

 In this situation, let us show that the generic fiber is geometrically irreducible.
 Let $\{X_{\eta,i}\}$ be the set of irreducible components of $X_\eta$. Let $X_i$ be the closure of $X_{\eta,i}$,
 and put $X_{ij}:=X_i\cap X_j$.
 Assume $X_{ij}\cap X_{t}\neq\emptyset$ for some $t\in S$.
 Take a point $x$ in the intersection.
 Then $\mc{O}_{X,x}$ has more than one minimal primes, so it cannot be geometrically unibranch.
 However, since $\mc{O}_{S,t}$ is assumed to be normal, it is geometrically unibranch (cf.\ [EGA $0_{\mr{IV}}$, 23.2.1]) this contradicts with [EGA IV, 17.5.7].
 Thus, $X_{ij}$ does not intersect with $X_{t}$ for any $t\in S$.
 This implies that $X_i$ and $X_j$ are disjoint. Since $X$ is assumed connected, this can happen only when $X_i=X_j$, thus $X_\eta$ is irreducible.

 Since $X_\eta$ is geometrically irreducible,
 there exists an open dense subscheme $U\subset S$ such that for any $s\in U$, $X_s$ is geometrically irreducible by [EGA IV, 9.7.8].
 Since the dimension of $Z:=S\setminus U$ is $<n$,
 we may use the induction hypothesis, and we have a finite dominant morphism $W\rightarrow Z$ over which (*) is satisfied.
 Use Gabber's lemma \cite[1.6.2]{O2} to produce a finite dominant morphism $S'\rightarrow S$ such that each irreducible component of
 $Z':=S'\times_S Z$ maps to $W$. Thus (*) holds for $Z'$ by using the remarks at the beginning of this proof.
 Take any point $x\in X':=X\times_S S'$.
 If $x$ is not over $Z'$, we can just take the complement of $X'\times_{S'}Z'\subset X'$ since each fiber of $X_U\rightarrow U$ is geometrically irreducible.
 Assume $x$ is mapped to $Z'$. Then there exists an open neighborhood $V$ of $x$ in $X'\times_{S'}Z'$ which is fiberwise geometrically irreducible over $Z'$.
 Take any extension of $V$ to an open subset in $X'$.
 Since $X'$ is already fiberwise geometrically irreducible outside of $Z'$, this neighborhood is what we need, and (*) is satisfied for $X'\rightarrow S'$.
\end{proof}

\subsection{}
For terminologies concerning valuations, we follow \cite{V}.
Let $v$ be a valuation on an integral ring $A$ over a finite field $\mb{F}_p$.
Then $v$ extends uniquely to the perfection $A^{\mr{perf}}:=A^{1/p^{\infty}}:=\indlim A^{1/p^n}$.
Abusing notations, we denote the extension also by $v$.

Let $A$ be an integral noetherian ring, and $v$ be a valuation on it.
Then it is known that the value semigroup is well-ordered by \cite[Prop 9.1]{V}.
We moreover have the following result which can be found in Zariski-Samuel's book.
For the reader's convenience, we recall also the proof.

\begin{lem*}
 \label{discvalsemgr}
 Let $A$ be an integral noetherian ring with its field of fractions $K$,
 and let $v$ be a height $1$ valuation on $K$ centered on $A$.
 Then the value semigroup $\Gamma:=v(A\setminus\{0\})\subset\mb{R}_{\geq0}$ is
 discrete.
\end{lem*}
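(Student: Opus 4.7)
The plan is to reduce to the case of a noetherian local ring and then observe that a sufficiently high power of the maximal ideal is contained in every valuation ideal of interest, so that discreteness of $\Gamma$ below a fixed bound becomes a statement about a chain in an Artinian quotient.

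First, set $\mf{p}:=\{a\in A\mid v(a)>0\}$, the center of $v$ on $A$; this is a prime ideal. Each $s\in A\setminus\mf{p}$ satisfies $v(s)=0$, so $v(a/s)=v(a)$ for $a\in A$ and $s\in A\setminus\mf{p}$, whence $v(A_{\mf{p}}\setminus\{0\})=v(A\setminus\{0\})=\Gamma$. Replacing $A$ by $A_{\mf{p}}$, we may assume $A$ is a noetherian local ring whose maximal ideal $\mf{m}$ is the center of $v$. Because $\Gamma$ is well-ordered by \cite[Prop 9.1]{V}, the nonempty subset $\Gamma\setminus\{0\}$ admits a minimum $\alpha$, which is strictly positive.

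For each $\gamma\geq 0$, put $\mf{a}_\gamma:=\{a\in A\mid v(a)\geq\gamma\}$, an ideal of $A$. The key inclusion is $\mf{m}^n\subset\mf{a}_{n\alpha}$: any nonzero $a\in\mf{m}^n$ is a finite sum of $n$-fold products of elements of $\mf{m}$, each of valuation at least $n\alpha$ by definition of $\alpha$. Now fix $N>0$ and choose $n$ with $n\alpha>N$. For every $\gamma\in\Gamma\cap[0,N]$ we have $\mf{a}_\gamma\supset\mf{a}_{n\alpha}\supset\mf{m}^n$, so the rule $\gamma\mapsto\mf{a}_\gamma/\mf{m}^n$ sends $\Gamma\cap[0,N]$ into the set of ideals of $A/\mf{m}^n$, and it is strictly order-reversing: if $\gamma_1<\gamma_2$ lie in $\Gamma$, then any $a\in A$ with $v(a)=\gamma_1$ witnesses $\mf{a}_{\gamma_1}\supsetneq\mf{a}_{\gamma_2}$. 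Since $A/\mf{m}^n$ is Artinian, being noetherian with nilpotent maximal ideal, every strictly decreasing chain of ideals in it is finite. Hence $\Gamma\cap[0,N]$ is finite, and $\Gamma$ is discrete in $\mb{R}_{\geq 0}$.

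The substantive step is the inclusion $\mf{m}^n\subset\mf{a}_{n\alpha}$, which is the sole place the well-ordering hypothesis is used, precisely to supply the uniform positive lower bound $\alpha$ on valuations of elements of $\mf{m}$; the noetherian hypothesis enters only at the end, through Artinianness of $A/\mf{m}^n$. I do not anticipate any further obstacle.
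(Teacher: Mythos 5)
Your proof is correct and follows essentially the same route as the paper's: localize at the center, use well-orderedness of the value semigroup to get a minimal positive value $\alpha$, deduce $\mf{m}^n\subset\{v\geq n\alpha\}$, and invoke Artinianness of $A/\mf{m}^n$ to bound the number of valuation ideals, hence of values, below any fixed threshold. The only cosmetic difference is that you phrase the final step as a strictly order-reversing map into ideals of the Artinian quotient, while the paper counts the distinct ideals $I_r$ directly.
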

\begin{proof}
 Let $\mf{p}\subset A$ be the center of $A$. Since the value semigroup of $A$ is contained in that of $A_{\mf{p}}$,
 we may assume that $\mf{p}$ is a maximal ideal of $A$. For $r\in\mb{R}_{\geq0}$,
 let $I_r:=\bigl\{a\in A\mid v(a)\geq r\bigr\}$, which is an ideal of $A$.
 The semigroup $\Gamma$ is well-ordered as we remarked above.
 Take the minimal value $\gamma\in\Gamma\setminus\{0\}$.
 Then $\mf{p}=I_\gamma$.
 Take any $\rho\in\mb{R}_{\geq0}$ greater than $\gamma$.
 We may find an integer $N$ such that $\rho<N\gamma$.
 Then we have $\mf{p}^N\subset I_{\rho}$. Since $\mf{p}$ is assumed to be maximal, the quotient $A/\mf{p}^N$ is an Artinian ring.
 This implies that there can only be finitely many distinct ideals in the set $\{I_r\}_{\gamma\leq r\leq\rho}$,
 and thus $\Gamma\cap[\gamma,\rho]$ is a finite set.
\end{proof}

\begin{cor*}
 Let $K$ be a field, $v$ be a valuation on $K$ of height $1$, and $R$ be a {\em noetherian} subring of $K$ such that $v(R)\geq0$.
 Let $M\subset K$ be a finitely generated $R$-submodule.
 Then $v(M\setminus\{0\})\subset\mb{R}$ is a discrete subset.
\end{cor*}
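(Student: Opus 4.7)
The plan is to reduce to the case where $R$ is noetherian local with maximal ideal equal to the center of $v$, and then to extract a strictly decreasing chain of $R$-submodules of $M/N_\rho$ whose length is bounded, where $N_\rho:=\{x\in M\mid v(x)\geq\rho\}$.

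First I would let $\mf{p}:=\{r\in R\mid v(r)>0\}$, which is the center of $v$ in $R$ and is a prime ideal. For any $s\in R\setminus\mf{p}$ one has $v(s)=0$, so $v(r/s)=v(r)$ for $r\in R$, $s\in R\setminus\mf{p}$, giving $v(R_\mf{p})=v(R)\subset\mb{R}_{\geq0}$; moreover $R_\mf{p}\cdot M\supset M$ is a finitely generated $R_\mf{p}$-submodule of $K$. Since $v(M\setminus\{0\})\subset v(R_\mf{p}M\setminus\{0\})$ and subsets of discrete subsets of $\mb{R}$ are discrete, it is enough to prove the claim after replacing $R$ by $R_\mf{p}$ and $M$ by $R_\mf{p}M$. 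Hence we may assume $R$ is noetherian local with maximal ideal $\mf{p}$ equal to the center of $v$, in which case $R/\mf{p}^k$ is an artinian ring for every $k\geq 0$.

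Fix generators $m_1,\dots,m_n$ of $M$ and set $\beta_i:=v(m_i)$, $\beta^*:=\min_i\beta_i$. For any $x=\sum r_im_i\in M\setminus\{0\}$ we have $v(x)\geq\min_i(v(r_i)+\beta_i)\geq\beta^*$, so $v(M\setminus\{0\})\subset[\beta^*,\infty)$; it therefore suffices to show $v(M\setminus\{0\})\cap[\beta^*,\rho]$ is finite for every $\rho\in\mb{R}$. By the previous lemma, $v(R\setminus\{0\})$ is discrete in $\mb{R}_{\geq0}$, so it has a least positive element $\gamma_0>0$, and thus $\mf{p}^k\subset\{r\in R\mid v(r)\geq k\gamma_0\}$ since each element of $\mf{p}^k$ is a sum of $k$-fold products of elements of $\mf{p}$. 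Choose $k$ with $k\gamma_0+\beta^*\geq\rho$: for any $r\in\mf{p}^k$ and $m\in M$ we have $v(rm)\geq k\gamma_0+\beta^*\geq\rho$, so $\mf{p}^k M\subset N_\rho$. Consequently $M/N_\rho$ is a quotient of the finitely generated $R/\mf{p}^k$-module $M/\mf{p}^kM$, which has finite length because $R/\mf{p}^k$ is artinian.

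Finally, for $a_1<a_2$ both in $v(M\setminus\{0\})\cap[\beta^*,\rho]$, any $x\in M$ with $v(x)=a_1$ lies in $N_{a_1}\setminus N_{a_2}$ and, because $v(x)=a_1<a_2\leq\rho$, also outside $N_\rho$; thus the submodules $N_a/N_\rho\subset M/N_\rho$ strictly decrease as $a$ ranges over $v(M\setminus\{0\})\cap[\beta^*,\rho]$. A strictly decreasing chain of submodules in a module of finite length is finite, so this set is finite, completing the proof. The main step to get right is the containment $\mf{p}^kM\subset N_\rho$ for $k$ large enough: this combines the discreteness of $v(R\setminus\{0\})$ (to produce $\gamma_0>0$) with the lower bound $v(x)\geq\beta^*$ coming from the finite generation of $M$. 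Once that containment is in place, the length argument is essentially mechanical.
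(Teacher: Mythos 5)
Your proof is correct, but it takes a genuinely different route from the paper's. The paper argues by enlarging $R$ rather than analyzing $M$ directly: pick $\beta\in K^{\times}$ with $v(\beta\alpha_i)\geq0$ for all generators $\alpha_i$ of $M$, form the finitely generated (hence noetherian) ring $R':=R[\{\beta\alpha_i\}]\subset K$, observe that $v(R')\geq0$ and $\beta M\subset R'$, apply the Lemma to $R'$ to get discreteness of $v(R'\setminus\{0\})$, and conclude by translating by $-v(\beta)$. Your proof instead localizes $R$ at the center $\mf{p}$ of $v$, uses the Lemma to produce the least positive value $\gamma_0$ of $v$ on $R$, deduces $\mf{p}^kM\subset N_\rho$ for $k$ large from $v(\mf{p}^k)\geq k\gamma_0$ and the lower bound $\beta^*$ on $v|_M$, and then bounds the number of jumps in $[\beta^*,\rho]$ by the finite length of $M/N_\rho$ over the artinian ring $R/\mf{p}^k$. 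In effect you are rerunning the Lemma's own chain-length argument at the level of the module $M$, whereas the paper's trick reduces the module statement to the ring statement in one step. Both are valid and both draw on the Lemma and the noetherianity of $R$; the paper's version is shorter, while yours is self-contained modulo extracting $\gamma_0$ and makes the role of the center $\mf{p}$ and the filtration $N_\rho$ explicit.
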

\begin{proof}
 Write $M=\sum_{i\in I} R\cdot\alpha_i$ where $\alpha_i\in K$.
 Since $M$ is finitely generated, we may take $I$ to be a finite set.
 Thus, there exists $\beta\in K^{\times}$ such that $v(\beta\cdot\alpha_i)\geq0$ for any $i\in I$.
 Consider $R':=R\left[\{\beta\cdot\alpha_i\}_{i\in I}\right]\subset K$.
 This is a noetherian ring such that $v(R'\setminus\{0\})\geq0$ and contains $\beta\cdot M$.
 Invoking the lemma above, $v(R'\setminus\{0\})$ is a discrete subset of $\mb{R}_{\geq0}$.
 Thus, $v(M\setminus\{0\})=v(\beta\cdot M\setminus\{0\})-v(\beta)\subset v(R'\setminus\{0\})-v(\beta)$ is discrete.
\end{proof}

\section{Analysis around type 4 points}
\label{sect2}

\subsection{}
\label{fixnotatbersp}
In this section, we fix an algebraically closed complete non-archimedean
valuation field $\ell$ of characteristic $p>0$ with non-trivial
valuation $v$.
We denote by $\mf{o}_\ell$ the ring of integers of $\ell$.
We, moreover, fix a subring $R\subset\mf{o}_\ell$, which is assumed {\em
noetherian}. This implies that the value semigroup $v(R)$ is a discrete
subset of $\mb{R}_{\geq0}$ by Lemma \ref{discvalsemgr}.
The Tate algebra over $\ell$ is denoted by
$\ell\!\left<t\right>$, and
$R\!\left<t\right>:=R\dd{t}\cap\mf{o}_\ell\!\left<t\right>$.

\subsection{}
\label{NPdefandfirstprop}
A Berkovich point $\alpha$ of the Tate algebra $\ell\!\left<t\right>$ is
a multiplicative semi-norm $|\cdot|_\alpha$ such that $\leq\|\cdot\|$, where $\|\cdot\|$ is the $0$-Gauss norm.
However, in order to keep accordance with the notations for Riemann-Zariski space,
we use valuation theoretic notation, and put $v_\alpha(f):=\log(|f|_\alpha)$.
This is possible since there is no archimedean semi-norm on $\ell\!\left<t\right>$.
Beware that, since points of Berkovich space are merely semi-norms,
the valuation $v_\alpha\colon \ell\!\left<t\right>\rightarrow\mb{R}_\infty$ corresponding to a point $\alpha$,
where $\mb{R}_\infty:=\mb{R}\sqcup\{\infty\}$, may have non-trivial preimage of $\infty$.

For $\omega\in\ell$ and $r\in\left[0,\infty\right]$, we define a
semi-norm $v_{\omega,r}$ by
\begin{equation*}
 v_{\omega,r}\bigl(\sum a_i(t-\omega)^i\bigr):=
  \min\bigl\{v(a_i)+ir\bigr\}.
\end{equation*}
This semi-norm is multiplicative, and defines a point of $\mb{D}_\ell$.
This point is denoted by $\alpha_{\omega,e^{-r}}$ in \cite{Ked4}.
Let $\alpha$ be a Berkovich point on $\ell\!\left<t\right>$.
We put $r_\alpha:=-\log(r(\alpha))$ in $\mb{R}_{\infty}$, where $r(\alpha)$ is the radius of $\alpha$ (cf.\ \cite[2.2.11]{Ked4}).
For $s\in[0,r_\alpha]$, we have the valuation $v_{\alpha,s}$
corresponding to the point $\alpha_{e^{-s}}$.
Note that $v_{\alpha,0}$ corresponds to the Gauss point for any
$\alpha$. These points can be depicted in the following figure.

\begin{figure}[h]
\centering
  \setlength{\unitlength}{1bp}%
  \begin{picture}(201.14, 53.74)(0,0)
  \put(0,0){\includegraphics{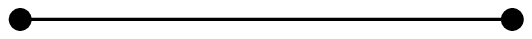}}
  \put(166.74,8.73){\fontsize{12}{15}\selectfont $v_{\alpha,r_{\alpha}}$}
  \put(5.67,8.73){\fontsize{12}{15}\selectfont $v_{\alpha,0}=v_{0,0}$}
  \put(13.56,36.96){\fontsize{12}{15}\selectfont Gauss}
  \put(174.64,36.96){\fontsize{12}{15}\selectfont $\alpha$}
  \end{picture}%
\end{figure}

For $f\in\ell\!\left<t\right>$, we consider the function
\begin{equation*}
 \NP_\alpha(f)\colon\left[0,r_\alpha\right]
  \rightarrow\mb{R};\quad
  s\mapsto v_{\alpha,s}(f).
\end{equation*}
This function $\NP_\alpha(f)$ is continuous piecewise affine,
convex, and there are only fintely many breaks. Indeed, we may assume
that $\ell$ is spherically complete. In this case, we may assume that
$\alpha=v_{\omega,r}$ for some $\omega\in\ell$ and $r$.
Then $\NP_\alpha(f)$ is nothing but (a truncation of) the \newton
polygon of $f(z+\omega)$ around $v_{0,\infty}$.
Now, let $g=\sum a_it^i\in\ell\!\left<t\right>$.
Then there exists $n$ such that $v(a_i)>v(a_n)$ for any $i>n$.
Then we have 
$\NP_{v_{0,\infty}}(g)=\NP_{v_{0,\infty}}(\sum_{i\leq n}a_it^i)$.
The latter function obviously satisfies the claim, and so is $\NP_\alpha(f)$.
This function is called the {\em \newton polygon} at $\alpha$.

\begin{lem}
 \label{finforfampoly}
 Let $R'\subset\mf{o}_\ell$ be a noetherian ring, and $\{f_i\}_{i\in I}$ be
 a set of functions in $R'\!\left<t\right>$.
 Then the function $V(s):=\inf_{i\in
 I}\bigl\{\NP_{\alpha}(f_i)(s)\bigr\}$
 is continuous piecewise affine, convex, and there are only fintely many
 breaks.
\end{lem}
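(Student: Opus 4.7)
Each $\NP_\alpha(f_i)$ is continuous, convex, piecewise affine with integer slopes and finitely many breaks (\S\ref{NPdefandfirstprop}). The infimum $V$ inherits convexity from the elementary inequality $\min(a+b,c+d)\ge\min(a,c)+\min(b,d)$, which gives $\min(f,g)(\lambda s+(1-\lambda)t)\ge\lambda\min(f,g)(s)+(1-\lambda)\min(f,g)(t)$ and hence the same for arbitrary infima, and continuity on $(0,r_\alpha)$ follows automatically from convexity. The real content of the lemma is therefore the finiteness of the number of breaks of $V$.

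My strategy is to reduce to the case where $\alpha$ is centered at $0$ and then exploit the discreteness of the value semigroup. After base-changing to a spherical completion of $\ell$, write $\alpha=v_{\omega,r_\alpha}$ with $\omega\in\mf{o}_\ell$, and translate $t\mapsto t+\omega$ so that $\omega=0$. The new coefficients of the $f_i$ lie in a noetherian extension $R''$ of $R'$, obtained by adjoining $\omega$ and passing to a $v$-adic completion, and by Lemma~\ref{discvalsemgr} the value semigroup $v(R''\setminus\{0\})$ remains discrete in $\mb{R}_{\ge 0}$. Multiplicativity of $v_{0,s}$ then yields
\[
V(s)=\inf_k\bigl(\sigma_k+ks\bigr),\qquad \sigma_k:=\inf_{i\in I}v(b_{ik})\in v(R'')\cup\{\infty\},
\]
where each $\sigma_k$ is achieved by the well-ordering of the value semigroup.

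To finish, I will bound the number of indices $k$ contributing to the minimum. Set $\Sigma:=\min_k\sigma_k$ and let $k^*$ be the smallest $k$ with $\sigma_{k^*}=\Sigma$. For $k>k^*$, the inequality $\sigma_k+ks\le\Sigma+k^*s$ at any $s\ge 0$ forces $\sigma_k=\Sigma$, but then the piece $\Sigma+k^*s$ is strictly smaller for $s>0$, so $k$ contributes no new segment. For $k<k^*$, participation in the minimum somewhere in $[0,r_\alpha]$ requires $\sigma_k\le\Sigma+(k^*-k)r_\alpha$, which allows at most $k^*$ indices. Hence $V$ is the minimum of at most $k^*+1$ affine functions, piecewise affine with finitely many breaks.

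The main obstacle I foresee is the reduction to $\omega=0$: after translation the new coefficients become infinite sums $\sum_{j\ge k}\binom{j}{k}b_{ij}\omega^{j-k}$ that live a priori only in a $v$-adic completion of $R'[\omega]$, and one must verify this completion is still noetherian with discrete value semigroup so that Lemma~\ref{discvalsemgr} continues to apply. This is particularly delicate for type 4 points, where $\omega$ lives in a spherical completion of $\ell$. An alternative would be to avoid translating and instead bound $v(a_{ik}^\omega)$ directly in terms of $v(b_{ij})$ and $v(\omega)$, but this requires careful attention to cancellation phenomena in characteristic $p$.
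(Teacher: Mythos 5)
Your strategy---translate so that the disc is centered at $0$, write $V(s)=\inf_{k}(\sigma_k+ks)$ with $\sigma_k$ the infimum over $i$ of the valuations of the $k$-th translated coefficients, and then argue that only $k\le k^*$ can contribute, where $k^*$ is the smallest index achieving $\Sigma:=\min_k\sigma_k$---is a genuinely different organization from the paper's. The paper's proof splits the interval: on $[\varepsilon,r_\alpha]$ it argues via boundedness and integrality of the active slope, while near $0$ it produces an initial affine segment by looking at the Gauss-point values $v_0(f_i)=\min_j v(b_{ij})\in v(R')$ and the integer initial slopes. Your version handles the whole interval at once by realizing $V$ as a minimum of finitely many explicit lines, which is cleaner if the translation step can be justified.

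The gap you flag is real, but the worry is slightly misdirected. You do not need the $v$-adic completion $R''$ of $R'[\omega]$ to be noetherian; what you actually use is only that $v(R''\setminus\{0\})$ is discrete, and that holds regardless of noetherianity of $R''$, because $v$-adic closure does not enlarge the value semigroup: if $x=\lim x_n$ with $x_n\in R'[\omega]$ and $x\neq 0$, then $v(x_n-x)>v(x)$ for $n\gg 0$, so $v(x)=v(x_n)\in v(R'[\omega])$. Since $R'[\omega]$ is a noetherian integral domain by Hilbert's basis theorem, and the height-$1$ valuation $v$ is centered on it, Lemma~\ref{discvalsemgr} gives discreteness of $v(R'[\omega]\setminus\{0\})$, hence of $v(R''\setminus\{0\})$. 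With that observation in place, each $\sigma_k$ and then $\Sigma$ is attained by well-ordering, $k^*$ is finite, and your proof closes. Two smaller remarks. First, the constraint $\sigma_k\le\Sigma+(k^*-k)r_\alpha$ you invoke for $k<k^*$ does no real work: the count of indices $k<k^*$ is simply $k^*$, and the inequality is vacuous when $r_\alpha=\infty$. Second, the paper's own argument near $0$ is in fact exposed to the very point you raise: it measures the length of the initial affine segment via the successor of $a$ in $v(R')$, but the first break of $V$ is controlled by the valuations of the translated coefficients, which lie only in the larger set $v(R''\setminus\{0\})$, so the completion-preserves-value-semigroup observation is implicitly needed there as well.
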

\begin{proof}
 Since the slopes of $\NP(f_i)(s)$ are non-negative integers and the
 slope decreases as $s$ goes to the infinity,
 the lemma holds over $\left[\varepsilon,\infty\right[$ for any
 $\varepsilon>0$.
 It suffices to check that the infimum has ``initial slope'' around
 $0$, namely there exists $\epsilon>0$ and $d$, $a$ such that
 $V(s)=ds+a$ for any $s\in[0,\epsilon]$.
 Because $R'$ is assumed notherian,
 $D:=v(R')\subset\left[0,\infty\right[$ is discrete by Lemma \ref{discvalsemgr}.
 Thus, the set $\{v_0(f_i)\}$ has the minimum.
 Let $a$ be the minimum. Then $V(0)=a$.
 Let $J$ be the set of $i\in I$ such that $\NP(f_i)(0)=a$.
 Since the slopes are integer, there exists the minimum in the set of
 initial slopes of $\NP(f_i)$ for $i\in J$.
 Let $d$ be the slope.
 Let $a'$ be the element in $D$ next to $a$.
 Then we may check that $V(s)=ds+a$ for $s\in\left[0,(a'-a)/s\right]$.
 Thus the lemma follows.
\end{proof}

\subsection{}
Now, we fix a Berkovich point $\alpha$, and we omit the index $\alpha$
from $\NP_\alpha(f)$.
Let us fix an element $\pi\in R$, and let
$\mbf{a}:=\{a_k\}_{k\geq0}$ be a sequence of elements in
$R\!\left<t\right>$. Valuations on $\ell\!\left<t\right>$ extends
uniquely to that of $\ell\!\left<t\right>^{\mr{perf}}$, and we identify
the Berkovich space of $\ell\!\left<t\right>$ and
$\ell\!\left<t\right>^{\mr{perf}}$.
Now, for $n\geq0$, we put
\begin{equation*}
 g^{(n)}(\mbf{a}):=\sum_{k=0}^{n}(a_k/\pi)^{1/p^k},\quad
  \NP^{(n)}(\mbf{a}):=\min_n\bigl\{0,\NP(g^{(n)}(\mbf{a}))\bigr\}.
\end{equation*}
For any $\varepsilon>0$, there exists $N$ such that $-v(\pi)/p^N>-\varepsilon$.
This implies that
\begin{equation}
 \label{NPtruncsta}
 \min\bigl\{-\varepsilon,\NP^{(n)}(\mbf{a})\bigr\}=
  \min\bigl\{-\varepsilon,\NP^{(N)}(\mbf{a})\bigr\}
  \leq0
\end{equation}
for any $n\geq N$. Thus,
\begin{equation*}
 \NP^{(\infty)}(\mbf{a}):=\lim_{n\rightarrow\infty}\NP^{(n)}(\mbf{a})
\end{equation*}
converges.
Now, let $I$ be a set, and for $i\in I$, assume we are given a sequence $\mbf{a}_i:=\{a_{i,k}\}_{k\geq0}$ in $R\!\left<t\right>$.
We put
\begin{equation*}
 \NP^{(\infty)}(\{\mbf{a}_i\}_{i\in I}):=
  \inf_{i\in I}\big\{\NP^{(\infty)}(\mbf{a}_i)\}.
\end{equation*}
The infimum is in fact the minimum.
Indeed, for any $N>0$, we have $v\bigl(\sum_{N\geq n\geq0}(R/\pi)^{1/p^n}\bigr)\subset v(R^{1/p^N}/\pi)$.
Since $R^{1/p^N}$ is noetherian, this is a discrete set by Lemma \ref{discvalsemgr}.
Thus, by (\ref{NPtruncsta}), $v\bigl(\sum_{n\geq0}(R/\pi)^{1/p^n}\bigr)\cap\left]-\infty,0\right[$ is a discrete subset, and the minimum exists.

\begin{lem}
 \label{defandfinofNP}
 The function $\NP^{(\infty)}(\{\mbf{a}_i\}_{i\in I})(s)$ is continuous piecewise affine and convex.
 Furthermore, there are only fintely many breaks.
 If $\alpha$ is a type 1 point, then the terminal slope is $0$, in other words the function is constant for $s\gg0$.
\end{lem}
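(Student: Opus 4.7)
The plan is to combine the stabilization estimate $(\ref{NPtruncsta})$ with Lemma~\ref{finforfampoly} and the discreteness of $v\bigl(\sum_{n\geq 0}(R/\pi)^{1/p^n}\bigr)\cap\left]-\infty,0\right[$ established in the paragraph immediately preceding the lemma. The crucial observation is that in $(\ref{NPtruncsta})$ the threshold $N=N(\varepsilon)$ is determined by the condition $v(\pi)/p^N<\varepsilon$ alone, independent of the sequence $\mbf{a}$, so the same $N$ works uniformly for the whole family $\{\mbf{a}_i\}_{i\in I}$.

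First I would treat a single sequence $\mbf{a}$. Given $\varepsilon>0$, pick $N$ as above and pass to the limit $n\to\infty$ in $(\ref{NPtruncsta})$ to obtain $\min\{-\varepsilon,\NP^{(\infty)}(\mbf{a})\}=\min\{-\varepsilon,\NP^{(N)}(\mbf{a})\}$. Thus on $U_\varepsilon:=\{s:\NP^{(\infty)}(\mbf{a})(s)<-\varepsilon\}$ the two functions coincide, and since $\NP^{(N)}(\mbf{a})=\min\{0,\NP(g^{(N)}(\mbf{a}))\}$ is continuous piecewise affine convex with finitely many breaks (Lemma~\ref{finforfampoly} applied to the singleton $\{\pi\cdot g^{(N)}(\mbf{a})\}$ in the noetherian ring $R[\pi^{1/p^N}]\!\left<t\right>$, then intersected with $0$), these properties transfer to $\NP^{(\infty)}(\mbf{a})$ on $U_\varepsilon$. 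Letting $\varepsilon\to 0$ and using $\NP^{(\infty)}(\mbf{a})\leq 0$, the remaining pieces are intervals on which $\NP^{(\infty)}(\mbf{a})=0$, joined continuously at $\partial U_\varepsilon$.

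For the family, convexity of $\NP^{(\infty)}(\{\mbf{a}_i\})$ is automatic as a pointwise infimum of convex functions. Exchanging $\min\{-\varepsilon,\cdot\}$ with $\inf_i$, the identity above gives $\min\{-\varepsilon,\NP^{(\infty)}(\{\mbf{a}_i\})\}=\min\{-\varepsilon,\inf_i\NP^{(N)}(\mbf{a}_i)\}$. Since $\{\pi\cdot g^{(N)}(\mbf{a}_i)\}_{i\in I}$ lies in the single noetherian ring $R[\pi^{1/p^N}]\!\left<t\right>$, Lemma~\ref{finforfampoly} shows that $\inf_i\NP^{(N)}(\mbf{a}_i)$ is continuous piecewise affine convex with finitely many breaks. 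To rule out accumulation of breaks of $\NP^{(\infty)}(\{\mbf{a}_i\})$ as $\varepsilon\to 0$, I would argue that the ``plateau heights'' of the infimum in $\left]-\infty,0\right[$ lie in the discrete set $v\bigl(\sum(R/\pi)^{1/p^n}\bigr)\cap\left]-\infty,0\right[$, so only finitely many such heights can appear at all, and between consecutive heights convexity allows only a bounded number of slope changes.

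Finally, when $\alpha$ is a type 1 point, $r_\alpha=\infty$. For fixed $N$, the Newton polygon of $g^{(N)}$ on $[0,\infty]$ either escapes to $+\infty$ (if $g^{(N)}$ vanishes at $\alpha$) or converges to a finite limit; after taking $\min\{0,\cdot\}$, the resulting $\NP^{(N)}$ has terminal slope $0$, and this is inherited by the limit in $n$ and by the family infimum using the uniform upper bound $0$. The main obstacle I anticipate is the control of breaks of the family infimum as one approaches the level $0$: individual breaks of $\NP^{(\infty)}(\mbf{a}_i)$ occur at $i$-dependent positions, so a priori the inf could have infinitely many. This accumulation is ruled out precisely by the noetherian hypothesis on $R$ through the discreteness of the value set, which is the conceptual heart of the lemma.
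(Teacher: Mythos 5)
Your opening moves match the paper's: reduce to the region $\{s:\NP^{(\infty)}<-\varepsilon\}$ via (\ref{NPtruncsta}), invoke Lemma~\ref{finforfampoly} there, and observe that the threshold $N(\varepsilon)$ is independent of the index $i$. (One small slip: the coefficients of $\pi\cdot g^{(N)}(\mbf{a})$ land in $R^{1/p^N}$, not in $R[\pi^{1/p^N}]$, so the correct noetherian auxiliary ring is $R'=R^{1/p^N}$ as in the paper.) The genuine gap is in the finiteness of breaks. Your argument that the ``plateau heights'' lie in the discrete value set and that ``between consecutive heights convexity allows only a bounded number of slope changes'' does not hold up: the value of $\NP^{(\infty)}(\{\mbf a_i\})$ at a break is of the form $v(g)+is$ for some coefficient $g$ and some integer $i$, and the $s$-contribution takes it outside any fixed discrete set; and a (con)convex piecewise-affine function with slopes in the accumulating set $\bigcup_m p^{-m}\mb{N}$ can in principle have infinitely many breaks between two fixed function values (e.g.\ slopes $1,1/p,1/p^2,\dots$). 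So discreteness of the \emph{values} alone does not cap the number of breaks; something must bound where the pieces can meet.

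What actually closes the argument in the paper is a finer structural fact that your outline omits. After normalizing to $\alpha=v_{0,\infty}$, stripping off constant terms (which only contributes a $\min$ with a constant $C_i$), and truncating the $a_{i,k}$ to polynomials of degree $\le N$ (using a second application of noetherianity of $R^{1/p^{\lceil\log_p N\rceil}}$), one shows that on $[\gamma,\infty[$ the function $\NP^{(\infty)}$ is the infimum of $\mbf{0}$ and a collection of half-lines belonging to $\bigcup_{m\ge 0}p^{-m}L$, where $L$ is a \emph{finite} set: each $l\in L$ has integral slope in $[1,N]$ and its height at $s=\gamma$ lies in the finite set $\{v(R')-v(\pi)+\mb{N}_{\le N}\gamma\}\cap\ ]-\infty,0]$. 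The decisive observation is that $p^{-m}l$ crosses the $s$-axis at the \emph{same} point as $l$, so the set $Z$ of all such crossings equals $\{l\cap\mbf{0}\mid l\in L\}$ and is finite. This forces $\NP^{(\infty)}\equiv 0$ for $s\ge\gamma_{\max}:=\max\mr{pr}_1(Z)$ (giving terminal slope $0$ in the type 1 case) and yields a last straight segment $[\gamma_1,\gamma_0]$ just before the plateau, ruling out accumulation of breaks. Without this ``finitely many lines, shared zero-crossings'' analysis, the finiteness of breaks — which you rightly identify as the conceptual heart — does not follow, and the terminal-slope claim depends on it.
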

\begin{proof}
 We may assume that $\ell$ is spherically complete.
 In this case, we may write $\alpha=v_{\omega,r}$.
 Since $\NP(f)$ is the same as the \newton polygon of $f(t+\omega)$ around $v_{0,\infty}$ (and truncate to $[0,r]$)
 for any $f\in\!\ell\left<t\right>^{1/q}$ where $q$ is some power of $p$,
 by replacing $R$ by $R[\omega]\subset\mf{o}_{\ell}$, which is indeed noetherian, we may assume that $\alpha=v_{0,\infty}$.
 Note that if $\alpha$ is a point of type 1 equal to $v_{\omega,\infty}$,
 then $\NP_\alpha(f(t))=\NP_{v_{0,\infty}}(f(t+\omega))$ and we do not need truncation.
 This implies that the claim for type 1 point is also reduced to the case $\alpha=v_{0,\infty}$.
 By (\ref{NPtruncsta}) together with Lemma \ref{finforfampoly} applying
 for $R'=R^{1/p^N}$,
 $\bigl(\NP^{(\infty)}(\mbf{a})\bigr)^{-1}
 (\left]-\infty,-\varepsilon\right[)$
 is continuous piecewise affine and convex for any $\varepsilon>0$.
 Thus, we only need to check that the breaks are finite and the terminal
 slope is $0$.
 
 Let $C_i:=\lim_{n\rightarrow\infty}\min \bigl\{0,v(\sum_{k=0}^n(a_{i,k}(0)/\pi)^{1/p^k})\bigr\}$.
 This limit converges since we have $\lim_{k\rightarrow\infty}\min\bigl\{0,v(a_{i,k}(0)/\pi)/p^k\bigr\}=0$.
 Write $a_{i,k}=a'_{i,k}+a_{i,k}(0)$, and put $\mbf{a}'_i:=\{a'_{i,k}\}_{k}$. 
 Since $\NP(a'_{i,k})$ does not contain segments of slope $0$ for any $i,k$, we have
 \begin{equation*}
  \NP^{(\infty)}(\mbf{a}_i)=\min
   \bigl\{\NP^{(\infty)}(\mbf{a}'_i),C_i\bigr\}.
 \end{equation*}
 Thus, we may assume that $a_{i,k}(0)=0$ for any $k$, namely $a_{i,k}$ does not have the contant term.

 Fix a point $\gamma\in\left]0,\infty\right[$ such that $\NP^{(\infty)}(\mbf{a})(\gamma)<0$.
 If we do not have such a point, $\NP^{(\infty)}(\mbf{a})=0$, and there is nothing to prove.
 On the interval $[0,\gamma]$, the function
 $\NP^{(\infty)}(\mbf{a})$ has only finitely many breaks because of
 (\ref{NPtruncsta}) and Lemma \ref{finforfampoly} (applied for $R'=R^{1/p^N}$).
 Thus, we only need to check the finiteness for
 $\left[\gamma,\infty\right[$. There exists an integer $N>0$ such that
 $(N+1)\gamma>v(\pi)$. For $g=\sum_{i\geq0}g_it^i\in R\!\left<t\right>$,
 denoted by $g^{\leq N}:=\sum_{N\geq i\geq0}g_it^i\in R[t]$, the
 $N$-truncated polynomial.
 Since $g_i\in R$ and thus $v(g_i)\geq0$ and $\alpha=v_{0,\infty}$,
 we have $\min\{0,v_s(g/\pi)\}=\min\{0,v_s(g^{\leq N}/\pi)\}$ for any $s\in\left[\gamma,\infty\right[$.
 Thus, by putting $\mbf{a}^{\leq N}_i:=\{a_{i,k}^{\leq N}\}_{k}$, we have
 $\NP^{(\infty)}(\{\mbf{a}_i\})=\NP^{(\infty)}(\{\mbf{a}_i^{\leq N}\})$ on $\left[\gamma,\infty\right[$.
 Thus, from now on, we may and do assume that $a_{i,k}$ is a polynomial without constant term of degree $\leq N$ for any $i\in I$ and $k\geq0$.

 Now, put $R':=R^{1/p^{\lceil\log_p(N)\rceil}}$, and
 let $\mb{N}'$ be the set of natural numbers $0<n\leq N$ such that
 $p\nmid n$. Denote by $[1,N]_{\mb{N}}:=[1,N]\cap\mb{N}$.
 Let $P$ be the (non-empty) set of monomials $\alpha t^i$ ($i\in\mb{Q}$) in $\ell[t]^{\mr{perf}}$ such that
 \begin{enumerate}
  \item $i$ is either in $p^{-k}\mb{N}'$ for some positive integer $k$,
	or in $[1,N]_{\mb{N}}$. In the latter case, put $k=0$.
  \item $\alpha\in(\pi^{-1}R')^{1/p^k}$ and $v(\alpha)+i\gamma<0$.
 \end{enumerate}
 Now, let $\{a_m\}_m$ be a sequence of $a_m\in R[t]$ of degree $\leq N$ and $a_m(0)=0$.
 Let $n\geq0$, and write $g^{(n)}(\{a_m\})=\sum_{i\in p^{-n}\mb{N}}g_it^i$ with $g\in\ell^{\mr{perf}}$.
 We claim that either $g_it^i\in P$ or $v(g_i)+i\gamma\geq0$.
 Indeed, we may assume $v(g_i)+i\gamma<0$, and we must show that $g_it^i$ belongs to $P$.
 By construction of $g^{(n)}$ and the assumption that there is no constant term in $a_m$, the coefficient $g_i$ is non-zero only if
 \begin{equation*}
  i\in p^{-n}\mb{N}'\sqcup p^{-n+1}\mb{N}'\dots p^{-1}\mb{N}'\sqcup
   [1,N]_{\mb{N}}.
 \end{equation*}
 We need to check that for $i\in p^{-k}\mb{N}'$ (resp.\ $i\in[1,N]_{\mb{N}}$),
 $g_i\in (\pi^{-1}R')^{1/p^k}$ (resp.\ $g_i\in \pi^{-1}R'$).
 Indeed, for $i\in[1,N]_{\mb{N}}$, since the degree of $a_m$ is bounded by $N$,
 there is no contribution to $g_it^i$ from $(a_m/\pi)^{1/p^m}$ if
 $m>\lceil\log_p(N)\rceil$. This implies that
 \begin{equation*}
  g_i\in \pi^{-1}R+(\pi^{-1}R)+\dots+
   (\pi^{-1}R)^{1/p^{\lceil\log_p(N)\rceil}}
   \subset \pi^{-1}R'.
 \end{equation*}
 We can check similarly for $i\in p^{-k}\mb{N}'$.

 Now, we claim that there exists a (non-empty) {\em finite} set $L$ of (half) lines with positive integral slope in
 $\left[\gamma,\infty\right[\times\mb{R}$ such that $\NP(\alpha t^i)\in\bigcup_{i\geq0}p^{-i}L$ for any $\alpha t^i\in P$.
 Indeed, let $\alpha t^i\in(\pi^{-1}R')^{1/p^k}t^i$ in $P$.
 Then we have
 \begin{equation*}
  \NP(\alpha t^i)(s)=
   p^{-k}\times\NP\bigl((\alpha t^i)^{p^k}\bigr)(s),
 \end{equation*}
 so we only need to construct $L$ such that $\NP(\alpha t^i)\in L$
 for any $\alpha t^i\in P$ and $i\in\mb{N}$.
 We show that the set
 \begin{equation*}
  L:=\Bigl\{\NP(\alpha t^i)\subset\left[\gamma,\infty\right[\times\mb{R}
   \,\big|\,
   \mbox{(*)\,$\alpha t^i\in P$ and $i\in\mb{N}$}\Bigr\}
 \end{equation*}
 is finite. Since $R'$ is noetherian, the set
 $v(R')-v(\pi)+\mb{N}_{\leq N}\cdot\gamma$ is discrete.
 We have
 \begin{equation*}
  \NP(\alpha t^i)(\gamma)\in
   \bigl\{v(R')-v(\pi)+\mb{N}_{\leq N}\gamma\bigr\}\cap
   \left]-\infty,0\right]=:D
 \end{equation*}
 for any $\alpha t^i$ satisfying (*), and the discreteness implies that $D$ is actually a finite set.
 Any line in $L$ has the endpoint in the set $\bigl\{(\gamma,d)\mid d\in D\bigr\}$ and the slope is in $\mb{N}_{\leq N}$.
 There are only $N\times(\#D)$ such lines, and the finiteness of $L$ follows.

 What we have shown implies that there is a set $S\subset\bigcup_{i\geq0}p^{-i}L$ such that $\NP^{(\infty)}(\mbf{a})=\inf_{l\in S}\{l,\mbf{0}\}$,
 where $\mbf{0}:=\left[\gamma,\infty\right[\times\{0\}\subset\left[\gamma,\infty\right[\times\mb{R}$.
 Since $L$ is a finite set, the set
 \begin{equation*}
  Z:=\bigl\{l\cap\mbf{0}
   \mid
   l\in\bigcup p^{-i}L
   \bigr\}
   =
   \bigl\{l\cap\mbf{0}
   \mid l\in L\bigr\}
   \subset\mb{R}^2
 \end{equation*}
 is finite (and non-empty).
 Let $\gamma_{\max}:=\max\{\mr{pr}_1(Z)\}$, where $\mr{pr}_1\colon\mb{R}^2\rightarrow\mb{R}$ is the first projection.
 Then for $s\geq\gamma_{\max}$, we have $\NP^{(\infty)}(\mbf{a})(s)=0$, thus the terminal slope is $0$.
 Since $\NP^{(\infty)}(\mbf{a})(\gamma)<0$, there exists $\gamma_0\in Z$ such that $\NP^{(\infty)}(\mbf{a})(\gamma_0)=0$.
 Take $\gamma_1<\gamma_0$ such that $\left(\left[\gamma_1,\gamma_0\right[\times\{0\}\right)\cap Z=(\gamma_1,0)$ if it exists and $\gamma_1=\gamma$ if not.
 Then $\NP^{(\infty)}(\mbf{a})(\left[\gamma_1,\gamma_0\right])$ is a segment.
 Indeed, if there were a break on $\left]\gamma_1,\gamma_0\right[$, then there would be a line in $\bigcup p^{-i}L$ which has intersection with
 $\left]\gamma_1,\gamma_0\right[\times\{0\}$, which is impossible by the choice.
 Thus there are finitely many breaks on $[\gamma,\gamma_0]$, and the lemma follows.
\end{proof}

\subsection{}
\label{mainpropstabil}
Let $f=\sum_{\ul{k}}a_{\ul{k}}\ul{x}^{\ul{k}}\in\ell\!\left<t\right>\dd{x_1,\dots,x_n}$, and let $\alpha\in\mb{D}_\ell$.
Let $\Lambda$ be the set $\bigl\{\ul{n}\in\mb{N}^n\setminus\{\ul{0}\}\mid p\nmid\ul{n}\bigr\}$.
For $\ul{n}\in\Lambda$, we put $A_{\ul{n}}:=\{a_{p^k\ul{n}}\}_{k\geq0}$, and
\begin{equation*}
 \NP^{(\infty)}_\alpha(f/\pi):=
  \NP^{(\infty)}_\alpha(\{A_{\ul{n}}\}
  _{\ul{n}\in\Lambda}).
\end{equation*}
We wish to study the behavior of this polygon around terminal points ({\it i.e.}\ $\alpha$ is either of type 1 or 4) when $f$ is algebraic.

\begin{prop*}
 Let $f=\sum a_{\ul{k}}x^{\ul{k}}$ be a formal power series in $R\!\left<t\right>\dd{x_1,\dots,x_n}$ which is algebraic
 over $\mr{Frac}\left(\ell\!\left<t\right>[x_1,\dots,x_n]\right)$.
 Assume that $\alpha$ is a terminal point.
 Then there exists $\varepsilon>0$ such that the function $\NP^{(\infty)}_{\alpha}(f/\pi)(s)$ is constant on $s\in[r_\alpha-\varepsilon,r_\alpha]$.
\end{prop*}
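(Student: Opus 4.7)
My approach splits based on the type of the terminal point $\alpha$.

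If $\alpha$ is of type $1$ (so $r_\alpha = \infty$), the conclusion is immediate from Lemma~\ref{defandfinofNP}: the terminal slope there is shown to be $0$, and since $\NP^{(\infty)}_{\alpha}(f/\pi)$ is piecewise affine with finitely many breaks, it must be constant for $s$ large, giving the desired stabilization. The substance of the proposition is the case where $\alpha$ is of type $4$.

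For $\alpha$ of type $4$, I would first unpack the proof of Lemma~\ref{defandfinofNP}: on an interval $[\gamma, r_\alpha]$ with $\gamma$ close to $r_\alpha$, the function takes the form
\begin{equation*}
 \NP^{(\infty)}_\alpha(f/\pi)(s)
 = \min\Bigl\{0,\ \inf_{l} l(s)\Bigr\},
\end{equation*}
where the infimum is over a \emph{finite} family of affine functions $l$ with positive slopes and negative intercepts, each coming from a monomial $\beta\,t^j/\pi^{1/p^k}$ (with $\beta \in R^{1/p^k}$, $j \leq N$, and $\beta t^j$ a term of some $a_{p^k\ul{n}}$). Stability on $[r_\alpha-\varepsilon, r_\alpha]$ is thus equivalent to the set of zeros of these lines being bounded away from $r_\alpha$.

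To establish this last statement I would invoke the Sharif--Woodcock theorem~\cite{SW}, which asserts that in positive characteristic the Hadamard product of algebraic power series is algebraic. By iterating Cartier operators in the $\ul{x}$-variables of $f$ (which preserve algebraicity by Sharif--Woodcock), I would assemble, for each $\ul{n}\in\Lambda$, an algebraic generating series $G_{\ul{n}}(t,u) \in \ell\!\left<t\right>^{\mr{perf}}\dd{u}$ whose $u^k$-coefficient essentially recovers $(a_{p^k\ul{n}}/\pi)^{1/p^k}$. Because $\alpha$ is of type $4$ and $\ell$ is algebraically closed, the virtual center of $\alpha$ lies in a proper extension of $\ell$ and is transcendental over $\ell$; consequently, the zeros of any algebraic element of $\ell\!\left<t\right>$, being algebraic over $\ell$, lie in $\ell$ and cannot accumulate at $\alpha$, so its Newton polygon at $\alpha$ has terminal slope $0$. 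Transporting this stability from $G_{\ul{n}}$ back to the formal sum $g^{(\infty)}(A_{\ul{n}})$ and then taking the infimum over $\ul{n}\in\Lambda$ (controlled by the finiteness of the set of lines), I would conclude that the zeros of the relevant lines stay a definite distance from $r_\alpha$.

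The principal obstacle is precisely constructing the algebraic $G_{\ul{n}}(t, u)$ and showing that stability of its Newton polygon controls that of the infinite formal sum $\sum_{k\geq0}(a_{p^k\ul{n}}/\pi)^{1/p^k}$: the infinite sum together with Frobenius roots destroys naive algebraicity at each stage, so the whole argument hinges on packaging the coefficients into a \emph{single} algebraic series in an auxiliary variable, which is what Sharif--Woodcock enables. Making this packaging compatible with the $\inf$-structure over the indices $(\ul{n}, k)$ in the definition of $\NP^{(\infty)}$ is where the main bookkeeping lies.
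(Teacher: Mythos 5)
Your treatment of the type 1 case matches the paper exactly: both deduce constancy for large $s$ directly from Lemma~\ref{defandfinofNP}. The substantive disagreement is in the type 4 case, and there the proposal has a real gap.

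You correctly identify Sharif--Woodcock (via Hadamard products / Cartier operators) as the entry point for using algebraicity of $f$; this is precisely Lemma~\ref{algHadm}, which shows that $h(X):=\sum_{k} a_{p^k\ul{n}}X^{p^k}$ is algebraic over $K(X)$. But from there the paper does \emph{not} package the data into a two-variable generating series. Instead it extracts, via Lemma~\ref{algrLRR}, a \emph{reversed linear recurrence relation} among the coefficients $b_i:=a_{p^i\ul{n}}/\pi$, and the core of the argument is Lemma~\ref{algbmainlrr}: the rLRR produces a fixed additive polynomial $P$ for which $P(S_{n,n'})=f(\mathbf{b}_n)-f(\mathbf{b}_{n'+1})$ telescopes, so one can solve $P(X)=f(\mathbf{b}_0)$ in a finite affinoid extension $A$ of $\ell\!\left<t\right>$ and obtain a genuine element $S\in A$ with $v_{\varphi(\rho)}(S-S_n)\to0$ uniformly on a segment $[r-\gamma,r]$. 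It is the existence of this honest element $S$ (whose Newton polygon is then analyzed via Lemma~\ref{valuextlem}) that converts the infinite formal sum $\sum b_i^{1/p^i}$ into something to which the ``algebraic elements have terminal slope $0$ at type 4 points'' heuristic can actually be applied.

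Your proposed replacement --- an algebraic $G_{\ul{n}}(t,u)\in\ell\!\left<t\right>^{\mr{perf}}\dd{u}$ whose $u^k$-coefficient ``essentially recovers'' $(a_{p^k\ul{n}}/\pi)^{1/p^k}$ --- does not bridge this gap, for two reasons. First, the coefficients live in a strictly increasing tower $\ell\!\left<t\right>^{1/p^k}$, so there is no fixed coefficient field over which such a $G$ would be a power series, and ``algebraic'' has no stable meaning; Sharif--Woodcock hands you algebraicity of $\sum a_{p^k\ul{n}}X^{p^k}$ over $K(X)$, not algebraicity of the $1/p^k$-rooted data in an auxiliary variable. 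Second, even granting such a $G$, the quantity $\NP^{(\infty)}_\alpha$ is the \emph{valuation of the sum} $\lim_n\sum_{k\le n}b_k^{1/p^k}$, not a coefficientwise or diagonal extraction from $G(t,u)$; you would still have to explain how the 2-variable Newton polygon of $G$ controls that limit, and this is exactly what the rLRR/telescoping mechanism is for. Your remark that ``the zeros of any algebraic element of $\ell\!\left<t\right>$ ... cannot accumulate at $\alpha$'' is a correct and relevant intuition, but it applies only once one has produced an actual element of some finite affinoid extension whose valuation polygon near $\alpha$ computes $\NP^{(\infty)}_\alpha$ --- producing that element is the content of Lemma~\ref{algbmainlrr}, and nothing in the proposal substitutes for it.

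A minor inaccuracy: in Lemma~\ref{defandfinofNP}, $\NP^{(\infty)}$ is an infimum over $\bigcup_{i\ge0}p^{-i}L$ with $L$ finite; the family of lines is countably infinite, not finite. What is finite is the set $Z$ of their intersections with the horizontal axis, and this is what controls the break structure. The distinction matters precisely when one tries to argue about accumulation near $r_\alpha$.
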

The rest of this section is devoted to proving this proposition.

\subsection{}
\label{lemlrrstr}
We start by recalling some basic facts.
Till \ref{proofmainprop}, the noetherian ring $R$ plays no role.
Let $K$ be a field of characteristic $p>0$.
A sequence $\{x_k\}_{k\geq0}$ in $K$ is said to satisfy a {\em linear recurrence relation} (resp.\ {\em reversed linear recurrence relation}),
abbreviated as {\em LRR} (resp.\ {\em rLRR}), if there exist $c_0,\dots,c_n\in\overline{K}$ such that
\begin{equation*}
 c_0x_k+c_1x_{k+1}^{p}+\dots+c_nx_{k+n}^{p^n}=0\qquad
  (\mbox{resp.\ }
  c_0x_k^{p^n}+c_1x_{k+1}^{p^{n-1}}+\dots+c_nx_{k+n}=0)
\end{equation*}
for any $k\geq0$.
We believe that the following lemma is well-known to experts.
A proof can be found, for example, in \cite[Lemma 4]{Kedmis}.

\begin{lem*}
 Let $c_0,\dots,c_n$ be elements in $\overline{K}$ such that
 $c_0,c_n\neq0$.

 (i) The roots of the ``characteristic polynomial''
 $P(X):=c_0X+c_1X^p+\dots+c_nX^{p^n}$
 {\normalfont(}resp.\ $P(X):=c_0X^{p^n}+c_1X^{p^{n-1}}+\dots+c_nX${\normalfont)}
 form a $\mb{F}_p$-vector subspace of $\overline{K}$ of dimension $n$.

 (ii) Let $z_1,\dots, z_n$ be a basis of the vector space of (i). Then a
 sequence $\{x_k\}_{k\geq0}$ in $K$ satisfies LRR {\normalfont(}resp.\ rLRR{\normalfont)}
 with respect to $c_0,\dots,c_n$ if and only if
 there exist $\lambda_1,\dots,\lambda_n$ in $\overline{K}$ such that
 $x_k$ is of the form
 \begin{equation*}
  x_k=z_1\lambda_1^{1/p^k}+\dots+z_n\lambda_n^{1/p^k},\qquad
   \mbox
   {{\normalfont(}resp.\ $x_k=z_1\lambda_1^{p^k}+\dots+z_n\lambda_n^{p^k}${\normalfont)}}.
 \end{equation*}
 for any $k\geq0$.
\end{lem*}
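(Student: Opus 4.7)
The plan is to treat (i) via the theory of additive ($p$-linearized) polynomials and then to deduce (ii) from a direct Frobenius computation in the ``if'' direction combined with a Moore-determinant argument in the ``only if'' direction. For part (i), the polynomial $P(X)=c_0X+c_1X^p+\dots+c_nX^{p^n}$ is additive: by the freshman's dream in characteristic $p$, $P(X+Y)=P(X)+P(Y)$ and $P(aX)=aP(X)$ for $a\in\mb{F}_p$, so the zero locus $V:=P^{-1}(0)\subset\overline{K}$ is an $\mb{F}_p$-subspace. Since $P'(X)=c_0\neq0$, $P$ is separable, hence has $\deg P=p^n$ distinct roots in $\overline{K}$, so $|V|=p^n$ and $\dim_{\mb{F}_p}V=n$. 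The reversed case is identical, with $c_n\neq 0$ playing the role of $c_0$.

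For the ``if'' direction of (ii), a direct computation using Frobenius additivity shows that, for any $\lambda_i\in\overline{K}$, the sequence $x_k:=\sum_{i=1}^n z_i\lambda_i^{1/p^k}$ satisfies
\[
\sum_{j=0}^{n}c_j x_{k+j}^{p^j}=\sum_{j=0}^n c_j \sum_i z_i^{p^j}\lambda_i^{1/p^k}=\sum_i\lambda_i^{1/p^k}P(z_i)=0.
\]
For the converse, note first that, because $c_n\neq 0$ and the Frobenius is injective on $\overline{K}$, the LRR
\[
x_{k+n}^{p^n}=-c_n^{-1}\bigl(c_0x_k+c_1x_{k+1}^p+\dots+c_{n-1}x_{k+n-1}^{p^{n-1}}\bigr)
\]
determines $x_{k+n}$ uniquely from $x_k,\dots,x_{k+n-1}$, so an LRR-sequence is determined by its first $n$ terms. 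I would then seek $\lambda_i$ solving $x_k=\sum_i z_i\lambda_i^{1/p^k}$ for $k=0,1,\dots,n-1$; raising to the $p^k$-th power reduces this to the linear system
\[
x_k^{p^k}=\sum_{i=1}^{n}z_i^{p^k}\lambda_i,\qquad k=0,1,\dots,n-1,
\]
whose coefficient matrix is the Moore matrix $(z_i^{p^k})$. By Moore's classical determinant theorem, this matrix is invertible precisely when $z_1,\dots,z_n$ are $\mb{F}_p$-linearly independent, which holds by hypothesis. Solving yields $\lambda_i$; the two sequences $\{x_k\}$ and $\{\sum z_i\lambda_i^{1/p^k}\}$ both satisfy the LRR and have the same first $n$ terms, hence coincide.

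The rLRR case is symmetric: the ansatz becomes $x_k=\sum z_i\lambda_i^{p^k}$, which satisfies the relation by the analogous Frobenius computation; for the converse, since the relation now reads $x_{k+n}=-c_n^{-1}\bigl(c_0 x_k^{p^n}+\dots+c_{n-1}x_{k+n-1}^p\bigr)$, no $p$-th roots are needed to propagate the sequence, and substituting $\mu_i:=\lambda_i^{p^{n-1}}$ and $y_k:=x_k^{p^{n-1-k}}$ reduces the recovery of the $\lambda_i$ from $(x_0,\dots,x_{n-1})$ to a Moore linear system in the $\mu_i$, after which $\lambda_i$ is the unique $p^{n-1}$-th root of $\mu_i$ in $\overline{K}$. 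The main (indeed only) nontrivial ingredient is Moore's determinant theorem over $\overline{K}$; the rest of the proof is formal manipulation of the Frobenius.
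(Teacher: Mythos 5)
Your proof is correct, and there is nothing in the paper to compare it with directly: the lemma is stated as well known, with the proof deferred to Kedlaya's Lemma 4 in [Ke1], so the paper contains no argument of its own. Your self-contained treatment is sound. For (i), additivity of $P$ together with $P'(X)=c_0\neq0$ (resp.\ $P'(X)=c_n\neq0$ in the reversed case) gives $p^n$ distinct roots, hence an $\mathbb{F}_p$-subspace of dimension $n$. For (ii), the displayed Frobenius computation correctly handles the ``if'' direction, and the ``only if'' direction combines two valid observations: injectivity of the Frobenius shows that a sequence in $\overline{K}$ satisfying the LRR (resp.\ rLRR) is determined by its first $n$ terms, and the Moore matrix $(z_i^{p^k})_{0\le k\le n-1,\,1\le i\le n}$ is invertible precisely because $z_1,\dots,z_n$ are $\mathbb{F}_p$-linearly independent, so one can solve for $\lambda_1,\dots,\lambda_n$ matching the initial segment; the candidate sequence and $\{x_k\}$ then satisfy the same recurrence with the same first $n$ terms and hence coincide. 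The rLRR reduction via $\mu_i=\lambda_i^{p^{n-1}}$ and $y_k=x_k^{p^{n-1-k}}$ is also fine, since raising to $p$-th powers is injective, so the transformed Moore system is equivalent to the original interpolation problem and the $p^{n-1}$-st roots recovering $\lambda_i$ are unique. This is essentially the standard argument for such semilinear recurrences (direct verification plus determination by initial data, with the Moore determinant supplying existence of the $\lambda_i$), so it is an acceptable substitute for the cited reference.
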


\begin{lem}
 \label{algHadm}
 Let $K$ be a field of characteristic $p>0$.
 Let $f=\sum a_{\ul{k}}\ul{x}^{\ul{k}}$ be a formal power series in
 $K\dd{x_1,\dots,x_n}$ which is algebraic over $K(x_1,\dots,x_n)$.
 Then for any $\ul{n}\neq\ul{0}$, the series
 \begin{equation*}
  h(X):=\sum_{k\geq0}a_{p^k\cdot \ul{n}}X^{p^k}
 \end{equation*}
 in $K\dd{X}$ is algebraic over $K(X)$.
\end{lem}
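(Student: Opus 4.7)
The plan is to leverage the Sharif--Woodcock theorem on the algebraicity of Hadamard products of power series in characteristic $p$ (already flagged as the key input in the introduction), together with two elementary substitutions, to deduce one-variable algebraicity of $h$ from the multivariate algebraicity of $f$.

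My first step would be to notice that, in characteristic $p$, the series
\[
\delta(\ul{x}):=\sum_{k\geq0}\ul{x}^{p^k\ul{n}}
\]
satisfies the Artin--Schreier relation $\delta^p-\delta+\ul{x}^{\ul{n}}=0$, so $\delta$ is algebraic over $K(\ul{x})$. Applying \cite{SW} to $f$ and $\delta$ then yields the algebraicity of the Hadamard product
\[
F(\ul{x}):=f\odot\delta=\sum_{k\geq0}a_{p^k\ul{n}}\,\ul{x}^{p^k\ul{n}}
\]
over $K(\ul{x})$. This single step carries all the nontrivial content of the proof.

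Next, I would reduce to the case where every $n_i>0$: if some $n_j=0$, then $a_{p^k\ul{n}}$ only involves coefficients of $f$ with $k_j=0$, so one may specialize $x_j\mapsto 0$ in $f$ (which preserves algebraicity, for instance by arranging the annihilating polynomial to be monic in the target variable) and drop $x_j$ entirely. Under this reduction, I would specialize all of $x_1,\dots,x_n$ to a single new indeterminate $Y$; since $Y$ has no constant term, this substitution is a well-defined homomorphism of formal power series rings and preserves algebraicity. It produces
\[
F(Y,\dots,Y)=\sum_{k\geq0}a_{p^k\ul{n}}\,Y^{p^k d}=h(Y^d),\qquad d:=n_1+\cdots+n_n\geq1,
\]
which is algebraic over $K(Y)$.

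To conclude, I would invoke the fact that $K(Y)/K(Y^d)$ is a finite algebraic extension (since $Y$ is a root of $T^d-Y^d$), so by transitivity any element of $K\dd{Y}$ algebraic over $K(Y)$ is already algebraic over $K(Y^d)$. Applying this to $h(Y^d)$ and renaming $X:=Y^d$, one has $h(Y^d)\in K\dd{Y^d}=K\dd{X}$ and $K(Y^d)=K(X)$, so the resulting algebraic relation is exactly the required one for $h(X)$ over $K(X)$. The only genuine obstacle is the very first step: one needs the Sharif--Woodcock theorem (in a multivariate form) to ensure that the Hadamard product $f\odot\delta$ is still algebraic; everything after that is a routine substitution argument.
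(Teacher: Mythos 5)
Your opening move --- Hadamard-multiplying $f$ with the algebraic series $\delta=\sum_k \ul{x}^{p^k\ul{n}}$ and invoking Sharif--Woodcock to conclude that $f*\delta = h(\ul{x}^{\ul{n}})$ is algebraic --- is exactly the paper's, and as you say it carries all the real content. Where you diverge is in passing from a relation for $h(\ul{x}^{\ul{n}})$ to one for $h(X)$. The paper views $K(x_1,\dots,x_n)$ as a finite extension of the purely transcendental field $K(X,x_2,\dots,x_n)$ with $X=\ul{x}^{\ul{n}}$ (using only $n_1\neq0$, no reduction to all $n_i>0$), writes a polynomial relation $\sum_i b_i(X,x_2,\dots,x_n)h(X)^i=0$, expands by monomials in $x_2,\dots,x_n$, and reads off a non-zero coefficient $c_{\ul{k}_0}(X,t)\in K[X,t]$ vanishing at $t=h(X)$. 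You instead set $x_1=\dots=x_n=Y$ and undo $X=Y^d$.

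The gap is the unjustified claim that the specialization $x_i\mapsto Y$ preserves algebraicity. This is in fact true, but it is not the formality you suggest: the naive attempt --- clear denominators, pass to a primitive annihilating polynomial $\sum_i b_i(\ul{x})T^i$ with $\gcd_i(b_i)=1$, and specialize --- can fail for $n\geq3$, since the $b_i$ can all lie in the diagonal ideal $(x_1-x_2,\dots,x_1-x_n)$ without sharing a common factor (e.g. $b_0=x_1-x_2$, $b_1=x_1-x_3$), so the specialized equation may be identically zero. (Your parenthetical fix for $x_j\mapsto0$ does work, but only because $(x_j)$ is principal, so primitivity saves you; the diagonal ideal is not principal once $n\geq3$.) A correct justification is a dimension count: the kernel of $K[\ul{x}][F]\to K\dd{Y}$ is a prime whose contraction to the normal ring $K[\ul{x}]$ is the diagonal ideal of height $n-1$; by going-down the kernel itself has height $n-1$, so the image $K[Y,G]$ has Krull dimension $1$, forcing $G:=F(Y,\dots,Y)$ to be algebraic over $K(Y)$. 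With that lemma supplied your endgame is a valid alternative --- geometrically appealing, at the cost of an extra specialization lemma --- while the paper's coefficient-extraction is shorter and self-contained. Also, your reduction to all $n_i>0$ is unnecessary: $d=\sum_i n_i>0$ already holds whenever $\ul{n}\neq\ul{0}$, and the rest of your argument never uses the stronger hypothesis.
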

\begin{proof}
 The argument here is a variant of \cite[Theorem 7.1]{SW}.
 Without loss of generality, we may assume $n_1\neq0$.
 Let $g=\sum_{k\geq0}\ul{x}^{p^k\cdot\ul{n}}$.
 This is algebraic over $K(x_1,\dots,x_n)$ since
 $g-g^p=\ul{x}^{\ul{n}}$. Then
 \begin{equation*}
  f*g=
   \sum_{k\geq0}a_{p^k\cdot \ul{n}}\ul{x}^{p^k\cdot\ul{n}}
   =h(\ul{x}^{\ul{n}}),
 \end{equation*}
 where $*$ denotes the Hadamard product (cf.\ \cite[Definition 2.1]{SW}).
 Since algebraicity is preserved by taking Hadamard product by
 \cite{SW},
 $f*g$ is algebraic over $K(x_1,\dots,x_n)$. This implies that
 $h(\ul{x}^{\ul{n}})=(f*g)(x_1,\dots,x_n)$ is algebraic over
 $K(X,x_2,\dots,x_n)$ where $X=\ul{x}^{\ul{n}}$, since
 $K(x_1,\dots,x_n)\supset K(X,x_2,\dots,x_n)$ is a finite algebraic
 extension by the assumption $n_1\neq0$.
 Note that $X,x_2,\dots,x_n$ do not have algebraic relation over $K$ in
 $K(x_1,\dots,x_n)$, thus $K(X,x_2,\dots,x_n)$ is the polynomial field
 with $n$-indeterminates.
 Then there exist an integer $N>0$ and polynomials $b_i\in K[X,x_2,\dots,x_n]$ for $0\leq i\leq N$
 such that $b_N\neq0$ and $h(X)$ is a solution of the equation
 \begin{equation}
  \label{algeqhapro}
   \tag{$\star$}
  b_Nt^N+b_{N-1}t^{N-1}+\dots+b_0=0.
 \end{equation}
 We write the left side of the equation as
 $\sum_{\ul{k}=(k_2,\dots,k_n)}
 c_{\ul{k}}\cdot(x_2^{k_2}\dots x_n^{k_n})$ where $c_{\ul{k}}\in
 K[X,t]$, a polynomial in $x_2,\dots,x_n$ with coefficients in $K[X,t]$.
 We have $c_{\ul{k}}(X,h(X))=0$ for any $\ul{k}$ in $K(X)$.
 Since the equation (\ref{algeqhapro}) is non-trivial, there exists
 $\ul{k}_0$ such that $c_{\ul{k}_0}\neq0$, and then,
 $h(X)$ is a solution of $c_{\ul{k}_0}(X,t)=0$, as required.
\end{proof}

\begin{lem}
 \label{algrLRR}
 Let $f:=\sum a_i\,X^{p^i}\in K\dd{X}$ be algebraic over $K(X)$.
 Then there exists $N$ such that $\{a_{i+N}\}_{i\geq0}$ is rLRR.
\end{lem}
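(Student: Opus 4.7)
The plan is to exploit that $\overline{K}(X,f)/\overline{K}(X)$ is a finite extension to produce a polynomial-coefficient relation among the Frobenius powers $f^{p^i}$, then read off an rLRR by examining the coefficient of the pure $p$-power monomial $X^{p^j}$ for $j$ large.

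First I would produce the polynomial relation. Every $f^{p^i}$ lies in the finite extension $L:=\overline{K}(X,f)$ of $\overline{K}(X)$, so $\{f^{p^i}\}_{i\geq 0}$ spans a finite-dimensional $\overline{K}(X)$-subspace of $L$, yielding a nontrivial $\overline{K}(X)$-linear dependence. Clearing denominators,
\begin{equation*}
 \sum_{i=0}^{N}P_i(X)\,f^{p^i}=0,\qquad P_i\in\overline{K}[X]\text{ not all zero.}
\end{equation*}
If every $P_i$ were divisible by $X$, I would divide the whole identity by $X$ (a non-zero-divisor in $\overline{K}((X))$) and repeat; this loop halts in at most $\min_{P_i\neq 0}\mathrm{ord}_X(P_i)$ steps, so I may assume that the constant terms $p_{i,0}:=P_i(0)$ are not all zero.

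The key combinatorial step is a coefficient comparison. Expanding $P_i=\sum_l p_{i,l}X^l$ and $f^{p^i}=\sum_k a_k^{p^i}X^{p^{k+i}}$, the coefficient of $X^{p^j}$ in $P_if^{p^i}$ is $\sum p_{i,l}a_k^{p^i}$ summed over $(l,k)$ with $l+p^{k+i}=p^j$. Set $D:=\max_i\deg P_i$. For $l>0$ in such a sum one has $l=p^j-p^{k+i}\geq p^j-p^{j-1}=p^{j-1}(p-1)$; hence once $j$ is so large that $p^{j-1}(p-1)>D$, the bound $l\leq D$ forces $l=0$ and $k+i=j$. Substituting, the vanishing of $\sum P_if^{p^i}$ reduces for all such $j$ to
\begin{equation*}
 \sum_{i=0}^{N}p_{i,0}\,a_{j-i}^{p^i}=0.
\end{equation*}

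Finally I repackage this as an rLRR. Let $i_{\min}$ (resp.\ $i_{\max}$) be the smallest (resp.\ largest) index with $p_{i,0}\neq 0$; they exist by the first step. Restricting the sum to $i_{\min}\leq i\leq i_{\max}$, reindexing by $l:=i_{\max}-i$ and $k:=j-i_{\max}$, and extracting a $p^{i_{\min}}$-th root (well defined in characteristic $p$, since $\overline{K}$ is perfect and Frobenius distributes over sums) converts the displayed identity into $\sum_{l=0}^{n}c_l\,a_{k+l}^{p^{n-l}}=0$ for all $k\geq 0$ on an appropriately shifted sequence $\{a_{k+M}\}_{k\geq 0}$, with $n:=i_{\max}-i_{\min}$ and $c_0=p_{i_{\max},0}^{1/p^{i_{\min}}},\;c_n=p_{i_{\min},0}^{1/p^{i_{\min}}}$ both nonzero. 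The main obstacle is the combinatorial coefficient-comparison step: one must show that the bounded-degree polynomials $P_i$ cannot contribute to high $X^{p^j}$ coefficients except via their constant terms. This is precisely where the sparse form $f=\sum a_iX^{p^i}$ meets the boundedness of $\deg P_i$; the remaining manipulations---producing the relation, stripping powers of $X$, and extracting a $p^{i_{\min}}$-th root---are routine.
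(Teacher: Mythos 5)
Your proof is correct and follows essentially the same route as the paper: both derive a polynomial relation $\sum_i P_i(X)f^{p^i}=0$ (the paper uses $\{1,f,f^p,\dots\}$ rather than $\{f,f^p,\dots\}$, an immaterial difference), then compare coefficients of $X^{p^j}$ for $j$ large enough that only the constant terms of the $P_i$ can contribute, yielding the rLRR. The extra normalization steps you include --- dividing out the common power of $X$, trimming to $i_{\min}\leq i\leq i_{\max}$, and extracting a $p^{i_{\min}}$-th root to force nonzero endpoint coefficients --- address a point the paper's proof leaves implicit, but do not constitute a genuinely different argument.
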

\begin{proof}
 Since $f$ is algebraic, the set $\{1,f,f^{p},f^{p^2},\dots\}$ is linearly dependent over $K(X)$.
 Thus, there exists $b,b_0,\dots,b_n\in K[X]$ such that
 \begin{equation}
  \label{equdifrec}
   \tag{$\star$}
  b=b_0f+b_1f^p+\dots+b_nf^{p^n}
 \end{equation}
 in $K[X]$.
 Let $M:=\max_{0\leq i\leq n}\{\deg(b),\deg(b_i)\}$.
 Take $N$ such that $p^{n+N-1}>M$.
 Let $c_i$ be the constant term of $b_i$.
 For $i\geq N$, comparing the $X^{p^{i+n}}$-terms of (\ref{equdifrec}), we have
 \begin{equation*}
  0=c_0a_{i+n}+c_1a_{i+n-1}^p+\dots+c_na_{i}^{p^n}.
 \end{equation*}
 This is what we wanted.
\end{proof}

\subsection{}
Let $K$ be a field equipped with a valuation $v$.
Let $P(X)=\sum a_i X^i\in K[X]$, where $a_i\in K$.
For $a\in\mb{R}$, we define $\overline{\NP}(P)(a):=\min_i\bigl\{v(a_i)+ia\bigr\}$.
This function $\overline{\NP}(P)$ is called the {\em complete valuation polygon} of $P$.
Evidently, this function is piecewise affine and convex.
 
\begin{lem*}
 \label{valuextlem}
 Assume that the valuation polygon $\overline{\NP}(P)$ has a break at $r\in\mb{R}$ and the slope changes by $n$.
 \begin{enumerate}
  \item\label{valuextlem-1}
       Let $\overline{K}$ be an algebraic closure of $K$ equipped with an extension $\widetilde{v}$ of $v$.
       There exist $n$ roots $\alpha_i\in\overline{K}$ of the equation $P(t)=0$, counted with multiplicities, such that $\widetilde{v}(\alpha_i)=r$.

  \item\label{valuextlem-2}
       Assume $K$ is complete.
       Let $\alpha\in K$ such that $v(\alpha)\geq r$.
       Then there is an extension $\widetilde{v}$ of $v$ on $K\!\left<\alpha^{-1}X\right>/(P(X))$ so that $\widetilde{v}(X)=r$.
 \end{enumerate}
\end{lem*}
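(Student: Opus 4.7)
The plan is to reduce (i) to the linear case via multiplicativity of $\overline{\NP}$, and then to deduce (ii) by pulling back a valuation along an evaluation map at a root of $P$ of valuation exactly $r$ (whose existence is supplied by (i)).

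For (i), fix an algebraic closure $\overline{K}$ of $K$ together with an extension $\widetilde{v}$ of $v$. A direct computation yields $\overline{\NP}(X-\alpha)(a)=\min\bigl(\widetilde{v}(\alpha),a\bigr)$, while the standard non-archimedean Gauss lemma gives
\[
  \overline{\NP}(PQ)(a)=\overline{\NP}(P)(a)+\overline{\NP}(Q)(a).
\]
Factoring $P(X)=a_N\prod_{j=1}^N(X-\alpha_j)$ in $\overline{K}[X]$ therefore gives
\[
  \overline{\NP}(P)(a)=v(a_N)+\sum_{j=1}^N\min\bigl(\widetilde{v}(\alpha_j),a\bigr).
\]
Each summand $\min\bigl(\widetilde{v}(\alpha_j),a\bigr)$ has slope $1$ for $a<\widetilde{v}(\alpha_j)$ and slope $0$ for $a>\widetilde{v}(\alpha_j)$, so the slope of $\overline{\NP}(P)$ at $a=r$ drops by exactly $\#\{j:\widetilde{v}(\alpha_j)=r\}$; the hypothesis that the slope changes by $n$ then identifies this count with $n$, producing the required $n$ roots of valuation $r$.

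For (ii), use (i) to pick a root $\beta\in\overline{K}$ of $P$ with $\widetilde{v}(\beta)=r$. Since $K$ is complete, $\widetilde{v}$ is the unique extension of $v$ to the finite extension $K(\beta)$. The evaluation $X\mapsto\beta$ gives a $K$-algebra homomorphism $K[X]\to K(\beta)$ that kills $P(X)$ and so descends to $K[X]/(P(X))\to K(\beta)$. To promote this map to the Tate-algebraic quotient $K\!\left<\alpha^{-1}X\right>/(P(X))$, one applies Weierstrass preparation to $P$ inside the Tate algebra, writing the quotient as a finite $K$-algebra that contains as a direct factor the residue field $K(\beta)$ at $\beta$. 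Pulling back $\widetilde{v}$ along the projection onto this factor produces the desired extension with $\widetilde{v}(X)=\widetilde{v}(\beta)=r$.

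The principal obstacle, as I see it, lies in (ii): one must verify that under the hypothesis $v(\alpha)\geq r$ the break of $\overline{\NP}(P)$ at $r$ guaranteed by (i) actually corresponds to a nontrivial factor of $P$ inside the affinoid $K\!\left<\alpha^{-1}X\right>$, so that the quotient is non-zero and the projection onto the relevant factor makes sense. This is the step where the valuation-theoretic slope-counting of (i) must be married to the rigid-analytic structure determined by $\alpha$; once the Weierstrass-type decomposition is in hand, the pull-back of $\widetilde{v}$ is immediate, and the entirety of (i) is essentially a consequence of Gauss-lemma multiplicativity.
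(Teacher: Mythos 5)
Your (i) is exactly the paper's proof: factor $P$ monically as $\prod_j(X-\alpha_j)$ in $\overline{K}$, use $\overline{\NP}(X-\alpha_j)(a)=\min\bigl(\widetilde{v}(\alpha_j),a\bigr)$ together with $\overline{\NP}(PQ)=\overline{\NP}(P)+\overline{\NP}(Q)$, and count the slope drop. For (ii) the strategy also matches: pick a root $\beta$ with $\widetilde{v}(\beta)=r$, and pull the unique extension of $v$ on $K(\beta)$ back along a projection from the affinoid quotient. The paper passes first to the completed algebraic closure $L$, splits $P$ linearly, and decomposes $L\!\left<\alpha^{-1}X\right>/(P)\cong\prod_{i\in I'}L\!\left<\alpha^{-1}X\right>/(X-\alpha_i)^{r_i}$ over the roots with $v(\alpha_i)\geq v(\alpha)$, then projects onto the factor with $v(\alpha_{i_0})=r$; your Weierstrass decomposition over $K$ is that same product regrouped into Galois orbits, and completeness of $K$ gives uniqueness on $K(\beta)$, so this is cosmetic. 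One imprecision to fix: $K(\beta)$ is a \emph{direct factor} of the Weierstrass quotient only when the irreducible factor of $P$ at $\beta$ is simple; in general $K(\beta)$ is merely a quotient of the factor $K[X]/(Q^e)$. That is harmless --- a multiplicative semi-norm pulls back along any surjection, and the paper's own factor $L\!\left<\alpha^{-1}X\right>/(X-\alpha_{i_0})^{r_{i_0}}$ has nilpotents when $r_{i_0}>1$ --- but say ``quotient,'' not ``direct factor.''

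The genuine gap is the one you flag and then leave open: that $\beta$ actually lies in $\mr{Sp}\,K\!\left<\alpha^{-1}X\right>$, i.e.\ $\widetilde{v}(\beta)\geq v(\alpha)$, so the relevant Weierstrass factor of $P$ is nonzero and evaluation at $\beta$ is defined on the Tate algebra at all. This is not a side concern but the crux, and it requires $r\geq v(\alpha)$ --- the \emph{reverse} of the printed hypothesis $v(\alpha)\geq r$. The printed inequality is a slip: the paper's own proof needs $i_0\in I'$, which forces $r=v(\alpha_{i_0})\geq v(\alpha)$, and the invocation in \ref{algbmainlrr} (where the Tate algebra is $A'\!\left<\alpha X\right>$ with $v(\alpha)\geq -R$ and one seeks $w(X)>R$) is likewise made with the break at or above the radius of the disc. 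So your suspicion was well founded; the right response, though, is to pin down the inequality actually needed rather than leave it as an ``obstacle.'' Once one takes $v(\alpha)\leq r$, the step closes in one line: $\widetilde{v}(\beta)=r\geq v(\alpha)$ puts $\beta$ in the disc, the Weierstrass factor of $P$ inside $K\!\left<\alpha^{-1}X\right>$ containing $\beta$ is nonzero, and pulling back along $K\!\left<\alpha^{-1}X\right>/(P)\twoheadrightarrow K(\beta)$ gives $\widetilde{v}(X)=r$.
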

\begin{proof}
 Let us check the first claim.
 We may assume $P$ to be monic, and let $P(X)=\prod(X-\alpha)$ in $\overline{K}$.
 Since the \newton polygon of $X-\alpha$ has the break at
 $r=\widetilde{v}(\alpha)$ and the slope changes by $1$, the lemma
 follows since $\overline{\NP}(P)=\sum\overline{\NP}(X-\alpha)$.

 Let us check the second one.
 Let $L$ be the completion of an algebraic closure of $K$.
 Note that $v$ extends (uniquely) on the algebraic closure by invoking \cite[1.4.9]{Kedbook}.
 It suffices to construct an extension $w$ of $v$ on $L\!\left<\alpha^{-1} X\right>/(P(X))$ so that $w(X)=r$.
 We write $P(X)=\prod_{i\in I}(X-\alpha_i)^{r_i}$ where $\alpha_i\in L$ and $r_i\in\mb{N}$ such that $\alpha_i\neq\alpha_j$ for $i\neq j$.
 Then we have $L\!\left<\alpha^{-1} X\right>/(P(X))\cong\prod_{i\in I'}L\!\left<\alpha^{-1} X\right>/(X-\alpha_i)^{r_i}$,
 where $I'$ is the subset of $i\in I$ such that $v(\alpha_i)\geq v(\alpha)$.
 There exists $i_0\in I'$ such that $v(\alpha_{i_0})=r$ by assumption.
 We may check easily that there exists a unique valuation on
 $L\!\left<\alpha^{-1} X\right>/(X-\alpha_{i_0})^{r_{i_0}}$ extending $v$ by using \cite[2.3.1]{Kedbook}.
\end{proof}

\subsection{}
\label{algbmainlrr}
Let $\alpha$ be a point of the Berkovich disk $\mb{D}_{\ell}:=\mr{Sp}(\ell\!\left<t\right>)$,
and put $r:=-\log(r(\alpha))$ as \ref{NPdefandfirstprop}.
Denote by $v_0$ the Gauss point of $\mb{D}_\ell$.

\begin{lem*}
 Let $\{a_i\}_{i\geq0}$ be an rLRR sequence in $\ell\!\left<t\right>$ and $a$ be an element of $\ell\!\left<t\right>^{\mr{perf}}$.
 Assume that there exists $\lambda>0$ such that $v_0(a_i)>-\lambda$ for any $i$.
 For $n\geq0$, consider the partial sum
 \begin{equation*}
  S_n:=a+\sum_{n\geq i\geq 0}a_i^{1/p^i}.
 \end{equation*}
 Fix a type 4 point $\alpha\in\mb{D}_\ell$, and put $r:=r_\alpha$.
 Then there exists a morphism of $\ell$-affinoid spaces $\psi\colon\mb{X}=\mr{Sp}(A)\rightarrow\mb{D}_\ell$,
 a continuous map $\varphi\colon\left[r-\gamma,r\right]\hookrightarrow\mb{X}$ such that $\gamma>0$ and $\psi\circ\varphi(r)=\alpha$,
 and an element $S$ in $A$ such that the following holds:
 for any $\varepsilon>0$, there exists an integer $N$ such that for any $n>N$ and $\rho\in\left[r-\gamma,r\right]$, we have
 \begin{equation*}
  v_{\varphi(\rho)}(S-S_n)>-\varepsilon.
 \end{equation*}
\end{lem*}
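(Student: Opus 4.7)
The plan is to express $S_n$ in closed form using the rLRR structure and then to realise the desired limit $S$ as an element of a finite (Artin--Schreier) cover of $\mb{D}_\ell$. First I would apply Lemma \ref{lemlrrstr}(ii) in the rLRR case to obtain an $\mb{F}_p$-basis $z_1,\dots,z_m \in \overline{\ell\!\left<t\right>}$ of the roots of the characteristic polynomial of the recurrence, together with $\lambda_1,\dots,\lambda_m \in \overline{\ell\!\left<t\right>}$, such that $a_k = \sum_j z_j \lambda_j^{p^k}$ for all $k\geq0$. Taking $p^k$-th roots in characteristic $p$ rewrites
$$
S_n \;=\; a + \sum_{j=1}^m \lambda_j\, T_{n,j}, \qquad T_{n,j} := \sum_{i=0}^n z_j^{1/p^i}.
$$
A direct computation gives $T_{n,j}^p - T_{n,j} = z_j^p - z_j^{1/p^n}$, so the natural candidate for the limit is a solution $T_j$ of the Artin--Schreier equation $T_j^p - T_j = z_j^p$, with error $z_j^{1/p^n}$ of valuation $v(z_j)/p^n$ at any fixed point.

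Next I would construct $\mbf{X}$ as a finite cover. Let $B$ be the integral closure of $\ell\!\left<t\right>$ in a finite normal extension of its fraction field containing $a$ and all $z_j, \lambda_j$; equipped with the spectral norm this is an $\ell$-affinoid algebra finite over $\ell\!\left<t\right>$. Adjoining Artin--Schreier solutions, set
$$
A := B[T_1,\dots,T_m]/(T_j^p - T_j - z_j^p : 1 \leq j \leq m), \qquad \mbf{X} := \mr{Sp}(A),
$$
and take $S := a + \sum_j \lambda_j T_j \in A$, with $\psi \colon \mbf{X} \to \mb{D}_\ell$ the canonical finite map. Because $\psi$ is finite and $\alpha$ is of type 4, the branch $\rho \mapsto v_{\alpha,\rho}$ lifts continuously to $\mbf{X}$ on some terminal segment $[r-\gamma,r]$, producing $\varphi$ with $\varphi(r)$ a preimage of $\alpha$; this choice of preimage selects, for each $j$, the branch of $T_j$ along which $T_{n,j}$ approaches $T_j$.

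The central estimate is that $U_{n,j} := T_j - T_{n,j}$ satisfies the perturbed Artin--Schreier equation $U_{n,j}^p - U_{n,j} = z_j^{1/p^n}$. By the standard valuation analysis of such equations with small right-hand side, the chosen branch gives $v_{\varphi(\rho)}(U_{n,j})$ controlled by $v_{\varphi(\rho)}(z_j)/p^n$, which tends to $0$ uniformly on the compact interval $[r-\gamma,r]$ as $n \to \infty$. Combined with $S - S_n = \sum_j \lambda_j U_{n,j}$, this yields
$$
v_{\varphi(\rho)}(S - S_n) \;\geq\; \min_j \bigl( v_{\varphi(\rho)}(\lambda_j) + v_{\varphi(\rho)}(z_j)/p^n \bigr),
$$
and the right-hand side exceeds $-\varepsilon$ for $n$ sufficiently large provided $v_{\varphi(\rho)}(\lambda_j)$ is uniformly bounded below on $[r-\gamma,r]$.

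The main obstacle is twofold. Geometrically, lifting the branch through a type 4 point while consistently selecting the Artin--Schreier branch so that the desired asymptotic $T_{n,j} \to T_j$ holds (in the bounded-valuation sense) along $\varphi$ requires showing that $\alpha$ has a canonical unique limit in the lift; otherwise different preimages would force incompatible branches. Algebraically, the hypothesis $v_0(a_i) > -\lambda$ must be leveraged---using the $\mb{F}_p$-linear flexibility in the choice of basis $\{z_j\}$, or by scaling the $\lambda_j$'s against the $z_j$'s---to arrange $v_{\varphi(\rho)}(\lambda_j) \geq 0$ on $[r-\gamma, r]$ after shrinking $\gamma$, since naively individual $\lambda_j$'s of negative valuation may appear with cancelling contributions in the decomposition of $a_i$. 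Reconciling these two aspects to obtain the stated uniform lower bound is the technical core of the argument.
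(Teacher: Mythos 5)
Your proposal takes a genuinely different route from the paper, but it has a gap at exactly the point you flag as ``the technical core,'' and the paper's method is chosen precisely to avoid that obstruction.

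\textbf{Structural difference.} You decompose termwise: write $a_k=\sum_j z_j\lambda_j^{p^k}$ via the rLRR form of Lemma \ref{lemlrrstr}, collect the series into $S_n=a+\sum_j\lambda_j T_{n,j}$ with $T_{n,j}=\sum_{i\leq n}z_j^{1/p^i}$, and try to realize the limit by solving separate Artin--Schreier equations $T_j^p-T_j=z_j^p$. The paper instead passes to $b_i:=a_i^{1/p^i}$, observes $\{b_i\}$ satisfies an LRR, forms the \emph{single} degree-$p^m$ additive polynomial $P(X)=\prod_{\beta\in V}(X-\beta)$ where $V$ is the $\mb{F}_p$-span of the $\beta_j$ appearing in $b_i=\sum_j\alpha_j^{1/p^i}\beta_j$ (note: these $\beta_j$ correspond to your $\lambda_j$, not your $z_j$), derives the telescoping identity $P(S_{n,n'})=f(\mathbf{b}_n)-f(\mathbf{b}_{n'+1})$ for an explicit additive $f$, and then solves the \emph{one} equation $P(X)=f(\mathbf{b}_0)$. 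The whole estimate then reduces to bounding $v_{\varphi(\rho)}(f(\mathbf{b}_{n+1}))$, and the hypothesis $v_0(a_i)>-\lambda$ applies directly and uniformly via $v(b_n)=v(a_n)/p^n>-\lambda/p^n\to 0$, followed by the Newton polygon analysis of $P$ (Lemma \ref{valuextlem}). No bounds on the individual $\lambda_j$, $z_j$ are needed.

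\textbf{The gap.} Your estimate $v_{\varphi(\rho)}(S-S_n)\geq\min_j\bigl(v_{\varphi(\rho)}(\lambda_j)+v_{\varphi(\rho)}(z_j)/p^n\bigr)$ requires two things that the hypothesis does not provide. First, when $v_{\varphi(\rho)}(z_j)=0$, the roots of $U^p-U=z_j^{1/p^n}$ all have valuation exactly $0$ for every $n$, not $v(z_j)/p^n$; so the claimed decay fails and the $j$-th contribution is pinned at $v_{\varphi(\rho)}(\lambda_j)$, which must then be $\geq -\varepsilon$. Second, and more fundamentally, you need a uniform lower bound on each $v_{\varphi(\rho)}(\lambda_j)$ along $[r-\gamma,r]$, and the hypothesis $v_0(a_i)>-\lambda$ constrains only the \emph{combined} quantities $\sum_j z_j\lambda_j^{p^i}$; individual $\lambda_j$'s can a priori have negative valuation with cancellation across $j$. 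You acknowledge this but propose to ``leverage the $\mb{F}_p$-linear flexibility'' without saying how, and it is not clear this can be made to work without essentially reinventing the combined-polynomial argument: the paper's $P$ is designed so its Newton polygon absorbs arbitrary valuations of the $d_i$'s (and hence of the $\lambda_j$'s), so no such normalization is needed. In short, the missing step is not a technicality but the step that the paper's additive-polynomial telescoping was introduced to supply.
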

\begin{proof}
 Let $K:=\mr{Frac}(\ell\!\left<t\right>)$, and fix an algebraic closure $\overline{K}$ of $K$.
 We may assume $a=0$. Indeed, let $S'$ be the element in $\overline{K}$ which satisfies the condition for $a=0$.
 Then $S:=S'+a$ satisfies the condition. From now on, we assume that $a=0$.

 Since $\{a_i\}$ is rLRR, there exist $\alpha_j$, $\beta_j$ for $1\leq j\leq m'$ in $\overline{K}$ such that
 \begin{equation*}
  a_i=\alpha_1\beta_1^{p^i}+\dots+\alpha_{m'}\beta_{m'}^{p^i}
 \end{equation*}
 by Lemma \ref{lemlrrstr}.
 Thus, we have $a_i^{1/p^i}=\sum_{j=1}^{m'}\alpha_j^{1/p^i}\beta_j$, which implies that $\{b_i:=a_i^{1/p^i}\}_{i\geq0}$ satisfies LRR.
 Now, let $V\subset\overline{K}$ be the $\mb{F}_p$-subspace spanned by $\{\beta_j\}$.
 We note that $m:=\dim(V)\leq m'$.
 We put
 \begin{equation*}
  P(X):=\prod_{\beta\in V}(X-\beta)=
   d_0X+d_1X^p+\dots+d_{m-1}X^{p^{m-1}}+d_mX^{p^m}.
 \end{equation*}
 Of course, $d_m=1$.
 By Lemma \ref{lemlrrstr} again, we have
 \begin{equation*}
  d_0b_i+d_1b_{i+1}^p+\dots+d_{m-1}b_{i+m-1}^{p^{m-1}}+
   d_mb_{i+m}^{p^m}=0
 \end{equation*}
 for any $i\geq0$.
 We put
 \begin{equation*}
  f(x_1,\dots,x_m):=
   \sum_{0\leq i\leq m}d_i\,
   \bigl(\sum_{1\leq j\leq i} x_j\bigr)^{p^i},
   \qquad
  g(x_1,\dots,x_m):=
  \sum_{0\leq i\leq m}d_i\,
  \bigl(\sum_{i+1\leq j\leq m} x_j\bigr)^{p^i}.
 \end{equation*}
 Since $d_i(x_j+y_j)^{p^i}=d_ix_j^{p^i}+d_iy_j^{p^i}$, we have
 \begin{equation*}
  f(x_1+y_1,x_2+y_2,\dots,x_m+y_m)=
   f(x_1,\dots,x_m)+f(y_1,\dots,y_m).
 \end{equation*}
 Furthermore, we have
 \begin{align*}
  f(b_{n+1},&b_{n+2},\dots,b_{n+m})+
  g(b_{n-m+1},b_{n-m+2},\dots,b_{n})\\
  &=
  \sum_{0\leq i\leq m}d_i\,
  \bigl(\sum_{1\leq j\leq i}b_{n+j}\bigr)^{p^i}
  +
  \sum_{0\leq i\leq m}d_i\,
  \bigl(\sum_{i+1\leq j\leq m} b_{n-m+j}\bigr)^{p^i}\\
  &=
  \sum_{0\leq i\leq m}d_i\,
  \bigl(\sum_{m+1\leq j\leq m+i}b_{n-m+j}\bigr)^{p^i}
  +
  \sum_{0\leq i\leq m}d_i\,
  \bigl(\sum_{i+1\leq j\leq m} b_{n-m+j}\bigr)^{p^i}\\
  &=
  \sum_{0\leq i\leq m}d_i\,
  \bigl(\sum_{i+1\leq j\leq m+i}b_{n-m+j}\bigr)^{p^i}
  =
  \sum_{0\leq i\leq m}d_i\,
  \bigl(\sum_{1\leq j\leq m}b_{n-m+i+j}\bigr)^{p^i}\\
  &=
  \sum_{1\leq j\leq m}\sum_{0\leq i\leq m}
  d_ib_{n-m+i+j}^{p^i}=0.
  \end{align*}
 For $n'>n>0$, put $S_{n,n'}:=S_{n'}-S_{n-1}=\sum_{n\leq i\leq n'}b_i$.
 We put $f(\mathbf{b}_n):=f(b_n,\dots,b_{n+m-1})$.
 Now, the summands of $P(S_{n,n'})$ can be seen as follows.
 \begin{equation*}
  \begin{array}{cccccccccc}
   &&&d_0b_n&d_0b_{n+1}&\cdots&d_0b_{n'-m}&d_0b_{n'-m+1}&\cdots&d_0b_{n'}\\
   &&d_1b_n^p&d_1b^p_{n+1}&d_1b^p_{n+2}&\cdots&d_1b^p_{n'-m+1}&d_1b^p_{n'-m+2}&&\\
   &&\vdots&\vdots&\vdots&&\vdots&\vdots\\
   &&\cdots&\cdots&\cdots&&\cdots&d_{m-1}b_{n'}^{p^{m-1}}\\
   d_mb_{n}^{p^m}&\cdots&d_mb_{n+m-1}^{p^m}&d_mb_{n+m}^{p^m}&
    d_mb_{n+m+1}^{p^m}&\cdots&d_mb_{n'}^{p^m}&&&\\
   \multicolumn{3}{c}{\underbrace{\hspace{4cm}}_{f(b_n,\dots,b_{n+m-1})}}&
    \multicolumn{4}{c}{\underbrace{\hspace{6.5cm}}_{0}}&
    \multicolumn{3}{c}{\underbrace{\hspace{3.5cm}}_{g(b_{n'-m+1},\dots,b_{n'})}}
  \end{array}
 \end{equation*} 
 Thus, we have
 \begin{align}
  \label{impequP}
  \tag{$\star$}
  P(S_{n,n'})=f(b_n,\dots,b_{n+m-1})+g(b_{n'-m+1},\dots,b_{n'})=f(\mathbf{b}_n)-f(\mathbf{b}_{n'+1}),
 \end{align}
 where the second equality holds by the relation above.
 For the convenience of the reader, let us outline the rest of the argument.
 We take a ``sufficiently large'' $n_0$, and define $S$ to be a (suitable) solution of the equation $P(X)=f(\mathbf{b}_{n_0})$.
 For $n\geq n_0$, (\ref{impequP}) implies that $P(S-S_{n_0,n})=f(\mathbf{b}_{n+1})$.
 To estimate $S-S_{n_0,n}$ at some valuation $v$, we may use Lemma \ref{valuextlem}
 by considering the \newton polygon of $P(X)=f(\mathbf{b}_{n+1})$.

 Now, let $\mb{X}':=\mr{Sp}(A')$ be the normalization of $\mr{Sp}\bigl(\ell\!\left<t\right>[d_i]\bigr)$.
 Since $d_i$ are algebraic over $\ell\!\left<t\right>$, the morphism $\psi'\colon\mb{X}'\rightarrow\mb{D}_\ell$ is finite,
 and we may find an extension $\widetilde{\alpha}$ of $\alpha$.
 Since $\alpha$ is of type 4, so is $\widetilde{\alpha}$.
 Take a continuous embedding $\varphi'\colon\left[r-\gamma,r\right]\hookrightarrow\mb{X}'$ with $\varphi'(r)=\widetilde{\alpha}$.
 By \cite[5.12]{BPR}, by decreasing $\gamma$, we may assume that $\psi'\circ\varphi'$ is injective and the image in $\mb{D}_\ell$ is contained in the generic path of
 $\alpha$ (cf.\ \cite[2.2.13]{Ked4}).
 By decreasing $\gamma$ further, we may assume that the function $\rho\mapsto v_{\varphi'(\rho)}(d_i)$ is constant on $\left[r-\gamma,r\right]$ for any $i$,
 since $\widetilde{\alpha}$ is of type 4 and we may take a neighborhood of $\widetilde{\alpha}$ which is isomorphic to an open ball by \cite[4.27]{BPR}.
 Let us consider the \newton polygon of $P(X)$.
 Namely, let $w_a$ ($a\in\mb{R}$) be the valuation on $\mr{Frac}(A')[X]$ such that $w_a(\sum c_iX^i)=\min_i\{v_{\varphi'(\rho)}(c_i)+ia\}$.
 Recall that the complete \newton polygon of $P(X)$ is the function sending $a\in\mb{R}$ to $\overline{\NP}(a):=w_a(P)$.
 By the choice of $\gamma$, $w_a$ does not depend on the choice of $\rho\in[r-\gamma,r]$.
 Note that all the slopes of $\overline{\NP}$ are positive integers since $P(0)=0$.

 \begin{figure}[h]
  \centering
  \setlength{\unitlength}{1bp}%
  \begin{picture}(201.87, 136.06)(0,0)
   \put(0,0){\includegraphics{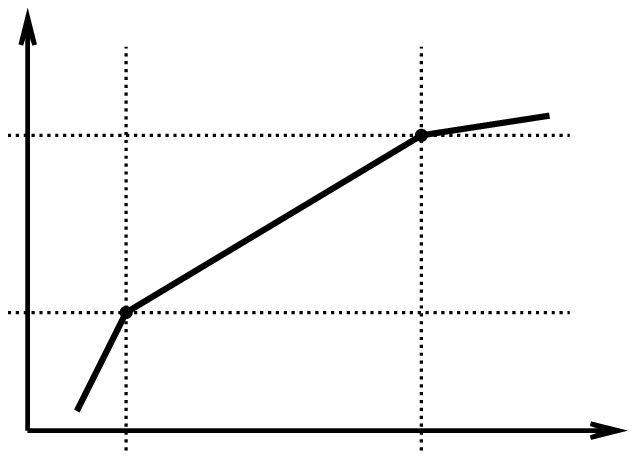}}
   \put(51.38,14.03){\fontsize{8.54}{10.24}\selectfont $R$}
   \put(136.96,13.83){\fontsize{8.54}{10.24}\selectfont $0$}
   \put(7.68,98.11){\fontsize{8.54}{10.24}\selectfont $M$}
   \put(5.67,47.57){\fontsize{8.54}{10.24}\selectfont $M'$}
   \put(175.22,14.84){\fontsize{14.23}{17.07}\selectfont $a$}
   \put(170.39,112.23){\fontsize{11.38}{13.66}\selectfont $\overline{\NP}(a)$}
  \end{picture}%
 \end{figure}

 Let $M:=\overline{\NP}(0)=\min_i\{v_{\varphi'(\rho)}(d_i)\}$.
 Let $R<0$ such that $(R,M':=\overline{\NP}(R))$ is a break point of $\overline{\NP}(a)$ and there is no break point in $\left]R,0\right[$.
 If there is no such a break point, we take any $R<0$.
 Note that $M'<M$.
 Recall that, since $a_n$ is in $\ell\!\left<t\right>$ and $\varphi'$ maps to a sub-segment of the generic path of $\alpha$,
 we have $v_{\varphi'(r-\gamma)}(b_n)\leq v_{\varphi'(\rho)}(b_n)$.

 Let $\varepsilon'>0$, and recall that we are given $\lambda>0$ in the statement of the lemma.
 Take $N$ such that $\varepsilon'>p^{m-N}\lambda$.
 Then for any $n>N$ and $\rho\in\left[r-\gamma,r\right]$, we have
 \begin{equation*}
  p^m v_{\varphi'(\rho)}(b_n)=p^m v_{\psi'\varphi'(\rho)}(b_n)\geq
   p^mv_0(b_n)
   >-p^{m-n}\lambda>-\varepsilon',
 \end{equation*}
 where the first inequality holds since $a_n\in\ell\!\left<t\right>$.
 Thus,
 \begin{equation*}
  v_{\varphi'(\rho)}(f(\mathbf{b}_n))=
   v\Bigl(\sum_{0\leq i\leq m}d_i\,
   \bigl(\sum_{1\leq j\leq i} b_{n+j}\bigr)^{p^i}\Bigr)
   >M-\varepsilon'.
 \end{equation*}
 This implies that we may take large enough $N$ such that $v_{\varphi'(\rho)}(f(\mathbf{b}_n))>M'$ and $v_{\varphi'(\rho)}(b_n)>R$ for any $n\geq N$ and $\rho$.
 Since the characteristic polynomial $P(X)$ remains the same even if we consider the sequence $\{a_{i+N}\}_{i\geq0}$ instead of $\{a_i\}_{i\geq0}$,
 we may and do assume that
 \begin{equation}
  \label{twoassumpmain}\tag{$\star\star$}
  v_{\varphi'(\rho)}(f(\mathbf{b}_n))>M',\qquad
   v_{\varphi'(\rho)}(S_{n})>R
 \end{equation} 
 for any $n\geq0$.

 Now, take $\alpha\in\ell$ such that $v(\alpha)\geq -R>0$, and put $A:=A'\!\left<\alpha X\right>/(P(X)-f(\mbf{b}_0))$ and $\mb{X}:=\mr{Sp}(A)$.
 Since $A\widehat{\otimes}_{A'}\ms{H}(\varphi'(r))\cong\ms{H}(\varphi'(r))\!\left<\alpha X\right>/(P(X)-f(\mbf{b}_0))$
 and since $v_{\varphi'(r)}(f(\mathbf{b}_0))>M'$ by (\ref{twoassumpmain}),
 using Lemma \ref{valuextlem}.\ref{valuextlem-2},
 there exists a valuation $w$ on $A\widehat{\otimes}_{A'}\ms{H}(\varphi'(r))$ extending that of $\ms{H}(\varphi'(r))$ such that $w(X)>R$.
 This defines a point $\beta$ of $\mb{X}$.
 Take any continuous map $\varphi\colon[r-\gamma,r]\rightarrow\mb{X}$ such that $\varphi(r)=\beta$.
 By \cite[5.12]{BPR}, decreasing $\gamma$ if necessary, $\varphi$ becomes a lifting of $\varphi'$ and further $v_{\varphi(\rho)}(X)>R$ for any $\rho\in[r-\gamma,r]$.
 We take $S=X\,(=\alpha^{-1}\cdot(\alpha X)\in A)$, which is a solution of $P(X)=f(\mathbf{b}_0)$ such that $v_{\varphi(\rho)}(S)>R$.
 Let us check that $S$ satisfies the condition of the lemma.
 Let $\varepsilon>0$ as in the claim of the lemma.
 By (\ref{impequP}), we have
 \begin{align*}
  P(S-S_n)&=P(S-S_{0,n})=P(S)-P(S_{0,n})\\
  &=f(\mathbf{b}_0)-
  \bigl(f(\mathbf{b}_0)-f(\mathbf{b}_{n+1})\bigr)\\
  &=f(\mathbf{b}_{n+1}).
 \end{align*}
 We have checked that, for $n\gg0$, $v_{\varphi(\rho)}(f(\mathbf{b}_{n+1}))=v_{\varphi'(\rho)}(f(\mathbf{b}_{n+1}))>\overline{\NP}(-\varepsilon)$ for any $\rho$.
 This implies that any solution $s$ of the equation $P(X)=f(\mathbf{b}_{n+1})$ such that $v_{\varphi(\rho)}(s)>R$
 have the property that $v_{\varphi(\rho)}(s)>-\varepsilon$ by Lemma \ref{valuextlem}.\ref{valuextlem-1}.
 On the other hand, we have $v_{\varphi(\rho)}(S-S_n)\geq \min\{v_{\varphi(\rho)}(S),v_{\varphi(\rho)}(S_n)\}>R$ for any $\rho$ by (\ref{twoassumpmain}).
 Thus $v_{\varphi(\rho)}(S-S_n)>-\varepsilon$ as required.
\end{proof}

\begin{cor}
 \label{conscornp}
 Let $\alpha$ be a type 4 point on $\mb{D}_{\ell}$, and $\{a_i\}$ and $S_n$ be as in the lemma.
 Then there exists $\gamma>0$ such that one of the following conditions holds:
 \begin{itemize}
  \item for any $\varepsilon>0$, there exists $N$ such that $v_{\alpha,\rho}(S_n)>-\varepsilon$ for any $n>N$ and $\rho\in\left[r-\gamma,r\right]$.

  \item there exists $\mu<0$ and $N>0$ such that $v_{\alpha,\rho}(S_n)=\mu$ for any $n>N$ and $\rho\in[r-\gamma,r]$.
 \end{itemize}
\end{cor}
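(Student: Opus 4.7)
The plan is to feed the preceding lemma through and then use the type 4 property of $\varphi(r)$ to upgrade its pointwise conclusion to one of the two uniform alternatives.

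First, apply the preceding lemma to get an affinoid $\mb{X} = \mr{Sp}(A)$, a morphism $\psi \colon \mb{X} \to \mb{D}_\ell$, a continuous embedding $\varphi \colon [r - \gamma_0, r] \hookrightarrow \mb{X}$ with $\psi \circ \varphi(r) = \alpha$, and an element $S \in A$ such that for every $\varepsilon > 0$ there is an integer $N$ satisfying $v_{\varphi(\rho)}(S - S_n) > -\varepsilon$ for every $n > N$ and $\rho \in [r - \gamma_0, r]$. From the construction, $\psi \circ \varphi$ is injective onto a sub-segment of the generic path of $\alpha$; after reparametrizing the domain, I arrange $\psi \circ \varphi(\rho) = v_{\alpha, \rho}$, so that $v_{\varphi(\rho)}(f) = v_{\alpha, \rho}(f)$ for any $f$ pulled back from $\ell\!\left<t\right>^{\mr{perf}}$, in particular for $f = S_n$.

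Next, I exploit that $\beta := \varphi(r)$ is a type 4 point of $\mb{X}$, being a lift of the type 4 point $\alpha$ through the finite morphism $\psi$. By \cite[4.27]{BPR} (already used in the previous lemma), $\beta$ admits a neighborhood isomorphic to an open ball in a Berkovich disk. The value $v_\beta(S)$ is finite; indeed it was shown during the lemma's proof that $v_{\varphi(\rho)}(S) > R$. The analytic function $S$ therefore has no essential zero at $\beta$ and, since zeros of an analytic function on a disk are discrete, it has locally constant norm on a sufficiently small sub-disk around $\beta$. After shrinking $\gamma_0$ to a suitable $\gamma \in (0, \gamma_0]$, I may therefore assume that $v_{\varphi(\rho)}(S) = \mu$ is a single fixed real number for every $\rho \in [r - \gamma, r]$.

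Finally, I conclude by splitting on the sign of $\mu$. If $\mu \geq 0$, for any $\varepsilon > 0$ I apply the lemma with $\varepsilon/2$ in place of $\varepsilon$ to obtain $N$ with $v_{\varphi(\rho)}(S - S_n) > -\varepsilon/2$ for $n > N$ and $\rho \in [r - \gamma, r]$; the ultrametric inequality then gives
\[
v_{\alpha,\rho}(S_n) \geq \min\bigl(\mu, v_{\varphi(\rho)}(S - S_n)\bigr) \geq \min(\mu, -\varepsilon/2) = -\varepsilon/2 > -\varepsilon,
\]
which is the first alternative. If $\mu < 0$, I apply the lemma with $\varepsilon_0 := -\mu/2 > 0$ to obtain $N$ with $v_{\varphi(\rho)}(S - S_n) > \mu/2 > \mu = v_{\varphi(\rho)}(S)$ for all $n > N$; the strict ultrametric inequality case then forces
\[
v_{\alpha,\rho}(S_n) = v_{\varphi(\rho)}(S - (S - S_n)) = v_{\varphi(\rho)}(S) = \mu
\]
for every $n > N$ and $\rho \in [r - \gamma, r]$, which is the second alternative. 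The main obstacle in this argument is the local constancy of $\rho \mapsto v_{\varphi(\rho)}(S)$ near $\beta$; everything else is a routine application of the ultrametric inequality. Once this local constancy is secured via the type 4 structure (which is already in place through the BPR reference used in the preceding lemma), the corollary is immediate.
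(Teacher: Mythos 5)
Your proof is correct and follows essentially the same approach as the paper: invoke the preceding lemma, establish that $v_{\varphi(\rho)}(S)$ is eventually constant near the type 4 terminal point $\beta=\varphi(r)$, and case-split on the sign of that constant $\mu$ via the ultrametric inequality. The only difference is that the paper asserts the local constancy (``since $\alpha$ is a type 4 point'') without argument, whereas you supply the justification via the finiteness/discreteness of zeros of $S$ in an open-ball neighborhood of $\beta$; both are sound.
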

\begin{proof}
 Let $\varphi\colon[r-\gamma,r]\rightarrow\mb{X}$ be as in the lemma.
 We have
 \begin{equation*}
  v_{\alpha,\rho}(S_n)=v_{\varphi(\rho)}(S_n)
   =v_{\varphi(\rho)}(S_n-S+S)\geq
   \min\{v_{\varphi(\rho)}(S_n-S),v_{\varphi(\rho)}(S)\}.
 \end{equation*}
 Since $\alpha$ is a type 4 point, by increasing $\gamma$, we may
 assume that $v_{\varphi(\rho)}(S)$ is constant $\mu$ for any
 $\rho\in\left[r+\gamma,r\right]$.
 The conclusion of the lemma implies that one of the following
 conditions holds:
 \begin{itemize}
 \item if $\mu\geq0$ then for any $\varepsilon>0$,
       there exists
       $N$ such that $v_{\alpha,\rho}(S_n)>-\varepsilon$ for any
       $n>N$.
 \item if $\mu<0$, then there exists $N$ such that
       $v_{\alpha,\rho}(S_n)=v_{\varphi(\rho)}(S)$ for any $n>N$.
 \end{itemize}
 Thus the corollary follows.
\end{proof}

\subsection{Proof of Proposition \ref{mainpropstabil}}\label{proofmainprop}\mbox{}\\
If $\alpha$ is of type 1, the claim follows by Lemma
\ref{defandfinofNP}. Thus, we may assume $\alpha$ to be of type 4.
We use the notation of \ref{mainpropstabil}.
By Lemma \ref{defandfinofNP}, there exists the terminal slope of
$\NP^{(\infty)}(f)$. Thus, it remains to show that for any
$\ul{n}\in\Lambda$, the terminal slope of the \newton polygon
$\NP^{(\infty)}(A_{\ul{n}})$ is $0$.
By Lemma \ref{algHadm}, the function $\sum_{k\geq0}a_{p^k\ul{n}}X^{p^k}$
is algebraic, so $\sum_{k\geq0}(a_{p^k\ul{n}}/\pi)X^{p^k}$ is algebraic
as well.
Thus, by putting $b_i:=a_{p^i\ul{n}}/\pi$, the sequence
$\{b_i\}_{i\geq N}$ satisfies rLRR for $N\gg0$ by Lemma \ref{algrLRR}.
Apply Corollary \ref{conscornp} above for $a:=\sum_{k<N}b^{1/p^k}_k$ and $a_i:=b^{1/p^N}_{i+N}$ to get the desired result.
Indeed, with these choices of $a$ and $a_i$, we have $\NP(S_n)=\NP^{(n+N)}(A_{\ul{n}})$.
In the first case of the corollary, {\em for any} $\rho\in[r-\gamma,r]$, we have $\NP^{(\infty)}(A_{\ul{n}})(\rho)=\lim_{n\rightarrow\infty}\NP(S_n)(\rho)=0$.
Thus, the terminal slope is $0$.
In the second case where $\mu<0$, the corollary implies that $\lim_{n\rightarrow\infty}\NP(S_n)(\rho)=\mu$ for any $\rho\in[r-\gamma,r]$,
and the terminal slope is also $0$, as required.
\qed

\section{Existence of admissible alteration}
\label{sect3}
This section is devoted to proving Theorem \ref{mainthmadm}, which says that given a smooth morphism $X\rightarrow S$ and a rational function $f$ on $X$,
we may take an alteration $S'$ of $S$ so that $f$ is ``admissible''.
We follow \cite{Ked4} for a proof: We define ``local admissibility'' around each point of the Riemann-Zariski space of $S$.
It takes most of the section to prove that this local admissibility can be attained after taking an alteration around each valuation of $S$.
At the very end, we deduce the theorem from the local admissibility by using the quasi-compactness of Riemann-Zariski space.

\begin{dfn}
 \label{dfnadmfun}
 Let $X\rightarrow S$ be a smooth morphism, and $h\in\rat{X}$.
 We say that $h$ is {\em admissible} if the following condition holds:
 there exists an open finite covering $\{V_i\}_{i\in I}$ of $S$ and open subschemes $\{U_i\}_{i\in I}$ of $X$ where $U_i\subset X\times_SV_i$ such that;
 1.\ $\bigcup_{i\in I}U_{i,s}\subset X_s$ is dense for any $s\in S$;
 2.\ there exists a function $f_i\colon U_i\rightarrow\mb{A}^1_S$ which is {\em $S$-separable},
 a regular function $\sigma_i\in\mc{O}_S(V_i)\cap\rat{S}^{\times}$, and $g_i\in\rat{X}$ such that
 \begin{equation*}
  h|_{U_i}+(g_i^p-g_i)\in f_i/\sigma_i+\mc{O}_X(U_i).
 \end{equation*}
\end{dfn}

\begin{rem*}
 The 2nd condition that $f_i$ being $S$-separable can be replaced by $f_i$ being either $S$-separable {\em or $0$}.
 Indeed, assume that $h|_{U_i}+(g_i^p-g_i)\in\mc{O}_{X}(U_i)$.
 By refining $U_i$ and $V_i$, we may assume that there exists an $S$-separable function $\alpha_i\in\mc{O}_{U_i}$.
 Then we have $h|_{U_i}+(g_i^p-g_i)\in \alpha_i/1+\mc{O}_{X}(U_i)$.
\end{rem*}

For a motivation of this definition, one can see Lemma \ref{AsneacyAS}.

\subsection{}
Let $X\rightarrow S$ be a flat morphism and $S'\rightarrow S$ be a
maximally dominant morphism.
Then $X_{S'}\rightarrow X$ is maximally dominant by \cite[Exp.\ II,
1.1.5]{G}. Thus, the pullback homomorphism
$\rat{X}\rightarrow\rat{X_{S'}}$ is defined.

\begin{thm*}
 \label{mainthmadm}
 Let $X\rightarrow S$ be a smooth morphism between $k$-schemes of finite
 type, and take a rational function $h\in\rat{X}$.
 Then there exists an alteration $S'\rightarrow S$ such that the
 pullback of $h$ to $X_{S'}$ is admissible.
\end{thm*}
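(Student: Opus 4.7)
The plan follows Kedlaya's strategy as sketched in the introduction. I will define a notion of \emph{local admissibility} at each height-one valuation $v$ on $\rat{S}$ centered on $S$: roughly, that there exists $g \in \rat{X_{S'}}$ for some local alteration $S' \to S$ dominating $v$ such that $\mr{VP}(h + g^p - g)(v) = \mr{VP}^{(\infty)}(h)(v)$, which in turn allows admissibility in the sense of Definition \ref{dfnadmfun} to be realized near $v$. Once local admissibility is established at every valuation, I will patch using the quasi-compactness of the Zariski--Riemann space of $S$: a finite cover by opens over which local alterations exist will be dominated by a single global alteration witnessing the theorem. Preliminary reductions --- using Lemma \ref{lemonprofun} and Lemma \ref{geomirrlem} --- let me assume $X$ and $S$ integral, $X \to S$ smooth admitting local coordinates $\{x_i\}$, and $h = h'/\tau$ with $h' = \sum a_{\ul{k}}\ul{x}^{\ul{k}} \in \mc{O}_X$ and $\tau \in \mc{O}_S \cap \rat{S}^\times$; higher-height valuations reduce to the height-one case by composition.

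The substantive part is an induction on the transcendence defect $n$ of $v$. For the base case $n = 0$, $v$ is an Abhyankar valuation and evaluates series monomially: $v(\sum a_{\ul{k}}\ul{x}^{\ul{k}}) = \min\{v(a_{\ul{k}}) + \ul{k} \cdot v(\ul{x})\}$. An explicit mollifier $g$ built from $p$-power roots of the coefficients $a_{p^k\ul{k}}$ (accessible after a finite cover extracting those roots) together with a blowup of $S$ that monomializes the resulting leading coefficient will realize local admissibility; this is Lemma \ref{monomialvalok}. For the inductive step $n > 0$, I seek a valuation $v_1$ near $v_0$ of defect $n-1$ with $\mr{VP}^{(\infty)}(h)(v_0) = \mr{VP}^{(\infty)}(h)(v_1)$; induction applied to $v_1$ together with the monotonicity of the valuation polygon then transfers admissibility back to $v_0$. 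To produce $v_1$, I embed $v_0$ into a suitable Berkovich disc $\mb{D}_\ell$ over the completed algebraic closure of an appropriate residue field, along a direction in which $v_0$ has positive defect. In the disc $v_0$ becomes a terminal point of type $1$ or $4$; the type $1$ case is immediate, since by Lemma \ref{defandfinofNP} the polygon $\mr{VP}^{(\infty)}$ has terminal slope zero near a type $1$ point, so any sufficiently close valuation of strictly lower defect works.

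The main obstacle is the type $4$ case, which is exactly what Proposition \ref{mainpropstabil} is designed to handle: it guarantees that $\mr{VP}^{(\infty)}_\alpha(h/\tau)(s)$ is constant on some interval $[r_\alpha - \varepsilon, r_\alpha]$, allowing me to take $v_1$ to be any valuation of strictly lower defect in the generic path approaching $v_0$. This stability is false for arbitrary formal power series and depends crucially on the algebraicity of $h$ through the Hadamard product preservation result of \cite{SW} (Lemma \ref{algHadm}) and the ensuing rLRR structure on the coefficient sequences $\{a_{p^k\ul{n}}\}_k$ (Lemma \ref{algrLRR}). The remaining technical work is the globalization: from the compactness of the Zariski--Riemann space I extract a finite set of local alterations, dominated by a single alteration $S' \to S$; on $S'$ I then choose the cover $\{V_i\}$ and opens $\{U_i \subset X \times_S V_i\}$ of Definition \ref{dfnadmfun} to simultaneously satisfy the fiberwise density and separability conditions, invoking Lemma \ref{nowhconslem} and Lemma \ref{nonconst-prop} to arrange these properties generically.
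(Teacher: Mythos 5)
Your plan reproduces the paper's overall strategy faithfully: reduce to $h = h'/\tau$ with local coordinates, define weak/local admissibility at height-one valuations in terms of $\mr{VP}$ and $\mr{VP}^{(\infty)}$, do induction on transcendence defect with the Abhyankar case (Lemma \ref{monomialvalok}) as base, embed $v_0$ in a Berkovich disc so it becomes a terminal point, and invoke Proposition \ref{mainpropstabil} (via the Hadamard-product algebraicity of \cite{SW}) to stabilize the polygon near type-4 points. This is indeed how Lemma \ref{minvalok} works.

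However, the globalization step has two real gaps. First, you write that local admissibility at each $v$ yields a cover by Zariski opens on which the property holds, and that compactness lets you extract a finite subcover. But local admissibility at $v$ does \emph{not} a priori propagate to an open neighborhood of $v$ in $\left<S\right>$: you need a purity statement. The paper supplies this via Lemma \ref{nagapurilem} (reducing admissibility near a minimal valuation to admissibility at the divisorial valuations of the boundary components) and Corollary \ref{opennbdofmolpt}, where a Gruson--Raynaud flattening argument shows the set of valuations extending to a fixed local alteration contains an honest Zariski open. Without this, there is no open cover to compactify against. Second, the inductive argument produces a good ally at \emph{some} extension of $v$ to a finite extension of $k(S)$, not at all of them. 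When you dominate the local alterations by a single $S'$, each $v$ has several lifts to $S'$, and your construction only controls one. The paper handles this with the Galois-closure argument of Lemma \ref{finallemgalclo}, which upgrades ``some extension'' to ``every extension'' (and also normalizes the constant term so that the ally is ``strict''). This step is essential to making the finite-subcover argument close, and your outline omits it. Two smaller inaccuracies: Lemma \ref{nowhconslem} and Lemma \ref{nonconst-prop} play no role in this theorem (they belong to \S\ref{sect5}); the preliminary reductions in \ref{conclpfmain} use Gabber's lemma, Lemma \ref{geomirrlem}, and Gruson--Raynaud flattening instead. And the passage from height $1$ to arbitrary height is a genuine second induction (Lemmas \ref{clokthevok} and \ref{anyvalok}, Cases 1 and 2 there), not merely a formal composition of valuations.
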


\subsection{}
From the next paragraph, we use Zariski-Riemann space.
We recall briefly the basic notions just to fix them.
See \cite{V,Ked2,Ked4} for more details.
Let $S$ be an integral scheme of finite type over $k$.
We denote by $\left<S\right>$, or $\left<S\right>_{/k}$ if we want to be more precise, inspired by the notation of \cite[II, \S E]{FK},
the set of valuations $v$ on $k(S)$ such that $v|_k$ is trivial and $v$ is centered on $S$.
This space is called the {\em Zariski-Riemann space} associated to $S$.
When $S$ is proper, $\left<S\right>$ is often denoted by $S_{k(S)/k}$ in the literature.
As a set, we may write $\left<S\right>=\invlim S'$ where $S'$ runs over all the modifications of $S$.
The topology induced by the Zariski topology of $S'$ is called the Zariski topology of $\left<S\right>$.
Let $U\subset S$ be an open subscheme.
Define $\left<S\right>_U:=\invlim S'$ where the projective limit runs over modifications $\rho\colon S'\rightarrow S$ such that $\rho^{-1}(U)\rightarrow U$ is an isomorphism.
For $v\in\left<S\right>$, denote by $k_v$ the residue field, $R_v$ the valuation ring, and $\Gamma_v$ the value semigroup.
The height (or rank) of $v$ is denoted by $\mr{ht}(v)$.
The {\em rational rank} of $v$ denoted by $\mr{rat.rk}(v)$ is $\mr{rk}_{\mb{Q}}(\Gamma_v\otimes\mb{Q})$.
We say that $v$ is a {\em minimal valuation} if $\mr{trdeg}_k(k_v)=0$ and $\mr{ht}(v)=1$,
an {\em Abhyankar valuation} if $\dim(S)=\mr{rat.rk}(v)+\mr{trdeg}_k(k_v)$,
a {\em monomial valuation} if it is Abhyankar and minimal,
a {\em divisorial valuation} if it is Abhyankar, $\mr{rat.rk}(v)=1$ and $\mr{ht}(v)=1$.

\subsection{}
\label{loclhypintr}
Until the beginning of \ref{conclpfmain}, we further assume that the following condition holds, and fix the data:
\begin{quote}
 (*) $X\rightarrow S$ is a smooth morphism between integral affine $k$-schemes of finite type with global coordinates
 $\{x_1,\dots,x_d\}$ over $S$ such that the fiber $X_s$ is geometrically irreducible over $k(s)$ for any $s\in S$,
 and it is endowed with a section $S\rightarrow X$ whose image is the zero locus $\mr{Zero}:=V(x_1,\dots,x_d)$.
 We denote by $\eta$ the generic point of $S$.
\end{quote}
Recall that  we write $f\in\mc{O}_T$ in the sense of $f\in\Gamma(T,\mc{O}_T)$.
For $v\in\left<S\right>$, a morphism of finite type $S'\rightarrow S$ is said to be a {\em local alteration around $v$} if it is generically finite,
$S'$ is affine integral, and $v$ extends to a valuation centered on $S'$.
Note that this condition (*) is stable under changing the base $S$ by a local alteration around $v$.
With these fixed coordinates, by [EGA $0_{\mr{IV}}$, 19.5.4], the
completion $\widehat{X}:=\widehat{X}_{\mr{Zero}}$ is isomorphic {\em canonically} to
$\widehat{\mb{A}}^d_S$, where $S$ is considered as a formal scheme with
the discrete topology.
Let $f\in\mc{O}_{X_\eta}$.
We can write
\begin{equation*}
 f=\sum_{\ul{k}\geq\ul{0}}a_{\ul{k}}\ul{x}^{\ul{k}}
\end{equation*}
where $a_{\ul{k}}\in k(S)$ in $\mc{O}_S\dd{x_1,\dots,x_d}\otimes_{\mc{O}_S}k(S)\subset k(S)\dd{x_1,\dots,x_d}$.
Put
\begin{equation*}
 \Lambda:=\bigl\{\ul{n}\in\mb{N}^d
  \mid
  \mbox{$\ul{n}\neq\ul{0}$ and $p\nmid\ul{n}$}\bigr\}.
\end{equation*}
For a point $v\in\left<S\right>$ of the Riemann-Zariski space, we define
\begin{equation*}
 \NP(f)(v):=
  \min_{\ul{k}\in\mb{N}^d\setminus\{\ul{0}\}}
  \bigl\{
  0,v(a_{\ul{k}})
  \bigr\}
  =
  \min_{\ul{n}\in\Lambda}\,\min_{k}\bigl\{
  0,v(a_{p^k\ul{n}})
  \bigr\}
\end{equation*}
in $\Gamma_v$.
Note that the minimum exists.
Indeed, since $f\in\mc{O}_{X_\eta}$, we may write $f=g/\pi$ with $g\in\mc{O}_X$ and $\pi\in\mc{O}_S$.
Then $a_{\ul{n}}\in\pi^{-1}\mc{O}_S$.
Since $\mc{O}_S$ is noetherian, the semigroup $v(\mc{O}_S)\subset\Gamma_v$ is well-ordered (cf.\ \ref{discvalsemgr}),
so is $v(\pi^{-1}\mc{O}_S)=v(\mc{O}_S)-v(\pi)$, and the minimum exists.

Now, {\em assume that $\mr{ht}(v)=1$}. For $\ul{n}\in\Lambda$, let
\begin{equation*}
 f^{(m)}_{\ul{n}}:=\sum^m_{k=0}(a_{p^k\ul{n}})^{1/p^k}.
\end{equation*}
This sum is defined in
$\pi^{-1}A:=\sum_{n\geq0}(\pi^{-1}\mc{O}_S)^{1/p^n}$.
Note that $v$ extends uniquely to the perfection $k(S)^{1/p^{\infty}}$.
Since $v$ is of height $1$, for any $\epsilon<0$ in $\Gamma_v\subset\mb{R}$, there exists an integer $N>0$ such that $\epsilon<-v(\pi)/p^N$.
This implies that
\begin{equation}
 \label{cutsmalpa}
 \bigl\{\lambda\in v(\pi^{-1}A)\mid \lambda\leq\epsilon\bigr\}
  =
  \bigl\{\lambda\in v\bigl(\TXsum_{n\leq N}(\pi^{-1}\mc{O}_S)^{1/p^n}
  \bigr)\mid \lambda\leq\epsilon\bigr\}.
\end{equation}
The set on the right is well-ordered because $\mc{O}_S^{1/p^N}$ is noetherian.
This implies that the set $J:=\bigl\{\lambda\in v(\pi^{-1}A)\mid \lambda\leq0\bigr\}$ is well-ordered. 
For $\ul{n}\in\Lambda$, assume $\lim_{m\rightarrow\infty}\min\bigl\{0,v(f^{(m)}_{\ul{n}})\bigr\}\neq0$.
Then $\liminf_{m\rightarrow\infty}v(f^{(m)}_{\ul{n}})<0$.
For any $\liminf v(\dots)<\epsilon<0$, there exists $M>\log_p(-v(\pi)/\epsilon)$ such that $v(f^{(M)}_{\ul{n}})<\epsilon$.
Then $v(f^{(M)}_{\ul{n}})=v(f^{(n)}_{\ul{n}})$ for any $n\geq M$, and
$\lim_{m\rightarrow\infty}\min\bigl\{0,v(f^{(m)}_{\ul{n}})\bigr\}=v(f^{(M)}_{\ul{n}})\in J$.
Since $J$ is well-ordered, this implies that the following definition makes sense:
\begin{equation*}
 \NP^{(\infty)}(f)(v):=
  \min_{\ul{n}\in\Lambda}\Bigl\{
  \lim_{m\rightarrow\infty}\min\bigl\{0,v(f^{(m)}_{\ul{n}})\bigr\}
  \Bigr\}.
\end{equation*}
Note that, by definition, $\NP(f)(v),\NP^{(\infty)}(f)(v)\leq0$.
For a dominant morphism $\alpha\colon S'\rightarrow S$ of integral schemes, $\NP$ and $\NP^{(\infty)}$ are invariant under base change by $\alpha$.
Namely, for any extension $v'$ to $S'$ of $v\in\left<S\right>$, we have the equalities
\begin{equation}
 \label{invarpulnp}
 \NP(\alpha^*f)(v')=\NP(f)(v),\qquad
  \NP^{(\infty)}(\alpha^*f)(v')=\NP^{(\infty)}(f)(v).
\end{equation}
Let $L/k(S)$ be a finite extension, $w$ be an extension of $v$, and $f'\in\mc{O}_{X_L}$, where $X_L:=X\times_S\mr{Spec}(L)$.
Then we can take a local alteration $S'\rightarrow S$ such that $k(S')\cong L$ and $w$ is centered on $S'$.
Then $f'\in\mc{O}_{X_{\eta'}}\cong\mc{O}_{X_L}$, where $\eta'$ is the generic point of $S'$,
and the values $\NP(f')(w)$, $\NP^{(\infty)}(f')(w)$ are defined.
By (\ref{invarpulnp}), these values only depend on $L$ and $w$, and not on specific ``model'' $S'$.
We use these values without referring to $S'$.

\begin{dfn}
 Take a point $v\in\left<S\right>$, and a function
 $h\in\mc{O}_{X_\eta}$.
 \begin{itemize}
  \item An {\em \ally} of $h$ is a rational function of the form $h+(g^p-g)$ with $g\in\mc{O}_{X_\eta}$.
	An \ally $h'$ of $h$ is said to be {\em \good on $S$ at $v$} if it belongs to $f/\sigma+\mc{O}_{X,z}$,
	where $z$ is the center of $v$ on $S$, $\mc{O}_{X,z}:=\mc{O}_X\otimes_{\mc{O}_S}\mc{O}_{S,z}$,
	$\sigma\in\mc{O}_{S}\setminus\{0\}$,
	and $f\in\mc{O}_{X}$ which is either an $S$-separable function or in $\mc{O}_S$.
	Such a $g$ is called a {\em mollifier}.
	For an open subscheme $U\subset S$, a mollifier $g$ is said to {\em provide a \good \ally of $h$ on $U$}
	if $h+(g^p-g)$ is a \good \ally of $h$ at any valuation $v$ centered on $U$.

  \item We say that {\em $h$ is locally admissible at $v$} if there exists
	an integral scheme $S'$ such that $k(S')=k(S)$, $v$ is centered on $S'$, and there exists a \good \ally of $h$ on $S'$ at $v$.

  \item Assume $\mr{ht}(v)=1$.
        We say that {\em $h$ is weakly admissible at $v$ on $k(S)$} if there exists a function
	$g\in\mc{O}_{X_\eta}=\mc{O}_X\otimes_{\mc{O}_S}k(S)$ such that $\NP^{(\infty)}(h)(v)=\NP(h+(g^p-g))(v)$.
	Such a $g$ is called a {\em weak mollifier}.
 \end{itemize}
\end{dfn}
Note that if $h$ is good at $v$ which is centered at $z\in S$, then there exists an open neighborhood subscheme $U$ of $z$ over which $h$ is good.
Indeed, it suffices to show that if $f$ is separable at $z$, then $f$ is separable on an open neighborhood of $z$.
Since we are assuming $X\rightarrow S$ to be geometrically irreducible, and thus equidimensional,
this follows by Lemma \ref{lemonprofun}.\ref{lemonprofun-pwsmoook} and [EGA IV, 13.1.3].
Even though we do not use, we remark that if $h$ admits a \good \ally, then it is locally weakly admissible.
This can be seen from Lemma \ref{propnpval} below.
The converse is ``almost'' true by Lemma \ref{goodmodelze}.

\begin{lem}
 \label{basiinequnp}
 For any $f,g\in\mc{O}_{X_\eta}$, we have
 \begin{equation*}
  \NP^{(\infty)}\bigl(f + (g^p-g)\bigr)(v)=
   \NP^{(\infty)}(f)(v),\qquad
  \NP(f)(v)\leq\NP^{(\infty)}(f)(v).
 \end{equation*}
\end{lem}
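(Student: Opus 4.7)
The plan is to reduce both assertions to a per-$\ul{n}$ statement, using that $\NP^{(\infty)}$ is the minimum over $\ul{n}\in\Lambda$ of $\lim_m\min\{0,v(f^{(m)}_{\ul{n}})\}$, and to exploit the fact that in characteristic $p$ the map $x\mapsto x^{1/p^k}$ is additive on the perfection of $k(S)$.

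For the inequality $\NP(f)(v)\leq\NP^{(\infty)}(f)(v)$, fix $\ul{n}\in\Lambda$. Because $v$ has values in $\mb{R}$ and $\NP(f)(v)\leq 0$, for each $k\geq 0$ one has $v\bigl((a_{p^k\ul{n}})^{1/p^k}\bigr)=v(a_{p^k\ul{n}})/p^k\geq \NP(f)(v)/p^k\geq \NP(f)(v)$. The ultrametric inequality gives $v(f^{(m)}_{\ul{n}})\geq \NP(f)(v)$, hence $\min\{0,v(f^{(m)}_{\ul{n}})\}\geq\NP(f)(v)$. Taking the limit in $m$ and then the minimum over $\ul{n}\in\Lambda$ yields the claimed inequality.

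For the equality, write $g=\sum_{\ul{j}}b_{\ul{j}}\ul{x}^{\ul{j}}$, so the coefficient of $\ul{x}^{\ul{m}}$ in $g^p-g$ equals $-b_{\ul{m}}+[p\mid\ul{m}]\,b_{\ul{m}/p}^{p}$. Fix $\ul{n}\in\Lambda$ and let $c_{\ul{m}}$ denote the Taylor coefficients of $f+g^p-g$. A direct inspection gives $c_{\ul{n}}=a_{\ul{n}}-b_{\ul{n}}$ and, for $k\geq 1$, $c_{p^k\ul{n}}=a_{p^k\ul{n}}-b_{p^k\ul{n}}+b_{p^{k-1}\ul{n}}^{p}$. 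Applying the $(1/p^k)$-th power (which is additive on the perfection) and summing, the $b$-contributions telescope:
\begin{equation*}
\sum_{k=0}^{m}(c_{p^k\ul{n}})^{1/p^k}=\sum_{k=0}^{m}(a_{p^k\ul{n}})^{1/p^k}-(b_{p^m\ul{n}})^{1/p^m}=f^{(m)}_{\ul{n}}(f)-(b_{p^m\ul{n}})^{1/p^m}.
\end{equation*}

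Since $g\in\mc{O}_{X_\eta}$, there exists a nonzero $\pi\in\mc{O}_S$ with $\pi g\in\mc{O}_X$, so $v(b_{\ul{j}})\geq -v(\pi)$ for every $\ul{j}$; hence $v\bigl((b_{p^m\ul{n}})^{1/p^m}\bigr)\geq -v(\pi)/p^m\to 0$ as $m\to\infty$. A short case analysis completes the argument: if $\lim_m\min\{0,v(f^{(m)}_{\ul{n}}(f))\}=0$, then $v(f^{(m)}_{\ul{n}}(f))$ and $v\bigl((b_{p^m\ul{n}})^{1/p^m}\bigr)$ both exceed any prescribed $-\varepsilon<0$ for $m$ large, and the ultrametric inequality propagates this to $f+g^p-g$; if the limit equals some $\mu<0$, then $v(f^{(m)}_{\ul{n}}(f))=\mu$ for $m\gg 0$ while $v\bigl((b_{p^m\ul{n}})^{1/p^m}\bigr)>\mu$, so the ultrametric inequality is sharp and $v\bigl(f^{(m)}_{\ul{n}}(f+g^p-g)\bigr)=\mu$ as well. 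In either case the $\ul{n}$-contributions to $\NP^{(\infty)}$ agree, and minimizing over $\ul{n}\in\Lambda$ gives the equality. The main obstacle is verifying the telescoping identity with the correct bookkeeping of which indices are divisible by $p$; once this is in place, everything else is routine ultrametric manipulation using the bound $v(b_{\ul{j}})\geq -v(\pi)$.
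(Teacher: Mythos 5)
Your proof is correct and follows essentially the same route as the paper: you reduce to a per-$\ul{n}$ statement, derive the same telescoping identity $\sum_{k=0}^{m}(c_{p^k\ul{n}})^{1/p^k}=f^{(m)}_{\ul{n}}(f)-(b_{p^m\ul{n}})^{1/p^m}$ (the paper writes this in terms of abstract sequences $\mathbf{a}$, $\mathbf{b}$, $\mathbf{b}^p$), and use the bound $v(b_{\ul{j}})\geq-v(\pi)$ to conclude that the subtracted term has valuation tending to $0$. Your case split at the end and the per-$k$ estimate $v(a_{p^k\ul{n}})/p^k\geq\NP(f)(v)$ for the second inequality are just slightly more explicit versions of what the paper leaves to the reader.
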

\begin{proof}
 We may take $\pi\in\mc{O}_S\setminus\{0\}$ such that $f,g\in\pi^{-1}\mc{O}_S$.
 Let $\mathbf{a}:=\{a_k\}_{k\geq0}$ be a sequence in $\pi^{-1}\mc{O}_S$, and put
 \begin{equation*}
  v(\mathbf{a}):=\min_k\bigl\{0,v(a_k)\bigr\},
   \quad
   v^{(\infty)}(\mathbf{a}):=
   \min\Bigl\{0,\lim_{m\rightarrow\infty}v\Bigl(\sum_{k=0}^ma_k^{1/p^k}\Bigr)\Bigr\}.
 \end{equation*}
 Now, let $\mathbf{b}=\{b_k\}_{k\geq0}$ be a sequence in $\pi^{-1}\mc{O}_S$. Put $\mathbf{b}^p:=\{b_{k-1}^p\}_{k\geq0}$ ($b_{-1}:=0$).
 Then we claim that
 \begin{equation*}
  v^{(\infty)}(\mathbf{a}+(\mathbf{b}^p-\mathbf{b}))
   =
   v^{(\infty)}(\mathbf{a}),\qquad
   v(\mathbf{a})
   \leq
   v^{(\infty)}(\mathbf{a}).
 \end{equation*}
 If these claims hold, the lemma follows.
 Indeed, in order to show the lemma, we take $\{a_k\}$ (resp.\ $\{b_k\}$) to be the coefficients of
 $\ul{x}^{p^k\ul{n}}$ for each $\ul{n}\in\Lambda$ in the expansion of $f$ (resp.\ $g$).
 Now, to check the first claim, we may compute
 \begin{align*}
  (a_0-b_0)+\sum_{k=1}^m\bigl(
  a_k+(b_{k-1}^p-b_k)
  \bigr)^{1/p^k}
  =
  \bigl(\sum_{k=0}^ma_k^{1/p^k}\bigr)-b_m^{1/p^m}.
 \end{align*}
 Since $v(b_m)\geq -v(\pi)$, we have $\liminf_{m\rightarrow\infty}v(b_m^{1/p^m})\geq0$, and the first equality follows.
 The second inequality is easy to check.
\end{proof}

\subsection{}
\label{prepringa}
Let $f\in\mc{O}_X$ and $\pi\in\mc{O}_S$, and let $v\in\left<S\right>$ whose center is $z\in S$.
We assume that $k$ is perfect and $k_v=k(z)$.
We put
\begin{equation*}
 A:=\pi\sum_{n\geq0}(\pi^{-1}\mc{O}_S)^{1/p^n}
  \supset\mc{O}_S,\qquad
  J:=\bigl\{\lambda\in v(\pi^{-1}A)\mid \lambda\leq 0\bigr\}\subset\Gamma_v.
\end{equation*}
Then $A$ is an $\mc{O}_S$-module contained in $\mc{O}_S^{1/p^\infty}$.
Recall from \ref{loclhypintr} that $J$ is well-ordered.
The filtration defined by $v$ on $\mc{O}_S^{1/p^\infty}$ induces a filtration on $A$.
For a submodule $B\subset k(S)^{1/p^\infty}$ and $\lambda\in\Gamma_v$, we put $\mr{Fil}^\lambda_v(B):=\bigl\{f\in B\mid v(f)\geq\lambda\bigr\}$,
$\mr{Fil}^{\lambda+}_v(B):=\bigl\{f\in B\mid v(f)>\lambda\bigr\}$, and $\mr{gr}^{\lambda}_v(B):=B^{\lambda}/B^{\lambda+}$.
We often denote $\mr{Fil}^{\lambda(+)}_v B$, $\mr{gr}^\lambda_v B$ by $B^{\lambda(+)}$, $\mr{gr}^\lambda B$ if no confusion may arise.
We have inclusions
\begin{equation*}
 \mr{gr}^\lambda(\mc{O}_S)\subset\mr{gr}^\lambda(A)\subset
  \mr{gr}^\lambda(\mc{O}_S^{1/p^\infty}).
\end{equation*}
By assumption on $v$, we have
$k_v:=\mr{gr}^0(k(S))=\mr{gr}^0(\mc{O}_S)=:k(z)$.
Since $\mc{O}_X$ is flat over $\mc{O}_S$ we may define a filtration on
$\mc{O}_X\otimes \pi^{-1}A$ by
$(\mc{O}_X\otimes \pi^{-1}A)^\lambda:=
\mc{O}_X\otimes(\pi^{-1}A)^\lambda$.
The flatness moreover implies that
$\mr{gr}(\mc{O}_X\otimes A)\cong\mc{O}_X\otimes\mr{gr}(A)$.

\begin{lem}
 Let $M$ be an $\mc{O}_S$-module, and $N\subset M$ be a submodule.
 Put $\widehat{\mc{O}}_X:=\mc{O}_{\widehat{X}_{\mr{Zero}}}$.
 Then we have the following equality  in
 $M\otimes_{\mc{O}_S}\widehat{\mc{O}}_X$:
 \begin{equation*}
  (N\otimes_{\mc{O}_S}\widehat{\mc{O}}_X)\cap
   (M\otimes_{\mc{O}_S}\mc{O}_X)=
   N\otimes_{\mc{O}_S}\mc{O}_{X}.
 \end{equation*}
\end{lem}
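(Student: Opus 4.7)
The inclusion $\supset$ is immediate from functoriality, so I focus on $\subset$. The plan is to reduce the claim to the injectivity of
\[
\Psi_P\colon P\otimes_{\mc{O}_S}\mc{O}_X \longrightarrow P\otimes_{\mc{O}_S}\widehat{\mc{O}}_X
\]
for every $\mc{O}_S$-module $P$. Both $\mc{O}_X$ (smoothness of $X\to S$) and $\widehat{\mc{O}}_X$ (flat over the noetherian ring $\mc{O}_X$ as its $\mathfrak{a}$-adic completion, with $\mathfrak{a}=(x_1,\dots,x_d)\mc{O}_X$) are $\mc{O}_S$-flat, so the sequence $0\to N\to M\to M/N\to 0$ remains exact after tensoring with each over $\mc{O}_S$. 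A diagram chase in the resulting commutative square identifies $\bigl((N\otimes\widehat{\mc{O}}_X)\cap(M\otimes\mc{O}_X)\bigr)\big/(N\otimes\mc{O}_X)$ with $\ker\Psi_{M/N}$, reducing the proof to showing $\Psi_P$ is injective for $P=M/N$.

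Writing $P$ as the filtered colimit of its finitely generated submodules and using $\mc{O}_S$-flatness (together with the fact that a filtered colimit of injections of abelian groups is an injection), one reduces to $P$ finitely generated, hence finitely presented since $\mc{O}_S$ is noetherian. Then $P\otimes_{\mc{O}_S}\mc{O}_X$ is a finitely presented $\mc{O}_X$-module, and by [EGA $0_{\mr{IV}}$, 19.5.4] the map $\Psi_P$ is the $\mathfrak{a}$-adic completion map of this module. Krull's intersection theorem gives that $\ker\Psi_P = \bigcap_n\mathfrak{a}^n(P\otimes_{\mc{O}_S}\mc{O}_X)$ is annihilated by $1-a$ for some $a\in\mathfrak{a}$.

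The crux is then to show that such $1-a$ is a non-zero-divisor on $P\otimes_{\mc{O}_S}\mc{O}_X$, for then this Krull-annihilated kernel must vanish. By the standard description of associated primes under flat base change, applied to $\mc{O}_S\to\mc{O}_X$, it suffices to verify that $1-a$ is a non-zero-divisor on each $\mc{O}_X/\mathfrak{p}\mc{O}_X$ as $\mathfrak{p}$ runs over the associated primes of $P$ in $\mc{O}_S$. Here hypothesis (*) of \S\ref{loclhypintr} is decisive: $V(\mathfrak{p})\subset S$ is integral, and since $X\to S$ is smooth with geometrically integral fibers, $X\times_S V(\mathfrak{p})$ is smooth with geometrically integral fibers over an integral base, hence itself integral; consequently $\mc{O}_X/\mathfrak{p}\mc{O}_X$ is a domain. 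The zero section restricts to a section of $X\times_S V(\mathfrak{p})\to V(\mathfrak{p})$, and along it $a\in\mathfrak{a}$ vanishes so that $1-a=1\neq 0$; hence $1-a$ is nonzero in this domain, therefore a non-zero-divisor. The most delicate step will be this final geometric verification, which weaves together smoothness, the geometric-irreducibility hypothesis on fibers, and the existence of the zero section to rule out any problematic torsion.
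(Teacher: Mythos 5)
Your proof is correct, and the geometric core (reduce to injectivity of $\Psi_{M/N}\colon (M/N)\otimes\mc{O}_X\to(M/N)\otimes\widehat{\mc{O}}_X$, use the formula for associated primes under the flat base change $\mc{O}_S\to\mc{O}_X$, use that each $\mc{O}_X/\mf{p}\mc{O}_X$ is a domain, and use the zero section to see $1-a$ survives there) coincides with the paper's. Where the routes differ is in the algebraic packaging of the injectivity of $\Psi_{M/N}$. The paper inserts the Zariskian localization $\mc{O}_{X,Z}=(1+J)^{-1}\mc{O}_X$ between $\mc{O}_X$ and $\widehat{\mc{O}}_X$: the ring map $\mc{O}_{X,Z}\to\widehat{\mc{O}}_X$ is faithfully flat because $(\mc{O}_{X,Z},J\mc{O}_{X,Z})$ is a Zariskian pair, so the corresponding Cartesian square is automatic by [EGA $0_{\mr{I}}$, 6.6.3], and the remaining square is a localization, whose injectivity on $(M/N)\otimes\mc{O}_X$ is equivalent (via [EGA IV, 3.1.9]) to the condition on associated primes. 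You instead reduce to finitely generated $P$ by a filtered-colimit argument, identify $\ker\Psi_P$ with $\bigcap_n\mf{a}^n(P\otimes\mc{O}_X)$ via the standard completion-is-base-change fact for finite modules over a noetherian ring, and invoke Krull's intersection theorem to produce an $a\in\mf{a}$ with $(1-a)\cdot\ker\Psi_P=0$. The two viewpoints are really dual formulations of the same phenomenon---Krull's theorem says $\bigcap_n\mf{a}^n M$ equals the $(1+J)$-torsion of $M$, which is exactly the kernel of the localization the paper uses---but the paper's formulation via associated primes and localization applies uniformly to every $\mc{O}_S$-module without the intermediate reduction to finitely generated $P$, whereas Krull's theorem requires finite generation and so forces you to take the limit. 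Both reductions are correct; neither is strictly shorter, and either is a legitimate way to write the proof. One small remark: the citation [EGA $0_{\mr{IV}}$, 19.5.4] you give for the identification of $\Psi_P$ with the $\mf{a}$-adic completion map is not quite the right reference---that entry is about the structure of the completion along a regular immersion and is used elsewhere in \S\ref{loclhypintr} for $\widehat{X}\cong\widehat{\mb{A}}^d_S$---but the fact you need (for finitely generated $M$ over noetherian $R$, $M\otimes_R\widehat{R}\cong\widehat{M}$) is standard.
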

\begin{proof}
 Let $J\subset\mc{O}_X$ be the ideal defining the closed subscheme $Z:=\mr{Zero}$ in \ref{loclhypintr} (*).
 We put $\mc{O}_{X,Z}:=(1+J)^{-1}\mc{O}_X$.
 The pair $(\mc{O}_{X,Z},J\mc{O}_{X,Z})$ is a Zariskian pair \cite[I, B.1 (a)]{FK}, and thus, the completion homomorphism
 $\mc{O}_{X,Z}\rightarrow\widehat{\mc{O}}_{X,Z}\cong\widehat{\mc{O}}_X$ is faithfully flat.
 The fully faithfulness implies that the following diagram on the right
 is Cartesian by [EGA $0_{\mr{I}}$, 6.6.3]:
 \begin{equation*}
  \xymatrix{
   N\otimes_{\mc{O}_S}\mc{O}_{X}
   \ar@{^{(}->}[r]\ar@{^{(}->}[d]&
   N\otimes_{\mc{O}_S}\mc{O}_{X,Z}
   \ar@{^{(}->}[d]\\
  M\otimes\mc{O}_X\ar@{^{(}->}[r]&
   M\otimes\mc{O}_{X,Z},
   }
   \qquad
   \xymatrix{
   N\otimes_{\mc{O}_S}\mc{O}_{X,Z}
   \ar@{^{(}->}[r]\ar@{^{(}->}[d]\ar@{}[rd]|\square&
   N\otimes_{\mc{O}_S}\widehat{\mc{O}}_{X,Z}
   \ar@{^{(}->}[d]\\
  M\otimes\mc{O}_{X,Z}\ar@{^{(}->}[r]&
   M\otimes\widehat{\mc{O}}_{X,Z}.
   }
 \end{equation*}
 Thus, it remains to show that the left diagram above is Cartesian.
 By the flatness of $\mc{O}_X$ over $\mc{O}_S$, this is equivalent to showing that the induced homomorphism
 $\alpha\colon L:=(M/N)\otimes\mc{O}_X\rightarrow(M/N)\otimes\mc{O}_{X,Z}$ is injective.
 Since $\alpha$ is a localization homomorphism by the multiplicative system $1+J$, it suffices to show that,
 for any $f\in 1+J$, we have $\mr{Ass}(L)\subset D(f)$ in $X=\mr{Spec}(\mc{O}_X)$ by [EGA IV, 3.1.9].
 If there exist $f$, $\mf{p}$ such that $\mf{p}\in\mr{Ass}(L)\cap V(f)$, then $V(J)\cap V(\mf{p})\subset V(f)\cap V(J)=\emptyset$.
 Thus, it suffices to check that, for any $\mf{p}\in\mr{Ass}(L)$,
 the intersection $V(J)\cap V(\mf{p})\neq\emptyset$.
 Since $\mc{O}_X$ is flat over $\mc{O}_S$, the associated prime of $L$ is equal to
 $\bigcup_{\mf{p}\in\mr{Ass}(M/N)}\mr{Ass}\bigl(k(\mf{p})\otimes\mc{O}_X\bigr)$ by [EGA IV, 3.3.1].
 Since each fiber of $\mc{O}_X$ is geometrically irreducible and is smooth over $\mc{O}_S$, $k(\mf{p})\otimes\mc{O}_X$ is integral, and
 thus $\mr{Ass}\bigl(k(\mf{p})\otimes\mc{O}_X\bigr)$ is equal to the generic point of the fiber $X\otimes_{\mc{O}_S}k(\mf{p})$.
 Since the morphism $Z\rightarrow S$ is an isomorphism, the intersection is not empty, and the claim follows.
\end{proof}

\begin{cor}
 \label{intergoff}
 Let $M$ be a finitely generated $\mc{O}_S$-module, and $N\subset M$ be a submodule.
 Let $h\in M\otimes_{\mc{O}_S}\mc{O}_X$, and write $h=\sum a_{\ul{k}}\ul{x}^{\ul{k}}$ in $M\otimes\widehat{\mc{O}}_X\cong M\dd{x_1,\dots,x_d}$.
 If $a_{\ul{k}}\in N$ for any $\ul{k}$, then $h\in N\otimes_{\mc{O}_S}\mc{O}_X$.
 In particular,  if $k$ is perfect and we are in the situation of {\normalfont\ref{prepringa}},
 $h\in\mc{O}_X\otimes\pi^{-1}A$ belongs to $(\mc{O}_X\otimes\pi^{-1}A)^{\lambda}$ if and only if $a_{\ul{k}}\in(\pi^{-1}A)^\lambda$ for any $\ul{k}$.
\end{cor}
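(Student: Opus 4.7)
The plan is to deduce both parts of Corollary \ref{intergoff} from the preceding lemma by a short finiteness reduction. For the first claim, since $\mc{O}_S$ is noetherian (as $S$ is of finite type over $k$) and $M$ is finitely generated, the submodule $N$ is also finitely generated. For any finitely generated $\mc{O}_S$-module $P$, the natural map $P\otimes_{\mc{O}_S}\widehat{\mc{O}}_X\to P\dd{x_1,\dots,x_d}$ is an isomorphism, because $\widehat{\mc{O}}_X\cong\mc{O}_S\dd{x_1,\dots,x_d}$ (via the trivialization recalled in \ref{loclhypintr}) and tensor product with the $(x_1,\dots,x_d)$-adic completion commutes with finitely generated modules. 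Hence the hypothesis $a_{\ul{k}}\in N$ for every $\ul{k}$ is equivalent to $h\in N\otimes_{\mc{O}_S}\widehat{\mc{O}}_X$, and combining with the hypothesis $h\in M\otimes_{\mc{O}_S}\mc{O}_X$, the preceding lemma gives
\[
h\in(N\otimes_{\mc{O}_S}\widehat{\mc{O}}_X)\cap(M\otimes_{\mc{O}_S}\mc{O}_X)=N\otimes_{\mc{O}_S}\mc{O}_X.
\]

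For the ``in particular'' part, the subtlety is that $\pi^{-1}A=\sum_{n\geq 0}(\pi^{-1}\mc{O}_S)^{1/p^n}$ is not itself finitely generated, so I truncate. Any element of $\mc{O}_X\otimes_{\mc{O}_S}\pi^{-1}A$ is a finite sum of pure tensors, so it already lies in $\mc{O}_X\otimes_{\mc{O}_S}M'$ where $M':=(\pi^{-1}\mc{O}_S)^{1/p^n}$ for some $n\geq 0$; the embedding $\mc{O}_X\otimes M'\hookrightarrow\mc{O}_X\otimes\pi^{-1}A$ is automatic by flatness of $\mc{O}_X$ over $\mc{O}_S$. Since $k$ is perfect and $\mc{O}_S$ is a finite type $k$-algebra, $\mc{O}_S$ is $F$-finite, so $M'$ is finitely generated over $\mc{O}_S$, and the submodule $N':=(\pi^{-1}A)^{\lambda}\cap M'$ is finitely generated as well.

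With this truncation in hand, the forward direction is direct: writing $h=\sum_j c_j\otimes r_j$ with $r_j\in(\pi^{-1}A)^{\lambda}$ and expanding each $c_j=\sum_{\ul{k}}c_{j,\ul{k}}\ul{x}^{\ul{k}}$ in $\mc{O}_S\dd{x_1,\dots,x_d}$, the coefficients of $h$ are $a_{\ul{k}}=\sum_j c_{j,\ul{k}}\,r_j$, which lie in $(\pi^{-1}A)^{\lambda}$ since $v(c_{j,\ul{k}})\geq 0$ (the valuation $v$ is centered on $S$). Conversely, if every $a_{\ul{k}}\in(\pi^{-1}A)^{\lambda}$, then since also $a_{\ul{k}}\in M'$, we have $a_{\ul{k}}\in N'$; applying the first claim to the pair $N'\subset M'$ of finitely generated modules yields $h\in N'\otimes_{\mc{O}_S}\mc{O}_X\subset\mc{O}_X\otimes_{\mc{O}_S}(\pi^{-1}A)^{\lambda}=(\mc{O}_X\otimes\pi^{-1}A)^{\lambda}$.

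The only genuine (and modest) obstacle is the finiteness step in the second claim: one must notice that $h$ only ``sees'' a finite level $M'$ of $\pi^{-1}A$, and invoke $F$-finiteness of $\mc{O}_S$ to make $M'$ finitely generated over $\mc{O}_S$. Beyond that, everything is formal bookkeeping, using flatness of $\mc{O}_X/\mc{O}_S$ and the identification $P\otimes_{\mc{O}_S}\widehat{\mc{O}}_X\cong P\dd{x_1,\dots,x_d}$ for finitely generated $P$.
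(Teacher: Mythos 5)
Your approach is the same as the paper's: for the first claim, identify $N\otimes_{\mc{O}_S}\widehat{\mc{O}}_X$ with the set of formal power series with coefficients in $N$ (using that $N$ is finitely generated over the noetherian ring $\mc{O}_S$) and then invoke the preceding lemma's Cartesian square; for the second claim, reduce to a finitely generated submodule of $\pi^{-1}A$ via $F$-finiteness of $\mc{O}_S$ (this is exactly where the paper uses that $k$ is perfect) and apply the first claim.

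There is one small but genuine slip in your truncation step. You write that $h$ lies in $\mc{O}_X\otimes_{\mc{O}_S}M'$ with $M':=(\pi^{-1}\mc{O}_S)^{1/p^n}=\pi^{-1/p^n}\mc{O}_S^{1/p^n}$ for a single $n$, but these modules are \emph{not} nested: for $m<n$ one has $(\pi^{-1}\mc{O}_S)^{1/p^m}\not\subset(\pi^{-1}\mc{O}_S)^{1/p^n}$, since $\pi^{-1/p^m}=\pi^{-1/p^n}\cdot\pi^{-(1/p^m-1/p^n)}$ and the second factor has a negative exponent. So a general element of $\mc{O}_X\otimes\pi^{-1}A$ only lands in $\mc{O}_X\otimes_{\mc{O}_S}\bigl(\sum_{m\leq n}(\pi^{-1}\mc{O}_S)^{1/p^m}\bigr)$ for some $n$, which is the truncation the paper actually uses. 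The fix is immediate --- the finite sum $\sum_{m\leq n}(\pi^{-1}\mc{O}_S)^{1/p^m}$ is again a finitely generated $\mc{O}_S$-module by $F$-finiteness, so taking $M'$ to be that sum (and $N':=(\pi^{-1}A)^\lambda\cap M'$) makes the rest of your argument go through verbatim, and then one takes the union over $n$, as in the paper. With that correction, the proposal matches the paper's proof.
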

\begin{proof}
 Since $N$ is finitely generated, we have
 \begin{equation*}
  N\otimes_{\mc{O}_S}\widehat{\mc{O}}_X
   \cong
   \Bigl\{\sum a_{\ul{k}}\ul{x}^{\ul{k}}\mid
   a_{\ul{k}}\in N\Bigr\}
   \subset M\dd{x_1,\dots,x_d},
 \end{equation*}
 and the first claim follows by the lemma.
 To check the second claim, ``if'' part is the only problem. The $\mc{O}_S$-module $M:=\pi^{-1}\sum_{n\leq a}(\pi^{-1}\mc{O}_S)^{1/p^n}$
 is finitely generated for any $a$ since $k$ is assumed to be perfect.
 Apply the first claim for $N:=\bigl(\pi^{-1}\sum_{n\leq a}(\pi^{-1}\mc{O}_S)^{1/p^n}\bigr)^{\lambda}$, and take the inductive limit over $a$ to conclude.
\end{proof}

\subsection{}
\label{propnpval}
Let $k$ be perfect and $v\in\left<S\right>$.
This corollary implies that for $h\in \mc{O}_X\otimes \pi^{-1}A$, $\NP(h)(v)$ is the largest $\lambda\in J$ such that $h\in(\mc{O}_X\otimes \pi^{-1}A)^\lambda$.
The image of $h$ in $\mr{gr}^{\NP(h)(v)}(\mc{O}_X\otimes\pi^{-1}A)$ is called the {\em symbol} denoted by $\sigma_v(h)$ or simply by $\sigma(h)$.
By construction, $\sigma(h)\neq0$ if $\lambda=\NP(h)(v)<0$.
An element in $\mc{O}_X\otimes\mr{gr}^\lambda(\pi^{-1}A)$ is said to be {\em separable} if the image of the element by the homomorphism
\begin{equation*}
 \mc{O}_X\otimes\mr{gr}^\lambda(\pi^{-1}A)\hookrightarrow
  \mc{O}_X\otimes\mr{gr}^\lambda\bigl(k(S)^{1/p^\infty}\bigr)
  \cong
  \mc{O}_X\otimes\mr{gr}^0\bigl(k(S)^{1/p^\infty}\bigr)
  \cong
  \mc{O}_X\otimes_k k_v^{1/p^\infty}
\end{equation*}
is ($\mr{Spec}(k_v^{1/p^{\infty}})$-)separable.
Note that the first isomorphism between $k_v^{1/p^\infty}$-vector spaces $\mr{gr}^\lambda$ and $\mr{gr}^0$
is given once we fix $f\in k(S)^{1/p^{\infty}}$ such that $v(f)=\lambda$.
In particular, it is not canonical, but the separability does not depend on the choice.

\begin{lem*}
 We keep assuming $k$ to be perfect.
 Let $h\in\mc{O}_X\otimes\pi^{-1}A$ and assume $\NP(h)(v)<0$.
 \begin{enumerate}
  \item If $\sigma_v(h)$ is separable, then there exists a power $q$ of $p$ and a local modification $S'\rightarrow S^{1/q}$
	such that $h$ can be written as $h'/\pi'$ where $h'$ is a separable function on $\mc{O}_{X_{S'}}$ and $\pi'\in\mc{O}_{S'}$.
	If $h\in\pi^{-1}\mc{O}_X$, then we may take $q=1$.
       
  \item\label{propnpval-2}
       Assume $\mr{ht}(v)=1$ and $\NP^{(\infty)}(h)(v)<0$.
       Then the symbol $\sigma_v(h)$ is separable if and only if $h$ is weakly admissible at $v$,
       namely $\NP^{(\infty)}(h)(v)=\NP(h)(v)$.
 \end{enumerate}
\end{lem*}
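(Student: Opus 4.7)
The plan is to handle the two assertions separately.

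For (1), I would first reduce to the case $q = 1$ by a direct change of base. Writing $h$ as a finite sum in $\mc{O}_X \otimes_{\mc{O}_S}\pi^{-1}A$, there is a uniform integer $N$ such that every coefficient $a_{\ul{k}}$ in the power series expansion lies in $\sum_{n \leq N}(\pi^{-1}\mc{O}_S)^{1/p^n}$. With $q := p^N$ and $\pi' := \pi^q \in \mc{O}_{S^{1/q}}$, one checks directly that $a_{\ul{k}} \in (\pi')^{-1}\mc{O}_{S^{1/q}}$, and Corollary \ref{intergoff} applied over $S^{1/q}$ then gives $h \in (\pi')^{-1}\mc{O}_{X_{S^{1/q}}}$. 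After renaming, I may assume $h = H/\pi$ with $H \in \mc{O}_X$; expanding $H = \sum b_{\ul{k}}\ul{x}^{\ul{k}}$ with $b_{\ul{k}} = \pi a_{\ul{k}} \in \mc{O}_S$, noetherianity of $\mc{O}_S$ makes the ideal $I := (\pi) + (b_{\ul{k}})_{\ul{k}}$ finitely generated. Choosing $\ul{k}_0$ so that $\sigma := b_{\ul{k}_0}$ realizes $v(\sigma) = \min_{\ul{k}}v(b_{\ul{k}}) = v(\pi) + \lambda$ with $\lambda := \NP(h)(v) < 0$, one has $v(\sigma) < v(\pi)$, so the center of $v$ on the blowup $S' \to S$ of $I$ lies on the chart where $\sigma$ generates $I\mc{O}_{S'}$. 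On that chart both $\pi/\sigma$ and every $b_{\ul{k}}/\sigma$ are regular, and $h = (H/\sigma)/(\pi/\sigma)$ yields the desired factorization, with $H/\sigma \in \mc{O}_{X_{S'}}$ by Corollary \ref{intergoff} and $\pi/\sigma \in \mc{O}_{S'}$.

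To check that $h' := H/\sigma$ is $S'$-separable on a neighborhood of the center $z'$ of $v$, I would use the trivialization $\mr{gr}^{\lambda}(\pi^{-1}A) \xrightarrow{\sim} k_v^{1/p^\infty}$ induced by $\tau := \sigma/\pi$ of $v$-value $\lambda$, under which the symbol $\sigma_v(h)$ is identified with the image of $\overline{H/\sigma}|_{z'}$ in $\mc{O}_X \otimes k_v^{1/p^\infty}$. Separability of $\sigma_v(h)$ then produces, via Lemma \ref{intFrob} over the perfect field $k_v^{1/p^\infty}$, some $\ul{n} \in \Lambda$ whose coefficient of $\ul{x}^{\ul{n}}$ in $\overline{H/\sigma}|_{z'}$ is nonzero; this same coefficient already lies and is nonzero in $k(z')$, so $\mr{d}h'$ does not vanish generically on $X_{z'}$, giving separability at $z'$. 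By the openness of the separable locus (Lemma \ref{lemonprofun}.\ref{lemonprofun-pwsmoook}, combined with the geometric irreducibility hypothesis in \ref{loclhypintr} and [EGA IV, 13.1.3]), one may then shrink $S'$ to obtain full $S'$-separability.

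For (2), I would first apply Lemma \ref{intFrob} over the perfect field $k_v^{1/p^\infty}$ to reformulate: $\sigma_v(h)$ is separable if and only if there exists $\ul{n} \in \Lambda$ with $v(a_{\ul{n}}) = \lambda$ where $\lambda := \NP(h)(v)$. If such $\ul{n}$ exists, then in $f^{(m)}_{\ul{n}} = \sum_{k=0}^{m} a_{p^k\ul{n}}^{1/p^k}$ the $k=0$ term has valuation $\lambda$ while all other terms have valuation $\geq \lambda/p^k > \lambda$, so the non-archimedean minimum rule gives $v(f^{(m)}_{\ul{n}}) = \lambda$ for every $m$, whence $\NP^{(\infty)}(h)(v) \leq \lambda$, and equality follows from Lemma \ref{basiinequnp}. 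Conversely, assume $\NP^{(\infty)}(h)(v) = \lambda$; the minimum in its definition is attained (as noted in \ref{loclhypintr}) by some $\ul{n}_0 \in \Lambda$ with $\lim_m v(f^{(m)}_{\ul{n}_0}) = \lambda$. The values $T_k := v(a_{p^k\ul{n}_0})/p^k$ satisfy $T_k \geq \lambda/p^k$, so $T_k \geq \lambda/p > \lambda$ for every $k \geq 1$; were $T_0 = v(a_{\ul{n}_0})$ also strictly greater than $\lambda$, the estimate $v(f^{(m)}_{\ul{n}_0}) \geq \min_{k \leq m} T_k \geq \min(T_0, \lambda/p) > \lambda$ would hold uniformly in $m$, contradicting the limit. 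Hence $v(a_{\ul{n}_0}) = \lambda$, proving separability of $\sigma_v(h)$. The main obstacle is the trivialization-versus-center identification in (1) that turns the abstract separability of $\sigma_v(h)$ into the concrete separability of $H/\sigma$ at $z'$; the strict estimate in the converse direction of (2) is the remaining technical subtlety needed to rule out cancellation in the limit.
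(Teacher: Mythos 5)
Your proof is correct and follows essentially the same approach as the paper's.

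For part (2), your argument is a direct reformulation of the one in the paper: both use Lemma \ref{intFrob} to translate separability of $\sigma_v(h)$ into the existence of $\ul{n}\in\Lambda$ with $v(a_{\ul{n}})=\NP(h)(v)$, and both rely on the well-orderedness of $J$ (which guarantees that the minimum over $\ul{n}$ in the definition of $\NP^{(\infty)}$ is attained). Where the paper presents the converse as a contrapositive over all $\ul{n}\in\Lambda$, you fix a witness $\ul{n}_0$ attaining the minimum and derive $v(a_{\ul{n}_0})=\lambda$ by a strict estimate on the $T_k$; the underlying computation is the same.

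For part (1), the paper's proof is deliberately terse (pick $\pi'$ with $v(\pi')=-\lambda$, enlarge $S$ so that $\pi'$ and $\pi'h$ are regular, then shrink), and what you wrote is a more explicit realization of exactly this: your $\pi/\sigma$ plays the role of $\pi'$, the blowup of $I$ along the $\sigma$-chart supplies the local modification, and the trivialization of $\mr{gr}^\lambda$ by $\tau=\sigma/\pi$ is the mechanism behind the paper's claim that ``$h'$ is separable if we shrink''. The preliminary base change to $S^{1/q}$ and your choice $\pi'=\pi^q$ are somewhat heavier than necessary (one can take $\pi'=\pi$ after passing to $S^{1/q}$), but this does not affect correctness, and your observation that $N=0$ forces $q=1$ when $h\in\pi^{-1}\mc{O}_X$ correctly accounts for the last sentence of statement (1). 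A point of imprecision shared with the paper is the treatment of the constant coefficient $a_{\ul{0}}$: $\NP(h)(v)$ is defined by a minimum over $\ul{k}\neq\ul{0}$ only, so one should implicitly be working modulo a constant or in a regime where $v(a_{\ul{0}})\geq\lambda$; this is how the lemma is used in Lemma \ref{goodmodelze}, so it is not a gap specific to your argument.
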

\begin{proof}
 Let us show the first one. Let $\lambda:=\NP(h)(v)<0$.
 We may take $\pi'\in k(S)^{1/p^\infty}$ such that $v(\pi')=-\lambda>0$.
 Take $S'$ so that $\pi'$ and $h':=\pi'\cdot h$ belong to $\mc{O}_{S'}$.
 Then $h':=\pi'\cdot h$ is separable if we shrink $S'$ around the center of $v$, thus the claim follows.

 Let us show the second claim. Write $h=\sum a_{\ul{n}}\ul{x}^{\ul{n}}$.
 Let $\Sigma$ be the set of $\ul{n}$ such that $v(a_{\ul{n}})=\NP(h)(v)$.
 The symbol $\sigma(h)$ is separable if and only if there exists $\ul{n}\in\Sigma$ such that $p\nmid\ul{n}$ by Lemma \ref{intFrob}.
 Assume there exists $\ul{n}_0\in\Sigma$ such that $p\nmid\ul{n}_0$.
 Then for any $i>0$,
 \begin{equation*}
  p^{-i}\cdot v(a_{p^i\ul{n}_0})>v(a_{\ul{n}_0}).
 \end{equation*}
 Indeed, if $v(a_{p^i\ul{n}_0})\geq0$, the claim follows since $v(a_{\ul{n}_0})<0$, and otherwise,
 $p^{-i}\cdot v(a_{p^i\ul{n}_0})>v(a_{p^i\ul{n}_0})\geq v(a_{\ul{n}_0})$.
 This implies that $v\bigl(\sum_{i=0}^N (a_{p^i\ul{n}_0})^{1/p^i}\bigr)=v(a_{\ul{n}_0})$ for any $N$.
 Thus, $\NP^{(\infty)}(h)(v)=\NP(h)(v)$.

 Assume, on the other hand, that any element of $\Sigma$ is divisible by $p$.
 Take $\ul{n}\in\Lambda$.
 If $\bigl\{p^i\ul{n}\bigr\}_{i\geq0}\cap\Sigma=\emptyset$, then $v(a_{p^i\ul{n}})>\NP(h)(v)$ for any $i$.
 If $p^a\ul{n}\in\bigl\{p^i\ul{n}\bigr\}_{i\geq0}\cap\Sigma$, then $a>0$ by assumption.
 In this case, for any $k$, we have
 \begin{align*}
  v\bigl(\sum_{i=0}^k(a_{p^i\ul{n}})^{1/p^i}\bigr)
  >
  v(a_{p^a\ul{n}}).
 \end{align*}
 Since $\NP^{(\infty)}(h)(v)\in J$, which is well-ordered, we have $\NP^{(\infty)}(h)(v)>\NP(h)(v)$.
\end{proof}

\begin{lem}
 \label{redulemav}
 Assume that $k$ is algebraically closed and $v$ is minimal.
 For any $h\in\mc{O}_X\otimes \pi^{-1}A$ such that $\NP(h)<\NP^{(\infty)}(h)$,
 there exists $g\in\mc{O}_X\otimes (\pi^{-1}A)^{1/p}\subset \mc{O}_X\otimes \pi^{-1}A$ such that $\NP(h)<\NP\bigl(h + (g^p-g) \bigr)$.
\end{lem}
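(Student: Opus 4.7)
The plan is to construct $g$ by lifting a $p$-th root of the leading symbol of $h$ from the closed fiber $X_z$, where $z\in S$ is the center of $v$. Set $\lambda:=\NP(h)(v)$; by hypothesis $\lambda<0$ and the minimum is attained on some non-empty $\Sigma:=\{\ul{n}\mid v(a_{\ul{n}})=\lambda\}$. Since $\NP(h)(v)<\NP^{(\infty)}(h)(v)$, the argument of Lemma~\ref{propnpval}.\ref{propnpval-2} forces every $\ul{n}\in\Sigma$ to be divisible by $p$; equivalently, $\sigma_v(h)$ is non-separable.

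Because $v$ is minimal and $k$ is algebraically closed, $k_v=k$ and $z$ is a $k$-rational closed point of $S$, so we have a surjection $\mc{O}_X\twoheadrightarrow\mc{O}_X\otimes_{\mc{O}_S}k_v=\mc{O}_{X_z}$. The graded piece $\mr{gr}^{\lambda}(\pi^{-1}A)$ is a non-zero $k$-subspace of the one-dimensional $k$-vector space $\mr{gr}^{\lambda}(k(S)^{1/p^{\infty}})\cong k_v$, hence all of it. Choose $\ul{n}_0\in\Sigma$ and set $\tau:=a_{\ul{n}_0}\in\pi^{-1}A$: then $v(\tau)=\lambda$ and $\bar\tau$ is a $k$-basis of $\mr{gr}^{\lambda}(\pi^{-1}A)$. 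Write $\sigma_v(h)=\bar\tau\cdot\bar F$ for the unique $\bar F\in\mc{O}_X\otimes_{\mc{O}_S}k_v=\mc{O}_{X_z}$. Via Corollary~\ref{intergoff}, the expansion of $\bar F$ in $k\dd{\ul{x}}$ is $\sum_{\ul{n}\in\Sigma}(\bar a_{\ul{n}}/\bar\tau)\,\ul{x}^{\ul{n}}$, so every monomial satisfies $p\mid\ul{n}$. Applying Lemma~\ref{intFrob} on the integral smooth $k$-scheme $X_z$ at the $k$-rational section point then yields $\bar g\in\mc{O}_{X_z}$ with $\bar F=\bar g^p$.

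Lift $\bar g$ to $f_0\in\mc{O}_X$ via the above surjection, and set $g:=-\tau^{1/p}f_0$. Since $\tau\in\pi^{-1}A$ implies $\tau^{1/p}\in(\pi^{-1}A)^{1/p}$, we have $g\in\mc{O}_X\otimes(\pi^{-1}A)^{1/p}$. In characteristic $p$ one computes $g^p=-\tau f_0^p$; moreover $v(f_0)=0$ because $\bar g\neq 0$, and so $\sigma_v(g^p)=-\bar\tau\cdot\bar g^p=-\bar\tau\cdot\bar F=-\sigma_v(h)$. Hence $\sigma_v(h+g^p)=0$, i.e.\ $v(h+g^p)>\lambda$, while $v(g)=\lambda/p>\lambda$ since $\lambda<0$. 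Combining, $\NP(h+g^p-g)(v)>\lambda=\NP(h)(v)$, as required.

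The main obstacle is the lifting step: the formal $p$-th root of the leading part of $h$ is generally not an algebraic function on $X$, so one cannot extract $g$ straight from the formal expansion. The hypotheses that $v$ is minimal and $k$ algebraically closed reduce the problem to a Frobenius-root question on the single smooth fiber $X_z$, where Lemma~\ref{intFrob} supplies the $p$-th root and the smoothness of $X/S$ supplies the lift to $\mc{O}_X$.
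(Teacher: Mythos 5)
Your proof is correct and reaches the same construction as the paper's, by a slightly more direct route. Both arguments identify the one-dimensional $k$-space $\mr{gr}^\lambda(\pi^{-1}A)$, transport the problem to the closed fiber $X_z$, and produce $g$ by lifting a Frobenius root of the leading symbol. The paper invokes Lemma~\ref{lemonprofun}.\ref{lemonprofun-smgenpinsep} to extract the maximal $q$-th root with a \emph{separable} base $g'$ (so $g'^q=-\sigma(h)$), uses the diagram~(\ref{diagqpowcom}), and then reduces to $q=p$ via the telescoping substitution $g\mapsto g+g^p+\dots+g^{q/p}$. You instead extract a single $p$-th root by applying Lemma~\ref{intFrob} once: since every $\ul{n}\in\Sigma$ is $p$-divisible, $\bar F\in\mc{O}_{X_z}\cap k\dd{\ul{x}^p}=\mc{O}_{X_z}^p$ ($k$ perfect), so $\bar g$ exists with $\bar g^p=\bar F$. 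This shortcut is legitimate because the separability of the extracted root, which the paper's intermediate $g'$ enjoys, plays no role inside this lemma; the only facts needed to conclude $\NP\bigl(h+(g^p-g)\bigr)(v)>\lambda$ are $\sigma_v(g^p)=-\sigma_v(h)$ and $v(g)=\lambda/p>\lambda$, both of which you verify. Your explicit normalization $\sigma_v(h)=\bar\tau\cdot\bar F$ with $\tau=a_{\ul{n}_0}$ is a concrete realization of the non-canonical identification $\mr{gr}^\lambda(\pi^{-1}A)\cong k$ that the paper leaves implicit in its diagram~(\ref{diagqpowcom}).
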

\begin{proof}
 Since $\mr{trdeg}_k(k_v)=0$ and $k$ is assumed to be algebraically closed, $\mr{gr}^0(\mc{O}_S)$ is equal to $k$.
 Moreover,
 \begin{equation*}
  \mr{gr}\bigl(k(S)^{1/p^\infty}\bigr)\cong k[\Gamma_v^{(p)}],
 \end{equation*}
 where $\Gamma_v^{(p)}:=\bigcup_{i>0}p^{-i}\Gamma_v$.
 Caution that this is a non-canonical isomorphism of $k$-modules.
 Since $\mr{gr}^\lambda(A)$ is a $\mr{gr}^0(\mc{O}_S)\cong k$-vector subspace of $\mr{gr}^{\lambda}(k(S)^{1/p^{\infty}})$,
 we get that $\mr{gr}^\lambda(A)$ is either $k$-module of rank $0$ or $1$.
 Since the map $\mr{gr}^\lambda(\pi^{-1}A)\rightarrow\mr{gr}^{\lambda+v(\pi)}(A)$ sending $\alpha$ to $\pi\alpha$ is an isomorphism,
 the same is true for $\mr{gr}^\lambda(\pi^{-1}A)$.
 In the same way, $\mr{gr}^\lambda\bigl((\pi^{-1}A)^{1/q}\bigr)$ is a $k$-vector space of dimension either $0$ or $1$.
 Now, take $0>\lambda\geq-v(\pi)$ such that $\mr{gr}^\lambda(\pi^{-1}A)$ is of rank $1$.
 Then there exists $\alpha\in \pi^{-1}A$ such that $v(\alpha)=\lambda$.
 Let $q$ be a power of $p$.
 Because valuations are multiplicative, the element $\sigma(\alpha^{1/q})$ does not vanish in $\mr{gr}^{\lambda/q}\bigl((\pi^{-1}A)^{1/q}\bigr)$.
 Thus the latter $k$-vector space is also of dimension $1$.
 This implies that we have the following commutative diagram whose vertical homomorphisms are isomorphic:
 \begin{equation}
  \tag{$\star$}
  \label{diagqpowcom}
  \vcenter{
  \xymatrix@C=60pt{
   \mc{O}_X\otimes\mr{gr}^{\lambda/q}\bigl((\pi^{-1}A)^{1/q}\bigr)
   \ar[r]^-{(-)^q}\ar[d]_{\sim}&
   \mc{O}_X\otimes\mr{gr}^{\lambda}(\pi^{-1}A)\ar[d]^{\sim}\\
  \mc{O}_X\otimes\mr{gr}^{\lambda/q}\bigl(k(S)^{1/p^\infty}\bigr)
   \ar[r]_-{(-)^q}&
   \mc{O}_X\otimes\mr{gr}^{\lambda}\bigl(k(S)^{1/p^\infty}\bigr).
   }
   }
 \end{equation}
 Here, the horizontal homomorphisms are $q$-th power map.
 
 Now, let us go back to the proof.
 By Lemma \ref{propnpval}.\ref{propnpval-2}, $\sigma(h)$ is not separable.
 This implies that $\sigma(h)$ is in the image of the $q$-th power map in $\mc{O}_X\otimes\mr{gr}(k(S)^{1/p^{\infty}})$
 for some power $q>1$ of $p$ by Lemma \ref{lemonprofun}.\ref{lemonprofun-smgenpinsep}.
 Since the vertical homomorphisms of (\ref{diagqpowcom}) are isomorphisms,
 there exists a separable function $g'\in\mr{gr}\bigl(\mc{O}_X\otimes (\pi^{-1}A)^{1/q}\bigr)$ such that $g'^q=-\sigma(h)$ where $q>1$ is a power of $p$.
 Now, we can take an extension $g\in\mc{O}_X\otimes(\pi^{-1}A)^{1/q}$ such that $\sigma(g)=g'$.
 By the choice of $g'$, we have $v(h+(g^q-g))>v(h)=0$.
 This implies that
 \begin{equation*}
  \NP\bigl(h + (g^q-g)\bigr)(v)>\NP(h)(v).
 \end{equation*}
 Finally by replacing $g$ by $g+g^p+\dots+g^{q/p}$, we may take $q=p$, and the lemma follows. 
\end{proof}

\begin{lem}
 \label{npnegok}
 Let $f\in\mc{O}_X$, $\pi\in\mc{O}_S\setminus\{0\}$, and let $v$ be a minimal valuation such that $\NP^{(\infty)}(f/\pi)(v)<0$.
 Then there exists a finite extension $L$ of $k(S)$ such that $f/\pi$ is weakly admissible at any extension of $v$ to $L$.
\end{lem}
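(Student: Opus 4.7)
The plan is to reduce to the algebraically closed setting covered by Lemma \ref{redulemav}, iterate that lemma until the $\NP$ value catches up with the $\NP^{(\infty)}$ value, and then descend the resulting mollifier to a finite extension of $k(S)$.

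First, I would pass to an algebraic closure $\bar{k}$ of $k$. Let $\bar{S}$ be an irreducible component of $S\otimes_{k}\bar{k}$ whose generic point lies over the generic point of $S$ and onto which the center of $v$ lifts, and let $\bar{v}$ be any extension of $v$ centered on $\bar{S}$. Since $k(\bar{S})/k(S)$ is algebraic, $\bar{v}$ remains of height $1$ with $\mathrm{trdeg}_{\bar{k}}(k_{\bar{v}})=0$, so $\bar{v}$ is minimal over $\bar{k}$. The smooth morphism $\bar{X}:=X\otimes_{S}\bar{S}\to\bar{S}$ inherits the hypotheses of \ref{loclhypintr}, and by the base-change invariance (\ref{invarpulnp}) we have $\NP^{(\infty)}(f/\pi)(\bar{v})=\NP^{(\infty)}(f/\pi)(v)<0$.

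Starting from $h_{0}:=f/\pi$, I would produce a sequence inductively: while $\NP(h_{i})(\bar{v})<\NP^{(\infty)}(h_{i})(\bar{v})$, apply Lemma \ref{redulemav} (which needs $\bar{k}$ algebraically closed and $\bar{v}$ minimal) to obtain $g_{i}\in\mc{O}_{\bar{X}_{\bar{\eta}}}\otimes(\pi^{-1}A)^{1/p}$ with $\NP(h_{i+1})(\bar{v})>\NP(h_{i})(\bar{v})$ for $h_{i+1}:=h_{i}+(g_{i}^{p}-g_{i})$. By Lemma \ref{basiinequnp}, $\NP^{(\infty)}(h_{i+1})(\bar{v})=\NP^{(\infty)}(h_{0})(\bar{v})$ throughout. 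The key point is termination: the strictly increasing sequence $\NP(h_{i})(\bar{v})$ lies in the bounded interval $[-\bar{v}(\pi),\NP^{(\infty)}(h_{0})(\bar{v})]$, and by (\ref{cutsmalpa}) combined with the corollary of Lemma \ref{discvalsemgr} applied to the finitely generated $\mc{O}_{\bar{S}}^{1/p^{N}}$-module $\sum_{n\leq N}(\pi^{-1}\mc{O}_{\bar{S}})^{1/p^{n}}$, this interval meets $v(\pi^{-1}A)$ only finitely often. Hence the iteration halts at some step $N$ with $\NP(h_{N})(\bar{v})=\NP^{(\infty)}(h_{0})(\bar{v})$. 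Setting $\bar{g}:=g_{0}+\cdots+g_{N-1}$ and using the characteristic-$p$ identity $\bar{g}^{p}=\sum g_{i}^{p}$, we obtain $h_{N}=f/\pi+(\bar{g}^{p}-\bar{g})$, so $\bar{g}$ is a weak mollifier at $\bar{v}$.

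Now I would descend: $\bar{g}$ involves finitely many elements of $\bar{k}$, so it is defined over a finite extension $L_{0}/k(S)$, and it witnesses weak admissibility of $f/\pi$ at $\bar{v}|_{L_{0}}$. Since the extensions of $v$ to a fixed finite extension of $k(S)$ form a finite set, and since (by (\ref{invarpulnp})) a mollifier defined over $L$ for an extension $w$ continues to work for any extension of $w$ to any larger field, I would repeat the construction once for each of the remaining extensions of $v$ to $L_{0}$ and take $L$ to be the compositum of the resulting fields (equivalently, a finite Galois extension of $k(S)$ containing $L_{0}$ together with all Galois conjugates of $\bar{g}$). Then every extension of $v$ to $L$ admits a weak mollifier, proving the lemma.

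The main obstacle is the termination argument in the second step: it requires combining the monotonicity from Lemma \ref{redulemav} (strictly raising $\NP$) with the stability of $\NP^{(\infty)}$ from Lemma \ref{basiinequnp} and the discreteness of value semigroups of noetherian rings (corollary of Lemma \ref{discvalsemgr}) to force only finitely many available values of $\NP$ in the relevant range. The reduction to $\bar{k}$ and the descent to a finite extension are essentially formal once termination has been established.
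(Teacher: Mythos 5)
Your argument is correct and essentially identical to the paper's: it iterates Lemma \ref{redulemav}, keeps $\NP^{(\infty)}$ fixed via Lemma \ref{basiinequnp}, and terminates because the set $v(\pi^{-1}A)\cap\left]-\infty,\NP^{(\infty)}(f/\pi)(v)\right]$ is finite by Lemma \ref{discvalsemgr} and (\ref{cutsmalpa}), after which the resulting mollifier is absorbed into a finite extension of $k(S)$ (the paper takes $L=k(S^{1/q})$ after assuming $k$ algebraically closed, while you spell out the Galois-conjugation descent for the ``any extension'' clause). The only slight imprecision is that $\bar{g}$ involves finitely many $p$-power roots of functions on $S$, not merely finitely many constants from $\bar{k}$; since these are still algebraic over $k(S)$, your descent to a finite extension $L_0$ is unaffected.
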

\begin{proof}
 We may assume that $k$ is algebraically closed.
 Since $v$ is of height $1$, $\Gamma_v\subset\mb{R}$.
 For any $-v(\pi)<\varepsilon<0$, the set
 $I_\varepsilon:=v(\pi^{-1}A)\cap\left]-\infty,\varepsilon\right]\subset J$
 is finite by Lemma \ref{discvalsemgr} and (\ref{cutsmalpa}).
 Set $I:=I_{\NP^{(\infty)}(v)}$.
 By Lemma \ref{basiinequnp}, we have $\NP\bigl(f/\pi + (g^p-g)\bigr)(v)\leq\NP^{(\infty)}(v)$ for any $g\in\mc{O}_X\otimes_{\mc{O}_S}\pi^{-1}A$,
 we have $\NP\bigl(f/\pi + (g^p-g)\bigr)(v)\in I$.
 Now, apply Lemma \ref{redulemav} with $h=f/\pi$, and we can find a function $g\in\mc{O}_X\otimes(\pi^{-1}A)^{1/p}$
 such that $\NP\bigl(f/\pi+(g^p-g)\bigr)>\NP(f/\pi)$.
 Note that $h + (g^p-g)$ is still in $\mc{O}_X\otimes \pi^{-1}A$,
 and thus $\NP\bigl(f/\pi+(g^p-g)\bigr)\in I$. We again apply Lemma \ref{redulemav}, this time, with $h=f/\pi+(g^p-g)$.
 We repeat this procedure. Since $I$ is a finite set, this procedure terminates after finitely many times,
 and we find $g'\in\mc{O}_X\otimes\pi^{-1}A$ such that $\NP^{(\infty)}(f/\pi)=\NP\bigl(f/\pi + (g'^p-g') \bigr)$.
 We take $S':=S^{1/q}$ so that $\pi g'\in\mc{O}_{X_{S'}}$ where $q$ is a power of $p$.
\end{proof}

\begin{lem}
 \label{goodmodelze}
 If $h\in\mc{O}_{X_\eta}$ is weakly admissible at a valuation $v$ of height $1$,
 then there exists a finite extension $L$ of $k(S)$ so that $h$ is admissible at any extension of $v$.
\end{lem}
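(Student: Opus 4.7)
The plan is to extract a good ally directly from the given weak mollifier. Let $g\in\mc{O}_{X_\eta}$ be a weak mollifier supplied by the hypothesis, put $\tilde{h}:=h+(g^p-g)$, and set $\lambda:=\NP^{(\infty)}(h)(v)=\NP(\tilde{h})(v)\leq 0$. In both cases below, $\tilde{h}$ itself will serve as the desired good ally after a suitable local alteration of $S$; the argument splits according to whether $\lambda$ vanishes.

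If $\lambda<0$, then by Lemma \ref{propnpval}.\ref{propnpval-2} the symbol $\sigma_v(\tilde{h})$ is separable, and the first part of Lemma \ref{propnpval} then produces a power $q$ of $p$ together with a local modification $S'\to S^{1/q}$ on which $\tilde{h}$ takes the form $\tilde{h}'/\pi'$ with $\tilde{h}'\in\mc{O}_{X_{S'}}$ separable and $\pi'\in\mc{O}_{S'}$. Setting $L:=k(S')$, which is finite over $k(S)$, this decomposition exhibits $\tilde{h}$ as a good ally at the extension of $v$ centered on $S'$. For any other extension $w$ of $v$ to $L$, the element $\pi'\in k(S)^{1/q}$ has the same positive valuation under every extension of $v$ (there is a unique extension to the purely inseparable layer $k(S)^{1/q}$), so a local model of function field $L$ adapted to $w$ may likewise be chosen on which $\tilde{h}'$ and $\pi'$ are regular.

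If $\lambda=0$, each non-constant coefficient $b_{\ul{k}}$ in the expansion $\tilde{h}=\sum b_{\ul{k}}\ul{x}^{\ul{k}}$ satisfies $v(b_{\ul{k}})\geq 0$. Decompose the constant term $b_{\ul{0}}=c/\sigma$ with $c,\sigma\in\mc{O}_S$, and propose $\tilde{h}=c/\sigma+(\tilde{h}-b_{\ul{0}})$ as the good ally with $f:=c\in\mc{O}_S$. What remains is to arrange, after a local alteration $S'\to S$ with function field $k(S)$, that $\tilde{h}-b_{\ul{0}}\in\mc{O}_{X_{S'},z'}$. Since $R_v=\bigcup_{S'}\mc{O}_{S',z'}$ and $\tilde{h}-b_{\ul{0}}$ is a single rational function on $X$, one writes $\tilde{h}-b_{\ul{0}}=G/\tau$ with $G\in\mc{O}_X$ and $\tau\in\mc{O}_S$, and blows up $\mc{O}_S$ along the ideal encoding the denominator so that $\tau$ becomes a unit at the new center; Corollary \ref{intergoff} then promotes the coefficient-wise valuation bounds to integrality of $\tilde{h}-b_{\ul{0}}$ in $\mc{O}_{X_{S'}}$ localized at $z'$.

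Since $L/k(S)$ is finite, there are only finitely many extensions of $v$ to $L$, and the definition of local admissibility permits an independent local model per extension; applying the argument above extension-by-extension concludes the proof. The trickiest point I anticipate is Case $\lambda=0$: turning the formal coefficient bounds $v(b_{\ul{k}})\geq 0$ into genuine regularity on a local scheme over $S$ uses the algebraicity of $\tilde{h}$ over $k(S)$ in an essential way, since one must clear the denominators of finitely many algebra generators simultaneously rather than of infinitely many Taylor coefficients, which is precisely what the blowup is designed to accomplish.
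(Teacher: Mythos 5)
Your overall route is the same as the paper's: pass to the weakly admissible ally $\tilde h=h+(g^p-g)$, split on $\lambda:=\NP(\tilde h)(v)=\NP^{(\infty)}(h)(v)$, invoke Lemma \ref{propnpval} when $\lambda<0$, and argue via coefficient bounds and Corollary \ref{intergoff} when $\lambda=0$. One minor omission: Lemma \ref{propnpval} (both parts, through the setting of \ref{prepringa}) assumes $k$ perfect, so in the case $\lambda<0$ you must first reduce to perfect $k$; the paper does this by observing that being a good ally can be checked after extension of the base field $k$, since smoothness of $U\rightarrow\mb{A}^1\times S$ descends along field extensions by [EGA IV, 17.7.3].

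The genuine gap is in the case $\lambda=0$. You write $\tilde h-b_{\ul 0}=G/\tau$ with $G\in\mc{O}_X$, $\tau\in\mc{O}_S$, and propose to blow up ``so that $\tau$ becomes a unit at the new center''. This is impossible whenever $v(\tau)>0$: then $\tau$ lies in the maximal ideal of $R_v$, hence in the maximal ideal of the local ring at the center of every model with function field $k(S)$; and if instead you only mean that the new denominator is a unit, that is the desired conclusion, not a mechanism. What the modification must achieve is divisibility of the coefficients by $\tau$, and the step that makes this possible is noetherianity of $\mc{O}_S$, which your sketch never uses: writing $G=\sum a_{\ul k}\ul{x}^{\ul k}$, the hypothesis $\lambda=0$ gives $v(a_{\ul k})\geq v(\tau)$ for $\ul k\neq\ul 0$; the ideal $I\subset\mc{O}_S$ generated by these infinitely many coefficients is finitely generated, say by $c_1,\dots,c_r$ with $v(c_i)\geq v(\tau)$, and blowing up $(c_1,\dots,c_r,\tau)$ the center of $v$ lands in the chart where $\tau$ generates the pulled-back ideal, so all $c_i/\tau$, hence all $a_{\ul k}/\tau$, become regular at the center $z'$; only then does Corollary \ref{intergoff} upgrade the coefficientwise bounds to $\tilde h-b_{\ul 0}\in\mc{O}_X\otimes\mc{O}_{S',z'}$. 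Your closing diagnosis also misplaces the source of finiteness: clearing denominators of finitely many algebra generators of $\mc{O}_X$ does not work, because the coefficients of a polynomial expression of $G$ in such generators need not satisfy $v(\cdot)\geq v(\tau)$ (cancellations in the Taylor expansion are exactly the issue); what is used is the noetherianity of $\mc{O}_S$ applied to the Taylor-coefficient ideal, as above.
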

\begin{proof}
 Assume $\NP(h)(v)=0$.
 We may write $h=f/\pi$ where $f\in\mc{O}_X$, $\pi\in\mc{O}_S$.
 Write $f=\sum a_{\ul{k}}\ul{x}^{\ul{k}}$.
 By assumption, $v(a_{\ul{k}})\geq v(\pi)$.
 Let $I\subset\mc{O}_{S}$ be the ideal generated by $\{a_{\ul{k}}\}$.
 For any $b\in I$, we have $v(b)\geq v(\pi)$.
 Since $\mc{O}_S$ is noetherian, there exist finitely many generators $c_i$ of $I$.
 We take a modification $S'$ of $S$ so that $c_i/\pi$ are regular functions around the center of $v$, which is possible since $v(c_i/\pi)\geq0$.
 Then the claim follows since $h\in\mc{O}_{X_{S'}}$ by Corollary \ref{intergoff}. Note that in this case, we can take the local alteration to be a ``local modification''.
 Assume, now, that $\NP(h)(v)<0$.
 Let $l/k$ be an extension of fields.
 For a $k$-scheme $Z$ and a morphism $a\colon Z\rightarrow\mb{A}^1$, if $a\otimes_kl$ is smooth, then $a$ is smooth by [EGA IV, 17.7.3].
 Thus, $h$ is a \good \ally if and only if $h\otimes_kl$ is a \good \ally, and we may assume that $k$ is perfect.
 Then the claim follows by Lemma \ref{propnpval}.
\end{proof}

\subsection{}
By combining what we have proven so far, when $v$ is minimal and $\NP^{(\infty)}(h)(v)<0$,
there exists a local alteration over which $h$ admits a \good \ally around an extension of $v$.
The next step is to treat the case where $v$ is minimal and $\NP^{(\infty)}(h)(v)=0$.
This case requires more serious analysis, and we use the induction on the transcendence defect of $v$ following \cite{Ked4}.
The base of the induction is the case where $v$ is monomial.
In fact, we can show the weakly admissibility for Abhyankar valuation of height $1$ as follows:

\begin{lem*}
 \label{monomialvalok}
 Let $v$ be an Abhyankar valuation of $\mr{ht}(v)=1$,
 and assume we are given an \'{e}tale morphism $S\rightarrow\mb{A}^e_{(s_1,\dots,s_e),k}$ such that $v(s_1),\dots,v(s_r)$
 is $\mb{Q}$-linearly independent with $r=\mr{rat.rk}(v)\leq e$, and $Z:=V(s_1,\dots,s_r)$ is irreducible.
 Let $f\in\mc{O}_X$ and $\pi\in\mc{O}_S$.
 Assume that $V(\pi)\subset V(s_1\cdots s_r)$ and $\NP^{(\infty)}(f/\pi)(v)=0$.
 Then $f/\pi$ is weakly admissible at $v$, and admits a weak mollifier so that $g\in \pi^{-1}\mc{O}_X$.
 If $v$ is divisorial {\normalfont(}{\it i.e.}\ $r=1${\normalfont)},
 then we can take $g$ so that it provides a \good \ally of $f/\pi$ on $S$ at $v$.
\end{lem*}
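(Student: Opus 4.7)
The plan is to construct a weak mollifier directly, exploiting the monomial structure of $v$ that follows from the $\mathbb{Q}$-linear independence of $v(s_1), \ldots, v(s_r)$. Expand $f = \sum_{\underline{k}} a_{\underline{k}} \underline{x}^{\underline{k}}$ with $a_{\underline{k}} \in \mathcal{O}_S$ via the canonical embedding $\mathcal{O}_X \hookrightarrow \widehat{\mathcal{O}}_X \cong \mathcal{O}_S\dd{x_1,\dots,x_d}$ of \ref{loclhypintr}, and for $\underline{n} \in \Lambda$ put $f^{(m)}_{\underline{n}} := \sum_{k=0}^m (a_{p^k\underline{n}}/\pi)^{1/p^k} \in k(S)^{1/p^\infty}$. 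The hypothesis $\NP^{(\infty)}(f/\pi)(v) = 0$, together with discreteness of the value semigroup (Lemma \ref{discvalsemgr}), produces integers $N_{\underline{n}}$ with $v(f^{(m)}_{\underline{n}}) \geq 0$ for all $m \geq N_{\underline{n}}$; only finitely many $\underline{n}$ need be considered. Set $b_{\underline{n}, i} := (f^{(i)}_{\underline{n}})^{p^i} \in k(S)$; these satisfy the telescoping identities $b_{\underline{n}, 0} = a_{\underline{n}}/\pi$ and $b_{\underline{n}, i} = a_{p^i\underline{n}}/\pi + b_{\underline{n}, i-1}^p$ in characteristic $p$. Define
\[
 g := \sum_{\underline{n} \in \Lambda}\, \sum_{i=0}^{N_{\underline{n}}}\, b_{\underline{n}, i}\, \underline{x}^{p^i\underline{n}}.
\]
By the telescoping, the coefficient of $\underline{x}^{p^i\underline{n}}$ in $f/\pi + g^p - g$ vanishes for $0 \leq i \leq N_{\underline{n}}$, equals $b_{\underline{n}, N_{\underline{n}}+1}$ for $i = N_{\underline{n}}+1$ (valuation $\geq 0$), and equals $a_{p^i\underline{n}}/\pi$ for $i > N_{\underline{n}}+1$; the last also has valuation $\geq 0$ via the identity $a_{p^{i+1}\underline{n}}/\pi = (f^{(i+1)}_{\underline{n}})^{p^{i+1}} - (f^{(i)}_{\underline{n}})^{p^{i+1}}$. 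Hence $\NP(f/\pi + g^p - g)(v) = 0$, which gives weak admissibility.

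The main obstacle is to verify $g \in \pi^{-1}\mathcal{O}_X$, since the naive formula only places $b_{\underline{n}, i}$ in $\pi^{-p^i}\mathcal{O}_S$. The crucial estimate is $v(b_{\underline{n}, i}) \geq -v(\pi)$, equivalently $v(f^{(i)}_{\underline{n}}) \geq -v(\pi)/p^i$. Indeed, if this failed at some $i$, then for all $m \geq i$ the terms added in passing from $f^{(i)}_{\underline{n}}$ to $f^{(m)}_{\underline{n}}$ have valuation $\geq -v(\pi)/p^m > -v(\pi)/p^i > v(f^{(i)}_{\underline{n}})$, so by the ultrametric inequality $v(f^{(m)}_{\underline{n}}) = v(f^{(i)}_{\underline{n}}) < 0$, contradicting $\NP^{(\infty)}(f/\pi)(v) = 0$. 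Thus $\pi b_{\underline{n}, i} \in \mathcal{O}_{S, v}$. To promote this to $\pi b_{\underline{n}, i} \in \mathcal{O}_S$, shrink $S$ to an open affine neighborhood of the center of $v$ on which each of the finitely many $\pi b_{\underline{n}, i}$ becomes regular; this shrinking is admissible because the lemma's conclusion only concerns integral schemes birational to $S$ with $v$ centered, and the hypothesis $V(\pi) \subset V(s_1 \cdots s_r)$ ensures that $\pi$ retains its monomial support structure on the neighborhood. Corollary \ref{intergoff} then gives $\pi g \in \mathcal{O}_X$.

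For the divisorial case ($r = 1$), the center of $v$ is a prime divisor and $\mathcal{O}_{S, v}$ coincides with the DVR $\mathcal{O}_{S, Z}$, so the non-constant coefficients of $f/\pi + g^p - g$ already lie in $\mathcal{O}_{X, z}$ for $z$ the center of $v$. By the remark following Definition \ref{dfnadmfun}, we may take $f_i = 0$ (or, if preferred, an $S$-separable function constructed via Lemma \ref{propnpval}) and $\sigma_i = 1$, so that $g$ provides a good ally of $f/\pi$ on $S$ at $v$.
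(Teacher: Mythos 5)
Your construction of $g$ by telescoping in the $\underline{x}$-expansion is clean and the computation of the coefficients of $f/\pi + g^p - g$ is correct, but the argument has two genuine gaps, both traceable to the fact that you never use the specific hypotheses of the lemma (the $\mathbb{Q}$-linear independence of $v(s_1),\dots,v(s_r)$ and $V(\pi)\subset V(s_1\cdots s_r)$).

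First, the key step ``$v(\pi b_{\underline{n},i})\geq 0$, hence $\pi b_{\underline{n},i}\in\mathcal{O}_{S,v}$'' conflates the valuation ring $R_v$ with the local ring of $S$ at the center of $v$. Non-negative valuation only places $\pi b_{\underline{n},i}$ in $R_v$, and $R_v$ is strictly larger than $\mathcal{O}_{S,z}$ as soon as $r=\mathrm{rat.rk}(v)>1$: for instance on $\mathbb{A}^2_{s,t}$ with $v(s)=1$, $v(t)=\sqrt{2}$, the function $t^2/s^2$ has positive valuation but has a pole through the center. Since $\pi b_{\underline{n},i}$ is a rational function whose denominator is a power of $\pi$ passing through the center of $v$, shrinking $S$ cannot remove the pole; the regularity must be established, not assumed. (Your remark that this is unproblematic in the divisorial case $r=1$ is correct precisely because there $R_v=\mathcal{O}_{S,Z}$, but the lemma must cover all Abhyankar valuations of height $1$.)

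Second, your $g$ is a priori an infinite formal power series in $\underline{x}$, and you assert without proof that ``only finitely many $\underline{n}$ need be considered.'' The hypothesis $\NP^{(\infty)}(f/\pi)(v)=0$ controls the eventual behavior of each chain $\{f^{(m)}_{\underline{n}}\}_m$, but it does not obviously bound the set of $\underline{n}$ for which some $f^{(m)}_{\underline{n}}$ has negative valuation. Even granting finiteness, Corollary \ref{intergoff} is only applicable once you already know $g$ lies in $\mathcal{O}_X\otimes M$ for a finite $\mathcal{O}_S$-module $M$, which is circular.

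The paper avoids both issues by expanding $f$ in the $S$-coordinates $\underline{s}$ rather than the $X$-coordinates $\underline{x}$, using the embedding $\mathcal{O}_X\hookrightarrow\mathcal{O}_{X_Z}\dd{s_1,\dots,s_r}$. Then the monomial valuation $\widetilde{v}$ is literally determined by exponents, the set $\Sigma$ of relevant $\underline{s}$-exponents below $v(\pi)$ is \emph{automatically finite} by $\mathbb{Q}$-linear independence, and the fact that $\pi=\underline{s}^{\underline{\alpha}}\pi^{\times}$ is a monomial (from $V(\pi)\subset V(s_1\cdots s_r)$) ensures that multiplying by $\pi$ clears denominators. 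Crucially, the coefficients $b_{\underline{n}}$ live in $\mathcal{O}_{X_Z}$ and are then replaced by approximations $\widetilde{b}_{\underline{n}}\in\mathcal{O}_X$ agreeing to order $>v(\pi)$, so $g$ is built directly from regular functions rather than first constructed formally and then shown regular; the resulting dichotomy between $c_{\underline{n}}\in\mathcal{O}_Z$ and not is then resolved using $\NP^{(\infty)}=0$. Your argument essentially reproves Lemma \ref{redulemav}/\ref{npnegok} (which work over $k(S)^{1/p^\infty}$) but cannot reach the sharper conclusion $g\in\pi^{-1}\mathcal{O}_X$ without the $\underline{s}$-monomial structure.
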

\begin{proof}
 The closure of the center of $v$ on $S$ is equal to $Z$ since $Z$ is assumed irreducible.
 Note that $k_v\supset\mc{O}_Z$.
 Invoking [EGA $0_{\mr{IV}}$, 19.5.4], the coordinates $s_i$ together with the fixed coordinates $x_i$
 yield the embeddings
 \begin{equation*}
  \mc{O}_X\hookrightarrow\mc{O}_{X_Z}\dd{s_1,\dots,s_r}
   \subset\mc{O}_{X_v}\dd{s_1,\dots,s_r},
 \end{equation*}
 where $X_v:=X\times_S\mr{Spec}(k_v)$.
 For $g=\sum a_{\ul{n}}\ul{s}^{\ul{n}}\in\mc{O}_{X_v}\dd{\ul{s}}$, we
 put $\widetilde{v}(g):=\min\{v(\ul{s}^{\ul{n}})\mid a_{\ul{n}}\neq0\}$.
 Note that minimum is uniquely attained by the assumption that $v(s_1),\dots,v(s_r)$ are linearly independent,
 and $\widetilde{v}$ is a valuation that extends $v$.
 If no confusion can arise, we denote $\widetilde{v}$ abusively by $v$.
 Note that for any $h\in\mc{O}_{X_Z}\dd{\ul{s}}$ and $\lambda>0$,
 there exists $\widetilde{h}\in\mc{O}_X$ such that
 $v(\widetilde{h}-h)>\lambda$ in $\mc{O}_{X_v}\dd{\ul{s}}$
 because $v(s_i)>0$ and $\mc{O}_{X_Z}\dd{\ul{s}}$ is the completion of
 $\mc{O}_X$ with respect to $(s_1,\dots,s_r)$-adic topology.
 Because $S$ is regular and $V(\pi)\subset V(s_1\dots s_r)$, using [EGA IV, (21.1.3.3) and 21.6.9],
 we may write $\pi=\ul{s}^{\ul{\alpha}}\pi^{\times}$ where $\pi^{\times}\in\mc{O}^{\times}_S$ (thus $v(\pi^{\times})=0$).
 The set $\Sigma$ of $\ul{n}\in\mb{N}^r$ such that $v(\ul{s}^{\ul{n}})< v(\pi)=v(\ul{s}^{\ul{\alpha}})$ is finite.
 Then we may write uniquely
 $(\pi^{\times})^{-1}f=\sum_{\ul{n}\in\Sigma}b_{\ul{n}-\ul{\alpha}}
 \ul{s}^{\ul{n}}+f_{\mr{res}}$ in $\mc{O}_{X_Z}\dd{\ul{s}}$ where
 $v(f_{\mr{res}})\geq v(\pi)$ and $b_{\ul{n}}\in\mc{O}_{X_Z}$.
 Note that $f_{\mr{res}}$ may not be in the image of $\mc{O}_X$.
 Then
 \begin{equation*}
  f/\pi=
   \bigl(\ul{s}^{-\ul{\alpha}}\sum_{\ul{n}\in\Sigma}
   b_{\ul{n}-\ul{\alpha}}\ul{s}^{\ul{n}}
   \bigr)
   +\ul{s}^{-\ul{\alpha}}\cdot f_{\mr{res}}
   =
   \bigl(
   \sum_{\ul{n}\in\Sigma-\ul{\alpha}}
   b_{\ul{n}}\ul{s}^{\ul{n}}
   \bigr)
   +\ul{s}^{-\ul{\alpha}}\cdot f_{\mr{res}}.
 \end{equation*}
 Put $\mc{O}_{X_Z}\cc{\ul{s}}:=\indlim_{|\ul{n}|\rightarrow\infty}\ul{s}^{-\ul{n}}\cdot\mc{O}_{X_Z}\dd{\ul{s}}$.
 Note that $\ul{s}^{-\ul{\alpha}}\cdot f_{\mr{res}}\in\bigl\{h\in\mc{O}_{X_Z}\cc{\ul{s}}\mid v(h)\geq0\bigr\}=:\mc{O}_{X_Z}\cc{\ul{s}}_{\geq0}$.
 Now, put
 \begin{equation*}
  \Sigma':=\bigl\{\ul{n}\in\Sigma-\ul{\alpha}\mid
   p\ul{n}\not\in \Sigma-\ul{\alpha}
   \bigr\} \subset\mb{Z}^{r},
 \end{equation*}
 and for $\ul{n}\in\Sigma'$, we put
 \begin{equation*}
  c_{\ul{n}}:=
   b_{\ul{n}}+
   b^p_{\ul{n}/p}+\dots+
   b^{p^{M}}_{\ul{n}/p^M}
   \quad\in\mc{O}_{X_Z},
 \end{equation*}
 where $M$ is the largest number such that $\ul{n}/p^M\in\mb{Z}^{r}$.
 Now, we have the following 2 cases:
 \begin{enumerate}
  \item\label{nonULAcase}
       We have $c_{\ul{n}}\in\bigl(\mc{O}_{X_v}\otimes_{k_v}k_v^{\mr{perf}}\bigr)^{p^{\infty}}\,(=\bigcap_i(\dots)^{p^i})$
       for any $\ul{n}\in\Sigma'$.

  \item\label{KLtypecase}
       There exists $\ul{n}_0\in\Sigma'$ such that
       $c_{\ul{n}_0}\not\in\bigl(\mc{O}_{X_v}\otimes_{k_v}k_v^{\mr{perf}}\bigr)^{p^{m+1}}$
       for some integer $m$.
 \end{enumerate}
 Put
 \begin{equation*}
  g:=\sum_{\ul{n}\in\Sigma'}
   \sum_{i=1}^{d_{\ul{n}}}
   \bigl(
   \sum_{k=i}^{d_{\ul{n}}}
   \widetilde{b}_{\ul{n}/p^k}\ul{s}^{\ul{n}/p^k}
   \bigr)^{p^{i-1}},
 \end{equation*}
 where $d_{\ul{n}}:=\max\bigl\{i\mid \ul{n}/p^i\in\mb{Z}^r\bigr\}$ (this is the same as $M$ in the definition of $c_{\ul{n}}$),
 and $\widetilde{b}_{\ul{n}}\in\mc{O}_X$ such that $v\bigl(\widetilde{b}_{\ul{n}}-b_{\ul{n}}\bigr)>v(\pi)$.
 Note that $\pi g\in\mc{O}_X$.
 Then we may compute
 \begin{equation*}
   \bigl(\sum_{\ul{n}\in\Sigma-\ul{\alpha}}
   b_{\ul{n}}\ul{s}^{\ul{n}}\bigr)
   +(g^p-g)
   \in
   \sum_{\ul{n}\in\Sigma'}c_{\ul{n}}\ul{s}^{\ul{n}}
   +\mc{O}_{X_Z}\cc{\ul{s}}_{\geq0}.
 \end{equation*}

 Consider the case \ref{nonULAcase}.
 The section of $X\rightarrow S$ yields a section of $X_v\rightarrow\mr{Spec}(k_v)$.
 Since $X_v$ is smooth over $k_v$, we have the inclusion $\mc{O}_{X_v}\subset k_v\dd{x_1,\dots,x_d}$.
 We have
 \begin{equation*}
  k_v^{\mr{perf}}\subset
  \bigl(\mc{O}_{X_v}\otimes_{k_v}k_v^{\mr{perf}}\bigr)
   ^{p^{\infty}}\subset
   \bigl(k_v^{\mr{perf}}\dd{x_1,\dots,x_d}\bigr)^{p^{\infty}}
   =k_v^{\mr{perf}}.
 \end{equation*}
 Thus, $c_{\ul{n}}\in k^{\mr{perf}}_v\cap\mc{O}_{X_Z}$ in $(k_v\dd{x_1,\dots,x_d})^{\mr{perf}}$ for any $\ul{n}\in\Sigma'$.
 On the other hand, we have
 \begin{equation*}
  k_v\subset
   k^{\mr{perf}}_v\cap\mc{O}_{X_v}\subset
   k^{\mr{perf}}_v\cap k_v\dd{x_1,\dots,x_d}=k_v
 \end{equation*}
 and thus $k^{\mr{perf}}_v\cap\mc{O}_{X_v}=k_v$.
 This implies that $c_{\ul{n}}\in k_v\cap\mc{O}_{X_Z}=\mc{O}_Z$ since $X\rightarrow S$ is faithfully flat.
 Now, we can take $\widetilde{c}_{\ul{n}}\in\mc{O}_S$ such that
 $v\bigl(\widetilde{c}_{\ul{n}}-c_{\ul{n}}\bigr)>v(\pi)$.
 Then we have
 $f/\pi+(g^p-g)\in\sum_{\ul{n}\in\Sigma'}\widetilde{c}_{\ul{n}}\ul{s}^{\ul{n}}+\mc{O}_{X_Z}\cc{\ul{s}}_{\geq0}\subset\rat{S}+\mc{O}_{X_Z}\cc{\ul{s}}_{\geq0}$.
 Thus, $\NP\bigl(f/\pi+(g^p-g)\bigr)(v)=0$, and $g$ is a weak mollifier at $v$.
 If $v$ is divisorial, we have $\mc{O}_{X_Z}\cc{s_1}_{\geq0}=\mc{O}_{X_Z}\dd{s_1}$,
 and thus $f/\pi+(g^p-g)$ is already a \good \ally since $\mc{O}_{X_Z}\dd{s_1}\cap\mc{O}_{X_\eta}=\mc{O}_X$ by Corollary \ref{intergoff}.

 Finally, consider \ref{KLtypecase}. Let us show that this occurs only
 when $\NP^{(\infty)}(v)<0$ which is impossible from the assumption
 of the lemma. Take $\widetilde{c}_{\ul{n}}\in\mc{O}_X$
 (not in $\mc{O}_S$ this time) such that
 $v\bigl(\widetilde{c}_{\ul{n}}-c_{\ul{n}}\bigr)>v(\pi)$, then
 $f/\pi+(g^p-g)\in
 \sum_{\ul{n}\in\Sigma'}\widetilde{c}_{\ul{n}}\ul{s}^{\ul{n}}
 +\mc{O}_{X_v}\cc{\ul{s}}_{\geq0}$.
 Write $\widetilde{c}_{\ul{n}_0}=\sum a_{\ul{k}}\ul{x}^{\ul{k}}$ in $\mc{O}_S\dd{\ul{x}}$.
 We have the following commutative diagram:
 \begin{equation*}
  \xymatrix{
   \mc{O}_X\ar@{^{(}->}[r]\ar@{^{(}->}[d]&\mc{O}_S\dd{\ul{x}}\ar@{^{(}->}[d]\\
  \mc{O}_{X_v}\dd{\ul{s}}\ar@{^{(}->}[r]&k_v\dd{\ul{s},\ul{x}}.
   }
 \end{equation*}
 Since $c_{\ul{n}}\in\mc{O}_{X_Z}\subset\mc{O}_{X_v}\dd{\ul{s}}$,
 we may write $c_{\ul{n}_0}=\sum a'_{\ul{k}}\ul{x}^{\ul{k}}$ with $a'_{\ul{k}}\in k_v$ in $k_v\dd{\ul{s},\ul{x}}$.
 By the choice of $\widetilde{c}_{\ul{n}}$, we have $v(a_{\ul{k}}-a'_{\ul{k}})>v(\pi)$ for any $\ul{k}$, and in particular, $v(a_{\ul{k}})$ is either $0$ or $>v(\pi)$.
 Because of Lemma \ref{intFrob} and the assumption that
 $c_{\ul{n}_0}\not\in\mc{O}^{p^{m+1}}_{X_v}\otimes k_v$, there exists
 $p^{m+1}\nmid\ul{k}_0$ such that $v(a_{\ul{k}_0})=0$.
 Let $\ul{l}_0\in\Lambda$ such that $p^K\ul{l}_0=\ul{k}_0$ with some
 $K\in\mb{N}$. Then, for any integer $a\geq K$, we have
 \begin{align*}
  v&\bigl((\widetilde{c}_{\ul{n}_0}\ul{s}^{\ul{n}_0})^{(a)}_{\ul{l}_0}\bigr)=\\
  &v\Bigl(
  \ul{s}^{\ul{n}_0}a_{\ul{l}_0}+
  (\ul{s}^{\ul{n}_0}a_{p\ul{l}_0})^{1/p}
  +\dots+
  (\ul{s}^{\ul{n}_0}a_{\ul{k}_0})^{1/p^K}
  +
  (\ul{s}^{\ul{n}_0}a_{p^{K+1}\ul{l}_0})^{1/p^{K+1}}
  +\dots+
  (\ul{s}^{\ul{n}_0}a_{p^{a}\ul{l}_0})^{1/p^{a}}
  \Bigr)\\
  &\hspace{23em}
  \in
  \bigl\{p^{-i}v(\ul{s}^{\ul{n}_0})\mid i=0,\dots,K\bigr\}=:D.
 \end{align*}
 For $\ul{n}\in\Sigma'\setminus\{\ul{n}_0\}$, by a similar computation,
 $v\bigl((\widetilde{c}_{\ul{n}}\ul{s}^{\ul{n}})^{(a)}_{\ul{l}_0}\bigr)$ is also a real number of the form $p^{-i}v(\ul{s}^{\ul{n}})$.
 By the choice of $\Sigma'$, this in particular implies that this number cannot be in $D$.
 Thus, we have
 \begin{equation*}
  \NP^{(\infty)}(f/\pi)(v)
   =
   \NP^{(\infty)}\bigl(f/\pi+(g^p-g)\bigr)(v)
   \leq
   \lim_{a\rightarrow\infty}
   v\left(\bigl({\textstyle \sum_{\ul{n}\in\Sigma'}}\,\widetilde{c}_{\ul{n}}\ul{s}^{\ul{n}}\bigr)^{(a)}_{\ul{l}_0}\right)
   \leq
   p^{-K}v(\ul{s}^{\ul{n}_0})<0,
 \end{equation*}
 and the claim follows.
\end{proof}

\subsection{}
We have the following purity type result.

\begin{lem*}
 \label{nagapurilem}
 Let $v$ be a minimal valuation on $S$ centered on $z\in U\subset S$.
 Assume we have a system of coordinates $\{s_1,\dots,s_d\}$ on $U$ around $z$ such that $\pi\in s_1^{a_1}\dots s_k^{a_k}\mc{O}^{\times}_S$.
 Let $D_i:=V(s_i)$, and denote by $v_{D_i}$ the associated divisorial valuation.
 Then the following conditions are equivalent:
 \begin{enumerate}
  \item\label{puritylem1}
       $\NP^{(\infty)}(f/\pi)(v)=0$ and there exists a mollifier which provides a \good \ally of $f/\pi$ on an open neighborhood of $z$ in $U$;

  \item\label{puritylem2}
       for any divisorial valuation $v_D$ centered on $U$, we have $\NP^{(\infty)}(f/\pi)(v_D)=0$;
  \item\label{puritylem3}
       $\NP^{(\infty)}(f/\pi)(v_{D_i})=0$ for any $i$.
 \end{enumerate}
\end{lem*}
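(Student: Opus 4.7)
The plan is to establish the cycle $(1)\Rightarrow(2)\Rightarrow(3)\Rightarrow(1)$. The implication $(2)\Rightarrow(3)$ is automatic: each $v_{D_i}$ is divisorial with center the generic point $\eta_{D_i}$ of $D_i$, and since $z\in D_i\cap U$ with $U$ open, $\eta_{D_i}\in U$, so $v_{D_i}$ is centered on $U$.

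For $(1)\Rightarrow(2)$, set $h:=f/\pi+(g^p-g)$ with $g$ the mollifier; by Lemma \ref{basiinequnp}, $\NP^{(\infty)}(f/\pi)(v_D)=\NP^{(\infty)}(h)(v_D)$ for any divisorial $v_D$ centered on the open $V\ni z$ on which $g$ works. The good-ally decomposition $h=f'/\sigma+r$ with $r\in\mc{O}_{X,z_D}$ and $f'\in\mc{O}_X$ (either $S$-separable or in $\mc{O}_S$) controls the non-constant coefficients of $h$. When $f'\in\mc{O}_S$ only the constant term of $h$ is affected by $f'/\sigma$, so all non-constant coefficients come from $r$ and lie in $\mc{O}_{S,z_D}$, giving $\NP(h)(v_D)=0$ directly. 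When $f'$ is separable, Lemma \ref{intFrob} shows the symbol $\sigma_{v_D}(h)$ is separable, so Lemma \ref{propnpval}.\ref{propnpval-2} forces $\NP^{(\infty)}(h)(v_D)=\NP(h)(v_D)$; here the first clause of (1), namely $\NP^{(\infty)}(f/\pi)(v)=0$, is used to rule out $\NP(h)(v_D)<0$: a specialization argument from $\eta_{D_i}$ to $z$ (analogous to the monotonicity exploited in \ref{mainpropstabil}) would otherwise propagate strict negativity to $v$.

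For the main implication $(3)\Rightarrow(1)$, the strategy is to perform the explicit construction of Lemma \ref{monomialvalok} simultaneously for all $D_i$ rather than for one valuation at a time. Using the formal coordinates $\ul{s}$ on $U$ (together with the fixed $\ul{x}$ and the section) to embed $\mc{O}_X\hookrightarrow\mc{O}_{X_Z}\dd{\ul{s}}$ for $Z:=V(s_1,\dots,s_k)$, expand $f/\pi=(\pi^{\times})^{-1}\sum_{\ul{n}}b_{\ul{n}}\ul{s}^{\ul{n}}$ with $b_{\ul{n}}\in\mc{O}_{X_Z}$; the multi-indices $\ul{n}\in\mb{Z}^k$ with some strictly negative coordinate form a finite set $\Sigma$. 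Define $\Sigma'$ (the $p$-``edge'' of $\Sigma$) and a candidate mollifier $g\in\pi^{-1}\mc{O}_X$ exactly as in Lemma \ref{monomialvalok}. The aggregate coefficients $c_{\ul{n}}=b_{\ul{n}}+b_{\ul{n}/p}^p+\cdots$ for $\ul{n}\in\Sigma'$ must each satisfy Case (1) of that proof at every $v_{D_i}$, since occurrence of Case (2) at some $v_{D_i}$ would yield $\NP^{(\infty)}(f/\pi)(v_{D_i})<0$, contradicting (3). Running Case (1) in parallel at all $v_{D_i}$ places each $c_{\ul{n}}$ in $\bigcap_i(\mc{O}_{X_{v_{D_i}}}\otimes k_{v_{D_i}}^{\mr{perf}})^{p^{\infty}}$, which by iterating the $k^{\mr{perf}}\cap\mc{O}_Y=k$ computation of Lemma \ref{monomialvalok} collapses to $\mc{O}_Z$. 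Lifting $c_{\ul{n}}$ to $\widetilde{c}_{\ul{n}}\in\mc{O}_S$ and shrinking $U$ if needed, one obtains $f/\pi+(g^p-g)\in\sum_{\ul{n}\in\Sigma'}\widetilde{c}_{\ul{n}}\ul{s}^{\ul{n}}+\mc{O}_{X_Z}\cc{\ul{s}}_{\geq0}$. Since $\sum\widetilde{c}_{\ul{n}}\ul{s}^{\ul{n}}\in\rat{S}$, this is a good ally with $f'\in\mc{O}_S$; hence only the constant term of $h$ is affected by the $f'/\sigma$ summand, and $\NP^{(\infty)}(f/\pi)(v)=\NP^{(\infty)}(h)(v)=0$ follows automatically, completing (1).

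The main obstacle will be the simultaneous nature of $(3)\Rightarrow(1)$: Lemma \ref{monomialvalok} constructs a mollifier for one Abhyankar valuation at a time, whereas here we need a single $g$ providing a good ally on an entire neighborhood of the minimal center $z$. The technical crux is showing that the aggregate coefficients $c_{\ul{n}}$ land in the common intersection $\bigcap_i k_{v_{D_i}}^{\mr{perf}}\cap\mc{O}_{X_Z}=\mc{O}_Z$; this requires (3) at every $v_{D_i}$ in parallel together with an inductive refinement of the perfection-intersection calculation. A secondary delicate point is the use of $\NP^{(\infty)}(f/\pi)(v)=0$ in $(1)\Rightarrow(2)$ to rule out the case $v_D(\sigma)>0$, which requires a careful specialization/monotonicity argument along a path in the Riemann-Zariski space from $v_{D_i}$ to $v$.
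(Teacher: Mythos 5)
Your overall architecture — a cyclic chain of implications with $(3)\Rightarrow(1)$ as the load-bearing step — matches the shape of the paper's argument, and $(2)\Rightarrow(3)$ is as trivial as you say. But both non-trivial legs have gaps.

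\paragraph{The main problem: $(3)\Rightarrow(1)$.}
Your ``simultaneous'' adaptation of Lemma \ref{monomialvalok} fails already at the claim that the set $\Sigma$ of multi-indices $\ul{n}\in\mb{Z}^k$ with some strictly negative coordinate is finite. It is not: writing $f/\pi=\ul{s}^{-\ul{\alpha}}(\pi^{\times})^{-1}f$, the indices appearing are all $\ul{n}\ge -\ul{\alpha}$, and for any fixed $i$ the set $\{\ul{n}\mid -\alpha_i\le n_i<0,\ n_j\ge-\alpha_j\ \forall j\}$ is already infinite because the remaining coordinates are unbounded above. This finiteness is exactly what fails when you try to replace the single Abhyankar valuation $v$ of Lemma \ref{monomialvalok} (whose rank-$1$ structure with $\mb{Q}$-independent $v(s_i)$ makes $\Sigma=\{\ul{n}\mid v(\ul{s}^{\ul{n}})<v(\pi)\}$ finite) by the $k$ divisorial valuations $v_{D_1},\dots,v_{D_k}$: there is no single ``monomial valuation'' that controls all of them, so the finite $p$-edge $\Sigma'$ and the finite-sum mollifier $g$ cannot be built. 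The perfection-intersection step has an analogous problem: running Case (1) at each $v_{D_i}$ lands $c_{\ul{n}}$ in $\mc{O}_{D_i}\subset\mc{O}_{X_{D_i}}$ for each $i$ separately, but these are rings over different bases and do not collapse to $\mc{O}_{V(s_1,\dots,s_k)}$ in the way you claim. The paper avoids all of this by \emph{inducting on $k$}: apply Lemma \ref{monomialvalok} to $v_{D_k}$ alone on $U\setminus V(s_1\cdots s_{k-1})$ (where $V(\pi)=D_k$ is irreducible, so the lemma's hypotheses hold), obtain a mollifier $g_k$ and a constant $\gamma_k\in\rat{S}$ for which $f/\pi+(g_k^p-g_k)+\gamma_k$ lies in $\mc{O}_X\otimes(s_1^{a_1}\cdots s_{k-1}^{a_{k-1}})^{-pn}\mc{O}_U$ for some $n$, i.e.\ the pole along $D_k$ has been removed. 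Since adding $g^p-g$ and constants does not change $\NP^{(\infty)}$ (Lemma \ref{basiinequnp}), the hypotheses of (3) persist for the new function along $D_1,\dots,D_{k-1}$, and one recurses; the base case $k=0$ is trivially regular.

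\paragraph{$(1)\Rightarrow(2)$.}
Your case analysis on the form of the good ally is sound when $f'\in\mc{O}_S$, but the separable case is not handled: you appeal to an unjustified ``specialization/monotonicity along a path in the Riemann--Zariski space,'' which is not a mechanism available in the paper (the monotonicity in (\ref{ineqnpinftori}) is for toric valuations, and Proposition \ref{mainpropstabil} is about valuation polygons on a Berkovich disk — neither applies here). Moreover, your argument only reaches divisorial valuations centered in the open $V$ where the mollifier works, whereas (2) asks about every divisorial valuation centered on $U$. The paper's route is simpler and sidesteps both issues: the hypothesis $\NP^{(\infty)}(f/\pi)(v)=0$ \emph{rules out} the form $f'/\sigma+\alpha$ with $f'$ separable and $v(\sigma)>0$ (such a form would force $\NP(h)(v)<0$ with separable symbol, hence $\NP^{(\infty)}(h)(v)<0$ by Lemma \ref{propnpval}.\ref{propnpval-2}, contradicting $\NP^{(\infty)}(h)=\NP^{(\infty)}(f/\pi)=0$ from Lemma \ref{basiinequnp}). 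Therefore $h+\gamma$ is regular on an open $U\setminus Z$ for suitable $\gamma\in\rat{S}$ and closed $Z\not\ni z$, which instantly gives $\NP^{(\infty)}(f/\pi)(w)=0$ for any $w$ centered off $Z$; for $w$ centered on $Z\cap\bigcup D_i$ one reuses $(3)\Rightarrow(1)$, and for $w$ centered on $Z\setminus\bigcup D_i$ the function $f/\pi$ is already regular there since $V(\pi)=\bigcup D_i$. You should replace your monotonicity heuristic with this argument.
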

\begin{proof}
 Assume the condition \ref{puritylem1} holds.
 Take a mollifier $g$ on $U$ providing a \good \ally around $z$.
 Then taking a suitable $\gamma\in\rat{S}$, $f/\pi+(g^p-g)+\gamma$ is a regular function around $v$ since it cannot be of the form
 $f'/\sigma+\alpha$ where $f'$ is separable function, $\alpha$ is a regular function, and $v(\sigma)>0$ by the assumption that $\NP^{(\infty)}(f/\pi)(v)=0$.
 Thus, there exists a closed subscheme $Z\subset U$ not containing $z$ such that $f/\pi+(g^p-g)+\gamma$ is in $\mc{O}_X\otimes\mc{O}_{U\setminus Z}$.
 This implies that $\NP^{(\infty)}(f/\pi)(w)=0$ for any valuation $w$ centered on $U\setminus Z$.
 This in particular implies that $\NP^{(\infty)}(f/\pi)(v_{D_i})=0$, and condition \ref{puritylem3} holds.
 
 Now, let us check (\ref{puritylem3} $\Rightarrow$ \ref{puritylem1}).
 We show by the induction on $k$.
 Since $v_{D_k}$ is a divisorial valuation, there exists a function $g_k\in \mc{O}_{X}\otimes\pi^{-n}\mc{O}_U$ for some $n\geq0$
 such that $\NP\bigl(f/\pi+(g^p_k-g_k)\bigr)(v_{D_k})=0$ by applying Lemma \ref{monomialvalok} to $U\setminus V(s_1\dots s_{k-1})$.
 Note that by construction, $f/\pi+(g^p_k-g_k)\in\mc{O}_X\otimes\pi^{-pn}\mc{O}_U$.
 However, since $\NP(f/\pi+(g^p_k-g_k))(v_{D_k})=0$, there exists $\gamma_k\in\rat{S}$ such that
 $f/\pi+(g^p_k-g_k)+\gamma_k\in\mc{O}_X\otimes (s_1^{a_1}\dots s_{k-1}^{a_{k-1}})^{-pn}\mc{O}_U$.
 This construction also shows that the mollifier can be taken in $\pi^{-n}\mc{O}_{X_U}$ for some $n$.
 Now, use the induction hypothesis to conclude.

 Finally, let us check that (\ref{puritylem1} $\Rightarrow$ \ref{puritylem2}) holds.
 Let $Z$ be as in the proof of (\ref{puritylem1} $\Rightarrow$ \ref{puritylem3}).
 We only need to check the claim for valuations centered on $Z$ above.
 If $w$ is a valuation centered on $Z\setminus(D_1\cup\dots\cup D_k)$, then $f/\pi$ already provides a \good \ally, and we have nothing to show.
 Let $w$ be a valuation centered on $Z\cap (D_1\cup\dots\cup D_k)$.
 Then we can apply (\ref{puritylem3} $\Rightarrow$ \ref{puritylem1}) for this $w$ to conclude.
\end{proof}

\begin{cor}
 \label{opennbdofmolpt}
 Let $v\in\left<S\right>$ be of height $1$.
 Assume that $\NP^{(\infty)}(f/\pi)(v)=0$ and there exists a finite extension $L$ of $k(S)$
 and some extension $v'$ of $v$ to $L$ such that $f/\pi$ is locally admissible at $v'$.
 Then there exists a Zariski open neighborhood $W$ of $v$ in $\left<S\right>$ such that $\NP^{(\infty)}(f/\pi)(w)=0$ for any $w\in W$ and $\mr{ht}(w)=1$.
\end{cor}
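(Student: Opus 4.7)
The plan is to use local admissibility to write $h':=f/\pi+(g^p-g)$ as $F/\sigma+R$ on some open neighborhood of the center $z'$ of $v'$, then to leverage the hypothesis $\NP^{(\infty)}(f/\pi)(v)=0$ to upgrade this to $h'$ being essentially regular there, and finally to spread the conclusion to neighboring valuations by a topological argument on Riemann--Zariski spaces.

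First I would unpack local admissibility: there is a local alteration $T\to S$ with $k(T)=L$, a mollifier $g$, an open $U\subset T$ containing the center $z'$ of $v'$, a function $F\in\mc{O}_{X_T}$ (either $T$-separable or in $\mc{O}_T$) and $\sigma\in\mc{O}_T\setminus\{0\}$, such that $h'\in F/\sigma+\mc{O}_{X\times_S U}$. By Lemma \ref{basiinequnp} and the pullback invariance (\ref{invarpulnp}), the hypothesis becomes $\NP^{(\infty)}(h')(v')=0$.

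The key reduction is to eliminate the case $F$ separable with $v'(\sigma)>0$. In that case Lemma \ref{intFrob} (applied to $F_{z'}$ after passing to the perfect closure of $k(z')$) produces a monomial $a_{\ul{k}_0}\ul{x}^{\ul{k}_0}$ of $F$ with $p\nmid\ul{k}_0$ and $v'(a_{\ul{k}_0})=0$, so the coefficient $c_{\ul{k}_0}$ of $h'$ has valuation exactly $-v'(\sigma)<0$; for $k\geq 1$ the perfected term $c_{p^k\ul{k}_0}^{1/p^k}$ has valuation $\geq -v'(\sigma)/p^k>-v'(\sigma)$, so $\sum_{k=0}^{m}c_{p^k\ul{k}_0}^{1/p^k}$ is dominated by its $k=0$ summand and $\NP^{(\infty)}(h')(v')\leq -v'(\sigma)<0$, a contradiction. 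Hence $v'(\sigma)=0$, and after shrinking $U$ so that $\sigma$ is invertible one has $F/\sigma\in\mc{O}_{X\times_S U}$ (and if $F\in\mc{O}_T$ then $F/\sigma\in L$ is already constant in $\ul{x}$). In either event $h'-\tau\in\mc{O}_{X\times_S U}$ for some $\tau\in L$, and then for any valuation $w'$ on $L$ centered on $U$ the non-constant coefficients of $h'$ lie in $\mc{O}_{T,U}\subset R_{w'}$, giving $\NP^{(\infty)}(h')(w')=\NP(h')(w')=0$, hence $\NP^{(\infty)}(f/\pi)(w'|_{k(S)})=0$ by (\ref{invarpulnp}) and Lemma \ref{basiinequnp}.

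The main obstacle is the last step: producing a Zariski open neighborhood $W$ of $v$ in $\left<S\right>$ such that every height-$1$ valuation in $W$ admits some $L$-extension centered on $U$. I would pass to the Galois closure $L_0$ of the separable part of $L/k(S)$ (the purely inseparable case being trivial since valuations extend uniquely), with Galois group $G$; then the restriction map from the Riemann--Zariski space of $k$-trivial valuations on $L_0$ down to $\left<S\right>$ realizes $\left<S\right>$ as the $G$-quotient and is therefore a quotient map for the Zariski topology. Taking a proper model of $L$ over $S$ containing $U$ as an open, the condition ``centered on $U$'' cuts out an open subset of the analogous space for $L$; pulling back to the $L_0$-space and saturating under $G$ produces a $G$-invariant open $V_0$, whose image $W$ in $\left<S\right>$ is then open and contains $v$ (by lifting the given extension $v'$ to $L_0$). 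Any $w\in W$ lifts to some $w_0\in V_0$, and a suitable $G$-translate of $w_0$ restricts to an $L$-valuation centered on $U$, so the previous paragraph gives $\NP^{(\infty)}(f/\pi)(w)=0$, completing the proof.
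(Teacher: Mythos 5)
Your proof is substantively correct but departs from the paper at two points, and the second step needs one more justification. For propagating $\NP^{(\infty)}(f/\pi)=0$ from $v'$ to nearby valuations, the paper arranges an SNC-type coordinate system on a local alteration so that $V(\pi)$ becomes a union of coordinate hyperplanes, then invokes the purity lemma (Lemma~\ref{nagapurilem}). You argue directly: $\NP^{(\infty)}(h')(v')=0$ together with separability of $F$ (via Lemma~\ref{intFrob}) forces $v'(\sigma)=0$, so after shrinking $U$ the good ally $h'$ lies in a constant plus $\mc{O}_{X\times_S U}$, and Corollary~\ref{intergoff} then gives $\NP(h')(w')=\NP^{(\infty)}(h')(w')=0$ for every height-$1$ valuation $w'$ centered on $U$. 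This is cleaner and sidesteps the reduction to an SNC pair.

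For producing the Zariski open $W$ in $\left<S\right>$, the paper compactifies, applies Gruson--Raynaud flattening to obtain a finite flat morphism $\overline{T}'\to T$, and uses openness of the flat image plus an integrality argument to show extensions exist; you pass to the Galois closure $L_0/k(S)$ with group $G$, pull the ``centered on $U$'' open back to $\left<L_0\right>$, saturate under $G$, and take the image. The weak link is the phrase ``is therefore a quotient map'': having $G$-orbit fibres is a set-theoretic statement and does not by itself give the quotient topology. What you actually need (and what suffices) is that the restriction map $\left<L_0\right>\to\left<S\right>$ is closed, so that the image of a $G$-invariant open is open. This is true---it follows because the map is a spectral surjection satisfying going-up for specializations (any composite $w'=w\circ\overline{w}$ with $w=w_0|_{k(S)}$ lifts to a composite of $w_0$ by extending $\overline{w}$ along the algebraic residue extension $k_{w_0}/k_w$), and spectral surjections with going-up are closed---but this deserves a sentence of justification rather than a ``therefore.'' With that supplied your proof goes through; alternatively one can adapt the paper's flattening argument $G$-equivariantly.
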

\begin{proof}
 We may take a local alteration $S'\rightarrow S$ around $v$ and a coordinate system $\{s_1,\dots,s_d\}$ on $S'$ such that $V(\pi)$
 is a union of $V(s_i)$'s and there exists a mollifier of $v$ on $S'$ which provides a \good \ally of $f/\pi$ at $v$ by Lemma \ref{goodmodelze}.
 By shrinking $S'$ around the center of an extension of $v$ to $S'$, we may assume that $\NP^{(\infty)}(f/\pi)(w)=0$ for any valuation on $k(S')$ centered on $S'$.
 Now, let $W'$ be the set of valuations in $\left<S\right>$ which extends to $S'$.
 Lemma \ref{nagapurilem} and (\ref{invarpulnp}) imply that $\NP^{(\infty)}(f/\pi)(w)=0$ for any $w\in W'$ and $\mr{ht}(w)=1$.
 Thus, it remains to show that $W'$ is a neighborhood of $v$.

 Take a compactification $\overline{S}'\rightarrow S$ of $S'$ over $S$.
 By the Gruson-Raynaud flattening, we may find modifications $T\rightarrow S$ and
 $\overline{T}'\rightarrow\overline{S}'$ and a finite flat morphism $\overline{T}'\rightarrow T$.
 We may assume that $\overline{T}'$ is integral.
 Let $T'$ be the pullback of $S'$ to $\overline{T}'$.
 The situation can be depicted as follows:
 \begin{equation*}
  \xymatrix@C=50pt{
   T'\ar@{^{(}->}[r]\ar[d]\ar@{}[rd]|\square&\overline{T}'\ar[r]^-{\mr{fin.\ flat}}\ar[d]^(.4){\mr{modif}}&T\ar[d]^{\mr{modif}}\\
  S'\ar@{^{(}->}[r]^-{\mr{open}}&\overline{S}'\ar[r]^-{\mr{prop}}&S.
   }
 \end{equation*}
 Since $g\colon T'\rightarrow T$ is flat, the image $g(T')\subset T$ is open.
 Let $w$ be an element of $\left<S\right>$ whose center lies in $g(T')$.
 Then we claim that $w$ extends on $S'$, and in particular belongs to $W'$.
 To show this, it suffices to show that $w$ extends on $T'$.
 Let $z$ be the center of $w$ in $g(T')$.
 Take a point $z'\in T'$ over $z\in g(T')$.
 Let $\mr{Spec}(R_v)\rightarrow T$ be the morphism from the valuation ring associated with $v$, and consider $P:=\mr{Spec}(R_v)\times_T T'$.
 This scheme $P$ is integral.
 Indeed, by shrinking $T$ around $z$, we may assume that $T$ is affine.
 We have inclusions $\mc{O}_T\subset R_v\subset k(T)$.
 Since $\mc{O}_T\rightarrow\mc{O}_T'$ is flat, this induces the inclusions $\mc{O}_{T'}\subset\mc{O}_P\subset k(T)\otimes_{\mc{O}_T}\mc{O}_{T'}$.
 Since $k(T)$ is a localization of $\mc{O}_T$, $k(T)\otimes_{\mc{O}_T}\mc{O}_{T'}$ is a localization of $\mc{O}_{T'}$, and since $T'$ is integral,
 we have an inclusion $k(T)\otimes_{\mc{O}_T}\mc{O}_{T'}\subset k(T')$.
 Thus, we have $\mc{O}_P\subset k(T')$ and the integrality follows.
 The point $z'$ induces a point $z''$ of $P$.
 We may take a valuation of $k(P)=k(T')$ centered on $z''$, which is the desired valuation centered on $T'$.
 Thus, if we put $W$ to be the Zariski open neighborhood defined by $g(T')$ in $\left<S\right>$, we have $W'\supset W$.
\end{proof}

\subsection{}
We use the following lemma due to Kedlaya.
Even though it is completely contained in \cite[5.4.4]{Ked4}, it may not be easy to transform the statement into the form we need,
we decided to include the proof for the convenience of the reader.

\begin{lem*}[{\cite[5.4.4]{Ked4}}]
 \label{kedexpseqcons}
 Let $S\rightarrow S^0\times\mb{A}^1_t$ be an \'{e}tale morphism, $\pi\in\mc{O}_{S^0}$ and let $v\in\left<S\right>$.
 Put $v^0:=v|_{S^0}$, and assume that $v(t)$ is not contained in the divisible closure of $\Gamma_{v^0}$.
 Assume we are given a Zariski open neighborhood $W\subset\left<S\right>$ of $v$.
 Then by replacing $S^0$ by a local alteration around $v^0$, there exist
 \begin{itemize}
  \item $x_1,\dots,x_r\in\mc{O}_{S^0}$ such that the components of
	$V(\pi)$ are cut out by some of these functions;
  \item rational numbers $a_i,b_i\in\mb{Q}$ such that
	$v^0(x_i)/b_i<v(t)<v^0(x_i)/a_i$;
 \end{itemize}
 such that $S\bigl(w(x_i)/b_i<w(t)<w(x_i)/a_i\bigr):=
 \bigl\{w\in\left<S\right>\mid w(x_i)/b_i<w(t)<w(x_i)/a_i\bigr\}
 \subset W$.
\end{lem*}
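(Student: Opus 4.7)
The plan is to unwind the Zariski neighborhood $W$ of $v$ into a finite system of strict linear inequalities in $w(t)$ and $w(x_i)$ for suitable $x_i\in\mc{O}_{S^0}$, and then repackage these as strips. The hypothesis that $v(t)$ lies outside $\Gamma_{v^0}\otimes\mb{Q}$ is what guarantees that every inequality is strict, hence stable under small perturbations, so that the resulting strips are genuine open neighborhoods of $v$.

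First I would reduce to a basic-open $W$. After shrinking, $W$ is the preimage under some local alteration $\widetilde{S}\to S$ of a Zariski open $\widetilde{U}\subset\widetilde{S}$ containing the center of $v$. Using the \'etale map $S\to S^0\times\mb{A}^1$, one may replace $\widetilde{S}$ by the composition of a local alteration of $S^0$ with the blow-up of $S^0\times\mb{A}^1$ along a coherent ideal $\mc{I}$ whose generators are polynomials of the form $\sum_if_it^i$, $f_i\in\mc{O}_{S^0}$. By a further local alteration of $S^0$ around $v^0$, I may simultaneously monomialize $\pi$ and the collection of all $f_i$: there exist coordinates $y_1,\dots,y_d\in\mc{O}_{S^0}$ such that $\mc{I}$ is generated by monomials $\mbf{y}^{\alpha_j}t^{i_j}$, and such that every component of $V(\pi)$ is one of the coordinate hyperplanes $V(y_l)$.

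Next I would analyse the chart structure. The blow-up $\mr{Bl}_{\mc{I}}$ is covered by affine charts $D_+(\mbf{y}^{\alpha_k}t^{i_k})$, and the center of $v$ lies on a distinguished chart characterised by $v(\mbf{y}^{\alpha_{k_0}}t^{i_{k_0}})=\min_jv(\mbf{y}^{\alpha_j}t^{i_j})$. By the transcendence hypothesis, no two quantities $\sum_l\alpha_{jl}v(y_l)+i_jv(t)$ with distinct $i_j$ can coincide, so the inequalities $(i_j-i_{k_0})v(t)>v(\mbf{y}^{\alpha_{k_0}-\alpha_j})$ (or their reverse when $i_j<i_{k_0}$) hold strictly at $v$. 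Any further refinement of $\widetilde{U}$ inside the chosen chart introduces more strict inequalities of the same monomial shape, possibly after one more local alteration of $S^0$ to monomialize the additional functions involved. Each such inequality $m\cdot w(t)>w(\mbf{y}^{\beta})$ with $m\in\mb{Z}_{>0}$ is rewritten as $w(t)>w(x)/m$ with $x:=\mbf{y}^{\beta}\in\mc{O}_{S^0}$, and the reverse direction gives $w(t)<w(x)/m$. Pairing upper and lower bounds, replicating coordinates $x$ where necessary so that every lower bound is matched by an upper bound on the same coordinate, yields a finite family of strips $w(x_i)/b_i<w(t)<w(x_i)/a_i$ whose conjunction lies in the preimage of $\widetilde{U}$, hence in $W$. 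The $y_l$ cutting out the components of $V(\pi)$ are then adjoined to the list of $x_i$ with rational bounds $a_i,b_i$ chosen so that $v^0(y_l)/b_i<v(t)<v^0(y_l)/a_i$ holds at $v$, fulfilling the cutting-out requirement on $V(\pi)$.

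The main obstacle is to organise the successive local alterations of $S^0$ (monomializing $\mc{I}$ and $\pi$ first, then the auxiliary monomials arising from chart refinements) into a single local alteration around $v^0$, and to check that the \'etale passage between $\langle S\rangle$ and $\langle S^0\times\mb{A}^1\rangle$ preserves both the strip form of neighborhoods and the inclusion in $W$. The essential ingredient is the transcendence hypothesis, which converts the \emph{a priori} non-strict chart inequalities $v(\mbf{y}^{\alpha_{k_0}}t^{i_{k_0}})\leq v(\mbf{y}^{\alpha_j}t^{i_j})$ into strict ones, ensuring that the resulting finite intersection of strips is a genuine open containing $v$.
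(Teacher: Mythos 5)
Your global strategy --- exploit the transcendence hypothesis to make the dominant term unique, hence all the relevant inequalities strict, monomialize the coefficients, and repackage the conditions as strips in $w(t)$ --- agrees with the paper's, and your use of the transcendence hypothesis to obtain strictness is exactly right. However, your opening reduction has a gap that the rest of the argument does not repair.

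You assert that, after shrinking, $W$ is the preimage of an open under a modification $\widetilde{S}\to S$, and that this modification ``may be replaced by the composition of a local alteration of $S^0$ with the blow-up of $S^0\times\mb{A}^1$ along a coherent ideal $\mc{I}$ whose generators are polynomials $\sum_i f_i t^i$, $f_i\in\mc{O}_{S^0}$.'' This is unjustified on two counts. First, $S\to S^0\times\mb{A}^1$ is only \'etale, so an arbitrary modification of $S$ need not descend (even up to domination) to a blow-up of $S^0\times\mb{A}^1$, and a local alteration of $S^0$ alone does not produce such a descent. Second, even after replacing $S^0$ by $V(t)$ so that $S\to S^0$ acquires a section, the natural expansion of a regular function on $S$ in $t$ lands in $\mc{O}_{S^0}\dd{t}$, a formal power series, not a polynomial; you are implicitly truncating, but no justification is supplied. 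As a consequence, the collection of coefficients $f_i$ you propose to ``simultaneously monomialize'' is, as written, either the wrong one or infinite, and the chart analysis that follows is not anchored.

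The paper avoids both difficulties by never working at the level of modifications. Basic opens $\bigl\{w\in\left<S\right>\mid w\bigl(f^{(k)}/f^{(n+k)}\bigr)\geq 0\ (k=1,\dots,n)\bigr\}$ with finitely many $f^{(k)}\in\mc{O}_S$ are cofinal among Zariski neighborhoods of $v$, so one starts directly from finitely many regular functions on $S$. Each $f^{(k)}$ is expanded as $\sum f_i t^i\in\mc{O}_{S^0}\dd{t}$ via the section; the transcendence hypothesis forces a unique dominant index $h$, and choosing $N$ with $(N-h)\,v(t)>v^0(f_h)$ (using $v^0(f_i)\geq 0$) ensures the tail $\sum_{i\geq N}f_it^i$ cannot affect the dominant term. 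Only the finitely many $f_i$ with $i<N$ are then written, after a modification followed by an exposing alteration, as a unit times a monomial in the coordinates $x_1,\dots,x_r$ furnished by \cite[2.3.5]{Ked4} (which, crucially, makes $v^0(x_1),\dots,v^0(x_r)$ $\mb{Q}$-linearly independent and forces the exponents to be non-negative integers). The resulting finitely many strict linear inequalities in $c_i:=v(x_i)/v(t)$ are then stable under a rational perturbation $a_i<c_i<b_i$, yielding the strips. If you replace your blow-up reduction by this basic-open reduction, and insert the truncation bound $N$ before any monomialization, the remainder of your argument --- identifying a dominant chart, converting the chart inequalities into strips, and adjoining the coordinates cutting out $V(\pi)$ --- does go through.
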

\begin{proof}
 Put $Z:=V(\pi)$ and $r=\mr{rat.rk}(v^0)$.
 Since $v(t)>0$, the center of $v$ is contained in $V(t)$.
 Since $V(t)\rightarrow S^0$ is \'{e}tale by assumption, by replacing $S^0$ by $V(t)$, we may assume that $S\rightarrow S^0$ has a section and fix one.
 By \cite[2.3.5]{Ked4}, by replacing $S^0$ by its alteration around $v^0$, we may assume that there exists a coordinate system
 $\{x_1,\dots,x_d\}$ such that $\{v^0(x_i)\}_{r=1,\dots,r}$ are linearly independent over $\mb{Q}$, and $\{x_i\}_{i=1,\dots,r}$ cuts out the components of $Z$.
 Let $h\in\mc{O}_{S^0}\setminus\{0\}$.
 Then we may write that $h=(x_1^{e_1}\dots x_r^{e_r})\cdot a$ where $e_i\in\mb{Q}$ and $v^0(a)=0$.
 Take a modification so that $a\in\mc{O}_{S^0}^\times$ followed by an exposing alteration $g\colon S'^0\rightarrow S^0$ of $(S^0,Z)$
 (cf.\ \cite[2.3.4]{Ked4} for the terminology).
 Let $x'_i$ be the coordinates of $S'^0$.
 Then since components of $Z':=g^{-1}(Z)$ are cut out by $\{x'_i\}_{i=1,\dots,r}$, by shrinking $S'^0$ around the center of $v^0$ if necessarily,
 we may write $g^*(x_1^{e_1}\dots x_r^{e_r})=({x'_1}^{e'_1}\dots {x'_r}^{e'_r})\cdot b'$ where $b'\in\mc{O}_{S'^0}^\times$.
 Thus, over $S'^0$, we can write $g^*h=({x'_1}^{e'_1}\dots {x'_r}^{e'_r})\cdot u'$ with $u'\in\mc{O}_{S'^0}^{\times}$.
 Note that since ${x'_1}^{e'_1}\dots {x'_r}^{e'_r}\in\mc{O}_{S^0}$, $e'_i$ are non-negative integers.
 
 We can take $f^{(1)},\dots,f^{(2n)}\in\mc{O}_S$ such that
 $W\supset\bigl\{w\mid f^{(k)}/f^{(n+k)}(w)\geq0\quad(k=0,\dots,n)\bigr\}\ni v$.
 Let $f$ be one of $f^{(k)}$'s.
 Using the fixed section of $S\rightarrow S^0$, we may write $f=\sum f_i t^i$ in $\mc{O}_{S^0}\dd{t}$ with $f_i\in\mc{O}_{S^0}$.
 Since $s:=v(t)$ is not in the divisible closure, there exists $h$ such that $v^0(f_i)+is>v^0(f_h)+hs$ for any $i\neq h$.
 Note that this $h$ is chosen so that $v(f_it^i)>v(f_ht^h)$ for any $i\neq h$.
 Take an index $N$ such that $(N-h)s>v^0(f_h)$.
 Note that, with this choice of $N$, we have $v(f_it^i)>v(f_ht^h)$ for any $i\geq N$.
 By the argument of the previous paragraph, by taking further alteration, we may assume that
 $f_i=u_i\cdot(x_1^{e_{i1}}\dots x_r^{e_{ir}})$ where $e_{ij}\in\mb{Z}$, $u_i\in\mc{O}_{S^0}^{\times}$ for $i=0,\dots,N-1$.
 
 Now, for $f=f^{(k)}$, we put superscript ${}^{(k)}$ to $e_{ij}$ {\it etc}.
 To ease the notation, $e^{(k)}_{h^{(k)}i}$ is denoted by $e^{(k)}_{hi}$.
 We have $v\bigl((x_1^{e^{(k)}_{i1}}\dots x_r^{e^{(k)}_{ir}})\cdot t^{i-h^{(k)}}\bigr)>0$ for $i=0,\dots,N^{(k)}-1$ and $i\neq h^{(k)}$,
 $v\bigl((x_1^{-e_{h1}^{(k)}}\dots x_r^{-e_{hr}^{(k)}})\cdot t^{N^{(k)}-h^{(k)}}\bigr)>0$, and
 $v\bigl((x_1^{e^{(k)}_{h1}-e^{(n+k)}_{h1}}\dots x_r^{e^{(k)}_{hr}-e^{(n+k)}_{hr}})t^{h^{(k)}-h^{(n+k)}}\bigr)\geq0$.
 Note that if the last $\geq0$ is $=0$, then we have
 $h^{(k)}=h^{(n+k)}$ and $e^{(k)}_{hi}=e^{(n+k)}_{hi}$ for any $i$ by
 the choice of $x_i$ and $t$.
 We take $c_i$ such that $v(x_i)=c_is$. Then these conditions can be written as
 \begin{itemize}
  \item $c_1e^{(k)}_{i1}+\dots+ c_re^{(k)}_{ir}+(i-h^{(k)})>0$ for any $k$, $i=0,\dots,N^{(k)}-1$, and $i\neq h^{(k)}$;
  \item $-(c_1e^{(k)}_{h1}+\dots+ c_re^{(k)}_{hr})+(N^{(k)}-h^{(k)})>0$ for any $k$;
  \item $(c_1e^{(k)}_{h1}+\dots+ c_re^{(k)}_{hr})-(c_1e^{(n+k)}_{h1}+\dots+ c_re^{(n+k)}_{hr})+h^{(k)}-h^{(n+k)}\geq0$ for any $1\leq k\leq n$,
	and $=0$ if and only if $h^{(k)}=h^{(n+k)}$ and $e^{(k)}_{hi}=e^{(n+k)}_{hi}$ for any
	$i$.
 \end{itemize}
 Thus, we may take $a_i<c_i<b_i$ such that for any $a_i\leq d_i\leq
 b_i$, the inequalities holds even if we replace $c_i$ by
 $d_i$. This is what we need.
\end{proof}

\subsection{}
Let $t,s_2,\dots,s_e$ be coordinates on $S$ around $z\in S$.
Let $U:=(u,u_2\dots,u_e)\in\mb{Q}_{>0}^{e}$.
Then the {\em toric valuation} $v_U$ associated to $U$ is defined to be
\begin{equation*}
 v_U\bigl(\sum a_{\ul{k}}\ul{s}^{\ul{k}}\bigr)=
  \min\Bigl\{\sum k_iu_i\mid
  \mbox{$\ul{k}=(k_1,\dots,k_e)$ such that $a_{\ul{k}}\neq0$}
  \Bigr\}
\end{equation*}
for $\sum a_{\ul{k}}\ul{s}^{\ul{k}}\in k(z)\dd{t,s_2\dots,s_e}\supset\mc{O}_S$.
This valuation is divisorial.
Let $U'=(u',u_2\dots,u_e)$ such that $u'\geq u$.
Then for $g\in\mc{O}_S$, we have $v_{U'}(g)\geq v_U(g)$.
Assume we have an \'{e}tale morphism $S\rightarrow \mb{A}^1_t\times S^0$.
Assume $\pi\in\mc{O}_{S^0}$ (not just in $\mc{O}_S$!) and $f\in\mc{O}_X$.
Then we have
\begin{equation}
 \label{ineqnpinftori}
  \NP^{(\infty)}(f/\pi)(v_{U'})
  \geq
  \NP^{(\infty)}(f/\pi)(v_{U}).
\end{equation}

\subsection{}
Let us show that the weakly admissibility holds for any minimal valuation after a local alteration.
We use the induction on the transcendence defect.

\begin{lem*}
 \label{minvalok}
 If $v$ is a minimal valuation, there then exists a finite extension $L$ of $k(S)$
 and some extension $v'$ of $v$ to $L$ such that $f/\pi$ is locally admissible at $v'$.
\end{lem*}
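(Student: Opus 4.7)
The plan is to argue by induction on the \emph{transcendence defect} $n := \dim(S) - \mr{rat.rk}(v)$ of the minimal valuation $v$. At the outset, observe that the case $\NP^{(\infty)}(f/\pi)(v) < 0$ is already handled: Lemma \ref{npnegok} yields a finite extension $L/k(S)$ over which $f/\pi$ becomes weakly admissible at any extension $v'$ of $v$, and Lemma \ref{goodmodelze} then upgrades weak admissibility to local admissibility. So throughout I may assume $\NP^{(\infty)}(f/\pi)(v) = 0$.

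For the base case $n = 0$, the valuation $v$ is monomial. After a preliminary local alteration to bring the data into the shape required by Lemma \ref{monomialvalok} (an \'{e}tale map $S \to \mb{A}^e_k$ with a maximal $\mb{Q}$-linearly independent family $v(s_1),\dots,v(s_r)$ and $V(\pi) \subset V(s_1 \cdots s_r)$), that lemma supplies a weak mollifier at $v$, and Lemma \ref{goodmodelze} once more concludes.

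The inductive step $n \geq 1$ is the core of the argument. After a local alteration I would invoke Lemma \ref{kedexpseqcons} to arrange an \'{e}tale presentation $S \to S^0 \times \mb{A}^1_t$ for which $v(t)$ does not lie in the divisible closure of $\Gamma_{v^0}$, where $v^0 := v|_{S^0}$; this guarantees $\mr{rat.rk}(v^0) = \mr{rat.rk}(v)$, so $v^0$ has transcendence defect exactly $n-1$. Next, I would complete $k(S^0)$ at $v^0$ and pass to the completion of an algebraic closure to produce the non-archimedean field $\ell$ of Section \ref{sect2}; the restriction of $v$ to $k(S^0)(t)$ then corresponds to a \emph{terminal} Berkovich point $\alpha \in \mb{D}_\ell$ (of type $1$ when $\Gamma_{v^0}$ is discrete, of type $4$ otherwise). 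Expanding $f$ as a power series in $\ul{x}$ with coefficients in $k(S)$ and tracing the identifications, $\NP^{(\infty)}(f/\pi)(v)$ becomes the value at $\alpha$ of the Newton polygon function $\NP_\alpha^{(\infty)}$ of \ref{mainpropstabil}. Proposition \ref{mainpropstabil}, whose proof rests on the algebraicity of Hadamard products, forces this function to be constant on some interval $[r_\alpha - \varepsilon, r_\alpha]$. I would then choose a point in this interval corresponding to a valuation $v_1$ on a finite extension of $k(S)$ of strictly smaller transcendence defect, with $\NP^{(\infty)}(f/\pi)(v_1) = \NP^{(\infty)}(f/\pi)(v) = 0$. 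Applying the induction hypothesis at $v_1$ produces a mollifier $g$ witnessing local admissibility there; the invariance of $\NP^{(\infty)}$ under mollification (Lemma \ref{basiinequnp}) together with the monotonicity property (\ref{ineqnpinftori}) pushes this back to give $\NP(f/\pi + (g^p - g))(v) = 0$, so $g$ is also a weak mollifier at $v$, and a final appeal to Lemma \ref{goodmodelze} promotes this to local admissibility at $v$.

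The main obstacle will be the careful translation between the algebraic-geometric setup on $S$ and the Berkovich-disc analysis of Section \ref{sect2}: one must arrange the \'{e}tale coordinate $t$ and the accompanying local alterations so that $v$ genuinely appears as a terminal point of $\mb{D}_\ell$, which is precisely what Lemma \ref{kedexpseqcons} is engineered to achieve. The second delicate point is the downward transfer of the good-ally property from $v_1$ to $v$; the polygon-stability furnished by Proposition \ref{mainpropstabil} only guarantees equality of the $\NP^{(\infty)}$-values, so matching this with the pointwise $\NP$-equality needed for weak admissibility, and then with the Zariski-local structure required by Definition \ref{dfnadmfun}, will rely on combining the purity-type argument of Lemma \ref{nagapurilem} with a neighborhood argument in the spirit of Corollary \ref{opennbdofmolpt}.
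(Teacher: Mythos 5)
Your high-level plan — induction on the transcendence defect, splitting off the case $\NP^{(\infty)}(f/\pi)(v)<0$ via Lemma \ref{npnegok} and \ref{goodmodelze}, using Lemma \ref{monomialvalok} as the base case, and passing through a Berkovich disc over the completion of $k(S^0)$ to invoke Proposition \ref{mainpropstabil} — matches the paper's strategy. But there are two genuine errors in the inductive step.

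First, the divisibility condition on $v(t)$ is backwards. You require that $v(t)$ not lie in the divisible closure of $\Gamma_{v^0}$ and claim this gives $\mr{rat.rk}(v^0)=\mr{rat.rk}(v)$, hence $\mr{td}(v^0)=n-1$. But if $v(t)$ is rationally \emph{independent} of $\Gamma_{v^0}$, then $\mr{rat.rk}(v)=\mr{rat.rk}(v^0)+1$, so $\mr{td}(v^0)=(\dim S-1)-(\mr{rat.rk}(v)-1)-\mr{trdeg}_k(k_{v^0})=n$, \emph{not} $n-1$: your induction would not advance. What one actually needs (and what the cited \cite[5.1.2]{Ked4} arranges) is that $v(t)$ \emph{is} in the divisible closure of $\Gamma_{v^0}$, so that $\mr{rat.rk}(v^0)=\mr{rat.rk}(v)$ and $\mr{td}(v^0)=n-1$. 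The "not in the divisible closure" hypothesis belongs to Lemma \ref{kedexpseqcons} and is imposed on the auxiliary valuation $v_1=v_{0,s_2}$ that you later extract from the flat segment, not on $v$ itself; you have conflated the two.

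Second, the transfer from $v_1$ back to $v$ does not work as you state it. You write that the mollifier $g$ supplied by the induction hypothesis at $v_1$, combined with Lemma \ref{basiinequnp} and the toric monotonicity (\ref{ineqnpinftori}), directly yields $\NP(f/\pi+(g^p-g))(v)=0$. This cannot be right: $g$ lives on a local alteration around $v_1$, not around $v$, and even if it did extend, equality of the polygon at $v_1$ tells you nothing a priori about the polygon at the \emph{different} valuation $v$ — the inequality (\ref{ineqnpinftori}) only compares toric valuations $v_U, v_{U'}$ on a fixed toric presentation. The paper's actual route is more elaborate: Corollary \ref{opennbdofmolpt} converts the inductive hypothesis at $v_1$ (together with $\NP^{(\infty)}(f/\pi)(v_1)=0$) into the vanishing of $\NP^{(\infty)}$ on a whole Zariski open neighborhood $W_1$ of $v_1$ in $\left<S\right>$; Lemma \ref{kedexpseqcons} exhibits this neighborhood by explicit coordinate inequalities $w(z_i)/b_i<w(t)<w(z_i)/a_i$; (\ref{ineqnpinftori}) then propagates the vanishing over the toric region $D=\{u_i/b_i<u_0\}$ which contains the tuple of values of $v$; finally, a toroidal blowup produces a local alteration $S_1\to S$ with smooth boundary pair $(S_1,Z_1)$ whose divisorial components correspond to $v_U$ with $U\in D$, and the purity Lemma \ref{nagapurilem} then produces the mollifier at $v$. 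You do gesture at Corollary \ref{opennbdofmolpt} and Lemma \ref{nagapurilem} in your last sentence, but the intermediate steps are both missing and contradicted by your incorrect claim in the middle, so the argument as written has a real gap.
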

\begin{proof}
 We may assume $k$ to be algebraically closed.
 When $\NP^{(\infty)}(f/\pi)(v)<0$, then the lemma has already been verified in Lemma \ref{npnegok}, so we may assume $\NP^{(\infty)}(f/\pi)(v)=0$.
 We use the induction on the transcendence defect ({\it i.e.}\ $\dim(S)-\mr{rat.rk}(v)-\mr{trdeg}_k(k_v)$) of $v$.
 Abhyankar's inequality implies that transcendence defect is non-negative (cf.\ \cite[2.1.4]{Ked4}).
 When the transcendence defect is $0$, the valuation is monomial in which case we have already treated in Lemma \ref{monomialvalok}.
 
 Assume that the claim holds for valuations whose transcendence defect is $<n$,
 and let $v\in\left<S\right>$ be a valuation of transcendence defect $n$.
 By replacing $S$ by its local alteration around $v$, we may find an \'{e}tale morphism $S\rightarrow S^0\times\mb{A}^1_t$ such that
 the image of $v$ denoted by $v^0$ in $\left<S^0\right>$ is of transcendence defect $n-1$, $\pi\in\mc{O}_{S^0}$, and $v(t)>0$ (cf.\ \cite[5.1.2]{Ked4}).
 Let $z$ (resp.\ $z^0$) be the center of $v$ (resp.\ $v^0$) in $S$ (resp. $S^0$).
 We have $k(z^0)=k(z)$ since $v$ is minimal and $k$ is assumed algebraically closed.
 We denote by $\rho\colon S\rightarrow S^0$ the projection.
 Let $R_{v^0}$ be the valuation ring corresponding to $v^0$.
 The point $v^0$ defines the morphism $\mr{Spec}(R_{v^0})\rightarrow S^0$ and let
 \begin{equation*}
  S_{v^0}:=\mr{Spec}(R_{v^0})\times_{S^0}S
   \rightarrow \mr{Spec}(R_{v^0}).
 \end{equation*}
 Let $a\in R_{v^0}$ be an element such that $v^0(a)>0$.
 We take the $a$-adic completion $S_{v^0}^\wedge$ of $S_{v^0}$ over the $a$-adic formal scheme $\mr{Spf}(R_{v^0}^\wedge)$.
 Put $\ell':=\mr{Frac}(R_{v^0}^\wedge)\,(=k(S^0)_{v^0}^\wedge)$,
 and let $\mb{S}:=(S^\wedge_{v^0})^{\mr{rig}}$ be the Raynaud generic fiber of $S_{v^0}^\wedge$.
 For a rigid space $X$, we denote by $\left<X\right>$ the underlying topological space in the sense of \cite[II, \S3.1]{FK}.
 This is the same as the underlying topological space considering $X$ as an adic space (cf.\ \cite[II, A.4.7]{FK}),
 and (the underlying topological space of) the Berkovich space associated to $X$ is simply the separation of $\left<X\right>$ (cf.\ \cite[II, C.4.34]{FK}).
 We have the following diagram of topological spaces:
 \begin{equation*}
  \xymatrix{
   \left<\mb{S}\right>\ar[d]\ar[r]^-{\approx}&
   \left<S_{v^0}\right>_{U_{\eta}}
   \setminus U_{\eta}
   \ar@{^{(}->}[r]\ar[d]&
   \left<S\right>_{U}
   \ar[d]&
   \left<S\right>\ar[d]\ar[l]\\
  \left<\mr{Sp}(\ell')\right>\ar[r]^-{\approx}&
   \left<\mr{Spec}(R_{v^0})\right>_{\eta}\setminus\{\eta\}
   \ar@{^{(}->}[r]&
   \left<S^0\right>_{U^0}
   &
   \left<S^0\right>
   \ar[l]
   }
 \end{equation*}
 where $U^0:=S^0\setminus\{z^0\}$, $U:=\rho^{-1}(U^0)$, $\eta$ is the
 generic point of $U^0$.
 Moreover, $\approx$ means homeomorphic,
 which follows by \cite[II, E.2.10]{FK}.
 Consider the composition
 \begin{equation*}
  \left<\mb{S}\right>\xrightarrow{\mr{sp}}
   S_{v^0}^\wedge\otimes_{R^\wedge_{v^0}}k(v_0)\cong
   \rho^{-1}(z_0).
 \end{equation*}
 Let $\mb{D}^{\circ}_{\ell'}$ be the pullback $\mr{sp}^{-1}(z)$ of
 $z\in\rho^{-1}(z_0)$, in other words, the tubular neighborhood of
 $z$. We have the following diagram:
 \begin{equation*}
  \xymatrix{
   \mr{Spec}(k(z))\ar@{^{(}->}[r]\ar[d]_{\sim}&
   S^\wedge_{v^0}\ar[d]^{\rho}\\
  \mr{Spec}(k(z^0))\ar@{^{(}->}[r]&
   \mr{Spf}(R^\wedge_{v^0}).
   }
 \end{equation*}
 The morphism $\rho$ is smooth. Thus,
 $\mb{D}^{\circ}_{\ell'}$ is an ``open disk'' (cf.\ \cite[Thm
 4.6]{BPR}\footnote{
 In fact, they assume that the base field, in our situation $\mr{Frac}(R^\wedge_{v^0})$, to be algebraically closed,
 but the result can be proven similarly.
 P. Berthelot calls this result the ``weak fibration theorem'' and a proof can be found in 1.3.2 of his famous preprint
 ``{\em Cohomologie rigide et cohomologie rigide \`{a} supports propres. Premi\'{w}re partie (version provisoire 1991)},
 Pr\'{e}publication IRMR {\bf 96-03} (1996)''.
 }).
 By construction, $v\in\left<S\right>$ defines a point
 $\alpha'\in\mb{D}^{\circ}_{\ell'}$. Arguing as \cite[5.2.2]{Ked4},
 $\alpha'$ is a terminal point, namely it is either of type 1 or 4.

 Now, let $\ell$ be the completion of an algebraic closure of $\ell'$.
 We take a lifting $\alpha$ of $\alpha'$ on $\mb{D}_{\ell}$.
 There exists an element $a\in\ell$ such that $\mb{D}^{\circ}_\ell\supset\mr{Sp}(\ell\!\left<t/a\right>)\ni\alpha$ (thus $v^0(a)>0$).
 We have
 \begin{equation*}
  \mc{O}_S\subset\widehat{\mc{O}}_{S,z}\cong
   \widehat{\mc{O}}_{S^0,z^0}\dd{t},\qquad
   \widehat{\mc{O}}_{S^0,z^0}\subset k(S^0)^\wedge_{v^0}=\ell'
   \subset\ell,
 \end{equation*}
 where the isomorphism follows by [EGA IV, 17.6.3] since the morphism
 $S\rightarrow S^0\times\mb{A}^1_t$ is \'{e}tale and $k(z^0)=k(z)$.
 On the other hand, we have
 $\mc{O}_S\subset\Gamma(\mb{D}^{\circ}_{\ell'},\mc{O})
 \subset\ell\!\left<t/a\right>$. Thus, we have
 \begin{equation*}
  \mc{O}_S\subset
   \widehat{\mc{O}}_{S^0,z^0}\dd{t}\cap
   \ell\!\left<t/a\right>
   \subset\bigl(\widehat{\mc{O}}_{S^0,z^0}[a]\bigr)
   \!\left<t/a\right>,
 \end{equation*}
 where the intersection is taken in $\ell\dd{t}$ (cf.\
 \ref{fixnotatbersp} for the notation of the last ring).
 Note that $R:=\widehat{\mc{O}}_{S^0,z^0}[a]$ is noetherian.
 Since $f$ is a function on $X$, we know that $f\in\mc{O}_S\dd{\ul{x}}$ is algebraic over $\mr{Frac}(\mc{O}_S[\ul{x}])$.
 Summing up, $f\in R\!\left<t/a\right>\dd{\ul{x}}\subset \ell\!\left<t/a\right>\dd{\ul{x}}$ is algebraic over $\mr{Frac}(\ell\!\left<t/a\right>[\ul{x}])$.
 Thus, we are in the situation of applying Proposition \ref{mainpropstabil}.

 The proposition implies that we can take $0<s_1<r_\alpha$ such that $0=\NP^{(\infty)}(f/\pi)(\alpha)=\NP^{(\infty)}(f/\pi)(v_{\alpha,s})$ for any $s\in[s_1,r_\alpha]$.
 There exists $b\in\mf{o}_\ell$ such that $v_{b,s_2}\geq v_{\alpha}$ (cf.\ \cite[2.2.6]{Ked4} for the notation) for some $s_1<s_1$.
 We may assume that $s_2$ is not in the divisible closure of $v^0(k(S^0))$.
 By approximating $b$, we may assume that $b$ is in the integral closure of $R_{v^0}$.
 By replacing $S^0$ by its alteration containing $b$, we may assume that $b\in\mc{O}_{S^0}$.
 Finally, replace $t$ by $t-b$, and we may assume that
 \begin{equation*}
  v_{\alpha,s_1}>v_{0,s_2}>v=v_{\alpha}.
 \end{equation*}
 Let $v_1$ be the valuation on $k(S)$ corresponding to $v_{0,s_2}$.
 Then by construction, $v_1$ is a minimal valuation whose transcendence defect is equal to $n-1$.

 Now, since $v_1$ admits an alteration which makes $f/\pi$ weakly admissible at some extension of $v_1$ by induction hypothesis and $\NP^{(\infty)}(f/\pi)(v_1)=0$,
 there exists a Zariski open neighborhood $W_1\subset\left<S\right>$ of $v_1$ such that $\NP^{(\infty)}(f/\pi)(w)=0$ for any $w\in W_1$ by Corollary \ref{opennbdofmolpt}.
 By Lemma \ref{kedexpseqcons} for $v_1$, by replacing $S^0$ by its local alteration around $v^0$, we may find a coordinate $\{z_1,\dots,z_r\}$
 on $S^0$ and rational numbers $a_i$, $b_i$ such that $v^0(z_i)/b_i<v_1(t)=s_2<v^0(z_i)/a_i$, $\NP^{(\infty)}(f/\pi)(w)=0$ for any
 $w\in S\bigl(w(z_i)/b_i<w(t)<w(z_i)/a_i\bigr)$, and the components of $V(\pi)\subset S^0$ passing through $z^0$ is cut out by $z_1,\dots,z_r$.
 This implies that for $U:=(u_0,u_1,\dots,u_r)\in\mb{Q}^{r+1}_{>0}$ such that $u_i/b_i<u_0<u_i/a_i$ for $i=1,\dots,r$, $\NP^{(\infty)}(f/\pi)(v_U)=0$.
 By (\ref{ineqnpinftori}), this implies that $\NP^{(\infty)}(f/\pi)(v_U)=0$
 for any $U\in D:=\bigl\{(u_0,\dots,u_r)\in\mb{Q}^{r+1}_{>0}\mid u_i/b_i<u_0\bigr\}$.
 Since $s_2\leq v(t)$, $D$ includes a neighborhood of $(v(t),v(z_1),\dots,v(z_r))=(v(t),v^0(z_1),\dots,v^0(z_r))$.

 Now, by a suitable toroidal blowup in $t,z_1,\dots,z_r$,
 we may take a local alteration $f_1\colon S_1\rightarrow S$ around $v$ such that $(S_1,Z_1:=V(\pi))$ is a smooth pair,
 and the divisorial valuation corresponding to each component of $Z_1$ is of the form $v_U$ for some $U$ such that $u_i/b_i<u_0$.
 Thus, by purity (Lemma \ref{nagapurilem}), $f/\pi$ possesses a mollifier around $v$ on $S_1$, as required.
\end{proof}

\begin{lem}
 \label{clokthevok}
 Let $h\geq1$ be an integer.
 Assume that the existence of a \good \ally after a local alteration is known for any field $k$ and any height $h$ valuation $v$ on $S$ such that
 $\mr{trdeg}_k(k_v)=0$.
 Then we have the existence of a \good \ally after a local alteration at any valuation of height $h$.
\end{lem}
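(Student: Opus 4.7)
The plan is to reduce to the hypothesis by enlarging the base field to absorb a transcendence basis of $k_v$ over $k$, and then to descend the resulting alteration and good ally back to a local alteration of the original $S$.

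Let $v$ be a height-$h$ valuation on $S$ with $t := \mr{trdeg}_k(k_v) > 0$ (if $t=0$, the hypothesis applies directly). First I would choose a transcendence basis $\bar{s}_1,\ldots,\bar{s}_t \in k_v$ of $k_v/k$ and lifts $s_i \in R_v \cap k(S)$. After a preliminary local alteration around $v$, I may assume $s_i \in \mc{O}_S$, which gives a morphism $\phi \colon S \to \mc{S}^0 := \mr{Spec}(k[s_1,\ldots,s_t])$. The algebraic independence of the $\bar{s}_i$ forces $v$ to be trivial on $k[s_1,\ldots,s_t] \setminus \{0\}$: otherwise a nonzero polynomial $P$ with $v(P(s)) > 0$ would yield the relation $P(\bar{s}_1,\ldots,\bar{s}_t) = 0$ in $k_v$. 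Hence $v$ extends trivially to $k' := k(s_1,\ldots,s_t)$, the function field of $\mc{S}^0$.

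Set $S^{\mr{gen}} := S \times_{\mc{S}^0} \mr{Spec}(k')$ and $X^{\mr{gen}} := X \times_{\mc{S}^0} \mr{Spec}(k')$. These are smooth affine integral $k'$-schemes of finite type, satisfying hypothesis (*) with the pulled-back coordinates and section, and $k(S^{\mr{gen}}) = k(S)$, $k(X^{\mr{gen}}) = k(X)$. The center of $v$ on $S$ lies above the generic point $\eta^0 \in \mc{S}^0$, so $v$ defines a height-$h$ valuation on $k(S^{\mr{gen}})$ centered on $S^{\mr{gen}}$; and since the image of $k' \hookrightarrow R_v \twoheadrightarrow k_v$ is precisely $k(\bar{s}_1,\ldots,\bar{s}_t)$, I have $\mr{trdeg}_{k'}(k_v) = 0$. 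Applying the hypothesis over the field $k'$ produces a local alteration $\pi^{\mr{gen}} \colon S'^{\mr{gen}} \to S^{\mr{gen}}$ around $v$ together with good ally data $(f',\sigma',g')$ on $X'^{\mr{gen}}$ satisfying the containment of Definition \ref{dfnadmfun} at the center $z'$ of (the chosen extension of) $v$ on $S'^{\mr{gen}}$.

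Finally, I would descend the data. By noetherian spreading out [EGA IV, \S8] there exists a nonempty open $\mc{U} \subset \mc{S}^0$ containing $\eta^0$, an integral affine scheme $S'$ of finite type over $k$, and a morphism $S' \to S \times_{\mc{S}^0} \mc{U}$ whose base change along $\eta^0 \hookrightarrow \mc{U}$ recovers $\pi^{\mr{gen}}$, together with extensions of $f',\sigma',g'$ to $X' := X \times_S S'$ (absorbing denominators from $k[s_1,\ldots,s_t]$ into $\sigma'$). The $S^{\mr{gen}}$-separability of $f'$ is the smoothness of $U' \to \mb{A}^1 \times S^{\mr{gen}}$ combined with fiberwise density, both open conditions, so after shrinking $\mc{U}$ and $S'$ to a neighborhood of the center of $v$ they promote to $S'$-separability. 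The containment defining the good ally, together with $\sigma' \in \rat{S'}^\times \cap \mc{O}_{S'}$, likewise persist on a Zariski neighborhood of the center. The composition $S' \to S \times_{\mc{S}^0} \mc{U} \to S$ is then a local alteration of $S$ around $v$ carrying a good ally of $f/\pi$.

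The main obstacle is the descent step: the hypothesis only furnishes $S^{\mr{gen}}$-separability, a condition on fibers of $X'^{\mr{gen}} \to S^{\mr{gen}}$ lying over $\eta^0$, while Definition \ref{dfnadmfun} demands $S'$-separability over \emph{every} point of the spread-out $S'$. Ensuring this requires combining openness of the smooth locus with constructibility of fiberwise density (via Lemma \ref{lemonprofun}.\ref{lemonprofun-pwsmoook} and [EGA IV, 9.7.7]) so that separability at the center of $v$ propagates to a Zariski neighborhood, after which shrinking $S'$ appropriately preserves the local-alteration status.
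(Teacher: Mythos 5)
Your proof is correct and follows essentially the same route as the paper: project $S$ down to an affine base of dimension $\mr{trdeg}_k(k_v)$, pass to the fiber over its generic point where the residual transcendence degree vanishes, apply the hypothesis there, and spread out back to a local alteration of $S$. The paper first alters $S$ so that it is smooth and the center of $v$ has dimension $n$, then picks a smooth morphism $U\to\mb{A}^n$, whereas you directly choose lifts of a transcendence basis to define $S\to\mc{S}^0$ — a minor bookkeeping difference — and you carry out explicitly the spreading-out and propagation-of-separability steps that the paper compresses into the phrase that a mollifier of $w'$ can be taken as a mollifier of $w$.
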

\begin{proof}
 Let $v$ be a height $h$ valuation such that $\mr{trdeg}(k_v/k)=n$.
 By taking an alteration around $v$, we may assume that the center of $v$ on $S$ is of dimension $n$ and $S$ is smooth.
 Then we may find an open dense subscheme $U\subset S$ and a smooth morphism $U\rightarrow\mb{A}^n$
 such that the center of $v$ is sent to the generic point $\eta$ of $\mb{A}^n$.
 Since $v$ is centered over $\eta$, the restriction $v|_{k(\eta)}$ is trivial, and defines a valuation $v'$ in $\left<S_\eta\right>_{/k(\eta)}$.
 By construction, $v'$ is a valuation of height $h$ such that $\mr{trdeg}_{k(\eta)}(k_{v'})=0$.
 Thus, by assumption, there exists a finite extension $L$ of $k(S_\eta)$ over which there exists a mollifier providing a \good \ally at an extension $w'$ of $v'$.
 The valuation $w'$ defines a valuation $w\in\left<S\right>_{/k}$, and by definition, a mollifier of $w'$ can be taken as a mollifier of $w$.
\end{proof}

\begin{lem}
 \label{anyvalok}
 For any valuation, there exists a finite extension $L$ of $k(S)$ and some extension $v'$ of $v$ to $L$ such that $f/\pi$ is locally admissible at $v'$.
\end{lem}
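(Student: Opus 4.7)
The plan is to argue by induction on the height $h$ of $v$. For the base case $h=1$, Lemma \ref{minvalok} handles the case $\mr{trdeg}_k(k_v)=0$, and Lemma \ref{clokthevok} applied with $h=1$ extends this to arbitrary transcendence degree. I would then assume $h\geq 2$ and that the lemma is known, over every base field of characteristic $p$, for every valuation of height strictly less than $h$; by Lemma \ref{clokthevok} I may further assume $\mr{trdeg}_k(k_v)=0$. I would then decompose $v=v_{h-1}\circ w$, where $v_{h-1}$ is the height-$(h-1)$ coarsening of $v$ on $k(S)$ (whose valuation ring is the localization of $R_v$ at the unique height-$(h-1)$ prime) and $w$ is the induced height-one valuation on the residue field $k_{v_{h-1}}$.

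Applying the induction hypothesis to $v_{h-1}$ would produce a local alteration $S_1\to S$ around $v_{h-1}$, a mollifier $g_1\in\rat{X_{S_1}}$, a function $f_1\in\mc{O}_{X_{S_1}}$ which is either $S_1$-separable or in $\mc{O}_{S_1}$, and $\sigma_1\in\mc{O}_{S_1}\setminus\{0\}$ such that
\begin{equation*}
 f/\pi+(g_1^p-g_1)=f_1/\sigma_1+h_1,\qquad h_1\in\mc{O}_{X_{S_1},z_{h-1}},
\end{equation*}
where $z_{h-1}$ is the center of $v_{h-1}$ on $S_1$. Since $h_1$ is regular at $z_{h-1}$, the remaining obstruction to admissibility at $v$ depends only on the residual height-one valuation $w$. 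My strategy would be to restrict to the fiber $X_{z_{h-1}}:=X_{S_1}\times_{S_1}\mr{Spec}(k(z_{h-1}))$, which is smooth of finite type over the field $k(z_{h-1})$, and to apply the already-proven height-one case of the lemma, now over the base field $k(z_{h-1})$, to the reduction $\overline{h_1}$ and the height-one valuation induced by $w$. This would produce a local alteration of the fiber around $w$ together with a fiberwise mollifier making $\overline{h_1}$ admissible.

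The hard part — and the main obstacle — will be to lift this fiberwise alteration and mollifier back to an honest local alteration $S_2\to S_1$ around $v$ and an honest mollifier $g_2\in\rat{X_{S_2}}$, so that the combined mollifier $g_1+g_2$ witnesses admissibility of $f/\pi$ at $v$ on $S_2$. I would accomplish this by working on the formal completion $\widehat{\mc{O}}_{S_1,z_{h-1}}$: approximating the fiberwise mollifier and separable function to sufficiently high $v_{h-1}$-order by elements of $\mc{O}_{X_{S_2}}$, using Corollary \ref{intergoff} to guarantee that the approximations lie in the required submodules, and verifying that the approximation error does not spoil admissibility at $v$. Once the lifting is secured, combining $g_1$ and $g_2$ would close the induction and finish the proof.
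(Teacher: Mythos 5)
Your induction on the height and the decomposition $v=v_{h-1}\circ w$ with the induction hypothesis applied to $v_{h-1}$, together with the reduction via Lemma \ref{clokthevok} to the case $\mr{trdeg}_k(k_v)=0$, matches the paper's plan of attack. But after applying the induction hypothesis at $v'=v_{h-1}$ you implicitly assume that the good ally is of the form $f_1/\sigma_1+h_1$ with $v'(\sigma_1)=0$, so that ``the remaining obstruction depends only on $w$.'' This misses the second case: the ally may instead be of the form $f'/\tau+\mc{O}_{X,z'}$ with $v'(\tau)>0$ and $\sigma_{v'}(f'/\tau)$ separable, i.e.\ the good ally at $v'$ still has a genuine pole there. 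In that case there is no fiber to restrict to --- $f'/\tau$ is not regular near the center of $v'$ --- and the paper handles it by an entirely different mechanism: it repeatedly mollifies at $v$ (à la Lemma \ref{redulemav}) while using the separability of $\sigma_{v'}(f'/\tau)$ to pin $\NP(\cdot)(v')=\lambda$ under each mollification, so that the refined $v$-valuation is forced into the discrete set $\overline{v}_\alpha\bigl(\mr{gr}^\lambda_{v'}(\tau^{-1}\mc{O}_S)\bigr)\cap\left]-\infty,B\right]$, whose finiteness terminates the process. Without recognizing and treating this case your induction does not close.

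Even in the case you do address there is a real gap in the reduction to the residue field. You propose to restrict to the fiber $X_{z_{h-1}}=X_{S_1}\times_{S_1}\mr{Spec}(k(z_{h-1}))$ and apply the height-one result over $k(z_{h-1})$; but $w$ lives on $k_{v'}$, and in general $k(z_{h-1})\subsetneq k_{v'}$, so the height-one case over $k(z_{h-1})$ does not directly speak to $w$. The paper works around this by a de Jong alteration making the relevant divisors simple normal crossings, choosing local parameters $\{s_1,\dots,s_n,t_1,\dots,t_m\}$ with $E=V(t_1,\dots,t_m)$ whose generic point is the center of $v'$, and then replacing $S$ by $S\times_{\mb{A}^n}E$ to force $k(E)=k_{v'}$, so that $\overline{v}$ literally becomes a minimal valuation on $E$. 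This geometric preparation is what makes the fiberwise height-one argument applicable; your proposal does not supply it, and the subsequent lifting via $\widehat{\mc{O}}_{S_1,z_{h-1}}$ and Corollary \ref{intergoff}, while plausible in spirit, is not a substitute for it.
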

\begin{proof}
 We show the claim by the induction on the height of the valuation.
 When the height is $1$, we have already treated in Lemmas
 \ref{minvalok}, \ref{goodmodelze}, and \ref{clokthevok}.
 We assume that the claim holds for $\mr{ht}(v)<h$. Let $\mr{ht}(v)=h$
 from now on. By Lemma \ref{clokthevok}, we may assume that
 $\mr{trdeg}_k(k_v)=0$. Moreover, we may assume $k$ to be algebraically
 closed. We may write $v=v'\circ\overline{v}$ where $\mr{ht}(v')=h-1$,
 $\mr{ht}(\overline{v})=1$. Using the induction hypothesis at $v'$,
 by replacing $S$ by its alteration and $v$, $v'$ by some of its
 extensions, we may assume that there exists $g\in\mc{O}_{X_\eta}$ such that $f/\pi+(g^p-g)$ is equal a function of either of the following forms:
 1.\ it belongs to $k(S)+\mc{O}_{X,v'}$;
 2.\ it belong to $k(S)+f'/\tau$ where $\sigma_{v'}(f')$ is separable and $v'(\tau)>0$.

 Let us treat Case 1.
 Shrinking $S$ around $v$ if necessary,
 we can write that $f/\pi+(g^p-g)\in f'/\tau+k(S)$ with some $f'\in\mc{O}_X$, $\tau\in\mc{O}_S$ such that $v'(\tau)=0$.
 If $v(\tau)=0$, then $f'/\tau$ is in $\mc{O}_{X}\otimes\mc{O}_{S,v}$, and this becomes a \good \ally if we modify $S$ suitably.
 Thus, we assume $v(\tau)>0$.
 Now, let $B:=|\mr{div}(\tau)|$. By assumption $v'\not\in B$ and $v\in B$.
 There exists a divisor $D\ni v'$ (thus $D\ni v$).
 Take an alteration theorem of de Jong for $(S,D\cup B)$, and we may assume that $D\cup B$ is a simple normal crossing divisor.
 By shrinking $S$ around $v$ and enlarging $D$,
 we may assume that the intersection $E$ of all the components of $D$ is irreducible whose generic point is the center of $v'$.
 We may assume that there exists local parameters $\{t_1,\dots,t_m\}$ such that $D_{\mr{red}}=V(t_1\cdots t_m)$ and $E=V(t_1,\dots,t_m)$.
 Complete the local parameters by $\{s_1,\dots,s_n,t_1,\dots,t_m\}$ so that this forms a system of local parameters around $v$,
 and defines an \'{e}tale morphism $S\rightarrow\mb{A}^n\times\mb{A}^m$ such that the fiber over $\mb{A}^n\times\{0\}$ is $E$.
 Note that the induced morphism $E\rightarrow\mb{A}^n$ is \'{e}tale as well.
 Replacing $S$ by $S\times_{\mb{A}^n}E$ and shrinking around some extension of $v$,
 we may assume that there exists an \'{e}tale morphism $S\rightarrow E\times\mb{A}^m$ such that the fiber over $E\times\{0\}$ is $E\subset S$.
 By construction, $k(E)=k_{v'}$ and $\overline{v}$ is a minimal valuation on $E$.

 Since $v'(\tau)=0$, we have $\tau|_E\neq 0$ and the function $f'/\tau|_E$ is defined on $E$.
 For a function $h\in\mc{O}_S$ such that $h/\tau|_E=0$, we have $v'(h/\tau)>0$, and thus $v(h/\tau)>0$.
 This implies that if $f'/\tau|_E=0$, then we already have $\NP(f'/\tau)(v)=0$.
 Thus, we may assume that $f'/\tau|_E\neq0$.
 Since $\overline{v}$ is minimal, we can take an alteration $E'\rightarrow E$ over which
 $\NP\bigl(f'/\tau|_{E'}+(g^p_{E'}-g_{E'})\bigr)(\overline{v})=\NP^{(\infty)}(f'/\tau|_E)(\overline{v})=:V$ for some $g_{E'}$ such that $E'$ is smooth.
 Note that if $V<0$, then $\sigma_{\overline{v}}\bigl(f'/\tau|_{E'}+(g^p_{E'}-g_{E'})\bigr)$ is separable by Lemma \ref{propnpval}.
 Let $S':=S\times_{(E\times\mb{A}^n)}(E'\times\mb{A}^n)$, $X':=X\times_SS'$.
 Take $\widetilde{g}_{E'}\in\mc{O}_{X',v'}$ whose restriction over $E'$ is $g_{E'}$.
 By construction, we have
 $\NP\bigl(f'/\tau+(\widetilde{g}_{E'}^p-\widetilde{g}_{E'})\bigr)(v)=\NP\bigl(f'/\tau|_{E'}+(g_{E'}^p-g_{E'})\bigr)(\overline{v})$
 and
 $\sigma_v\bigl(f'/\tau+(\widetilde{g}_{E'}^p-\widetilde{g}_{E'})\bigr)=\sigma_{\overline{v}}\bigl(f'/\tau|_{E'}+ (g_{E'}^p-g_{E'})\bigr)$.
 Summing up, if $V=0$, then $f'/\tau+(\widetilde{g}_{E'}^p-\widetilde{g}_{E'})$ is regular,
 and in the case $V<0$, $\sigma_{v}\bigl(f'/\tau+(\widetilde{g}_{E'}^p-\widetilde{g}_{E'})\bigr)$ is separable, and the claim follows.

 Let us handle Case 2.
 We may assume that $\sigma_v(f'/\tau)$ is not separable.
 Recall that $\mr{Fil}^\lambda_{v'}(\tau^{-1}\mc{O}_S)$ is the filtration on $\tau^{-1}\mc{O}_S\subset k(S)$ associated with $v'$ (cf.\ \ref{prepringa}).
 Then since $\mc{O}_S$ is noetherian, the submodule $\mr{Fil}^{\lambda}_{v'}(\tau^{-1}\mc{O}_S)$ is finitely generated for any $\lambda$.
 Thus, $\mr{gr}^{\lambda}_{v'}(\tau^{-1}\mc{O}_S)$ is a finitely generated $\mr{gr}_{v'}(\mc{O}_S)$-module as well.
 Now, for $\alpha\in k(S)$, we have an isomorphism $\kappa_\alpha\colon\mr{gr}^{v'(\alpha)}_{v'}(k(S))\xrightarrow{\sim}\mr{gr}^0_{v'}(k(S))$
 of $\mr{gr}^0_{v'}(k(S))$-modules by multiplying by $\alpha^{-1}$.
 We put $\overline{v}_\alpha:=\overline{v}\circ\kappa_\alpha$, and $\overline{\NP}_\alpha(f)(v)$ for $f$ such that $\NP(f)(v')=v'(\alpha)$
 to be the image in $\Gamma_{\overline{v}}$ of $\NP(\alpha^{-1}\cdot f)(v)$.

 Fix any $\alpha\in k(S)$ such that $v'(\alpha)=\NP(f'/\tau)(v')=:\lambda<0$.
 Put
 \begin{equation*}
  J:=
  \overline{v}_{\alpha}\bigl(\mr{gr}_{v'}^{\lambda}(\tau^{-1}\mc{O}_S)\bigr)
   =
   \overline{v}\bigl(\kappa_\alpha(\mr{gr}_{v'}^{\lambda}(\tau^{-1}\mc{O}_S))\bigr)
   \subset\Gamma_{\overline{v}}.
 \end{equation*}
 Since $\kappa_\alpha(\mr{gr}_{v'}^{\lambda}(\tau^{-1}\mc{O}_S))$ is a finitely generated $\mr{gr}^0_{v'}(\mc{O}_S)$-module by the observation above,
 by invoking Corollary \ref{discvalsemgr}, the subset $J\subset\mb{R}_\infty$ is discrete.
 On the other hand, write $f'/\tau=\sum a_{\ul{k}}\ul{x}^{\ul{k}}$ with $a_{\ul{k}}\in\mc{O}_S$.
 Put $\Delta:=\bigl\{\ul{k}\in\mb{N}^d\setminus\{\ul{0}\}\mid v'(a_{\ul{k}})=\lambda\,(=\NP(f'/\tau)(v'))\bigr\}$.
 Since $\sigma_{v'}(f'/\tau)$ is assumed to be separable, there exists $\ul{k}_0\in\Delta$ such that $p\nmid\ul{k}_0$.
 Put $B:=\overline{v}_{\alpha}(a_{\ul{k}_0})$.
 By the discreteness of $J$, the intersection $J\cap\left]-\infty,B\right]$ is a finite set.

 By arguing similar to Lemma \ref{redulemav}, since we are assuming that $\sigma_{v}(f'/\tau)$ is not separable,
 we can take $g\in\mc{O}_X\otimes(\tau^{-1}\mc{O}_S)^{1/q}$ ($q>1$) such that $\NP(g)(v')=\lambda/q$ and $-\sigma_v(f'/\tau)=\sigma_v(g^q)$.
 Since $\sigma_{v'}(f'/\tau)$ is separable, we have $\sigma_{v'}(f'/\tau+(g^q-g))\neq\sigma_{v'}(g^q)$,
 which implies that $\NP(f'/\tau)(v')=\NP(f'/\tau+(g^q-g))(v')$.
 Moreover, writing $f'/\tau+(g^q-g)=\sum a'_{\ul{k}}\ul{x}^{\ul{k}}$, we have $a'_{\ul{k}_0}=a_{\ul{k}_0}$ in $\mr{gr}^{\lambda}_{v'}(k(S))$
 because $\NP(g)(v')>q\NP(g)(v')=\NP(f'/\tau)(v')$ and $g$ does not affect the value in $\mr{gr}^{\lambda}_{v'}(k(S))$.
 Thus, we have
 \begin{equation*}
  \overline{\NP}_{\alpha}(f'/\tau)(v)
   <
  \overline{\NP}_\alpha(f'/\tau+(g^q-g))(v)
  =
  \overline{\NP}_\alpha(f'/\tau+g^q)(v)
   \in J\cap\left]-\infty,B\right].
 \end{equation*}
 If $\sigma_v(f'/\tau+(g^q-g))$ is still not separable, we repeat this process.
 Since $J\cap\left]-\infty,B\right]$ is a finite set, this procedure terminates in finitely many steps,
 and we may argue as Lemma \ref{npnegok} to conclude the proof.
\end{proof}

\begin{lem}
 \label{finallemgalclo}
 Let $v\in\left<S\right>$. Then there exists an alteration
 $S'\rightarrow S$ such that for {\em any} extension $v'$ of $v$ to $S'$
 centered on $z'$, there exists $g_{v'}\in\mc{O}_{X}\otimes k(S')$
 such that $f/\pi+(g_{v'}^p-g_{v'})\in h/\sigma+\mc{O}_{X,z'}$, where
 $\sigma\in\mc{O}_S$ and $h\in\mc{O}_X$ is either separable function or
 $0$ {\normalfont(}not just a constant!{\normalfont)}.
\end{lem}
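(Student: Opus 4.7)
The plan is to promote Lemma~\ref{anyvalok}, which produces a good ally at \emph{one} extension of $v$, to an alteration yielding good allies at \emph{every} extension of $v$, via Galois transfer.

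First, apply Lemma~\ref{anyvalok} to obtain an integral alteration $\alpha_0\colon S_0\to S$, an extension $v_0$ of $v$ to $S_0$ centered on $z_0\in S_0$, and a mollifier $g_0\in\mc{O}_{X_{\eta_0}}$ such that $f/\pi+(g_0^p-g_0)\in h_0/\sigma_0+\mc{O}_{X,z_0}$ with $\sigma_0\in\mc{O}_{S_0}$ and $h_0\in\mc{O}_{X_{S_0}}$ either $S_0$-separable or in $\mc{O}_{S_0}$. Next, pass to a normal envelope: let $L_0:=k(S_0)$, let $L_0^{\mathrm{sep}}$ be the separable part of $L_0/k(S)$, let $M$ be the Galois closure of $L_0^{\mathrm{sep}}/k(S)$ in a fixed algebraic closure of $k(S)$, and set $L:=M\cdot L_0$. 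Then $L/k(S)$ is finite and normal; since $L/M$ is purely inseparable, every $k(S)$-automorphism of $M$ extends uniquely to $L$, so $G:=\mathrm{Aut}(L/k(S))$ acts on $L$. Construct a $G$-equivariant normal integral alteration $S'\to S$ with $k(S')=L$ dominating $S_0$: start from the normalization of $S$ in $L$, which is $G$-equivariant, and then perform a $G$-equivariant modification to dominate $S_0$.

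Because $L/k(S)$ is normal, $G$ acts transitively on the set of extensions of $v$ to $L$. Fix an extension $\tilde{v}_0$ of $v_0$ to $S'$ with center $\tilde{z}_0$, and denote by $\beta\colon S'\to S_0$ the dominating map. The pull-back $g:=\beta^{\ast}g_0\in\mc{O}_{X_{\eta'}}$ satisfies $f/\pi+(g^p-g)\in(\beta^{\ast}h_0)/(\beta^{\ast}\sigma_0)+\mc{O}_{X,\tilde{z}_0}$, so $g$ provides a good ally at $\tilde{v}_0$ on $S'$. For any $\tau\in G$, applying $\tau$ (a $k$-scheme automorphism of $S'$) to this relation yields $f/\pi+(\tau g)^p-(\tau g)\in\tau(\beta^{\ast}h_0)/\tau(\beta^{\ast}\sigma_0)+\mc{O}_{X,\tau(\tilde{z}_0)}$. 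Since $\tau$ acts as an automorphism of $S'$ and of $X_{S'}=X\times_SS'$ over $X$, $\tau(\beta^{\ast}h_0)$ remains $S'$-separable or in $\mc{O}_{S'}$, and $\tau(\tilde{v}_0)$ is centered on $\tau(\tilde{z}_0)$. Thus $g_{v'}:=\tau g$ is a good ally at $v':=\tau(\tilde{v}_0)$, and as every extension of $v$ to $S'$ has the form $\tau(\tilde{v}_0)$, the lemma follows.

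The main obstacle is the construction of the $G$-equivariant model $S'$ dominating $S_0$ and the treatment of the purely inseparable part of $L_0/k(S)$; both rely on the uniqueness of extensions of automorphisms through purely inseparable subextensions, which in particular ensures that $G$ acts on $L$ itself. The subsequent Galois transfer of the good ally structure is then a routine verification exploiting that $S'$-separability is preserved by $k$-scheme automorphisms of $S'$ (equivalently, by base change by $\tau$ composed with $\mathrm{id}$ on the $\mb{A}^1$-factor).
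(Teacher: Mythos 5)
Your overall strategy — apply Lemma~\ref{anyvalok} to get a good ally at one extension, then pass to a normal closure and transfer by Galois action — is the same as the paper's, and the Galois-transfer and equivariance arguments (including your remark that $S'$-separability is preserved by $k(S)$-automorphisms of $S'$) are sound. Your treatment of the purely inseparable part differs slightly: you observe that $G=\mathrm{Aut}(L/k(S))$ already acts transitively on extensions of $v$ to a finite \emph{normal} $L$, whereas the paper first replaces $f/\pi$ by $(f/\pi)^{p^n}$ so that the mollifier lands in the separable closure and $k(T)/k(S)$ becomes honestly Galois. Both manoeuvres accomplish the same thing.

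However, there is a genuine gap that leaves you with only a weaker statement than the lemma. Lemma~\ref{anyvalok} only guarantees a good ally $f/\pi+(g^p-g)\in h_0/\sigma_0+\mc{O}_{X,z_0}$ with $h_0$ either $S_0$-separable \emph{or an element of $\mc{O}_{S_0}$}, i.e.\ $h_0$ is allowed to be a nonzero constant function. Your Galois transfer faithfully propagates exactly this alternative to every extension $v'$, so in the end you obtain $h$ that is separable \emph{or a nonzero constant}, which is precisely the case the parenthetical ``not just a constant!'' forbids. You have not addressed how to force the constant part to vanish. The paper does this with an extra Artin--Schreier cover at the level of the base \emph{before} taking the Galois closure: writing $f_{T'}:=\bigl(f/\pi+(g'^p-g')\bigr)|_{s(T')}$ for the value of the ally along the fixed section, it replaces $T'$ by $T:=T'[t]/(t^p-t-f_{T'})$ and $g'$ by $g:=g'-t$, which normalizes the constant term to $0$. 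Since the section is defined over the base, this normalization survives pullback and Galois transport, so after passing to a model $S'$ dominating everything, the restriction of $f/\pi+(\tau^*g)^p-(\tau^*g)$ along $s(S')$ is still $0$ for every $\tau$, and consequently the constant alternative can never be a \emph{nonzero} constant. Without this step your argument proves the lemma only with the weaker ``separable or constant'' conclusion, which would not suffice for the applications later (cf.\ the strict ally formalism in \ref{conclpfmain}).
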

\begin{proof}
 The proof is similar to \cite[2.4.2]{Ked4}.
 Let $s\colon S\rightarrow X$ be the fixed section.
 For a morphism $T\rightarrow S$, we also denote by $s$ the base change $T\rightarrow X_{T}$.
 Let $T'\rightarrow S$ be an alteration such that the pullback of $f/\pi$ admits a \good \ally at {\em some} extension $v'$ of $v$,
 whose existence is assured by Lemma \ref{anyvalok}.
 This means that there exists $g'\in\mc{O}_X\otimes k(T')$ such that $f/\pi+(g'^p-g')$ is a \good \ally of $f/\pi$ at $v'$.
 Then $f_{T'}:=\bigl(f/\pi+(g'^p-g')\bigr)|_{s(T')}$, the ``constant term'', is a rational function in $\rat{T'}$.
 Consider the alteration $T:=T'[t]/(t^p-t-f_{T'})$.
 Then by putting $g=g'-t$ and for any extension $w$ of $v'$, $f/\pi+(g^p-g)$ is good at $w$ and $\bigl(f/\pi+(g^p-g)\bigr)|_{s(T)}=0$.

 It suffices to check the lemma for $(f/\pi)^{p^n}$ for some $n$, since $f/\pi$ and $(f/\pi)^{p^n}$ shares the same alliance.
 There exists an integer $n$ such that $g^{p^n}\in \mc{O}_X\otimes k(S)^{\mr{sep}}$,
 where $k(S)^{\mr{sep}}$ is the separable closure of $k(S)$ in $k(T)$.
 Thus, by replacing $f/\pi$ by $(f/\pi)^{p^n}$, we may assume that the extension $k(T)/k(S)$ is separable.

 Take a Galois closure of $k(T)$, denoted by $L$.
 Let $u$ be any extension of $w$ to $L$.
 Then $f/\pi$ is locally admissible at $u$.
 Let $T_u$ be a model of $L$ over which $f/\pi+(g^p-g)$ is good with some $g$.
 Let $u'$ be any valuation on $L$ over $v$.
 Since the Galois group acts transitively on the set of extensions of valuation by [Bourbaki, Comm.\ Alg., Ch.\ VI \S8.6 Cor 1],
 there exists $\sigma\in\mr{Gal}(L/k(S))$ such that $\sigma(u)=u'$.
 Let $T_{u'}$ be the normalization of $T_u$ with respect to the morphism $\mr{Spec}(L)\xrightarrow{\sigma}\mr{Spec}(L)\rightarrow T_u$.
 Let $\widetilde{\sigma}\colon T_{u'}\rightarrow T_u$ be the induced morphism.
 Then $\widetilde{\sigma}^*(f/\pi+(g^p-g))=f/\pi+(\widetilde{\sigma}^*(g)^p-\widetilde{\sigma}^*(g))$ is good on $T_{u'}$.
 Thus, the local admissibility holds for the valuation $u'$.
 We put $\overline{T}_u$ to be a compactification of $T_u$ over $S$.

 Now, let $S'$ be a model of $L$ which is proper over $S$ and such that $S'$ dominates $\overline{T}_u$ for any extension $u$ of $v$.
 Then any extension of $v$ on $S'$ admits a \good \ally locally around the center.
 Moreover, we have $\bigl(f/\pi+(\sigma^*(g)^p-\sigma^*(g))\bigr)|_{s(S')}=0$.
 Since the restriction to $s(S')$ is $0$, the \good \ally {\em cannot} be of the form $h/\sigma+\mc{O}_{X,z'}$ where $h$ a {\em non-zero} constant function.
 Thus the lemma follows.
\end{proof}

\subsection{Conclusion of the proof of Theorem \ref{mainthmadm}}
\label{conclpfmain}
\mbox{}\\
Let us conclude the proof.
By \cite[Exp.\ II, 3.1.1]{G}, we may replace $S$ either by its alteration or by an \'{e}tale covering.
Let $\{U_i\}$ be an open covering of $X$. If the theorem is true for each $U_i\rightarrow S$, then it is true also for $X$.
Moreover, let $U\subset X$ be an open subscheme such that $U_s\subset X_s$ is dense for each $s\in S$.
For any base change by a morphism $S'\rightarrow S$, $U\times_S S'\subset X\times_S S'$ has the same property.
Thus, we may replace $X$ by $U$.

By Lemma \ref{geomirrlem}, we may assume that any point $x\in X$ has an open neighborhood which is fiberwise geometrically irreducible over $S$.
Since $X$ is noetherian, we may cover $X$ by finitely many open subschemes which are fiberwise geometrically irreducible.
Thus, we may assume that $S$, $X$ are affine and $X\rightarrow S$ is fiberwise geometrically irreducible.

When $h=0$, the claim is obvious, so we may assume that $h\neq0$.
Since $X$ is affine, we may write $h=f'/g'$ with $f',g'\in\mc{O}_X$.
Use Gruson-Raynaud flattening theorem to generate a modification $S'\rightarrow S$ so that the strict transform of $V(g')$ is flat over $S'$.
Thus, by removing the strict transform from $X\times_S S'$, replacing $S$ by $S'$, and taking an affine covering of $S$ again,
we may assume that $h=f/\pi$ where $f\in\mc{O}_X$ and $\pi\in\mc{O}_S$, or in other words, $h\in\mc{O}_{X_\eta}$.

Now, we can take finite open coverings $\{X_i\}$ and $\{S_i\}$ of $X$ and $S$ and an \'{e}tale morphism
$X_i\rightarrow\mb{A}^d_{S_i,(x_1,\dots,x_d)}$ for each $i$ such that $V(x_1,\dots,x_d)\rightarrow S_i$ is surjective.
Thus, we may take a coordinate $(x_1,\dots,x_d)$ of $X$ over $S$ such that the zero-locus $S':=V(x_1,\dots,x_d)$ surjects to $S$.
Since $x_i$ is a coordinate system, $S'$ is \'{e}tale over $S$, and we further replace $S$ by $S'$.
Then the zero-locus of $(x_1,\dots,x_d)$ is isomorphic to $S'\times_S S'$.
By [EGA IV, 17.9.3], the section $\Delta\colon S'\rightarrow S'\times_S S'$ is a connected component of $S'\times_S S'$.
Thus removing the components other than $\Delta(S')$, we may assume that the zero locus of $(x_1,\dots,x_d)$ is equal to $S'$.
This, finally, implies that it suffices to show the theorem under the hypothesis (*) of \ref{loclhypintr}.

An \ally is said to be a {\em strict \ally} if it is of the form of the conclusion of Lemma \ref{finallemgalclo}.
For $v\in\left<S\right>$, use Lemma \ref{finallemgalclo}, and take an alteration $S'_v\rightarrow S$ admitting strict \ally for any extension $w$ of $v$ to $S_v$.
By using Gruson-Raynaud flattening, we may find a following commutative diagram:
\begin{equation*}
 \xymatrix{
  S_v\ar[r]^-{\mr{modif}}\ar[d]_{\mr{fin.\ flat}}^(.4){g}&S'_v\ar[d]\\
 T\ar[r]^-{\mr{modif}}&S.
  }
\end{equation*}
Let $t\in T$ be the center of $v$.
For any point $t'\in g^{-1}(t)$, there exists a neighborhood $W_{t'}\subset S_v$ over which a strict \ally exists.
Put $Z_{t'}:=S_v\setminus W_{t'}$, $Z:=\bigcap_{t'\in g^{-1}(t)}Z_{t'}$, $U_v:= T\setminus g(Z)$.
Since $t'\not\in Z_{t'}$, we have $t\not\in g(Z)$, and $U_v$ is an neighborhood of $t$.
By construction, $f/\pi$ admits a strict \ally around any valuation on $S_v$ centered on $g^{-1}(U_v)$.
Since $\left<S\right>$ is quasi-compact, there exists a finite subset $v_1,\dots,v_n\in\left<S\right>$
such that $\{\left<U_{v_i}\right>\}_{i=1,\dots,n}$ covers $\left<S\right>$.
Let $T\subset S_{v_1}\times_S\dots\times_S S_{v_n}$ be a closed subscheme maximally dominant over $S$.
This is an alteration.
By assumption, $f/\pi$ is admissible over $T$.
\qed

\section{Recollections and complements of Part I}
\label{sect4}
From now on, we use the language of nearby cycles over general base freely.
This is recalled in [Part I, \S4], and the reader is invited to take a brief look at the section
(except for [Part I, 4.10], which we will not use) before reading this section.
Now, recall from Notations that we are fixing $\Lambda$, which acts as a coefficient of \'{e}tale sheaves.
Note that in [Part I], we assumed that $\Lambda$ is {\em local}, but we do not need this assumption here.
We made such an assumption so that the notion of rank works, which we do not consider here.

\subsection{}
Let us give additional remarks in this paragraph on Artin-Schreier sheaf.
For the conventions concerning Artin-Schreier $\Lambda$-module, see [Part I, 4.11].
In particular, we are implicitly fixing a non-trivial additive character $\psi$.
Let $X$ be a scheme, and let $f,g\in\mc{O}_X$.
The following properties are important:
\begin{enumerate}
 \item multiplicativity: $\mc{L}(f+g)\cong\mc{L}(f)\otimes\mc{L}(g)$;
 \item inversion: $\mc{L}(-f)\cong\mc{L}(f)^{\vee}$;
 \item relation with Frobenius: $\mc{L}(f^p)\cong\mc{L}(f)$.
\end{enumerate}
Since the verification is standard, we leave it to the reader.
Combining these properties, we have $\mc{L}(f^p-f)\cong\Lambda$.

Now, let $h\in\rat{X}$.
Then this can be regarded as a rational morphism $h\colon X\dashrightarrow\mb{A}^1_{\mb{F}_p}$.
The composition $X_{\mr{red}}\hookrightarrow X\dashrightarrow\mb{A}^1_{\mb{F}_p}$ is denoted by $h_{\mr{red}}$.
By [EGA I, 7.2.2], there exists the maximum open dense subscheme $j\colon U\hookrightarrow X_{\mr{red}}$ over which $h_{\mr{red}}$
is an actual morphism $h_{\mr{red},U}\colon U\rightarrow\mb{A}^1_{\mb{F}_p}$.
In this situation, we put
\begin{equation*}
 \mc{L}(h):=j_!\,h_{\mr{red},U}^*(\mc{L}_{\psi}).
\end{equation*}
If $f\colon X'\rightarrow X$ is a maximally dominant morphism, the pullback homomorphism $f^*\colon\rat{X}\rightarrow\rat{X'}$ is defined.
In this situation, we have a canonical homomorphism $f^*\mc{L}(h)\rightarrow\mc{L}(f^*h)$.
Even though this homomorphism is isomorphic on the pullback of the subscheme of definition of $h_{\mr{red}}$,
caution that it may {\em not} be isomorphic because $f^*h$ may be defined on a larger open subscheme.

\subsection{}
\label{Gactcomp}
For a morphism of finite type $X\rightarrow S$ between noetherian separated schemes,
we defined the group of flat \'{e}tale systems $\Comp_d(X/S)$ in [Part I, Definition 4.7].
This is a certain subgroup of $\prod_{\eta\in S^0}\mr{K}_0\mr{Cons}(X_{\overline{\eta}})$, where $S^0$ is the set of generic points of $S$,
and $\mr{K}_0\mr{Cons}(Y)$ is the Grothendieck group of $D_{\mr{ctf}}(Y)$.
If we are given a morphism $g\colon S'\rightarrow S$, it admits a pullback map $g^*\colon\Comp_d(X/S)\rightarrow\Comp_d(X_{S'}/S')$.
Recall that the pullback is given by nearby cycle functor.
We will use a variant of this construction later.
Let $G$ be a finite group.
Let $\mr{Cons}^G(X):=D_{\mr{ctf}}(X,\Lambda[G])$, the category of ctf-$\Lambda$-complexes with $G$-action.
Assume we are given a specialization map $t\rightsquigarrow s$ on $S$.
For $\mc{F}\in\mr{Cons}^G(X_{\overline{t}})$, $\Psi_{\overline{s}\leftarrow\overline{t}}(\mc{F})$ is naturally equipped with $G$-action.
Thus, replacing $\Comp$ by $\Comp\vphantom{E}^G$ in [Part I, Definition 4.7], we may define $\Comp\vphantom{E}_d^G(X/S)$.
Now, consider the left exact functor $\ul{\Gamma}^G:=\sHom_{\Lambda[G]}(\Lambda,-)$.

\begin{lem*}
 \begin{enumerate}
  \item The functor $\mr{R}\ul{\Gamma}^G\colon D^+(X,\Lambda[G])\rightarrow D^+(X,\Lambda)$ preserves ctf-complexes.

  \item The following diagram commutes
	\begin{equation*}
	 \xymatrix@C=40pt{
	  \Comp\vphantom{E}^G(X/S)\ar[r]^-{\mr{R}\ul{\Gamma}^G}\ar[d]_{g^*}&\Comp(X/S)\ar[d]^{g^*}\\
	 \Comp\vphantom{E}^G(X_{S'}/S')\ar[r]^-{\mr{R}\ul{\Gamma}^G}&\Comp(X_{S'}/S').
	  }
	\end{equation*}
 \end{enumerate}
\end{lem*}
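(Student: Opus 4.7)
My plan is to compute $\mr{R}\ul{\Gamma}^G$ via a projective resolution $P_\bullet \to \Lambda$ of $\Lambda$ as a $\Lambda[G]$-module by finitely generated free $\Lambda[G]$-modules; such a resolution exists because $\Lambda[G]$ is Noetherian ($\Lambda$ being finite) and $\Lambda$ is a finitely generated $\Lambda[G]$-module. For any $\mc{F} \in D^+(X,\Lambda[G])$ this gives a quasi-isomorphism
\begin{equation*}
 \mr{R}\ul{\Gamma}^G(\mc{F}) \simeq \sHom_{\Lambda[G]}(P_\bullet,\mc{F}),
\end{equation*}
and each term $\sHom_{\Lambda[G]}(P_i,\mc{F})$ is, as a $\Lambda$-sheaf, a finite direct sum of copies of the underlying $\Lambda$-sheaf of $\mc{F}$.

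For part~(i), constructibility of the cohomology sheaves follows stalkwise: the stalk at a geometric point $\overline{x}$ computes the group cohomology $H^*(G,\mc{F}_{\overline{x}})$ of a finitely generated $\Lambda[G]$-module, which is finitely generated over the Noetherian ring $\Lambda$, and local constancy on the strata of a constructibility stratification for $\mc{F}$ is immediate since $G$ acts through a quotient that does not vary on strata. The finite tor-amplitude follows from the projection formula $\mr{R}\ul{\Gamma}^G(\mc{F}) \otimes^L_\Lambda M \simeq \mr{R}\ul{\Gamma}^G(\mc{F} \otimes^L_\Lambda M)$ combined with a uniform bound on the cohomological amplitude of $\mr{R}\ul{\Gamma}^G$ restricted to objects in the ctf class (the delicate point, addressed below).

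For part~(ii), I would work with the termwise representative $\sHom_{\Lambda[G]}(P_\bullet,\mc{F})$ of $\mr{R}\ul{\Gamma}^G(\mc{F})$. The nearby cycles functor $\Psi_{\overline{s}\leftarrow\overline{t}}$ underlying $g^*$ is additive, so for each $i$ there is a canonical isomorphism
\begin{equation*}
 \Psi_{\overline{s}\leftarrow\overline{t}}\bigl(\sHom_{\Lambda[G]}(P_i,\mc{F})\bigr)
 \simeq \sHom_{\Lambda[G]}\bigl(P_i,\Psi_{\overline{s}\leftarrow\overline{t}}(\mc{F})\bigr),
\end{equation*}
and these identifications are compatible with the differentials of the resolution because the latter are induced by $G$-equivariant maps, which are preserved by $\Psi_{\overline{s}\leftarrow\overline{t}}$ (the $G$-action being functorial in the nearby cycles construction). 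Totalizing, after bounding the resolution using part~(i), yields the desired natural isomorphism $g^*\circ \mr{R}\ul{\Gamma}^G \simeq \mr{R}\ul{\Gamma}^G \circ g^*$ of functors on $\Comp\vphantom{E}^G(X/S)$.

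The principal obstacle is the boundedness and tor-amplitude of $\mr{R}\ul{\Gamma}^G$ in the modular case where $|G|$ is not invertible in $\Lambda$; in that situation $P_\bullet$ is genuinely infinite and $\mr{R}\ul{\Gamma}^G$ of an arbitrary object of $D^+(X,\Lambda[G])$ can be unbounded above. To circumvent this I would reduce to the case where $\mc{F}$ is of the form $j_!\mc{L}$ for $\mc{L}$ lisse ctf with $G$-action, at which point the stalkwise cohomology $H^*(G,\mc{L}_{\overline{x}})$ lies in a bounded range uniformly on each stratum; a standard spectral sequence comparison then transports the tor-amplitude bound from $\mc{F}$ to $\mr{R}\ul{\Gamma}^G(\mc{F})$. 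Once this uniform finiteness is established, both parts of the lemma follow cleanly from the explicit termwise resolution argument above.
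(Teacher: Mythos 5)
Your overall strategy — compute $\mr{R}\ul{\Gamma}^G$ via a finite-free $\Lambda[G]$-resolution $P_\bullet$ of $\Lambda$ and work through the explicit termwise model $\sHom_{\Lambda[G]}(P_\bullet,-)$ — is the same as the paper's, and the stalk identification $\mr{R}\ul{\Gamma}^G(\mc{F})_{\overline{x}}\cong\mr{R}\Gamma^G(\mc{F}_{\overline{x}})$ is the right first move. The gap is in the finite tor-amplitude step, which you correctly flag as the delicate point but do not actually close. Your proposed fix (reduce to $j_!\mc{L}$, assert that $H^*(G,\mc{L}_{\overline{x}})$ is bounded, invoke a ``standard spectral sequence comparison'') does not work as stated: boundedness of $H^*(G,\mc{L}_{\overline{x}})$ is not a free consequence of local constancy or constructibility but is exactly what the finite tor-dimension of $\mc{L}_{\overline{x}}$ over $\Lambda[G]$ is supposed to give you, and extracting it already needs the acyclicity fact below; and boundedness together with finitely generated cohomology does not by itself give finite tor-dimension over $\Lambda$ (for instance $\mb{F}_\ell$ is a bounded $\mb{Z}/\ell^2$-module with finitely generated cohomology that is \emph{not} of finite tor-dimension). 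The projection-formula variant you sketch earlier has the same defect: to bound $\mr{R}\ul{\Gamma}^G(\mc{F}\otimes^L_\Lambda M)$ for a non-constructible $\Lambda$-module $M$ you again need the acyclicity statement.

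The missing ingredient, and the one the paper proves, is: for a finitely generated projective $\Lambda[G]$-module $P$, the $\Lambda$-module $\Gamma^G(P)$ is projective and $\mr{R}^i\Gamma^G(P)=0$ for $i>0$. By additivity it suffices to treat $P=\Lambda[G]$, where $\Gamma^G(\Lambda[G])=\Lambda$; acyclicity follows by applying the duality $\mr{Hom}_{\Lambda[G]}(-,\Lambda[G])\cong\mr{Hom}_\Lambda(-,\Lambda)$ to a free $\Lambda[G]$-resolution $Q_\bullet\to\Lambda$, which gives $\mr{R}\Gamma^G(\Lambda[G])\cong\mr{Hom}_\Lambda(Q_\bullet,\Lambda)\cong\mr{R}\mr{Hom}_\Lambda(\Lambda,\Lambda)$, namely $\Lambda$ in degree $0$. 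With this, one represents the stalk $\mc{F}_{\overline{x}}$ of a ctf $\Lambda[G]$-complex by a bounded complex $P^\bullet$ of finitely generated projective $\Lambda[G]$-modules with $P^i=0$ for $i<-r$; then $\mr{R}\Gamma^G(\mc{F}_{\overline{x}})\simeq\Gamma^G(P^\bullet)$ is a complex of projective $\Lambda$-modules with the same lower bound, and the tor-amplitude follows immediately. Without this lemma there is no mechanism to transport the tor-amplitude bound. For (ii) your termwise commutation with $\Psi$ is morally right, but you must still justify passing $\Psi$ through the infinite totalization; the paper instead passes to stalks using the identification from (i), writes the nearby-cycle stalk as an $\mr{R}\Gamma$, and reduces to the standard commutation of $\mr{R}\ul{\Gamma}^G$ with $\mr{R}\Gamma$ via the same free resolution — a cleaner route to the same conclusion.
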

\begin{proof}
 Let $x\in X$ be a point.
 For any $\mc{F}\in D^+(X,\Lambda[G])$, we have a canonical map $\mr{R}\ul{\Gamma}^G(\mc{F})_{\overline{x}}\rightarrow\mr{R}\Gamma^G(\mc{F}_{\overline{x}})$.
 This map is an isomorphism.
 Indeed, we may assume that $\mc{F}$ is a sheaf.
 We have
 \begin{equation*}
  \ul{\Gamma}^G(\mc{F})_{\overline{x}}:=
   \sHom_{\Lambda[G]}(\Lambda,\mc{F})_{\overline{x}}
   \cong
   \mr{Hom}_{\Lambda[G]}(\Lambda,\mc{F}_{\overline{x}})
   =:
   \Gamma^G(\mc{F}_{\overline{x}}),
 \end{equation*}
 where the isomorphism follows since $\Lambda$ is a finitely presented $\Lambda[G]$-module.
 Thus, it suffices to show that if $\mc{I}$ is an injective $\Lambda_X[G]$-module, then $\mc{I}_{\overline{x}}$ is an injective $\Lambda[G]$-module.
 We have $\mc{I}_{\overline{x}}\cong\indlim_U\Gamma(U,\mc{I})$, where the inductive limit runs over \'{e}tale neighborhood $U$ of $\overline{x}$.
 The $\Lambda[G]$-module $\Gamma(U,\mc{I})$ is injective since the functor $\Gamma(U,-)$ admits an exact left adjoint.
 Since $\Lambda[G]$ is noetherian, filtered inductive limits of injective $\Lambda[G]$-modules is injective by Baer's criterion, the claim follows.

 Let us show the first claim.
 Since the preservation of constructibility is obvious, we only need to show that it preserves complexes of finite Tor-dimension.
 Let $\mc{F}$ be a constructible complex of Tor-dimension $\leq r$, and $N$ be a $\Lambda$-module (not a sheaf).
 By the observation above, we have
 \begin{equation*}
  \bigl(\mr{R}\ul{\Gamma}^G(\mc{F})\otimes^{\mb{L}}_{\Lambda}N\bigr)_{\overline{x}}
   \cong
   \mr{R}\Gamma^G(\mc{F}_{\overline{x}})\otimes^{\mb{L}}_{\Lambda}N.
 \end{equation*}
 It suffices to show that the cohomology vanishes for degree $<-r$.
 Since $\mc{F}_{\overline{x}}$ is a $\Lambda[G]$-complex of Tor-dimension $\leq r$ whose cohomology is finitely generated,
 there exists a complex $P^{\bullet}$ of finitely generated projective $\Lambda[G]$-modules such that $P^i=0$ for $i<-r$.
 It remains to show that for a finitely generated projective $\Lambda[G]$-module $P$, $\Gamma^GP$ is a projective $\Lambda$-module,
 and $\mr{R}^i\Gamma^G(P)=0$ for $i>0$.
 Indeed, it suffices show this for $P=\Lambda[G]$.
 Since $\Gamma^G(\Lambda[G])=\Lambda$, this is clearly projective.
 Let $Q_{\bullet}\rightarrow\Lambda$ be a resolution by projective $\Lambda[G]$-modules.
 We have
 \begin{equation*}
  \mr{R}\Gamma^G(\Lambda[G])\cong
  \mr{Hom}_{\Lambda[G]}(Q_{\bullet},\Lambda[G])\cong
  \mr{Hom}_{\Lambda}(Q_{\bullet},\Lambda)\cong
  \mr{R}\mr{Hom}_{\Lambda}(\Lambda,\Lambda),
 \end{equation*}
 and the claim follows.

 Let us show the second claim.
 It suffices to show that $\Psi_{s\leftarrow t}$ and $\mr{R}\ul{\Gamma}^G$ commutes for any specialization map $t\rightsquigarrow s$ on $S$.
 We have the canonical map $\Psi_{s\leftarrow t}\circ\mr{R}\ul{\Gamma}^G\rightarrow\mr{R}\ul{\Gamma}^G\circ\Psi_{s\leftarrow t}$.
 To show that this is an isomorphism, it is pointwise, and in view of the commutation of taking stalk and $\mr{R}\ul{\Gamma}^G$ above,
 the verification is reduced to the commutation of $\mr{R}\ul{\Gamma}^G$ and $\mr{R}\Gamma$,
 whose verification is standard (by taking a free resolution of the left $\Lambda[G]$-module $\Lambda$).
\end{proof}

\subsection{}
Let us recall some terminologies from [Part I, 5.6] for the convenience of the reader.
As in [Part I], we put $\Box:=\mb{P}^1_k\setminus\{1\}$ with coordinate $t$.
For a morphism $V\rightarrow\Box$, we put $V_\infty:=V_{\mr{md}/\Box}\times_{\Box}\{\infty\}$,
where $V_{\mr{md}/\Box}$ denotes the maximal closed subscheme of $V$ which is maximally dominant over $\Box$.
Let $S$ be a noetherian separated scheme.
An $\Mdf$-sequence over $S$ (of length $n$) is a set $\{\mbf{V}_i\}_{i=0,\dots,n}$ where
$\mbf{V}_0:=(S\times\Box\rightarrow S\times\Box)$ and $\mbf{V}_i$ is a proper morphism $\mbf{V}_i\rightarrow\mbf{V}_{i-1,\infty}\times\Box$ for $i>0$.
The sequence is said to be {\em principal} if the morphism $V_i\rightarrow\mbf{V}_{i-1,\infty}\times\Box$ is surjective for each $i$.
A morphism of $\Mdf$-sequences (of the same length) $\phi\colon\{\mbf{V}_i\}\rightarrow\{\mbf{W}_i\}$
is a collection of morphisms $\phi_i\colon V_i\rightarrow W_i$ such that the following diagram commutes:
\begin{equation*}
 \xymatrix@C=40pt{
  V_i\ar[r]^-{\phi_i}\ar[d]_{\mbf{V}_i}&W_i\ar[d]^{\mbf{W}_i}\\
 \mbf{V}_{i-1,\infty}\times\Box\ar[r]^-{\phi_{i-1}}&
  \mbf{W}_{i-1,\infty}\times\Box.
  }
\end{equation*}

\subsection{}
Let $f\colon X\rightarrow S$ be a morphism of finite type, and $F$ be an element of $\Comp_d(X/S)$.
Let $\mbf{f}=(f_1,\dots,f_n)\in\mc{O}_X^{\times n}$, a finite ordered sequence of functions of $\mc{O}_X$.
Recall from [Part I, Definition 5.6] that an $\Mdf$-sequence $\{\mbf{V}_i\}$ is said to be {\em adapted to $(F;\mbf{f})$} if the following condition holds:
Put $F[\emptyset]_{\infty}:=F$, and assume $F[\{\mbf{V}_i\}_{i<k}]_{\infty}\in\Comp(X_{\mbf{V}_{n,\infty}}/\mbf{V}_{n,\infty})$ is defined.
Then, $F[\{\mbf{V}_i\}_{i<k}]_{\infty}\otimes\mc{L}(f_kt)$ is a flat \'{e}tale system on $\mbf{V}_{k}$.
We let $F[\{\mbf{V}_i\}_{i\leq k}]_{\infty}$ be the restriction to $\mbf{V}_{k,\infty}$ of this flat \'{e}tale system.

\begin{dfn*}
 Let $\{\mbf{V}_i\}_{i\leq n}$ is an $\Mdf$-sequence adapted to $(F;\mbf{f})$.
 \begin{enumerate}
  \item We say that the {\em boundary dimension} of $(F;\mbf{f})$ with respect to $\{\mbf{V}_i\}$ is $\leq r$ if
	$F[\{\mbf{V}_i\}_{i\leq n}]_{\infty}$ belongs to $\Comp_{r}(X_{\mbf{V}_{n,\infty}}/\mbf{V}_{n,\infty})$.
	The element $F$ is of {\em boundary dimension $\leq r$ with respect to $\mbf{f}$}
	if there exists a principal $\Mdf$-sequence $\{\mbf{V}_i\}_{i\leq n}$
	which is adapted to $(F;\mbf{f})$ and is of boundary dimension $\leq r$.

  \item Assume we are given $n$ functions $\mbf{h}:=\{h_i\}_{i=1,\dots,n}$ (with order) in $\mc{O}_X$ and $\mbf{h}':=\{h'_j\}_{j=1,\dots,m}$.
	We put $\mbf{h}\vee\mbf{h}':=\{h_1,\dots,h_n,h'_1,\dots,h'_m\}$.
	Note that $\mbf{h}\vee\mbf{h}'\neq\mbf{h}'\vee\mbf{h}$ since we are considering the order.
 \end{enumerate}
\end{dfn*}

\begin{rem*}
 If $F$ is of boundary dimension $\leq r$ with respect to $\mbf{f}$ then for any
 $\Mdf$-sequence $\{\mbf{V}_i\}_{i\leq n}$ which is adapted to $(F;\mbf{f})$,
 it is of boundary dimension $\leq r$ by [Part I, Lemma 5.6.2].
\end{rem*}

\section{Main result}
\label{sect5}
\begin{dfn}
 \label{defasmas}
 Let $X\rightarrow S$ be a smooth morphism and $\mc{F}$ be a constructible $\Lambda$-module on $X$.
 The sheaf $\mc{F}$ is said to be an {\em admissible AS sheaf} if the following holds Zariski locally around each point of $S$:
 Assume first that $S$ is irreducible.
 Then $\mc{F}$ is so if there exists a sheaf $\mc{G}$ on $X$, a closed subscheme $Y\subset X$,
 and an admissible function $h\in\rat{X}$ (cf.\ Definition \ref{dfnadmfun}) such that the following holds:
 1.\ for any $s\in S$, $Y_s\subset X_s$ is nowhere dense;
 2.\ $\mc{G}\otimes\mc{L}(h)^{-1}|_{U}$, for a sufficiently small open dense subscheme $U\subset X\setminus Y$,
 extends to a locally constant constructible sheaf (possibly zero) on $X\setminus Y$;
 3.\ $\mc{F}_{\overline{\eta}}\cong\mc{G}_{\overline{\eta}}$, where $\eta$ is the generic point of $S$.
 If $S$ is not irreducible, $\mc{F}$ is admissible AS sheaf if it is so over each irreducible component of $S$.

 For $F\in\Comp(X/S)$, we say that $F$ is {\em of admissible AS type}, if for each irreducible component $S_i$,
 there exists an admissible AS sheaf $\mc{F}_i$ on $X_{S_i}\rightarrow S_i$ such that $[\mc{F}_i]=F|_{S_i}$.
\end{dfn}

\subsection{}
\label{altadmassh}
The main result of the previous section (Theorem \ref{mainthmadm}) can
be interpreted as follows in terms of admissible AS sheaves:

\begin{cor*}
 Let $X\rightarrow S$ be a smooth morphism of finite type, and let $f\in\rat{X}$.
 Then there exists an alteration $S'\rightarrow S$ such that the pullback of $\mc{L}(f)$ to $X_{S'}$ is an admissible AS sheaf.
\end{cor*}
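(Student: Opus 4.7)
The plan is to deduce the corollary directly from Theorem \ref{mainthmadm} by unwinding definitions; all the real work sits in that theorem, and the remaining content is a translation between admissibility of the function $f$ and admissibility of the sheaf $\mc{L}(f)$ via the formal properties of Artin-Schreier sheaves recalled at the start of \S\ref{sect4}.

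First I would apply Theorem \ref{mainthmadm} to produce an alteration $S'\rightarrow S$ such that the pullback $f'\in\rat{X_{S'}}$ of $f$ is admissible. After replacing $(X,S,f)$ by $(X_{S'},S',f')$, the task becomes: assuming $f$ itself is admissible on $X\to S$, show that $\mc{L}(f)$ is an admissible AS sheaf. Both Definition \ref{dfnadmfun} and Definition \ref{defasmas} are Zariski-local on $S$, and the latter is checked componentwise, so I may shrink $S$ to be irreducible and assume the admissibility covering consists of a single open $U\subset X$, with $U_s\subset X_s$ dense for all $s\in S$, equipped with an $S$-separable function $f_0\in\mc{O}_X(U)$ (or $f_0=0$, via the remark following Definition \ref{dfnadmfun}), a regular $\sigma\in\mc{O}_S(S)\cap\rat{S}^\times$, and $g\in\rat{X}$ such that
\[
 f|_U+(g^p-g)=f_0/\sigma+r\qquad\text{for some }r\in\mc{O}_X(U).
\]

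The key computation is now the decomposition of $\mc{L}(f)|_U$ using the three properties listed in \S\ref{sect4}: multiplicativity, inversion, and Frobenius invariance. These give $\mc{L}(g^p-g)\cong\mc{L}(g)^{\otimes p}\otimes\mc{L}(g)^{\vee}\cong\Lambda$, hence
\[
 \mc{L}(f)|_U\;\cong\;\mc{L}\bigl(f|_U+(g^p-g)\bigr)\;\cong\;\mc{L}(f_0/\sigma)|_U\otimes\mc{L}(r).
\]
I then take $h:=f_0/\sigma\in\rat{X}$ as the admissible function in Definition \ref{defasmas} (it is admissible by construction, with trivial mollifier), $\mc{G}:=\mc{L}(f)$, and $Y:=X\setminus U$ as the closed subscheme (so $Y_s$ is nowhere dense in $X_s$ for each $s$). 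Then $\mc{G}\otimes\mc{L}(h)^{-1}|_U\cong\mc{L}(r)$, which is locally constant constructible on $U=X\setminus Y$ since $r$ is an honest morphism $U\to\mb{A}^1$, and the condition $\mc{F}_{\overline{\eta}}\cong\mc{G}_{\overline{\eta}}$ is tautological. In the degenerate case $f_0=0$, one simply takes $h=0$, so that $\mc{G}\otimes\mc{L}(h)^{-1}|_U\cong\mc{L}(r)$ directly.

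The main obstacle has already been absorbed into Theorem \ref{mainthmadm}; the only lingering subtlety is to check that, in passing from the sheaf $\mc{L}(f)$ on $X$ to its pullback on $X_{S'}$, the chosen witness $\mc{G}$ and the generic stalk comparison remain compatible. This is harmless because the alteration $X_{S'}\to X$ is maximally dominant, so the domain of definition of $f$ pulls back to a dense open in each fiber, and the canonical map $p^*\mc{L}(f)\to\mc{L}(p^*f)$ is an isomorphism at the generic point of each component of $X_{S'}$, which is all that condition 3 of Definition \ref{defasmas} requires.
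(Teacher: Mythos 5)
The paper states this corollary without proof, treating it as an immediate unwinding of Theorem \ref{mainthmadm} via Definition \ref{defasmas}; your overall plan (apply the theorem, then verify the definition using the multiplicativity and Frobenius invariance of $\mc{L}$) is the intended one. There are, however, two genuine gaps in the verification.

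First, your reduction to a single open $U$ is not justified. In Definition \ref{dfnadmfun} the covering $\{V_i\}_{i\in I}$ of $S$ and the opens $\{U_i\}_{i\in I}$ share the index set $I$, and nothing prevents several indices from carrying the same $V_i$. After shrinking $S$ around a point $s_0$, you are left with finitely many $U_1,\dots,U_k$, all sitting over the shrunken $S$, with only $\bigcup_i U_{i,s}$ dense in $X_s$ and a separate decomposition $f|_{U_i}+(g_i^p-g_i)=f_i/\sigma_i+r_i$ on each $U_i$. If $k>1$ and you pick $h:=f_1/\sigma_1$, then on $U_j$ ($j\neq 1$) one finds $\mc{G}\otimes\mc{L}(h)^{-1}\cong\mc{L}(f_j/\sigma_j-f_1/\sigma_1)\otimes\mc{L}(r_j)$ generically, and the first factor need not extend to a locally constant sheaf on $U_j$, so condition 2 of Definition \ref{defasmas} fails for your witness. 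The fix, which is also the simplest reading of the corollary, is to take $h:=f$ itself (the pulled-back function) as the admissible function, $\mc{G}:=\mc{L}(f)$, and $Y:=X\setminus\bigcup_i U_i$. Then $h$ is admissible by Theorem \ref{mainthmadm}, $Y_s$ is nowhere dense by the density condition of Definition \ref{dfnadmfun}, and on a small dense $W\subset X\setminus Y$ inside the domain of $f$ one has $\mc{G}\otimes\mc{L}(h)^{-1}|_W\cong\mc{L}(f)\otimes\mc{L}(-f)|_W\cong\Lambda|_W$, which trivially extends to the locally constant sheaf $\Lambda$ on $X\setminus Y$. Your decomposition via $f_0/\sigma$ is only needed later, in the proof of Lemma \ref{AsneacyAS}, where one is allowed to localize on $X$ as well (ULA is local), so the multi-chart issue disappears there.

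Second, your justification of condition 3 is too weak. Condition 3 requires $p^*\mc{L}(f)$ and $\mc{L}(p^*f)$ to agree on the entire geometric generic fiber $X_{\overline{\eta}}$, which is a positive-dimensional scheme, not merely at the generic points of the components of $X_{S'}$. What one actually needs is that the domain of definition of $f$ inside the generic fiber is preserved under the base change $X_{\eta'}\to X_{\eta}$, where $\eta'\to\eta$ is the generic-point extension coming from the alteration. This does hold: for $x'\in X_{\eta'}$ lying over $x\in X_\eta$, the map $\mc{O}_{X_\eta,x}\to\mc{O}_{X_{\eta'},x'}$ is a local flat homomorphism (hence faithfully flat), and then $p^*f\in\mc{O}_{X_{\eta'},x'}$ forces $f\in\mc{O}_{X_\eta,x}$ by the argument of the type [EGA $0_{\mr{I}}$, 6.6.3] that the paper invokes repeatedly. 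With that established, the canonical map $p^*\mc{L}(f)\to\mc{L}(p^*f)$ is an isomorphism over $X_{\overline{\eta}}$, which is what condition 3 actually demands.
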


\begin{lem}
 \label{AsneacyAS}
 Let $X\rightarrow S$ be a smooth morphism and $F\in\Comp(X/S)$ which is of admissible AS type.
 For any morphism $g\colon S'\rightarrow S$, the pullback $g^*(F)\in\Comp(X_{S'}/S')$ remains to be of admissible AS type.
\end{lem}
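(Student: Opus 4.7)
The plan is to make componentwise and Zariski-local reductions, then split into a dominant case (routine pullback of admissibility data) and a closed-immersion case (the main obstacle, requiring explicit nearby-cycle computation of Artin--Schreier sheaves).

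Because admissible AS type is defined componentwise on the base (Definition \ref{defasmas}) and the conclusion is Zariski-local on $S'$, I reduce to the case where $S$ and $S'$ are both irreducible and the unique generic point $\eta'$ of $S'$ maps into a chosen irreducible component of $S$ with generic point $\eta$. Let $T\subset S$ be the reduced closure of $g(\eta')$ and factor $g$ as $S'\xrightarrow{g_0}T\xrightarrow{\iota}S$, with $g_0$ dominant. By the transitivity of pullback in $\Comp$, it suffices to treat the two cases $g=g_0$ (dominant) and $g=\iota$ (closed immersion) separately.

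For $g=g_0$ dominant, I pick an admissible AS representative $\mc{F}$ of $F$ with data $(Y,h,\mc{G})$ from Definition \ref{defasmas} and admissibility data $\{V_i,U_i,f_i,\sigma_i,g_i\}$ for $h$ from Definition \ref{dfnadmfun}, and pull everything back to $X_{S'}/S'$. The closed subscheme $Y_{S'}\subset X_{S'}$ is fiberwise nowhere dense because geometric fibers over $S'$ are base-changed from those over $S$; the $S$-separability of $f_i$ is preserved by the fiber criterion of Lemma \ref{lemonprofun}\ref{lemonprofun-pwsmoook} combined with the stability of smoothness under base change; $\sigma_i$ remains a nonzero element of $\rat{S'}^{\times}$ by dominance of $g_0$; and $g_i\in\rat{X}$ pulls back to $\rat{X_{S'}}$ since $X_{S'}\to X$ is maximally dominant by flatness of $X\to S$. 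The defining relation $h|_{U_i}+(g_i^p-g_i)\in f_i/\sigma_i+\mc{O}_X(U_i)$ survives the pullback, so the pullback $h'$ of $h$ is admissible; the locally constant datum $\mc{G}\otimes\mc{L}(h)^{-1}|_U$ gives a locally constant $\mc{G}_{S'}\otimes\mc{L}(h')^{-1}|_{U_{S'}}$. Thus $\mc{F}_{S'}$ is admissible AS, and at the generic point $\eta'$ of $S'$ its class equals $g_0^*[\mc{F}]$, since along a dominant morphism the $\Comp$-pullback at the generic point is the naive pullback of the generic stalk, no genuine nearby cycle intervening.

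The hard part is the closed-immersion case $\iota\colon T\hookrightarrow S$: here the $\Comp$-pullback $\iota^*[\mc{F}]$ at the generic point $\tau$ of $T$ is a genuine nearby cycle $\Psi_{\bar\tau\leftarrow\bar\eta}(\mc{F})|_{X_{\bar\tau}}$. The strategy is to use the explicit admissible form $h=f/\sigma+(g^p-g)+r$ with $f$ being $S$-separable, $\sigma\in\rat{S}^{\times}\cap\mc{O}_S$, and $r\in\mc{O}_X$, together with the factorization $\mc{L}(h)\cong\mc{L}(f/\sigma)\otimes\mc{L}(r)$. The tame factor $\mc{L}(r)$ and the locally constant part of $\mc{G}$ behave compatibly under nearby cycle formation, being pulled back from $\mb{A}^1$ along a regular map and being locally constant, respectively. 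The wild factor $\mc{L}(f/\sigma)$ is the core difficulty: the computation of its nearby cycle along the specialization of $\sigma$ is exactly the phenomenon the weak structure theorem outlined in the introduction addresses, and the expected output is that, away from a fiberwise nowhere dense subscheme, one obtains a sheaf of the form $\mc{L}(h^{\flat})\otimes\mc{M}'$ for some admissible $h^{\flat}\in\rat{X_T}$ and locally constant $\mc{M}'$. Combining these contributions produces an admissible AS representative of $\iota^*F$ on $X_T/T$, concluding the argument.
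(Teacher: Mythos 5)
There is a genuine gap in the closed-immersion case, and the unnecessary factorization obscures the one idea the proof actually needs.

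You correctly reduce to $S$, $S'$ irreducible and handle the dominant case, but then factor $g$ into a dominant piece and a closed immersion, and for the closed immersion you say the nearby-cycle computation is ``exactly the phenomenon the weak structure theorem ... addresses'' and write down what the answer \emph{should} be. That is not an argument: the weak structure theorem is what \emph{produces} the admissible form, it does not compute the $\Psi$ of an already-admissible sheaf for you, and you do not explain how one would extract the claimed shape $\mc{L}(h^{\flat})\otimes\mc{M}'$ from $\Psi_{\bar\tau\leftarrow\bar\eta}$. The tool that actually makes this step trivial is that, once you have absorbed the constant-multiplier and $\mc{L}(g^p-g)$ and $\mc{L}(r)$ factors into a locally constant $\mc{H}$ and reduced Zariski-locally to $F=[\mc{H}\otimes\mc{L}(f/\sigma)]$ with $f$ an $S$-separable function and $\sigma\in\mc{O}_S$, the sheaf $\mc{H}\otimes\mc{L}(f/\sigma)$ is \emph{universally locally acyclic} over $S$ by Katz--Laumon \cite[2.4.4]{KL}. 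Universal local acyclicity says precisely that the nearby cycle is the naive pullback, for \emph{any} $g$ at once, so the dominant/closed-immersion dichotomy is superfluous. With ULA in hand, the remaining bookkeeping is: if $g(S')\subset V(\sigma)$ then $g^*\mc{L}(f/\sigma)=0$ (since $\mc{L}(f/\sigma)$ is the zero extension off the locus of definition of $f/\sigma$), so $g^*(F)=0$ which is trivially admissible AS; otherwise $g'^*(f)/g^*(\sigma)$ makes sense in $\rat{X_{S'}}$ and $g^*(F)=[g'^*(\mc{H})\otimes\mc{L}(g^*(f/\sigma))]$ is again of the required shape. Your proposal is missing this ULA input entirely, and without it the closed-immersion case remains unproved. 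Also note that you should address the $g(S')\subset V(\sigma)$ degeneration explicitly; your $g_0/\iota$ split does not by itself surface it.
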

\begin{proof}
 Since the claim is local, we may assume that $F=\left[\mc{H}\otimes\mc{L}(f/\sigma)\right]$
 where $\mc{H}$ is a locally constant constructible sheaf, $f\in\mc{O}_X$ is an $S$-separable function, and $\sigma\in\mc{O}_S$.
 We may also assume that $S'$ is irreducible with generic point $\eta'$, and let $g'\colon X_{S'}\rightarrow S'$ be the base change.
 By \cite[2.4.4]{KL}, $\mc{H}\otimes\mc{L}(f/\sigma)$ is universally locally acyclic.
 This implies that $g^*(F)=\left[g'^*(\mc{H}\otimes\mc{L}(f/\sigma))\right]$.
 This implies that if $g(S')\subset V(\sigma)$, then $g^*(F)=0$ since $g^*\mc{L}(f/\sigma)=0$, and in particular, it is of admissible AS type.
 Otherwise, $g'^*(f/\sigma)=g'^*(f)/g^*(\sigma)$ is well-defined in $\rat{S'}$, and we have $g^*(F)=\left[g'^*(\mc{H})\otimes\mc{L}(g^*(f/\sigma))\right]$,
 which is also of admissible AS type.
\end{proof}

\begin{lem}
 Consider the following diagram of noetherian schemes:
 \begin{equation*}
  \xymatrix{
   X\ar@<0.5ex>[r]^-{x}\ar@<-0.5ex>[r]_-{y}
   \ar@/_10pt/[rd]&
   \mb{A}^1_{z,S}\ar[d]\ar[r]\ar@{}[rd]|\square&
   \mb{A}^1_{z,\mb{A}^2}\ar[d]\\
  &S\ar[r]&\mb{A}^2_{(s,t)}.
   }
 \end{equation*}
 Here, $x$ is smooth and the function corresponding to $y$ is $S$-Frobenius separable.
 Abusing notations, we denote $x^*(z)$, $y^*(z)$ by $x$, $y$.
 Put $\mc{L}_n:=\mc{L}_!(x/s+y^n/t)$ for an integer $n>0$, where $\mc{L}_!$ means to take the zero extension outside of $s,t\neq0$.
 Then there exists an integer $N$ such that for any $n>N$ and $p\nmid n$,
 and any morphism $T\rightarrow S$ from a henselian trait $T$ whose closed point is sent to $V(t)\subset S$,
 we have $\dim\mr{Supp}(\Psi_{T}(\mc{L}_{n,T}))<\dim(X/S)$.
\end{lem}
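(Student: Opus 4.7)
The plan is to reduce the support estimate to a pointwise vanishing of nearby cycles at each generic point of the special fiber, and then convert it into a Swan-conductor estimate controlled by the Frobenius-separability of $y$ and the smoothness of $x$. Since $\mathrm{Supp}(\Psi_T(\mathcal{L}_{n,T})) \subset X_{0_T}$ with $\dim X_{0_T} = \dim(X/S)$, it suffices to show that $\Psi_T(\mathcal{L}_{n,T})_\xi = 0$ at each of the finitely many generic points $\xi$ of the reduced special fiber.

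First I fix such a $\xi$. Because $X_T \to T$ is smooth, $\mathcal{O}_{X_T, \xi}$ is a DVR with uniformizer $\pi_T$ and residue field $k(\xi)$. Writing $s = u_s \pi_T^{v_s}$ and $t = u_t \pi_T^e$ with $u_s, u_t \in R^\times$ and $e := v_T(t) > 0$ (the cases $s = 0$ or $t = 0$ in $R$ give $\mathcal{L}_{n,T} = 0$ trivially), the smoothness of $x \colon X \to \mathbb{A}^1_{z,S}$ together with the decomposition $y = (y')^q$, $y'$ being $S$-separable and $q = p^a$, imply that $x|_{X_{z_0}}$ and $y'|_{X_{z_0}}$ are dominant on each component of the fiber $X_{z_0}$ over $z_0 := \mathrm{image}(0_T) \in S$. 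Hence $x(\xi), y(\xi) \in k(\xi)^\times$, and Lemma \ref{intFrob} yields the crucial property $x(\xi), y'(\xi) \notin k(\xi)^p$. By the standard criterion for nearby cycles of a rank-$1$ Artin--Schreier sheaf at a codimension-$1$ point of a smooth family over a trait, $\Psi_T(\mathcal{L}(h_n))_\xi = 0$ is equivalent to positivity of the Swan conductor of $\mathcal{L}(h_n)$ at $\xi$, i.e., $h_n \notin \wp(K_\xi) + \mathcal{O}_{X_T,\xi}$ where $K_\xi := \mathrm{Frac}(\mathcal{O}_{X_T,\xi})$.

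I then perform iterated Artin--Schreier reduction. Expanding $h_n$ as a Laurent series in $\pi_T$ with coefficients in $k(\xi)\dd{\pi_T} \cong \widehat{\mathcal{O}}_{X_T,\xi}$, the substitution $h_n \leadsto h_n + g^p - g$ can lower the pole order precisely when the current leading coefficient is a $p$-th power in $k(\xi)$, in which case extracting a $p$-th root shifts the $y'$-exponent of that coefficient by a factor of $p$. In the generic case $e > v_s$, after $j$ reductions the leading coefficient takes the form $y'(\xi)^{nq/p^j} \cdot u_t(\xi)^{-1/p^j}$. For $j \geq a$ the exponent of $y'(\xi)$ becomes $n/p^{j-a}$, and since $p \nmid n$ together with $y'(\xi) \notin k(\xi)^p$ force this to have nonzero class in $k(\xi)^\times/k(\xi)^{\times p}$, no further reduction is possible once $j$ exceeds the $p$-divisibility depth of $u_t(\xi)$ in $k(\xi)^\times / k(\xi)^{\times p^\infty}$. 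The remaining pole order is therefore bounded below by a positive constant, uniformly in $n$ coprime to $p$.

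The symmetric case $v_s > e$ is treated identically with $x/s$ dominating the pole, using $x(\xi) \notin k(\xi)^p$. In the balanced case $v_s = e$ the two leading coefficients add in $k(\xi)$, and an $\mathbb{F}_p$-linear relation modulo $p$-th powers between $[x(\xi)/u_s(\xi)]$ and $[y'(\xi)^{nq}/u_t(\xi)]$ could in principle cancel the leading pole; this is ruled out for $n$ outside a single residue class modulo $p$ by the joint non-$p$-th-power property of $x(\xi)$ and $y'(\xi)$, and a further reduction step handles the exceptional residue class. Taking $N$ as the maximum of the resulting thresholds over the finite set of generic points of $X_{0_T}$ completes the proof. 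The main technical obstacle is the fine bookkeeping of iterated $p$-th root extractions in $k(\xi)^\times/k(\xi)^{\times p^\infty}$, particularly in the balanced case, where the smoothness of $x$ and the Frobenius-separability of $y$ must be combined in an essential way to preclude complete Artin--Schreier cancellation.
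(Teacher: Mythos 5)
There is a genuine gap in your reduction to the Swan conductor of $\mathcal{L}(h_n)$ over the horizontal trait $\mathcal{O}_{X_T,\xi}$. The ``standard criterion'' you invoke --- that $\Psi_T(\mathcal{L}(h_n))_\xi = 0$ if and only if $h_n \notin \wp(K_\xi) + \mathcal{O}_{X_T,\xi}$ --- does not hold. The nearby cycle over the trait $T$ is computed on the Milnor fibre $\bar{X}_{(\bar\xi)} \times_{\bar{T}} \bar\eta_T$, which lives over a {\em separably closed} field containing $\bar{K}_T := k(\bar\eta_T)$. Over $\bar{K}_T$ the Artin--Schreier map is surjective, so every element of $\bar{K}_T$ lies in $\wp(\bar{K}_T)$, and in particular any part of $h_n$ that is a ``constant'' (pulled back from $T$) is trivialised after geometric base change. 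A concrete failure: take $h = 1/\pi_T$. Then $h \notin \wp(K_\xi)+\mathcal{O}_{X_T,\xi}$ (its pole order is $1$, which is coprime to $p$), so your criterion predicts $\Psi_T(\mathcal{L}(1/\pi_T))_\xi = 0$; but $\mathcal{L}(1/\pi_T)$ is the pullback from $T$ of a rank-$1$ sheaf, and on the Milnor fibre $1/\pi_T$ lies in $\wp(\bar{K}_T)$, so the restriction is the constant sheaf and $\Psi_T(\mathcal{L}(1/\pi_T))_\xi \cong \Lambda \neq 0$. What governs the vanishing is not the absolute Swan conductor of $\mathcal{O}_{X_T,\xi}$, but the ramification that survives after killing constants from $\bar{K}_T$ --- exactly the ``geometric'' or relative ramification over $T$.

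This distinction is not a cosmetic one; it is why the paper's proof isolates the numerator $t'u^\gamma x + s'y^n$ (which lives in $\mathcal{O}_{X_T}$, not in $\mathcal{O}_T$) and proves it is a {\em separable function on the fibres}, before invoking [KL, 2.4.4]. The smoothness of the numerator encodes precisely the statement that $\mathrm{d}(t'u^\gamma x + s'y^n)$ does not vanish generically on each fibre, so the Artin--Schreier sheaf has nontrivial geometric ramification at every generic point of the special fibre, no matter what $p$-power classes the units $t',s',u$ occupy. Your iterated Artin--Schreier reduction in $k(\xi)^\times/k(\xi)^{\times p}$ tracks the wrong invariant: it sees the $p$-power class of $u_t(\xi)$, $u_s(\xi)$, which the Milnor fibre forgets. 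Relatedly, the treatment of the balanced case ($v_s = e$) is not substantiated: the sum $x(\xi)/u_s + y(\xi)^{nq}/u_t$ being a $p$-th power is an additive coset condition, not a multiplicative one, so it is not ``ruled out for $n$ outside a single residue class modulo $p$'' by the non-$p$-th-power property of $x(\xi)$ and $y'(\xi)$ alone, and the promised ``further reduction step'' is not described. In the paper this is exactly where Lemma \ref{nowhconslem} enters: it fixes the threshold $N$ so that $y^{n}f_1$ is fibrewise non-constant, forcing the coefficient $t' + ns'y^{n-1}f_1$ of $\mathrm{d}x$ in $\mathrm{d}(t'x+s'y^n)$ to be generically non-zero. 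You would need a substitute for this step.
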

\begin{proof}
 We may assume $X$ connected.
 First, we may assume that $y$ is smooth. Indeed, by assumption, there exists a separable function $y'$
 such that $y'^q=y$ for a number $q$ which is some power of $p$.
 We take the universally homeomorphism $\varphi\colon\mb{A}^2_{s,t'}\rightarrow\mb{A}^2_{s,t}$ sending $t$ to $t'^q$.
 Then we have $\varphi^*\mc{L}(y^n/t)=\mc{L}(y'^n/t')$, and it suffices to check the claim for $\mc{L}(y'/t')$.
 Shrink $X$ by fiberwise dense open subscheme, we may assume $y$ smooth.
 
 Given any finite dominant morphism $S'\rightarrow S$, we may replace $S$ by $S'$ because any specialization map in $S$ can be lifted to $S'$.
 Thus, by Lemma \ref{geomirrlem}, we may assume that any point $x\in X$ has an open neighborhood $U\subset X$ which is fiberwise geometrically irreducible over $S$.
 Now, the claim is Zariski local with respect to $S$ and $X$.
 Thus, we may assume that $X\rightarrow S$ is surjective and fiberwise geometrically irreducible,
 and that $S$ is affine and $\Omega^1_{X/S}$ is a free $\mc{O}_X$-module.
 Take a system of local coordinates $x,x_2,\dots,x_d$ of $X$ over $S$.
 Write $\mr{d}y=f_1\mr{d}x+f_2\mr{d}x_2+\dots+f_d\mr{d}x_d$.
 Let $S\supset D_i:=\bigl\{s\in S\mid f_i|_{X_s}=0 \mbox{ on $X_s$}\bigr\}$.
 Since $X\rightarrow S$ is equidimensional and the fibers are irreducible, $D_i$ is the set of $s\in S$ such that $\dim\bigl(f_i^{-1}(0)\cap X_s\bigr)=d$,
 thus $D_i$ is constructible by applying Chevalley's theorems [EGA IV, 13.1.3, 1.8.4] to $f_i^{-1}(0)\rightarrow S$.
 Since $y$ is smooth over $S$, we get $\bigcap D_i=\emptyset$.
 There exists $N$ such that for any $n\geq N$, $(y^{n} f_1)|_{X_s}$ is non-constant for any $s\in(S\setminus D_1)$
 by writing the constructible set $S\setminus D_1$ as a finite union of locally closed subschemes and by applying Lemma \ref{nowhconslem} to each of them.
 For any morphism $S'\rightarrow S$ from an integral scheme $S'$, constants $a,b\in\mc{O}_{S'}^{\times}$, and $n>N$ such that $p\nmid n$,
 the function $ax+by^n$ on $X\times_SS'$ is fiberwise generically smooth over $S'$ because
 \begin{equation*}
  \mr{d}(ax+by^n)=a\cdot \mr{d}x+b\cdot \mr{d}y^n
   =
   (a+nby^{n-1}f_1)\cdot \mr{d}x+\sum_{i\geq2}nbf_iy^{n-1}\cdot \mr{d}x_i
 \end{equation*}
 in $\Omega^1_{X_{s'}/s'}$ is not $0$ on $X_{s'}$ for any $s'\in S'$.
 
 To conclude the proof, it suffices to show that for any morphism $g\colon T:=\mr{Spec}(R)\rightarrow S$ from a henselian trait,
 the pullback $\mc{L}_{n,T}$ is universally locally acyclic for any $n>N$ such that $p\nmid n$.
 Let $u$ be a uniformizer of $R$.
 Let $(s_0,t_0)\in\mb{A}^2_{(s,t)}$ be the image of the closed point of $T$ via $T\rightarrow S\rightarrow\mb{A}^2_{(s,t)}$. If $s_0,t_0\neq0$,
 then $\mc{L}_n$ is locally constant constructible, so it is universally locally acyclic.
 If $s_0=0$ and $t_0\neq0$, then it is universally locally acyclic by \cite[2.4.4]{KL}, and the same for the case where $s_0\neq0$ and $t_0=0$.
 The remaining case is when $s_0=t_0=0$.
 Write $g^*(t)=t'u^\alpha$, $g^*(s)=s'u^{\beta}$ where $t',s'\in R^{\times}$ and $u$ is a uniformizer of $R$. Since $s_0=t_0=0$, we have $\alpha,\beta>0$.
 We get
 \begin{equation}
  \label{vancompsum}\tag{$\star$}
  \frac{x}{s}+\frac{y^n}{t}=
   \frac{t'u^\alpha x+s'u^\beta y^n}{t's'u^{\alpha+\beta}}
 \end{equation}
 Now, assume $\gamma:=\alpha-\beta>0$. Then (\ref{vancompsum}) is equal to $(t'u^\gamma x+s'y^n)/(t's'u^\alpha)$.
 Now, since $y^n$ is smooth if $p\nmid n$ and $y\neq0$ on the special fiber of $X_T$, we get that $t'u^\gamma x+s'y^n$ is smooth as well,
 thus by \cite[2.4.4]{KL}, the dimension of $\Psi_T(\mc{L}_{n,T})$ is $<\dim(X/S)$.
 Likewise, the claim also holds when $\alpha-\beta<0$.
 Finally, consider the case where $\alpha=\beta$.
 Then (\ref{vancompsum}) is equal to $(t'x+s'y^n)/(t's'u^\alpha)$.
 By the choice of $N$, $t'x+s'y^n$ is separable, so the claim follows.
\end{proof}

\begin{cor}
 \label{puinsreddim}
 Let $X\rightarrow S$ be a smooth morphism equidimensional of dimension $d>0$ between $k$-schemes of finite type,
 and $F\in\Comp(X/S)$ be of admissible AS type.
 Let $g\colon X\rightarrow\mb{A}^1_S$ be an {\em $S$-Frobenius separable} function.
 Then there exists an integer $n$ such that $(F,\{g^n\})$ is of boundary dimension $<d$.
\end{cor}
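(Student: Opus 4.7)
First, I reduce to a concrete local situation. Working component by component, I may assume $S$ is integral with generic point $\eta$, and that $F$ is represented generically by an admissible AS sheaf of the form $\mc{H}\otimes\mc{L}(h)$, where $\mc{H}$ is locally constant constructible on a fiberwise dense open $X\setminus Y\subset X$ and $h\in\rat{X}$ is admissible. Since the classes in $\Comp$ depend only on generic fibers, I may replace $X$ by $X\setminus Y$. Applying Definition \ref{dfnadmfun}, Zariski-locally on $S$ and on $X$ I can write $h+(g_0^p-g_0)=f/\sigma+r$ with $f\in\mc{O}_X$ an $S$-separable function, $\sigma\in\mc{O}_S\cap\rat{S}^{\times}$, $r\in\mc{O}_X$, and $g_0\in\rat{X}$. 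Since $\mc{L}(g_0^p-g_0)\cong\Lambda$ and $\mc{L}(r)$ is locally constant constructible (the Artin-Schreier cover by a regular function is \'{e}tale), absorbing $\mc{L}(r)$ into $\mc{H}$ yields a locally constant constructible $\mc{H}'$ and a description $\mc{F}\cong\mc{H}'\otimes\mc{L}(f/\sigma)$ on a fiberwise dense open. Since $S$ is noetherian, finitely many such opens cover, so a single large $n$ will suffice uniformly.

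Next, I build a principal $\Mdf$-sequence of length one adapted to $(F;\{g^n\})$. The natural candidate is to take $\mbf{V}_1$ to be a suitable proper surjective modification of $S\times\Box$ over which $F\otimes\mc{L}(g^nt)$ becomes a flat \'{e}tale system. In the local model, $F\otimes\mc{L}(g^nt)$ is represented by $\mc{H}'\otimes\mc{L}(f/\sigma+g^nt)$, and universal local acyclicity of this sheaf over $S\times\Box$ away from the divisor $\sigma\cdot t\cdot(1-t)=0$ follows from \cite[2.4.4]{KL} once $f/\sigma+g^nt$ is fiberwise generically smooth over $S\times\Box$. Where this fails, one absorbs a further modification into $\mbf{V}_1$; the universal local acyclicity of $\mc{H}'$ is automatic because it is locally constant constructible on a fiberwise dense open.

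Finally, I compute the nearby cycle at $t=\infty$. Any generic point of $\mbf{V}_{1,\infty}$ is dominated by a henselian trait $T\to\mbf{V}_1$ whose closed point lies over $t=\infty$; in the coordinate $t':=1/t$, the function $f/\sigma+g^nt$ becomes $f/\sigma+g^n/t'$, matching exactly the setup of the preceding lemma with $s\leftrightarrow\sigma$, $t\leftrightarrow t'$, $x\leftrightarrow f$, $y\leftrightarrow g$. Since $f$ is $S$-separable (hence smooth on a fiberwise dense open, to which we shrink $X$) and $g$ is $S$-Frobenius separable, the preceding lemma furnishes an integer $N$ such that for every $n>N$ with $p\nmid n$ and every such $T$,
\begin{equation*}
 \dim\mr{Supp}\bigl(\Psi_T(\mc{L}(f/\sigma+g^n/t'))\bigr)<\dim(X/S)=d.
\end{equation*}
Universal local acyclicity of $\mc{H}'$ ensures that tensoring does not increase this dimension. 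Choosing $n$ large uniformly over the finite Zariski cover of $S$ then yields $F[\mbf{V}_1]_\infty\in\Comp_{<d}(X_{\mbf{V}_{1,\infty}}/\mbf{V}_{1,\infty})$, which is exactly the claimed boundary dimension condition.

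The principal obstacle I anticipate is the middle step: assembling the local descriptions $\mc{F}\cong\mc{H}'\otimes\mc{L}(f/\sigma)$ on various Zariski patches into a genuine flat \'{e}tale system on a single principal $\Mdf$-sequence, which requires careful bookkeeping with the formalism of [Part I, \S4] and with the behavior of the twist $\mc{L}(g^nt)$ at the bad divisor. The analytical heart of the argument — that large $n$ with $p\nmid n$ forces the nearby cycle at $t=\infty$ to drop dimension — is already supplied by the preceding lemma, combined with the admissibility of $h$.
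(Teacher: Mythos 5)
Your proposal is correct and follows essentially the same route as the paper's proof: reduce locally to the representation $\mc{H}\otimes\mc{L}(f/\sigma)$ (absorbing the locally constant factors $\mc{L}(r)$ and the ally twist into $\mc{H}$), then apply the preceding lemma with $s\leftrightarrow\sigma$, $t\leftrightarrow t_{\Box}^{-1}$, $x\leftrightarrow f$, $y\leftrightarrow g$ to bound the dimension of the nearby cycle at $t=\infty$. The "middle step" you flag as the anticipated obstacle — constructing a single modification over which $F\otimes\mc{L}(g^n t_{\Box})$ becomes a flat \'{e}tale system (very good) — is handled in the paper simply by invoking Orgogozo's modification theorem and is not a genuine difficulty.
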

\begin{proof}
 We show that there exists $N>0$ such that for any $n>N$ and $p\nmid n$, $(F,\{g^n\})$ is of boundary dimension $<d$.
 Since $S$ is quasi-compact, by enlarging $N$, the claim is local, and we may assume that $F=[\mc{F}]$ where
 $\mc{F}\cong\left[\mc{H}\otimes\mc{L}(f/\sigma)\right]$ where $\mc{H}$ is a locally constant constructible sheaf,
 $f\in\mc{O}_X$ is an $S$-separable function, and $\sigma\in\mc{O}_S$.
 Let $t_{\Box}$ denote the coordinate of $\Box$.
 We define $S\times\Box\rightarrow\mb{A}^2_{(s,t)}$ to be the morphism sending $t$ to $t_{\Box}^{-1}\in\mc{O}_{\Box}$ and $s$ to $\sigma\in\mc{O}_S$.
 The functions $f$, $g$ induces the morphisms $x,y\colon X\times\Box\rightarrow\mb{A}^1_{S\times\Box}$.
 Invoking the lemma, we may find $N$ so that for any $n>N$, $p\nmid n$, and any morphism $T\rightarrow S\times\Box$
 where $T$ is a strict trait and the closed point is sent to a point in $S\times\{\infty\}$,
 the dimension of the support of $\Psi_T\bigl((\mc{H}\otimes\mc{L}(f/\sigma+g^nt_{\Box}))_T\bigr)$ is $<d$.
 Let $Z\rightarrow S\times\Box$ be a modification over which
 $\mc{F}\otimes\mc{L}(g^nt_{\Box})\cong\mc{H}\otimes\mc{L}(f/\sigma+g^nt_{\Box})$ is very good.
 For any point $z\in Z$ over $\infty\in\Box$, we may take a dominant morphism $T\rightarrow Z$
 such that $T$ is a strict trait and the closed point of $T$ is sent to $z$.
 Thus, by the goodness over $Z$, $(F,\{g^n\})$ is of boundary dimension $<d$.
\end{proof}

\begin{prop}
 \label{dimredas}
 Let $f\colon X\rightarrow S$ be a smooth morphism equidimensional of dimension $d>0$ of schemes of finite type over $k$,
 $\mf{h}$ be a non-constant family with respect to $f$ {\normalfont(}cf.\ {\normalfont\ref{nonconsfam}}{\normalfont)},
 and $\mc{F}$ be an object of $D_{\mr{ctf}}(X)$ whose cohomology sheaves are locally constant constructible.
 Then there exists $\mbf{h}\subset\mf{h}$ such that $([\mc{F}];\mbf{h})$ is of boundary dimension $<d$.
\end{prop}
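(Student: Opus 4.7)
My plan is to combine the dimension–reduction tool of Corollary \ref{puinsreddim} with the structure of the non-constant family $\mf{h}$, using noetherian induction on $S$. First, observe that $[\mc{F}]\in\Comp(X/S)$ is of admissible AS type in the sense of Definition \ref{defasmas}: each cohomology sheaf $\mc{H}^i(\mc{F})$ is locally constant constructible, hence itself an admissible AS sheaf (take $Y=\emptyset$ and $h=0$ in the definition), and $[\mc{F}] = \sum_i(-1)^i[\mc{H}^i\mc{F}]$. Lemma \ref{AsneacyAS} further ensures that this class behaves well under base change.

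The noetherian induction reduces the problem to the following: find an open dense $V\subset S$ and a finite ordered $\mbf{h}_V\subset\mf{h}$ such that $([\mc{F}]|_{X_V};\mbf{h}_V)$ has boundary dimension $<d$ over $V$. The closed complement $Z:=S\setminus V$ is then treated by the induction hypothesis applied to $X_Z\rightarrow Z$ (which is still smooth of relative dimension $d$, with $\mc{F}|_{X_Z}$ still having lcc cohomology and $\mf{h}|_{X_Z}$ still a non-constant family), yielding $\mbf{h}_Z$, and the concatenation $\mbf{h}:=\mbf{h}_V\vee\mbf{h}_Z$ will be the desired finite ordered subset.

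To produce $V$ and $\mbf{h}_V$, I first reduce to the case $S$ integral with generic point $\eta$. The non-constant family assumption furnishes $h\in\mf{h}$ which is non-constant on every positive-dimensional irreducible component of $X_\eta$; since all components of $X_\eta$ have dimension $d>0$, this just says $h|_{X_\eta}$ is non-constant. By semicontinuity of fiber dimension, the locus in $S$ where $h$ is fiberwise non-constant is open, so after shrinking $S$ to an open $V$ we may assume $h|_{X_V}$ is $V$-non-constant. Then Lemma \ref{lemonprofun}.\ref{lemonprofun-smgenpinsep} produces a quasi-finite dominant $V'\rightarrow V$---an iterated Frobenius followed by an open immersion into $V_{\mr{red}}$---such that the pullback of $h$ to $X_{V'}$ is $V'$-Frobenius separable. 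Since $[\mc{F}]|_{X_{V'}}$ remains of admissible AS type by Lemma \ref{AsneacyAS}, Corollary \ref{puinsreddim} yields an integer $n$ such that $([\mc{F}]|_{X_{V'}};\{h_{V'}^n\})$ has boundary dimension $<d$ over $V'$.

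The main obstacle will be the descent of this boundary-dimension statement from $V'$ back to an open subscheme of $V$. Because $V'\rightarrow V$ factors through an iterated Frobenius (a universal homeomorphism) composed with an open immersion into $V_{\mr{red}}$, étale constructible sheaves, their nearby cycles, and hence modifications and $\Mdf$-sequences correspond canonically between $V'$ and the image open in $V$. The real work will be to verify carefully that an $\Mdf$-sequence witnessing boundary dimension $<d$ over $V'$ transfers---via pullback/pushforward along the universal homeomorphism and Zariski closure across the open immersion---to one over the corresponding open of $V$ with the same reduction of dimension. Once this bookkeeping is completed, $\mbf{h}_V=\{h^n\}$ does the job, closing the inductive step.
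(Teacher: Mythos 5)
Your opening moves match the paper's (pick $h\in\mf{h}$ non-constant on the fibers over a generic point, make it Frobenius separable via Lemma \ref{lemonprofun}.\ref{lemonprofun-smgenpinsep}, apply Corollary \ref{puinsreddim}), but the inductive scheme you set up does not close, and the missing step is exactly the heart of the paper's argument. Your induction hypothesis is the Proposition itself, which only covers complexes with locally constant constructible cohomology. However, in the concatenated sequence $\mbf{h}_V\vee\mbf{h}_Z$, the object that gets hit by the $\mbf{h}_Z$-part over the closed complement $Z$ is not $[\mc{F}|_{X_Z}]$: by the very definition of an adapted $\Mdf$-sequence, it is $F[\{\mbf{V}_i\}]_{\infty}$, i.e.\ the nearby-cycle output of the earlier steps restricted over (an alteration of) $Z$, and this carries vanishing-cycle contributions from the degeneration at $t=\infty$, so it is in general no longer lcc. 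Hence the induction hypothesis cannot be applied to it, and reversing the order of concatenation only moves the same problem to the other stratum. What is needed is a class of coefficients stable under the whole process, and that is precisely the notion of admissible AS type: the paper constructs the principal $\Mdf$-sequence step by step, choosing each alteration $\mbf{W}_n\rightarrow\mbf{W}_{n-1,\infty}\times\Box$ via Corollary \ref{altadmassh} (i.e.\ the main Theorem \ref{mainthmadm}) so that the twist is a flat \'etale system of admissible AS type, and then uses Lemma \ref{AsneacyAS} to see that the restriction to $\infty$ stays of admissible AS type; only after this bookkeeping can Corollary \ref{puinsreddim} be applied to the restriction of $G=F[\{\mbf{W}_i\}]_{\infty}$ over the bad stratum with a Frobenius-separable power of $h$. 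You invoke \ref{defasmas} and \ref{AsneacyAS} only for the initial $[\mc{F}]$ and never address why admissibility persists along the sequence, so the key input of the paper (the admissible-alteration theorem) is absent from your inductive step.

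A smaller point: the ``real work'' you anticipate in descending from $V'$ back to $V$ along the iterated Frobenius is not how the paper proceeds and is largely avoidable. Since boundary dimension is verified after arbitrary alterations adapted to the sequence, one simply takes an alteration $T\rightarrow\mbf{W}_{n,\infty}$ whose structure map to $S$ factors through $S'$, absorbing the Frobenius base change into the alteration rather than descending constructible data across a universal homeomorphism. Also, the paper's induction is organized as a single statement $(\mathrm{Red})_Z$ in which one ordered tuple $\mbf{h}$ is extended by a single extra function $h^m$ at each step while the bad closed set shrinks; this formulation is what makes the ``concatenation'' legitimate, because the admissible AS property of the intermediate outputs is carried along with it.
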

\begin{proof}
 Put $F:=[\mc{F}]$.
 For a closed subscheme $Z\subset S$, we consider the following condition:
 \begin{quote}
  $(\mbox{Red})_Z$:
  there exists $\mbf{h}\subset\mf{h}$ such that $(F;\mbf{h})$ viewed as an object on
  $(X\setminus f^{-1}(Z))\rightarrow (S\setminus Z)$ is of boundary dimension $<d$.
 \end{quote}
 We show that if $(\mbox{Red})_Z$ holds then there exists a closed subset $Z'\subsetneq Z$ such that $(\mbox{Red})_{Z'}$ holds,
 and conclude by noetherian induction. Assume $(\mbox{Red})_Z$ is true.
 Let $\xi\in Z$ be a generic point, and we take $h\in\mf{h}$ such that $h|_{X_\xi}$ is non-constant.
 Then there exists a closed subscheme $Z'\subset Z$ such that $\xi\in Z\setminus Z'$ and an iterated Frobenius
 morphism $g\colon S'\rightarrow S$ so that the pullback $g^*(h|_{X_{Z\setminus Z'}})$ is Frobenius separable
 by Lemma \ref{lemonprofun}.\ref{lemonprofun-smgenpinsep}.
 Let us show that $(\mbox{Red})_{Z'}$ is satisfied.

 We first show that there exists a principal $\Mdf$-sequence $\{\mbf{W}_i\}_{i\leq n}$ adapted to $(F;\mbf{h})$
 so that $F[\{\mbf{W}_i\}]_{\infty}$ is of admissible AS type on $\mbf{W}_{n,\infty}$.
 We show this by induction on $n$.
 When $n=0$, the claim follows since $\mc{F}$ is assumed to be locally constant constructible.
 Assume that $F[\{\mbf{W}_i\}_{i<n}]_{\infty}$ is of admissible AS type.
 By Corollary \ref{altadmassh}, we may take an alteration $\mbf{W}_n\rightarrow\mbf{W}_{n-1,\infty}\times\Box$ so that
 $F[\{\mbf{W}_i\}_{i<n}]_{\infty}\otimes\mc{L}(h_n)$ is an flat \'{e}tale system of admissible AS type on $\mbf{W}_n$.
 By Lemma \ref{AsneacyAS}, $F[\{\mbf{W}_i\}_{i\leq n}]_{\infty}$ is of admissible AS type on $\mbf{W}_{n,\infty}$ as claimed.

 Now, we take a sequence $\mbf{h}\subset\mf{h}$ of length $n$ so that $(\mbox{Red})_Z$ is satisfied.
 We take a principal $\Mdf$-sequence adapted to $(\mc{F};\mbf{h})$ such that $G:=F[\{\mbf{W}_i\}_{i\leq n}]_{\infty}$ is of admissible AS type.
 Take an alteration $T\rightarrow\mbf{W}_{n,\infty}$ such that the morphism $\rho\colon T\rightarrow S$ factors through $S'$.
 Put $\widetilde{Z}:=\rho^{-1}(Z)$.
 Let $\{\widetilde{Z}_i\}_{i\in I}$ be the set of irreducible components of $\widetilde{Z}$, and put $G_i:=G|_{\widetilde{Z}_i}$.
 Since $h|_{X_{Z\setminus Z'}}$ is Frobenius separable after pulling back to $T$,
 Corollary \ref{puinsreddim} implies that there exists an integer $m$ such that
 $(G_i,\{h^m\})$ is of boundary dimension $<d$ on $\widetilde{Z}_i\setminus\rho^{-1}(Z')$ for any $i\in I$.
 Thus, $F$ is of boundary dimension $<d$ with respect to $\mbf{h}\vee\{h^m\}$ on $S\setminus Z'$, and $(\mbox{Red})_{Z'}$ is satisfied.
\end{proof}

\begin{lem}
 \label{tworedlem}
 Let $S$ be an irreducible $k$-scheme with generic point $\eta$.
 Let $f\colon Y\rightarrow X$ be a morphism of flat $S$-schemes of finite type equidimensional of dimension $d$,
 and let $F\in\Comp(X/S)$.
 Assume one of the following holds:
 \begin{enumerate}
  \item\label{tworedlem-1}
       $f$ is proper and $f_\eta$ is a Galois covering;

  \item $f$ is a modification.
 \end{enumerate}
 If $(f^*F,f^*\mbf{h})$ is of boundary dimension $<d$ for $\mbf{h}\subset\mc{O}_X$,
 then so is $(F,\mbf{h})$.
\end{lem}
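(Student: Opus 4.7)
The strategy is to show that the $\Mdf$-sequence $\{\mbf{V}_i\}$ adapted to $(f^*F, f^*\mbf{h})$ witnessing $(f^*F)[\{\mbf{V}_i\}]_\infty \in \Comp_{d-1}$ is \emph{also} adapted to $(F, \mbf{h})$ and produces $F[\{\mbf{V}_i\}]_\infty \in \Comp_{d-1}$. The descent mechanism is birational (case 2) or Galois (case 1), applied at each step of the iterative construction of the $\Mdf$-sequence.

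\textbf{Case (2).} Since $f_\eta$ is an isomorphism, $F_\eta \cong f_{\eta,*} f_\eta^* F_\eta$. Proper pushforward $f_*$ commutes with $\Psi$ by proper base change, and it commutes with tensoring by Artin--Schreier sheaves (since these are pulled back from $X$): $f_*(f^*F \otimes \mc{L}(f^*h_k t)) = f_*(f^*(F \otimes \mc{L}(h_k t)))$, which equals $F \otimes \mc{L}(h_kt)$ at each generic point of $\mbf{V}_k$ by propagation via $\Psi$ from the identity at $\eta$. I would argue by induction on $k$: if $\{\mbf{V}_i\}$ is adapted to $(F, \mbf{h})$ up through step $k-1$ and $F[\{\mbf{V}_i\}_{i<k}]_\infty = f_*((f^*F)[\{\mbf{V}_i\}_{i<k}]_\infty)$ at each generic point of $\mbf{V}_{k-1,\infty}$, then the flat \'etale system property for $(f^*F)[\{\mbf{V}_i\}_{i<k}]_\infty \otimes \mc{L}(f^*h_k t)$ on $Y_{\mbf{V}_k}$ pushes forward via $f_*$ to the flat \'etale system property for $F[\{\mbf{V}_i\}_{i<k}]_\infty \otimes \mc{L}(h_k t)$ on $X_{\mbf{V}_k}$; restriction to $\mbf{V}_{k,\infty}$ preserves the pushforward relation. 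Since $f$ is proper, $f_*$ does not increase the dimension of support, so $F[\{\mbf{V}_i\}]_\infty$ lies in $\Comp_{d-1}$.

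\textbf{Case (1).} Let $G := \mathrm{Gal}(k(Y_\eta)/k(X_\eta))$. After replacing $Y$ by the integral closure of $X$ in $k(Y_\eta)$ (a proper $X$-scheme on which $G$ acts over $X$, dominated by the original $Y$ generically), one may assume that $G$ acts on $Y$ over $X$; this replacement does not increase the boundary dimension of $(f^*F, f^*\mbf{h})$. Then $f^*F$ naturally lives in $\Comp\vphantom{E}^G(Y/S)$, and the Mdf-sequence operations are $G$-equivariant. I would run the same inductive argument as in case (2), but with $R\ul{\Gamma}^G \circ f_*$ in place of $f_*$. Generically over $S$, $R\ul{\Gamma}^G(f_{\eta,*}f_\eta^*F_\eta) \cong F_\eta$, since for the \'etale Galois cover $f_\eta$ one has $f_{\eta,*}f_\eta^* F_\eta \cong F_\eta \otimes_\Lambda \Lambda[G]$ as $G$-equivariant sheaves, whose $G$-invariants recover $F_\eta$. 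Propagation via $\Psi$---using proper base change for $f_*$ and the commutation lemma of \ref{Gactcomp} for $R\ul{\Gamma}^G$---extends this identity to all generic points of $\mbf{V}_{k,\infty}$. Since neither $f_*$ nor $R\ul{\Gamma}^G$ increases the dimension of support, the boundary-dimension conclusion follows.

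\textbf{Main obstacle.} The heart of the argument is the inductive propagation: verifying that, step by step along the $\Mdf$-sequence, the flat \'etale system property and the pushforward (resp.\ $G$-invariants of pushforward) identity both descend along $f$, compatibly with $\Psi$ and with tensoring by Artin--Schreier sheaves. This is ultimately a bookkeeping application of proper base change combined with the lemma of \ref{Gactcomp}, but one must be careful that every generic point of $\mbf{V}_{k,\infty}$ (not just those over $\eta \in S$) is handled via specialization from $\eta$. In case (1), the secondary subtlety is arranging the global $G$-action on $Y$ without destroying the boundary-dimension hypothesis; this is where the reduction to the integral closure is needed.
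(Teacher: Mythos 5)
Your overall mechanism — descend through $\mr{R}\ul{\Gamma}^G\circ f_*$, using §\ref{Gactcomp} and proper base change to propagate the generic-fiber identity along the $\Mdf$-sequence — is the right one, and matches the paper. But there are two genuine problems.

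\textbf{Case (2) has a gap.} You open with ``Since $f_\eta$ is an isomorphism,'' but this is false for a general modification. A modification $f\colon Y\to X$ is an isomorphism over a dense open $V\subset X$; since $X$ is $S$-flat, $V_\eta\subset X_\eta$ is dense open, so $f_\eta$ is a modification of $X_\eta$, not necessarily an isomorphism (e.g.\ $Y$ a blow-up along a center dominating $S$). Consequently $F_\eta\not\cong f_{\eta*}f_\eta^*F_\eta$ in general, and the rest of the argument in case (2) collapses. What is needed is an extra reduction: shrink $X$ so that $f_\eta$ \emph{becomes} an isomorphism. The paper does exactly this — let $Z_\eta$ be the complement of the isomorphism locus in $X_\eta$, apply Gruson–Raynaud flattening to a modification $S'\to S$ making the closure of $Z_\eta$ flat, and delete it. After that, $f_\eta$ is an isomorphism and one runs case (1) with trivial Galois group. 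Without this step your argument does not go through.

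\textbf{Case (1) introduces an unnecessary (and circular) detour.} You replace $Y$ by the integral closure of $X$ in $k(Y_\eta)$ to get a scheme-level $G$-action, then assert the replacement ``does not increase the boundary dimension.'' But $Y\to Y'$ is a modification, so justifying this requires precisely case (2) of the lemma you are in the middle of proving — which, as above, you have not yet established. Moreover the replacement is not needed. The $\Comp\vphantom{E}^G$ formalism of §\ref{Gactcomp} lives at the level of $\Lambda[G]$-complexes on $X$, not $G$-equivariant sheaves on $Y$: since $f_\eta$ is Galois with group $G$, $f_{\eta*}f_\eta^*F_\eta$ carries a $\Lambda[G]$-structure and $\mr{R}\ul{\Gamma}^G(f_{\eta*}f_\eta^*F_\eta)=F_\eta$; the lemma of §\ref{Gactcomp} then says $\Psi$ and $\mr{R}\ul{\Gamma}^G$ commute, so this identity propagates to $F[\{\mbf{W}_i\}]_{\infty}=\mr{R}\ul{\Gamma}^G\mr{R}f_*\bigl((f^*F)[\{\mbf{W}_i\}]_{\infty}\bigr)$ along a principal $\Mdf$-sequence adapted to both $(F,\mbf{h})$ and $(f^*F,f^*\mbf{h})$. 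The support inclusion and properness of $f$ then immediately give the bound. This avoids any geometric $G$-action on $Y$, avoids the circularity, and also sidesteps your somewhat delicate claim that ``the flat étale system property pushes forward via $f_*$'' — the paper instead fixes a common adapted sequence from the start and invokes independence of boundary dimension on the choice of adapted sequence.
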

\begin{proof}
 Let $W\rightarrow S$ be a morphism, and assume we are given $E\in\Comp(Y_W/W)$ such that $f_{W*}(E)$
 is in $\Comp\vphantom{E}^G(X_W/W)$ for some finite group $G$ (see \ref{Gactcomp} for the notation).
 Let $h\in\mc{O}_{X_W}$, and let $W'\rightarrow W\times\Box$ be a maximally dominant surjection
 over which $E\otimes\mc{L}((f^*h)t)$ is a flat \'{e}tale system.
 Then $f_{W*}(E\otimes\mc{L}(f^*ht))$ can be viewed naturally as an element of $\Comp\vphantom{E}^G(X_{W'}/W')$.
 Let $i\colon W'_\infty:=W'\times_{\Box}\{\infty\}\rightarrow W'$.
 By Lemma \ref{Gactcomp} and since $f$ is proper, we have
 \begin{equation}
  \label{tworedlem-eq1}
   \tag{$\star$}
  \mr{R}\ul{\Gamma}^G\mr{R}f_*i^*(E\otimes\mc{L}(f^*ht))
   =
   i^*\bigl(\mr{R}\ul{\Gamma}^G\mr{R}f_*(E)\otimes\mc{L}(ht)\bigr).
 \end{equation}
 Now, let $G$ be the Galois group of $Y_\eta\rightarrow X_\eta$.
 Then we have $\mr{R}\ul{\Gamma}^G(f_*f^*F)=F$.
 Let $\{\mbf{W}_i\}_{i\leq n}$ be a principal $\Mdf$-sequence adapted to both $(F,\mbf{h})$ and $(f^*F,f^*\mbf{h})$.
 Using (\ref{tworedlem-eq1}) iteratively, we have an equality
 $F[\{\mbf{W}_i\}]_{\infty}=\mr{R}\ul{\Gamma}^G\mr{R}f_*\bigl((f^*F)[\{\mbf{W}_i\}]_{\infty}\bigr)$.
 This implies that $\mr{Supp}\bigl(F[\{\mbf{W}_i\}]_{\infty}\bigr)\subset f\bigl(\mr{Supp}((f^*F)[\{\mbf{W}_i\}]_{\infty})\bigr)$,
 and the claim follows.

 Let us treat the second case.
 We may assume that $F=[\mc{F}]$ for some object $\mc{F}$ in $D_{\mr{ctf}}(X_{\overline{\eta}})$.
 By replacing $X$ by $X_{\mr{red}}$ and $Y$ by $Y\times_{X}X_{\mr{red}}$, we may assume that $X$ is reduced.
 Since $f$ is a modification, there exists a dense open subscheme $U_\eta\subset X_\eta$ such that
 $U_\eta\times_XY\rightarrow U_\eta$ is an isomorphism.
 Let $Z_{\eta}$ be the complement.
 Using Gruson-Raynaud flattening, we take a modification $S'\rightarrow S$ so that the closure of $Z_\eta$ in $X_{S'}$ is flat.
 Then we may remove the closure from $X_{S'}$, and assume that $Y_\eta\rightarrow X_\eta$ is an isomorphism.
 Now, we apply \ref{tworedlem-1} to conclude.
\end{proof}

\subsection{}
Let $X$ be a noetherian scheme.
For $\mc{F}\in D_{\mr{ctf}}(X)$, the support is defined to be $\mr{Supp}(\mc{F}):=\{x\in X\mid\mc{F}_{\overline{x}}\neq0\}^{-}$.
 and $F\in\mr{K}_0\mr{Cons}(X)$.
We define the support of $F$ to be $\mr{Supp}(F):=\bigcap_{F=[\mc{F}]}\mr{Supp}(\mc{F})$ where $\mc{F}\in D_{\mr{ctf}}(X)$.
In fact, there exists $\mc{F}\in D_{\mr{ctf}}(X)$ so that $\mr{Supp}(\mc{F})=\mr{Supp}(F)$.
Indeed, take any $\mc{G}\in D_{\mr{ctf}}(X)$ such that $[\mc{G}]=F$.
There exists a stratification $\{j_i\colon Z_i\hookrightarrow X\}_{i\in I}$ of $X$ such that $j_i^*\mc{G}$ has locally constant cohomology sheaves.
Let $I'$ be the set of $i\in I$ such that $j_i^*(F)\neq0$ in $\mr{K_0}\mr{Cons}(Z_i)$.
Then by putting $\mc{F}=\bigoplus_{i\in I'}j_{i!}j_i^*\mc{G}$, we have $\mr{Supp}(\mc{F})=\bigcup_{i\in I}\overline{Z}_i=\mr{Supp}(F)$.
Finally, let $X\rightarrow S$ be a morphism between separated $k$-schemes of finite type.
For $F\in\Comp(X/S)$ is in particular an element $\{F_{\eta}\}$ of $\prod_{\eta\in S^0}\mr{K}_0\mr{Cons}(X_{\overline{\eta}})$.
The support of $F$ is defined to be $\bigcup(\pi_{\eta}\mr{Supp}(F_\eta))^{-}$,
where $\pi_\eta\colon X_{\overline{\eta}}\rightarrow X_\eta$ and the closure is taken in $X$.

\begin{thm*}
 Let $X\rightarrow S$ be a morphism between separated $k$-schemes of finite type where $X$ is affine.
 Let $F\in\Comp(X/S)$ such that $\dim(\mr{Supp}(F)_s)\leq d$ for any $s\in S$,
 and let $\mf{h}\subset\mc{O}_X$ be a non-constant family of $\mr{Supp}(F)$ over $S$.
 Then there exists $\mbf{h}\subset\mf{h}$ such that $(F;\mbf{h})$ is of boundary dimension $<d$.
\end{thm*}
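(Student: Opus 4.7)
The plan is to follow the noetherian-induction template used in the proof of Proposition \ref{dimredas}, with the additional step of reducing to a smooth situation via de Jong's alteration and transferring back via Lemma \ref{tworedlem}. For a closed subscheme $Z \subset S$, consider the assertion
\begin{quote}
 $(\mr{Red})_Z$: there exists $\mbf{h} \subset \mf{h}$ such that $(F;\mbf{h})$, viewed as data on $X_{S\setminus Z} \to S\setminus Z$, has boundary dimension $<d$.
\end{quote}
The base case $(\mr{Red})_S$ is trivial, and we want $(\mr{Red})_\emptyset$; by noetherian induction it suffices to pass from $(\mr{Red})_Z$ with $Z \neq \emptyset$ to $(\mr{Red})_{Z'}$ for some $Z' \subsetneq Z$.

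First I would reduce to $Z$ irreducible with generic point $\xi$, and work in a Zariski neighborhood of $\xi$ in $Z$. Let $W_\xi := \mr{Supp}(F_{\overline{\xi}})^{-} \subset X_{\overline{\xi}}$; by hypothesis $d_0 := \dim W_\xi \leq d$. Using Gruson--Raynaud flattening together with standard spreading-out, I extend $W_\xi$ to a closed subscheme $W \subset X_V$ for some open $V \subset Z$ containing $\xi$, flat over $V$ of relative dimension $d_0$, and carrying the restriction of $F$. If $d_0 < d$, then $F|_{X_V}$ has fiberwise support of dimension $<d$, so the empty $\Mdf$-sequence already gives boundary dimension $<d$ over $V$, yielding $(\mr{Red})_{Z\setminus V}$. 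Hence the substantive case is $d_0 = d$.

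When $d_0 = d$, apply de Jong's alteration to $W \to V$ (composed with Lemma \ref{geomirrlem} to secure fiberwise geometric irreducibility and with a Galois-closure of the generic fiber as in the argument of Lemma \ref{finallemgalclo}) to produce, after possibly shrinking $V$, a proper surjective morphism $\pi \colon W' \to W$ with $W'$ smooth and equidimensional of relative dimension $d$ over $V$, and such that $\pi$ is generically a Galois covering. On the smooth side, choose a representative $\mc{G}$ of $(\pi^*F)_{\overline{\eta}}$ in $D_{\mr{ctf}}(W'_{\overline{\eta}})$, stratify $W'_{\overline{\eta}}$ so that $\mc{G}$ has locally constant constructible cohomology on each stratum, and spread the stratification out over $V$. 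Decomposing $\pi^*F$ accordingly in $K_0$, the contributions from strata of relative dimension $<d$ give boundary dimension $<d$ automatically (empty $\mbf{h}$), while each stratum of dimension $d$ falls under Proposition \ref{dimredas} applied with the non-constant family $\mf{h}|_{W'}$, which is non-constant on $W'$ because $\mf{h}$ is non-constant on $W = \mr{Supp}(F)$ and Lemma \ref{nonconst-prop}.\ref{nonconst-prop-fin} applies to the generically finite flat equidimensional morphism $\pi$. Combining over the finitely many strata produces $\mbf{h}' \subset \mf{h}|_{W'}$ with $(\pi^*F; \mbf{h}')$ of boundary dimension $<d$. Factor $\pi$ as a composition of a generically Galois proper morphism and a modification, and apply Lemma \ref{tworedlem}.\ref{tworedlem-1} and the second case of the same lemma respectively; this transfers the conclusion to $(F; \mbf{h}|_V)$ on $X_V \to V$, giving $(\mr{Red})_{Z \setminus V}$ and closing the induction.

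The main obstacle I anticipate is the interplay between the $K_0$-level decomposition (needed because $F$ is only a compatibility system, not an honest complex) and the spreading-out/alteration arguments: one must check that a stratification of a representative of $(\pi^*F)_{\overline{\eta}}$ can be spread to yield an equality inside $\Comp(W'/V)$ on a neighborhood of $\eta$, so that the dimension-$d$ pieces to which Proposition \ref{dimredas} is applied really assemble back into the class $\pi^*F$. A secondary technical point is verifying the hypotheses of Lemma \ref{tworedlem} after flattening --- namely that $W$ and $W'$ are flat equidimensional over $V$ of the same relative dimension $d$ --- which forces the shrinking of $V$ steps to be carried out in the right order.
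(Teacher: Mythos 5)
Your high-level skeleton (noetherian induction over $S$, reduce the generic behavior of $\mr{Supp}(F)$ to a smooth, locally constant setting, apply Proposition \ref{dimredas}, descend via Lemma \ref{tworedlem}) is the right shape, and the reduction steps ``replace $X$ by $\mr{Supp}(F)$, spread out a representative of $F_{\overline{\xi}}$, discard the lower-dimensional stratum'' all match what the paper does. But the mechanism you propose for producing the smooth model --- de Jong's alteration of $W\to V$ --- does not deliver the hypotheses your later invocations actually need, and this is a genuine gap rather than a cosmetic one.

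The issue is that de Jong's alteration $\pi\colon W'\to W$ is proper, surjective, generically finite, but \emph{not finite} and in general \emph{not flat}. Yet you cite Lemma \ref{nonconst-prop}.\ref{nonconst-prop-fin} to conclude that $\pi^*\mf{h}$ is still a non-constant family; that lemma requires $\pi$ finite and both source and target flat and equidimensional over the base, precisely because its proof reduces to saying that $W'_s\to W_s$ is a finite map between equidimensional schemes of the same dimension, hence maximally dominant. For an alteration this fails on the nose: after flattening $W'$ over $V$ you can arrange $W'_s$ equidimensional of dimension $d$, but $\pi_s\colon W'_s\to W_s$ may fail to be dominant (or even surjective) on components, so a function non-constant on a component of $W_s$ can pull back to a constant on a component of $W'_s$. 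The same mismatch recurs when you invoke Lemma \ref{tworedlem}: its hypotheses require $f$ to be either a \emph{modification} or a proper map which is a \emph{Galois covering on the generic fiber}, between \emph{flat equidimensional} $S$-schemes; a blow-up composed with a generically finite map produced by de Jong need not factor through $V$-flat, $V$-equidimensional intermediate schemes, and modifying $V$ to flatten one piece generically destroys the smoothness of the other.

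What the paper uses instead is specifically designed to dodge this. After arranging $X\to S$ flat of relative dimension $d$ with $\mc{F}$ lcc on the generic fiber, it takes a finite $Y\to X$ trivializing $\mc{F}$ that is Galois over $\eta$, flattens $Y$ over $S$, and then applies the reduced fiber theorem of Bosch--L\"utkebohmert--Raynaud. That theorem produces a finite \'etale $S'\to S$ and a \emph{finite modification} $Y'\to Y_{S'}$ with $Y'\to S'$ flat with geometrically reduced fibers. Since $k$ is perfect, geometrically reduced means generically smooth on each fiber, so the singular locus has strictly smaller fiberwise dimension (via [EGA IV, 17.15.2] and [EGA IV, 13.1.5]) and can be discarded. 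Every step is either finite flat or an open immersion dense in each fiber, so Lemma \ref{nonconst-prop}.\ref{nonconst-prop-fin} applies throughout, and the two-stage descent $Y'\to Y_{S'}$ (modification) and $Y\to X$ (proper Galois) matches Lemma \ref{tworedlem} exactly. Your idea of stratifying $\pi^*F$ and decomposing in $K_0$ also becomes unnecessary once one pulls back to the trivializing cover $Y$: the sheaf is literally constant there, so the paper never has to re-assemble a $K_0$-decomposition across strata. If you want to salvage the de Jong route, you would need to replace the alteration by a finite cover achieving the same smoothing --- which is essentially what the reduced fiber theorem does for you.
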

\begin{proof}
 We may assume $k$ is perfect.
 We may freely replace $X$ by its underlying reduced subscheme.
 Since $\Psi$ commutes with pushforward by closed immersion, we may replace $X$ by $\mr{Supp}(F)$, and assume that $\mr{Supp}(F)=X$.
 Let $g\colon S'\rightarrow S$ be a maximally dominant proper morphism.
 Then by the assumption on the dimension, $\mf{h}\times_S S'$ is a non-constant family on $\mr{Supp}(g^*F)$.
 Thus, we may replace $S$ by $S'$ and $\mf{h}$ by its pullback.
 This allows us to assume $S$ to be integral.
 Let $\eta$ be the generic point of $S$.
 By replacing $S$ by its finite dominant morphism, we may assume that $F=[\mc{F}]$ and $\mr{Supp}(\mc{F})=\mr{Supp}(F)$
 for some object $\mc{F}$ in $D_{\mr{ctf}}(X_{\eta})$ (not just $D_{\mr{ctf}}(X_{\overline{\eta}})$!).
 Further replacing $S$ and replacing $X$ by its underlying reduced subscheme, we may assume that the generic fiber $X_\eta$ is geometrically reduced.
 Since $\mc{F}$ is constructible, there exists a closed subscheme $Z_\eta\subset X_\eta$ such that
 any cohomology sheaf of $\mc{F}|_{X_\eta\setminus Z_\eta}$ is locally constant constructible $\Lambda$-module,
 $\dim(Z_\eta)<d$, and $X_{\eta}\setminus Z_{\eta}$ is equidimensional of dimension $d$.
 By Gruson-Raynaud flattening, there exists a modification $S'\rightarrow S$ such that the strict transforms of $X$ and
 $\overline{Z}_{\eta}$ in $X_{S'}$ are flat.
 Thus, we may assume that $X$ and $\overline{Z}_\eta$ are flat.
 Since $\Psi$ commutes with pullback by open immersion, the claim is generic around each fiber.
 This allows us to remove $\overline{Z}_\eta$ and assume that $\mc{F}$ is locally constant constructible on $X_\eta$ and
 $X\rightarrow S$ is flat of relative dimension $d$.
 Let $Y\rightarrow X$ be a finite morphism which trivializes $\mc{F}$ and is a Galois covering over $\eta$.
 We further use Gruson-Raynaud flattening for $Y\rightarrow S$, and we may assume that $Y$ is $S$-flat.
 By the reduced fiber theorem \cite{BLR}, we can take the following diagram:
 \begin{equation*}
  \xymatrix@C=30pt@R=10pt{
   Y\ar[d]&Y_{S'}\ar[l]\ar[dd]\ar@{}[ldd]|\square&
   Y'\ar[l]_-{\substack{\mr{fin}\\\mr{modif}}}^-{\alpha}\ar@/^10pt/[ldd]\\
  X\ar[d]&&\\
   S&S'\ar@{->>}[l]_-{\mr{fin\ \acute{e}t}}&}
 \end{equation*}
 such that $Y'\rightarrow S'$ is flat and has reduced fibers.
 It suffices to show the claim for $X_{S'}$ and the pullback of $\mc{F}$ and $\mf{h}$.
 Note that the pullback of $\mc{F}$ to $Y_{S'}$ is trivial.
 For any point $s'\in S'$, the schemes $Y_{s'}$, $X_{s'}$, $Y'_{s'}$ are equidimensional of dimension $d$.
 Let
 \begin{equation*}
  Z:=
   \bigl\{y\in Y_{S'}\mid Y'_y\cap\mr{Sing}(Y'/S')\neq\emptyset\bigr\}
   =\alpha\bigl(\mr{Sing}(Y'/S')\bigr).
 \end{equation*}
 The openness of smooth locus implies that $Z$ is a closed subset.
 For each point $s'\in S'$, we have $\mr{Sing}(Y'/S')\cap Y'_{s'}=\mr{Sing}(Y'_{s'}/s')$ (cf.\ [EGA IV, 17.5.1]).
 Thus, if $s'$ is a closed point, since $Y'_{s'}$ is reduced and $k$ is assumed perfect,
 $\mr{Sing}(Y'_{s'}/s')\subset Y'_{s'}$ is nowhere dense by [EGA IV, 17.15.2].
 Thus, the dimension of each fiber of $Z\rightarrow S'$ is $<d$ by [EGA IV, 13.1.5].
 This allows us to remove the image of $Z$ in $X_{S'}$, and assume that $Y'\rightarrow Y_{S'}$ is a finite modification and $Y'\rightarrow S'$ is smooth.
 Since $X$, $Y$ are flat over $S$, the pullback of $\mf{h}$ to $Y$ is a non-constant family over $S$ by
 Lemma \ref{nonconst-prop}.\ref{nonconst-prop-fin}.
 Thus, the pullback to $Y_{S'}$ is a non-constant family over $S'$, and by the flatness, its pullback $\mf{h}'$
 to $Y'$ is a non-constant family over $S'$, again by Lemma \ref{nonconst-prop}.\ref{nonconst-prop-fin}.
 Apply Proposition \ref{dimredas} and Lemma \ref{tworedlem} to conclude the proof. 
\end{proof}

\begin{cor}
 \label{mainresult}
 Assume that $X$ and $S$ are noetherian affine $\mb{F}_p$-schemes {\normalfont(}not necessarily of finite type{\normalfont)},
 and assume we are given a morphism $X\rightarrow S$ of finite type.
 For any $F\in\Comp_d(X/S)$ and $\mbf{h}\subset\mc{O}_{X\times\Delta^m}(X\times\Delta^m)$,
 we can find $\mbf{h}'\subset\mc{O}_{X}(X)$ such that for a principal $\Mdf$-sequence $\{\mbf{V}_i\}$ adapted to $(F;\mbf{h}\vee\mbf{h}')$,
 $F[\{\mbf{V}_i\}]_{\infty}$ is in $\Comp_{d-1}$.
\end{cor}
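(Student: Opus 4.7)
The corollary should follow from the (unstarred) Theorem immediately preceding it by two maneuvers: a limit argument that removes the finite-type hypothesis on $S$, and a concatenation of $\Mdf$-sequences that absorbs the given $\mbf{h}$ into an initial segment and then feeds the result into the main theorem.

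\textbf{Step 1 (Limit reduction).} Write $S = \varprojlim_\alpha S_\alpha$ with $S_\alpha$ affine of finite type over $\mb{F}_p$. Since $X\rightarrow S$ is of finite type, it descends to some $X_\alpha\rightarrow S_\alpha$. Since $\mbf{h}$ and $\mbf{h}'$ (to be constructed) are finite ordered sets, they descend after enlarging $\alpha$. The element $F\in\Comp_d(X/S)$ is determined by finitely presented constructible data on the generic fibres of $S$ that is compatible under specialisation, so by standard spreading-out for constructible sheaves it comes by pullback from an $F_\alpha\in\Comp_d(X_\alpha/S_\alpha)$. Because nearby cycles and the formation of $\Mdf$-sequences commute with base change along $S\rightarrow S_\alpha$ (by [Part I, \S4] and $\Psi$-compatibility under proper maps), a witness $\mbf{h}'_\alpha\subset\mc{O}_{X_\alpha}(X_\alpha)$ pulls back to the required $\mbf{h}'\subset\mc{O}_X(X)$. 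Thus we may assume $S$ is of finite type over $k=\mb{F}_p$.

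\textbf{Step 2 (Absorb $\mbf{h}$).} Construct, stepwise, a principal $\Mdf$-sequence $\{\mbf{V}_i\}_{i\le n}$ over $S$ adapted to $(F;\mbf{h})$. At step $k$ one must turn $F[\{\mbf{V}_i\}_{i<k}]_\infty\otimes\mc{L}(h_kt)$ into a flat \'etale system over a suitable alteration $\mbf{V}_k\rightarrow\mbf{V}_{k-1,\infty}\times\Box$; such an alteration exists by Corollary \ref{altadmassh} (which, after an alteration, realises $\mc{L}(h_k)$ as an admissible AS sheaf) combined with the very-good-modification machinery of Part I applied to the tensor product. Set $F':=F[\{\mbf{V}_i\}_{i\le n}]_\infty\in\Comp(X_{\mbf{V}_{n,\infty}}/\mbf{V}_{n,\infty})$. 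The three operations used at each step — tensoring with an Artin-Schreier sheaf, pulling back along an alteration, and applying nearby cycles towards $\infty$ — all preserve the fibrewise dimension of support, so $F'\in\Comp_d$.

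\textbf{Step 3 (Apply the main theorem and concatenate).} By Lemma \ref{nonconst-prop}.\ref{nonconst-prop-bc}, the pullback of $\Gamma(X,\mc{O}_X)$, closed under positive powers, is a non-constant family on $\mr{Supp}(F')\subset X_{\mbf{V}_{n,\infty}}$ over $\mbf{V}_{n,\infty}$. The Theorem stated just before Corollary \ref{mainresult} therefore yields a finite ordered subset $\mbf{h}'\subset\mc{O}_X(X)$ and a principal $\Mdf$-sequence $\{\mbf{W}_j\}_{j\le m}$ over $\mbf{V}_{n,\infty}$ adapted to $(F';\mbf{h}')$ with $F'[\{\mbf{W}_j\}]_\infty\in\Comp_{d-1}$. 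Concatenating $\{\mbf{V}_i\}$ and $\{\mbf{W}_j\}$ produces a principal $\Mdf$-sequence over $S$ adapted to $(F;\mbf{h}\vee\mbf{h}')$ whose terminal value lies in $\Comp_{d-1}$. The Remark following the boundary-dimension definition of \S\ref{sect4} (via [Part I, Lemma 5.6.2]) then ensures that this property is independent of the chosen adapted principal $\Mdf$-sequence, giving the corollary.

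\textbf{Main obstacle.} The most delicate point is the limit descent of Step 1: one must verify that the compatible family of generic-fibre constructible data defining $F\in\Comp_d(X/S)$ genuinely comes by pullback from $F_\alpha\in\Comp_d(X_\alpha/S_\alpha)$ for some large $\alpha$, and that the flat \'etale system condition on the terms of an $\Mdf$-sequence is preserved through the affine cofiltered base change $S\rightarrow S_\alpha$. The preservation of the $\Comp_d$ bound in Step 2 is conceptually standard but also requires careful bookkeeping. Once these are in hand, the concatenation argument of Step 3 is essentially formal.
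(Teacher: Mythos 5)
Your overall architecture matches the paper's: a limit reduction to $S$ of finite type over a field, then run a principal $\Mdf$-sequence for $\mbf{h}$, then apply the unstarred Theorem preceding the corollary, then concatenate using the Remark in \S\ref{sect4}. Step 1 and Step 3 are essentially the paper's argument.

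The genuine gap is in Step 2, in the assertion that "the three operations\ldots all preserve the fibrewise dimension of support, so $F'\in\Comp_d$." Even granting that $F':=F[\{\mbf{V}_i\}_{i\le n}]_\infty$ lies in $\Comp_d(X_{\mbf{V}_{n,\infty}}/\mbf{V}_{n,\infty})$ (a statement about the $\mr{K}_0$-classes on the \emph{generic} fibres of $\mbf{V}_{n,\infty}$), the Theorem you want to apply requires the stronger hypothesis $\dim(\mr{Supp}(F')_s)\le d$ for \emph{every} $s$, where $\mr{Supp}(F')$ is the closure in $X_{\mbf{V}_{n,\infty}}$ of the union of the generic-fibre supports (\S\ref{sect5}). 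That closure is not flat over $\mbf{V}_{n,\infty}$ in general, and its fibres over non-generic points may well have dimension $>d$. The paper deals with this by inserting a Gruson--Raynaud flattening modification $W\rightarrow\mbf{V}_{n,\infty}$ so that $\mr{Supp}(g^*F')$ becomes fibrewise of dimension $\le d$ over $W$, and then observes (via Lemma \ref{nonconst-prop}.\ref{nonconst-prop-bc}) that $\mc{O}_X(X)\subset\mc{O}_{X_W}(X_W)$ is still a non-constant family on the flattened support. Your proposal skips this modification, so the Theorem's hypothesis is not verified at the point where you invoke it; this is not a cosmetic omission but a step the paper explicitly performs.

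A smaller point: in Step 2 you build the $\Mdf$-sequence adapted to $(F;\mbf{h})$ by invoking Corollary \ref{altadmassh} and the admissible AS formalism. That machinery is not needed (and not used by the paper) at this point: the existence of a principal $\Mdf$-sequence adapted to $(F;\mbf{h})$ is part of the Part I framework on flat \'etale systems. Corollary \ref{altadmassh} and Lemma \ref{AsneacyAS} enter later, inside the proof of Proposition \ref{dimredas}, where the admissible AS structure actually does work.
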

\begin{proof}
 By [EGA IV, 8.9.1], we may find a $\mb{F}_p$-scheme of finite type $S_0$, a morphism of finite type $X_0\rightarrow S_0$,
 functions $\mbf{h}_0\subset\mc{O}_{X_0\times\Delta^m}(X_0\times\Delta^m)$, and a dominant morphism $S\rightarrow S_0$
 such that $X\cong X_0\times_{S_0}S$ and $\mbf{h}$ is the pullback of $\mbf{h}_0$.
 Furthermore, by [SGA 4$\frac{1}{2}$, Rapport, 4.6 (i)] and [SGA 4, Exp.\ IX, 2.7.4],
 we may assume that $F$ is a pullback of an element of $\Comp_d(X_{S_0}/S_0)$.
 Thus, we may assume that $S=S_0$, and in particular, we may assume that $S$ is finite type over a field $k$ of characteristic $p$. 

 Let $S'\rightarrow S$ be a morphism, and $Z\subset X_{S'}$ be any closed subscheme.
 By Lemma \ref{nonconst-prop}.\ref{nonconst-prop-bc}, $\mc{O}_X$ is a non-constant family of $Z$.
 Take a principal $\Mdf$-sequence $\{\mbf{W}_i\}_{i\leq n}$ for $(F;\mbf{h})$.
 By Gruson-Raynaud flattening theorem, there exists a modification $g^*\colon W\rightarrow\mbf{W}_{n,\infty}$ such that
 $\mr{Supp}(g^*F[\{\mbf{W}_i\}]_{\infty})$ is fiberwise of dimension $\leq d$ over $W$.
 By the observation above, $\mc{O}_X(X)\subset\mc{O}_{X_W}(X_W)$ is a non-constant family of
 $\mr{Supp}\bigl(g^*F[\{\mbf{W}_i\}]_{\infty}\bigr)\subset X_W$.
 Invoking the theorem above, there exists $\mbf{h}'\subset\mc{O}_X(X)$ such that $(g^*F[\{\mbf{W}_i\}]_{\infty};\mbf{h}')$ is of boundary dimension $<d$.
 This is the same as saying that $(F;\mbf{h}\vee\mbf{h}')$ is of boundary dimension $<d$, and the corollary follows.
\end{proof}

\noindent
Tomoyuki Abe:\\
Kavli Institute for the Physics and Mathematics of the Universe (WPI)\\
The University of Tokyo\\
5-1-5 Kashiwanoha,  
Kashiwa, Chiba, 277-8583, Japan\\
e-mail: {\tt tomoyuki.abe@ipmu.jp}


\begin{thebibliography}{Bek}
 \bibitem[Part I]{}
		 Abe, T.: {\em Ramification theory from homotopical point of view, I}, available in arxiv.
\bigskip

 \bibitem[BLR]{BLR}
	      Bosch, S., L\"{u}tkebohmert, W., Raynaud, M.: {\em Formal geometry IV. The reduced fiber theorem},
	      Invent.\ Math.\ {\bf 119}, p.361--398 (1995).

 \bibitem[BPR]{BPR}
	      Baker M., Payne, S., Rabinoff, J.: {\em On the structure
	      of nonarchimedean curves}, Contemp.\ Math.\ {\bf 605},
	      p.93--121 (2013).
	      
 \bibitem[FK]{FK}
	     Fujiwara K., Kato, F.: {\em Foundations of Rigid Geometry
	     I}, EMS Monographs in Mathematics, (2018).
	    
 \bibitem[G]{G}
	    Illusie, L., Laszlo, Y., Orgogozo, F.:
	    {\em Travaux de Gabber sur l'uniformisation locale et la
	    cohomologie \'{e}tale des sch\'{e}mas quasi-excellents},
	    Ast\'{e}risque {\bf 363--364} (2014).
	      
 \bibitem[KL]{KL}
	     Katz, N., Laumon, G.: {\em  Transformation de Fourier et
	     majoration de sommes exponentielles}, Publ.\ IHES {\bf 62},
	     p.145--202 (1985).

 \bibitem[Ke1]{Kedmis}
	     Kedlaya, K.S.: {\em The algebraic closure of the power
	     series field in positive characteristic}, Proc.\ AMS {\bf
	     129}, p.3461--3470 (2001).
		 
 \bibitem[Ke2]{Ked2}
	     Kedlaya, K.S.: {\em Semistable reduction for overconvergent
	      $F$-isocrystals, II: A valuation-theoretic approach},
	      Compos.\ Math.\ {\bf 144}, p.657--672 (2008).

 \bibitem[Ke3]{Kedbook}
	      Kedlaya, K.S.: {\em $p$-adic differential equations},
	      Cambridge Studies in Advanced Mathematics {\bf 125}
	      (2010).
	      
 \bibitem[Ke4]{Ked4}
	     Kedlaya, K.S.: {\em Semistable reduction for overconvergent
	      $F$-isocrystals, IV: local semistable reduction at
	      nonmonomial valuations}, Compos.\ Math.\ {\bf 147},
	      p.467--523 (2011).

 \bibitem[Ke5]{Ked5}
	      Kedlaya, K.S.: {\em Good formal structures for flat meromorphic connections, II: Excellent schemes}.
	      Journal AMS {\bf 24}, p.183--229 (2010).

 \bibitem[O1]{O1}
	     Orgogozo, F.: {\em Modifications et cycles proches sur une
	     base g\'{e}n\'{e}rale}, Int.\ Math.\ Res.\ Not.\
	     {\bf 2006}, (2006).
	      
 \bibitem[O2]{O2}
	     Orgogozo, F.: {\em Constructibilit\'{e} et mod\'{e}ration uniformes en cohomologie \'{e}tale},
	     Compos.\ Math.\ {\bf 155}, p.711--757 (2019).

 \bibitem[SW]{SW}
	     Sharif, H., Woodcock, C.:
	     {\em Algebraic Functions Over a Field of Positive
	     Characteristic and Hadamard Products},
	     J. London Math.\ Soc.\ {\bf s2-37}, p.395--403 (1988).
	     
 \bibitem[V]{V}
	    Vaqui\'{e}, M.: {\em Valuations}, Progress in Math.\ {\bf
	    181}, p.539--590 (2000).
\end{thebibliography}
\end{document}